\theoremstyle{plain} 
\newtheorem{theorem}{Theorem}[section]
\newtheorem{corollary}[theorem]{Corollary}
\newtheorem{lemma}[theorem]{Lemma}  
\newtheorem{proposition}[theorem]{Proposition}
\newtheorem{constr}{Construction}
\theoremstyle{definition} 
\theoremstyle{definition} 
\newtheorem{example}[theorem]{Example}
\newtheorem{remark}[theorem]{Remark}
\newtheorem*{remark*}{Remark}
\numberwithin{equation}{section}
\newcommand{\FF}[1]{\mathcal{F}^{#1}}
\renewcommand{\H}[1]{\mathcal{H}_+^{#1}}
\renewcommand{\r}{\mathsf{r}}
\newcommand{\sss}{\mathsf{s}}
\newcommand{\aaa}{\mathsf{a}}
\newcommand{\w}{\mathsf{w}}
\newcommand{\M}{\mathfrak{M}}
\newcommand{\sign}{\operatorname{sign}}
\newcommand{\mes}{\operatorname{mes}}
\newcommand{\supp}{\operatorname{supp}}
\newcommand{\lc}{\mathsf{L\!C}}
\renewcommand{\le}{\leqslant}
\renewcommand{\ge}{\geqslant}
\newcommand{\lp}{\left(}
\newcommand{\rp}{\right)}
\newcommand{\al}{\alpha}
\newcommand{\ga}{\gamma}
\newcommand{\si}{\sigma}
\newcommand{\la}{\lambda}
\newcommand{\ka}{\kappa}
\newcommand{\de}{\delta}
\newcommand{\vpi}{\varphi}
\renewcommand{\P}{\operatorname{\mathsf{P}}} 
\newcommand{\E}{\operatorname{\mathsf{E}}}
\newcommand{\R}{\mathbb{R}}
\newcommand{\EE}{\mathcal{E}}
\newcommand{\I}{\mathcal{I}}
\newcommand{\B}{\mathcal{B}}
\newcommand{\D}{\overset{\mathrm{D}}=}
\newcommand{\tG}{\tilde{G}}
\newcommand{\tF}{\tilde{F}}
\newcommand{\tX}{\tilde{X}}
\newcommand{\tx}{\tilde{x}}
\newcommand{\ty}{\tilde{y}}
\newcommand{\tr}{\tilde{r}}
\newcommand{\trr}{\tilde{\r}}
\newcommand{\tmu}{\tilde{\mu}}
\newcommand{\tnu}{\tilde{\nu}}
\newcommand{\ts}{\tilde{\sss}}
\renewcommand{\d}{\mathrm{d}}
\newcommand{\as}{\stackrel{\mathrm{a.s.}}{=}}
\newcommand{\vp}{\varepsilon}
\newcommand{\iii}{\operatorname{I}}
\newcommand{\ii}[1]{\iii\{#1\}}
\begin{document}


\begin{abstract}
For any continuous zero-mean random variable (r.v.) $X$, a \emph{reciprocating} function $\r$ is constructed, based only on the distribution of $X$, such that the conditional distribution of $X$ given the  (at-most-)two-point set $\{X,\r(X)\}$ is the zero-mean distribution on this set;   
in fact, a more general construction without the continuity assumption is given in this paper, as well as  
a large variety of other related results, including  characterizations of the reciprocating function and modeling distribution asymmetry patterns.  
The mentioned disintegration of zero-mean r.v.'s implies, in particular, that an arbitrary zero-mean distribution is represented as the mixture of two-point zero-mean distributions; moreover, 
this mixture representation is most symmetric in a variety of senses. 
Somewhat similar representations -- of any probability distribution as the mixture of two-point distributions with the same skewness coefficient (but possibly with different means) -- go back to Kolmogorov; very recently, Aizenman \emph{et al.} further developed such representations and applied them to (anti-)concentration inequalities for functions of independent random variables and to spectral localization for random Schroedinger operators. 
One kind of application given in the present paper is to construct certain statistical 
tests for asymmetry patterns and for location without symmetry conditions. Exact inequalities implying conservative properties of such tests are presented. These developments extend results established earlier by Efron, Eaton, and Pinelis under a symmetry condition.

\end{abstract}

\title[Disintegration of random variables]{{\Large 
Optimal two-value zero-mean disintegration of 
zero-mean random variables}}
\date{\today; \emph{file}: \jobname.tex}
\author{Iosif Pinelis}
\address{ Department of Mathematical Sciences\\
Michigan Technological University\\
Hough\-ton, Michigan 49931 }
\email{ipinelis@mtu.edu}
\keywords{Disintegration of measures, Wasserstein metric, Kantorovich-Rubinstein theorem, transportation of measures, optimal matching, most symmetric, hypothesis testing, confidence regions, Student's $t$-test, asymmetry, exact inequalities, conservative properties}
\subjclass[2000]{Primary: 28A50, 60E05, 60E15, 62G10, 62G15, 62F03, 62F25. Secondary: 49K30, 49K45, 49N15, 60G50, 62G35, 62G09, 90C08, 90C46}

\vspace*{-.9cm}

\maketitle

\vspace*{-.9cm}

\tableofcontents



\makeatletter
\makeatother

\section{Introduction} \label{sec:intro}


Efron \cite{efron} considered the so-called self-normalized sum
\begin{equation}\label{eq:S}
S:=\frac{X_1+\dots+X_n}{\sqrt{X_1^2+\dots+X_n^2}},
\end{equation}
assuming that the $X_i$'s are any random variables (r.v.'s) satisfying the orthant symmetry condition:
the joint distribution of $\eta_1X_1,\dots,\eta_n X_n$ is the same for any choice of signs $\eta_1,\dots,\eta_n$ in the set $\{1,-1\}$, so that, in particular, each $X_i$ is symmetric(ally distributed). It suffices that the $X_i$'s be independent and symmetrically (but not necessarily identically) distributed. 
On the event $\{X_1=\dots=X_n=0\}$, $S:=0$.


Following Efron \cite{efron}, note that the conditional distribution of any symmetric r.v.\ $X$ given $|X|$ is the symmetric distribution on the (at-most-)two-point set $\{|X|,-|X|\}$.
Therefore, under the orthant symmetry condition, the distribution of $S$
is a mixture of the distributions of the normalized Khinchin-Rademacher sums 
$\vp_1a_1+\dots+\vp_na_n$, where the $\vp_i$'s are 
independent Rademacher r.v.'s, with $\P(\vp_i=1)=\P(\vp_i=-1)=\frac12$ for all $i$, which are also
independent of the $X_i$'s, and $a_i=X_i/(X_1^2+\dots+X_n^2)^{\frac12}$, so that $a_1^2+\dots+a_n^2=1$ (except on the event $\{X_1=\dots=X_n=0\}$, where $a_1=\dots=a_n=0$).

Here and in what follows, let $Z$ stand for a standard normal r.v. 
Let now $a_1,\dots,a_n$ be any real numbers such that 
$a_1^2+\dots+a_n^2=1.$ The sharp form, 
\begin{equation}\label{eq:khin}
\E f\lp\vp_1a_1+\dots+\vp_n a_n\rp\le\E f(Z),
\end{equation}
of Khinchin's inequality \cite{kh} for $f(x)\equiv |x|^p$ 
was proved by
Whittle (1960) \cite{whittle} for $p\ge3$ and Haagerup (1982) \cite{haag} for $p\ge2$. 
For $f(x)\equiv e^{\la x}$ ($\la\ge0$), inequality \eqref{eq:khin} follows from Hoeffding (1963) \cite{hoeff}, whence
\begin{equation}\label{eq:exp}
\P\lp\vp_1a_1+\dots+\vp_n a_n\ge x\rp\le 
\inf_{\la\ge0}\frac{\E e^{\la Z}}{e^{\la x}}
=e^{-x^2/2}\quad \forall x\ge0.
\end{equation}

As noted by Efron \cite{efron}, inequalities \eqref{eq:khin} and \eqref{eq:exp} together with the mentioned mixture representation imply
\begin{equation}\label{eq:khin-V}
\E e^{\la S}\le\E e^{\la Z}\quad\forall\la\ge0
\end{equation}
and
\begin{equation}\label{eq:exp-V}
\P\lp S\ge x\rp\le e^{-x^2/2}\quad \forall x\ge0.
\end{equation}
These results can be easily restated in terms of Student's statistic $T$, which is a monotonic function of $S$, as noted by Efron: $T=\sqrt{\frac{n-1}n}\,S/\sqrt{1-S^2/n}$.

Eaton (1970) \cite{eaton1} proved the Khinchin-Whittle-Haagerup inequality \eqref{eq:khin} for a rich class of moment functions, which essentially coincides with the class $\FF3$ of all convex functions $f$ with a convex second derivative $f''$; see \cite[Proposition~A.1]{pin94} and also \cite{asymm}.
Based on this extension of \eqref{eq:khin}, inequality \eqref{eq:exp} was improved in \cite{eaton1,eaton2,pin94}. In particular,
Pinelis (1994) \cite{pin94} obtained the following improvement of a conjecture by Eaton (1974) \cite{eaton2}: 
\begin{equation*}
\P\lp\vp_1a_1+\dots+\vp_n a_n\ge x\rp\le\tfrac{2e^3}9\,\P(Z\ge x)\quad
\forall x\in\R. 
\end{equation*} 

Thus, inequalities \eqref{eq:khin-V} and \eqref{eq:exp-V} can be improved as follows: 
\begin{equation}\label{eq:khin-V-pin94}
\E f(S)\le\E f(Z)\quad\forall f\in\FF3
\end{equation} 
and
\begin{equation}\label{eq:exp-V-pin94}
\P\lp S\ge x\rp\le\min\big(\tfrac{2e^3}9\,\P(Z\ge x),e^{-x^2/2}\big)\quad
\forall x\ge0.
\end{equation}
Multivariate extensions of these results, which can be expressed in terms of Hotel\-ling's statistic in place of Student's, were also obtained in \cite{pin94}. 

It was pointed out in \cite[Theorem~2.8]{pin94} that, since the normal tail decreases fast, inequality \eqref{eq:exp-V-pin94} implies that relevant quantiles of $S$ may exceed the corresponding standard normal quantiles only by a relatively small amount, so that one can use \eqref{eq:exp-V-pin94} rather efficiently to test symmetry even for non-i.i.d.\ observations. 

Here we shall present extensions of inequalities \eqref{eq:khin-V-pin94} and \eqref{eq:exp-V-pin94} to the case when the $X_i$'s are not symmetric. 
This paper is an improvement of preprint \cite{stud}: the results are now much more numerous and comprehensive, and also somewhat more general, while the proof of the basic result  (done here using a completely different method) is significantly shorter. A brief account of results of \cite{stud} (without proofs) was presented in \cite{asymm}. 


Our basic idea is to represent any zero-mean, possibly asymmetric,  distribution as an appropriate mixture of two-point zero-mean distributions.  
Let us assume at this point that a zero-mean r.v.\ $X$ has an everywhere continuous and strictly increasing distribution function (d.f.). Consider the truncated r.v.\ $\tilde X_{a,b}:=X\ii{a\le X\le b}$. 
\big(Here and in what follows  
$\ii{A}$ stands, as usual, for the indicator of a given assertion $A$, so that $\ii{A}=1$ if $A$ is true and $\ii{A}=0$ if $A$ is false.\big)
Then, for every fixed $a\in(-\infty,0]$, the function $b\mapsto\E \tilde X_{a,b}$ is continuous and increasing on the interval $[0,\infty)$ from $\E \tilde X_{a,0}\le0$ to 
$\E \tilde X_{a,\infty}>0$. Hence, for each $a\in(-\infty,0]$, there exists a unique value $b\in[0,\infty)$ such that $\E \tilde X_{a,b}=0$. Similarly, for each $b\in[0,\infty)$, there exists a unique value $a\in(-\infty,0]$ such that 
$\E \tilde X_{a,b}=0$. That is, one has a one-to-one correspondence between $a\in(-\infty,0]$ and $b\in[0,\infty)$ such that $\E \tilde X_{a,b}=0$. Denote by 
$\r=\r_X$ the {\em reciprocating} function defined on $\R$ and carrying this correspondence, so that 
$$\E X\ii{\text{$X$ is between $x$ and $\r(x)$} }=0\quad\forall x\in\R;$$
the function $\r$ is decreasing on $\R$ and such that $\r(\r(x))=x$ $\forall x\in\R$; moreover, $\r(0)=0$. 
(Clearly, $\r(x)=-x$ for all real $x$ if the r.v.\ $X$ is also  symmetric.)
Thus, the set 
$\big\{\,\{x,\r(x)\}\colon x\in\R\,\big\}$ of two-point sets constitutes a partition of $\R$. 
One can see that the conditional distribution of the zero-mean r.v.\ $X$ given the random two-point set $\{X,\r(X)\}$ %
is the uniquely determined zero-mean distribution on the set $\{X,\r(X)\}$.

It follows that the distribution of the zero-mean r.v.\ $X$ with a continuous strictly increasing d.f.\ is represented as a mixture of two-point zero-mean distributions. 
A somewhat similar representation -- of any probability distribution as the mixture of two-point distributions with the same skewness coefficient $\frac{q-p}{\sqrt{pq}}$ (but possibly with different means) -- goes back to Kolmogorov; very recently Aizenman \emph{et al.} \cite{aizen} further developed this representation and applied it to (anti-)concentration inequalities for functions of independent random variables and to spectral localization for random Schroedinger operators.

In accordance with their purposes, instead of r.v.'s $\tilde X_{a,b}=X\ii{a\le X\le b}$ Aizenman \emph{et al.} \cite{aizen} (who refer to $a$ and $b$ as markers) essentially deal with r.v.'s (i)  $\ii{X\le a}-\ii{X>b}$ (in a case of markers moving in opposite directions) and with (ii) $\ii{X\le a}-\ii{q_{1-p}<X\le b}$ (in a case of markers moving in the same direction, where $q_{1-p}$ is a $(1-p)$-quantile of the distribution of $X$). 
The construction described above in terms of $\tilde X_{a,b}=X\ii{a\le X\le b}$ corresponds, clearly, to the case of opposite-moving markers. 

While an analogous same-direction zero-mean disintegration is possible, we shall not deal with it in this paper. 
For a zero-mean distribution, the advantage of an opposite-directions construction is that the resulting two-point zero-mean distributions are less asymmetric than those obtained by using a same-direction method (in fact, we shall show that our opposite-directions disintegration is most symmetric, in a variety of senses). On the other hand, the same-direction method will produce two-point zero-mean distributions that are more similar to one another in width. 
Thus, in our main applications -- to self-normalized sums, the advantages of opposite-directions appear to be more important, since the distribution of a self-normalized sum is much more sensitive to the asymmetry than to the inhomogeneity of the constituent two-point distributions in width; this appears to matter more in the setting of Corollary~\ref{cor:stud-asymm} than in the one of Corollary~\ref{cor:student-normal}. 


These mixture representations of a distribution are similar to the  representations of the points of a convex compact set as mixtures of the extreme points of the set; 
the existence of such representations is provided by the celebrated Krein-Milman-Choquet-Bishop-de Leeuw 
(KMCBdL) theorem;  
concerning ``non-compact'' versions of this theorem see  e.g.\ \cite{oates}. 
In our case, the convex set would be the set of all zero-mean distributions on $\R$.  
However, in contrast with the KMCBdL-type pure-existence theorems, the representations given in \cite{stud}, \cite{aizen}, and this paper are constructive, specific, and, as shown here, optimal, in a variety of senses.

Moreover, in a certain sense \cite{stud} and this paper provide disintegration of r.v.'s rather than that of their distributions, as the two-point set $\{x,r(x)\}$ is a function of the observed value $x$ of the r.v.\ $X$.     
This makes it convenient to construct statistical 
tests for asymmetry patterns and for location without symmetry conditions. Exact inequalities implying conservative properties of such tests will be given in this paper. These developments extend the mentioned results established earlier by Efron, Eaton, and Pinelis under the orthant symmetry condition. 

More specifically, one can construct generalized versions of the self-normalized sum \eqref{eq:S}, which require -- instead of the symmetry of independent r.v.'s $X_i$ -- only that the $X_i$'s be zero-mean:
\begin{equation*} 
S_W:=\frac{X_1+\dots+X_n}{\frac12\sqrt{W_1^2+\dots+W_n^2}}\quad\text{and}\quad
S_{Y,\la}:=\frac{X_1+\dots+X_n}{(Y_1^\la+\dots+Y_n^\la)^{\frac1{2\la}}},
\end{equation*}
where $\la>0$,
$W_i:=|X_i-\r_i(X_i)|$ and $Y_i:=|X_i\,\r_i(X_i)|$, 
and the reciprocating function $\r_i:=\r_{X_i}$ is constructed as above, based on the distribution of $X_i$, for each $i$, so that the $\r_i$'s may be different from one another if the $X_i$'s are not identically distributed.
Note that $S_W=S_{Y,1}=S$ (recall here \eqref{eq:S}) when the $X_i$'s are symmetric.
Logan \emph{et al} \cite{logan} and Shao \cite{shao} obtained limit theorems for the ``symmetric'' version of $S_{Y,\la}$ (with $X_i^2$ in place of $Y_i$), whereas the $X_i$'s were not assumed to be symmetric. 

Corollaries~\ref{cor:student-normal} and \ref{cor:stud-asymm} in Subsection~\ref{subsec:several} of this paper suggest that statistical tests based on the ``corrected for asymmetry'' statistics $S_W$ and $S_Y$ have desirable conservativeness and similarity properties, which could result in greater power; further studies are needed here. \big(Recall that a test is referred to as (approximately) similar if the type I error probabilities are (approximately) the same for all distributions corresponding to the null hypothesis.
\big)

Actually, in this paper we provide two-point zero-mean disintegration of any zero-mean r.v.\ $X$, with a d.f.\ not necessarily continuous or strictly increasing. Toward that end, randomization \big(by means of a r.v.\ uniformly distributed in interval $(0,1)$\big) is used to deal with the atoms of the distribution of r.v.\ $X$, and generalized inverse functions to deal with the intervals on which the d.f.\ of $X$ is constant.  

Note that the reciprocating function $\r$ depends on the usually unknown in statistics distribution of the underlying r.v.\ $X$. However, if e.g.\ the $X_i$'s constitute an i.i.d.\ sample, then the function $G$ defined in the next section by \eqref{eq:G(x)} can be estimated based on the sample, so that one can estimate the reciprocating function $\r$. Thus, replacing $X_1+\dots+X_n$ in the numerators of $S_W$ and $S_{Y,\la}$ by $X_1+\dots+X_n-n\theta$,
one obtains approximate pivots to be used to construct confidence intervals or, equivalently, tests for an unknown mean $\theta$. 
One can also use bootstrap to estimate the distributions of such approximate pivots.

\section{Statements of main 
results on disintegration} \label{sec:results}
\subsection{Two-value zero-mean disintegration of one zero-mean r.v.} \label{subsec:one}
Let $\nu$ be any (nonnegative finite) measure defined on $\B(\R)$, where $\B(E)$ stands for the set of all Borel subsets of a given set $E$. 
Sometimes it will be convenient to consider such a measure $\nu$ extended to $\B([-\infty,\infty])$ so that, naturally, $\nu(\{-\infty\})=\nu(\{\infty\})=0$. 
Consider the function $G=G_\nu$ with values in $[0,\infty]$ defined by the formula 
\begin{equation}\label{eq:G(x)}
	G(x):=G_\nu(x):=
\begin{cases}
\int_{(0,x]}z\,\nu(\d z) & \text{ if }z\in[0,\infty], \\
\int_{[x,0)}(-z)\,\nu(\d z) & \text{ if }z\in[-\infty,0]. 
\end{cases}
\end{equation}
Note that 
\begin{equation}\label{eq:G properties}
\begin{aligned}
	G(0)=0;\ & \text{$G$ is non-decreasing on $[0,\infty]$ and right-continuous on $[0,\infty)$;} \\ 
	\text{and } & \text{$G$ is non-increasing on $[-\infty,0]$ and left-continuous on $(-\infty,0]$;  }
	\end{aligned}
\end{equation}
in particular, $G$ is continuous at $0$.

Define next the positive and negative generalized inverses $x_+$ and $x_-$ of the function $G$:
\begin{align}
x_+(h) & :=x_{+,\nu}(h):= \inf\{x\in[0,\infty]\colon G_\nu(x)\ge h\}, \label{eq:x+} \\
x_-(h)& :=x_{-,\nu}(h):= \sup\{x\in[-\infty,0]\colon G_\nu(x)\ge h\}, \label{eq:x-}
\end{align}
for any $h\in[-\infty,\infty]$; here, as usual, $\inf\emptyset:=\infty$ and $\sup\emptyset:=-\infty$.

Introduce also a ``randomized'' version of $G$: 
\begin{equation} \label{eq:H}
\tG(x,u):=\tG_\nu(x,u):=
\begin{cases}
G_\nu(x-)+(G_\nu(x)-G_\nu(x-))\,u & \text{ if }x\in[0,\infty], \\
G_\nu(x+)+(G_\nu(x)-G_\nu(x+))\,u & \text{ if }x\in[-\infty,0] 
\end{cases}
\end{equation}
and what we shall refer to as the \emph{reciprocating} function $\r=\r_\nu$ for the measure $\nu$: 
\begin{equation} \label{eq:r}
\r(x,u):=\r_\nu(x,u):=
\begin{cases}
x_{-,\nu}(\tG_\nu(x,u)) & \text{ if }x\in[0,\infty], \\
x_{+,\nu}(\tG_\nu(x,u)) & \text{ if }x\in[-\infty,0], \\
\end{cases}
\end{equation}
for all $u\in[0,1]$. 

\begin{remark}\label{rem:cont} 
\begin{enumerate}[(i)]
	\item \label{Borel}
	The function $\tG$ is Borel(-measurable), since each of the functions $G$, $G(\cdot\,+)$, $G(\cdot\,-)$ is monotonic on $[0,\infty)$ and $(-\infty,0]$ and hence Borel. Therefore and  
by property \eqref{x+-non-decr} of Proposition~\ref{lem:left-cont}, stated in the next section, the reciprocating function $\r$ is Borel, too.
		\item \label{r for cont}
Also, $\tG(x,u)$ and hence $\r(x,u)$ depend on $u$ for a given value of $x$ only if $\nu(\{x\})\ne0$.
Therefore, let us write simply $\r(x)$ in place of $\r(x,u)$ in the case when the measure $\nu$ is non-atomic.
\end{enumerate}
\end{remark}

If $\nu$ is the measure $\mu=\mu_X$ that is the distribution of a  r.v.\ $X$, then we may use subscript ${}_X$ with $G$, $\tG$, $\r$, $x_\pm$ in place of subscript ${}_\mu$ (or no subscript at all).  

In what follows, $X$ will by default denote an arbitrary   \emph{zero-mean} real-valued r.v., which will be usually thought of as fixed.
Then, for $G=G_X$,  
\begin{equation} \label{eq:m} 
G(\infty) = G(-\infty) =G(\infty-) = G\big((-\infty)+\big) =\tfrac12 \E |X|=: m<\infty. 
\end{equation}

Let $U$ 
stand for any r.v.\ which is independent of $X$ and uniformly distributed on the unit interval $[0,1]$.

For any $a$ and $b$ in $\R$ such that $ab\le0$, let $X_{a,b}$ denote any \emph{zero-mean} r.v.\ 
with values in 
the two-point set $\{a,b\}$; note that such a r.v.\ $X_{a,b}$ exists and, moreover, its distribution is uniquely determined: 
\begin{equation}\label{eq:Xab}
\P(X_{a,b}=a)=\tfrac b{b-a}\quad\text{and}\quad
\P(X_{a,b}=b)=\tfrac a{a-b}	
\end{equation}
if $a\ne b$, and $X_{a,b}=0$ almost surely (a.s.) if $a=b(=0)$; then in fact $X_{a,b}=0$ a.s.\ whenever $ab=0$. 
Along with the r.v.\ $X_{a,b}$, consider 
\begin{equation}\label{eq:r_ab}
	R_{a,b}:=\r_{a,b}(X_{a,b},U)
\end{equation}
provided that $U$ does not depend on $X_{a,b}$, where $\r_{a,b}:=\r_{X_{a,b}}$, the reciprocal function for $X_{a,b}$. Note that, 
if $ab=0$, then $R_{a,b}=0=X_{a,b}$ a.s. 
If $ab<0$, then $R_{a,b}=b$ a.s.\ on the event $\{X_{a,b}=a\}$, and $R_{a,b}=a$ 
a.s.\ on the event $\{X_{a,b}=b\}$, so that the random set $\{X_{a,b},R_{a,b}\}$ coincides a.s.\ with the nonrandom set $\{a,b\}$. However, $R_{a,b}$ equals in distribution to $X_{a,b}$ only if $a+b=0$, that is, only if $X_{a,b}$ is symmetric; moreover, in contrast with $X_{a,b}$, the r.v.\ $R_{a,b}$ is zero-mean only if $a+b=0$. 
Clearly, $(X_{a,b},R_{a,b})\D(X_{b,a},R_{b,a})$ whenever $ab\le0$. 

We shall prove that the conditional distribution of $X$ given the two-point random set $\{X,\r(X,U)\}$ is the zero-mean distribution on this set:
\begin{equation}\label{eq:cond}
	\big(X\,\big|\,\{X,\r(X,U)\}
	=\{a,b\}\big)\D X_{a,b}.
\end{equation}
In fact, we shall prove a more general result: 
that the conditional distribution of the ordered pair $\big(X,\r(X,U)\big)$ given that 
$\{X,\r(X,U)\}=\{a,b\}$ is the distribution of the ordered pair $\big(X_{a,b},R_{a,b}\big)$:
\begin{equation}\label{eq:cond-pair}
	\Big(\big(X,\r(X,U)\big)\,\Big|\,\{X,\r(X,U)\}
	=\{a,b\}\Big)\D \big(X_{a,b},R_{a,b}\big).
\end{equation}
Formally, this basic result of the paper is expressed as

\begin{theorem}\label{th:main}
Let $g\colon\R^2\to\R$ be any Borel function bounded from below (or from above). Then
\begin{equation}\label{eq:main}
	\E g\big(X,\r(X,U))
	=\int_{\R\times[0,1]} 
	\E g\big(X_{x,\r(x,u)},R_{x,\r(x,u)}\big)\,\P(X\in \d x)\,\d u.
\end{equation}
Instead of the condition that $g$ be bounded from below or above, it is enough to require only that $g(x,r)-cx$ be so for some real constant $c$ over all real $x,r$. 
\end{theorem}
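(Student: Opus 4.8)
The plan is to reduce the identity \eqref{eq:main} to a fiberwise statement indexed by the value of the random two-point set, and then verify that statement directly from the explicit formulas \eqref{eq:x+}--\eqref{eq:r}. First I would observe that it suffices to treat the case where $g$ itself is bounded from below: the ``$g(x,r)-cx$ bounded'' reduction follows by writing $g(x,r)=(g(x,r)-cx)+cx$, applying the bounded-below case to $g(x,r)-cx$, and handling the linear term $cx$ separately --- for the linear term both sides of \eqref{eq:main} equal $c\,\E X=0$ by the zero-mean property of $X$ and of each $X_{a,b}$ (since $X_{x,\r(x,u)}$ is by definition a zero-mean r.v.\ on $\{x,\r(x,u)\}$). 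The case ``bounded from above'' is symmetric (replace $g$ by $-g$). By monotone convergence it then suffices to prove \eqref{eq:main} for bounded Borel $g$, and by a standard $\pi$--$\lambda$/functional-monotone-class argument it is enough to check it for $g$ of product-indicator form, or even just to establish the equality of the two induced measures on $\R^2$.

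Next I would make the fiber structure explicit. The map $x\mapsto\{x,\r(x,u)\}$ (with the appropriate $u$-dependence only on atoms, per Remark~\ref{rem:cont}\eqref{r for cont}) partitions $\R$ into two-point sets, and the key computational claim is that for $\P(X\in\d x)\,\d u$-almost every $(x,u)$, the \emph{unordered} pair $\{x,\r(x,u)\}$ equals some $\{a,b\}$ with $ab\le 0$, and that conditioning $X$ on landing in this fiber yields exactly the two-point zero-mean law \eqref{eq:Xab}. Concretely: for $a<0<b$ in a given fiber, I must show that the $\mu_X$-mass that $X$ assigns to ``the $b$-side of the fiber'' versus ``the $a$-side'' is in the ratio $\tfrac{-a}{b}:1=\P(X_{a,b}=b):\P(X_{a,b}=a)$ --- equivalently, using \eqref{eq:G(x)}, that $G_X(b)=G_X(a)$ along the fiber, i.e.\ $\int_{(0,b]}z\,\mu_X(\d z)=\int_{[a,0)}(-z)\,\mu_X(\d z)$, which is precisely the zero-mean (``$\E X\,\ii{X\text{ between }x\text{ and }\r(x)}=0$'') property built into the definition of $\r$ via the generalized inverses $x_\pm$ of $G_X$. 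The randomized function $\tG_X(x,u)$ in \eqref{eq:H} is engineered so that, as $x$ ranges over an atom of mass $\mu_X(\{x_0\})$ and $u$ over $[0,1]$, the value $\tG_X(x_0,u)$ sweeps the interval $[G_X(x_0-),G_X(x_0)]$ linearly; this is exactly what splits a single atom on the positive side correctly among its (possibly several) reciprocal partners on the negative side, and vice versa. I would verify this ``mass-balance along fibers'' by a change-of-variables / layer-cake computation: push the measure $\P(X\in\d x)\,\d u$ forward under $(x,u)\mapsto\tG_X(x,u)$ and check, using property \eqref{x+-non-decr} of Proposition~\ref{lem:left-cont} (monotonicity of $x_\pm$) together with the right/left-continuity in \eqref{eq:G properties}, that the pushforward on the positive side and on the negative side both equal Lebesgue measure on $[0,m]$. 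Once these two one-dimensional distributions are identified as the \emph{same} measure on the index space $[0,m]$, the r.v.'s $X\,\ii{X>0}$ and $-X\,\ii{X<0}$ become coupled so that $X$ and $\r(X,U)$ sit on opposite sides of a common index $h\in[0,m]$ with $x=x_+(h)$, $\r=x_-(h)$ (or vice versa), and the identity \eqref{eq:main} becomes the assertion $\E g\big(x_+(h),x_-(h)\big)=\E g\big(X_{a,b},R_{a,b}\big)$ with $(a,b)=(x_-(h),x_+(h))$ --- but the right side is just the expectation of $g$ against the two-point law \eqref{eq:Xab} supported on that same fiber, so both sides integrate the same function against the same measure $\d h$ on $[0,m]$.

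I expect the main obstacle to be the careful bookkeeping at the atoms and on the flat stretches of $G_X$ simultaneously: when $\mu_X$ has an atom at some $x_0>0$ whose reciprocal ``partner'' on the negative axis is itself an atom (or a point where $G_X$ is flat), one must check that the linear interpolation in $\tG$ on the $x_0$-side and the generalized inverse $x_-$ on the other side combine to split masses in exactly matching proportions, with no leftover or double-counted mass; this is where Proposition~\ref{lem:left-cont} (and the precise one-sided continuity conventions in \eqref{eq:G properties} and \eqref{eq:H}) does the real work, and where a sign error or an off-by-a-boundary-point error would be easy to make. A clean way to organize this is to prove once and for all the auxiliary fact that the pair $\big(X\vee 0,\ (-X)\vee 0\big)$ of positive/negative parts, after the $U$-randomization, has the property that $\tG_X(X\cdot\ii{X>0},U)$ and $\tG_X(X\cdot\ii{X<0},U)$ are \emph{equal} and uniformly distributed on $[0,m]$ --- i.e.\ the randomized $G$ transports both one-sided pieces of $\mu_X$ to Lebesgue measure on $[0,m]$ --- after which \eqref{eq:main} is essentially a reformulation of this transport fact together with the defining relation $x=x_\pm(\tG_X(x,u))$ (valid for a.e.\ $(x,u)$). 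The remaining degenerate cases ($a b=0$, where $X_{a,b}=R_{a,b}=0$ a.s.) correspond to the single fiber $\{0\}$ and contribute only through the point mass $\mu_X(\{0\})$, which is handled separately and trivially since $\r(0,u)=0$ and $g(0,0)$ appears on both sides with the same weight.
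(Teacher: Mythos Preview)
Your overall strategy --- parametrize fibers by a ``height'' $h=\tG_X(x,u)\in[0,m]$, transport both sides of \eqref{eq:main} to integrals over $[0,m]$, and compare --- can be made to work, but as written it contains a concrete error that breaks the argument. You assert that the pushforward of $\P(X\in\d x)\,\d u$ under $(x,u)\mapsto\tG_X(x,u)$, restricted to the positive (respectively negative) side, is Lebesgue measure on $[0,m]$. This is false: the total mass of that pushforward from $\{x>0\}$ is $\P(X>0)$, which is generally not $m=\tfrac12\E|X|$. What is true --- and what you actually need --- is Proposition~\ref{lem:H}: the pushforward of the \emph{weighted} measure $x^+\,\mu_X(\d x)\,\d u$ (respectively $(-x^-)\,\mu_X(\d x)\,\d u$) under $\tG_X$ is Lebesgue on $[0,m]$; equivalently (Proposition~\ref{prop:Y}), $\tG(Y_\pm,U)$ is uniform on $[0,m]$. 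Your alternative formulation, that $\tG_X(X\ii{X>0},U)$ and $\tG_X(X\ii{X<0},U)$ are ``equal and uniformly distributed on $[0,m]$'', is likewise incorrect: on $\{X>0\}$ the second r.v.\ is $\tG(0,U)=0$, so they are neither equal nor uniform. Once the $|x|$ weight is inserted, and once you invoke the nontrivial fact $\hat x(X,U)=X$ a.s.\ (Proposition~\ref{lem:regul}, which you only gesture at via ``$x=x_\pm(\tG_X(x,u))$ a.e.'' and which itself rests on Propositions~\ref{prop:cont} and \ref{lem:hat}), the argument does close: both sides of \eqref{eq:main} become
\[
\int_0^m\Big[\frac{g\big(x_+(h),x_-(h)\big)}{x_+(h)}+\frac{g\big(x_-(h),x_+(h)\big)}{-x_-(h)}\Big]\,\d h\;+\;g(0,0)\,\P(X=0),
\]
and equality is immediate from \eqref{eq:Xab}.

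The paper's proof takes a genuinely different route. After reducing to bounded $g$, it observes that the difference of the two sides of \eqref{eq:main} equals $\E X\,\psi\big(X,\r(X,U)\big)$ with the \emph{symmetric} function $\psi(x,r)=\frac{g(x,r)-g(r,x)}{x-r}\,\ii{xr\le0,\;x\ne r}$, so that the whole theorem reduces to Proposition~\ref{prop:cond-mean0}: $\E X\,\psi(X,\r(X,U))=0$ whenever $\psi$ is symmetric and $x\psi(x,r)$ is bounded. That conditional-mean-zero identity is in turn reduced, by a layer-cake over symmetric indicators, to the explicit identity $e_1(a,b)+e_2(a,b)=0$ of Lemma~\ref{lem:e}, whose proof again rests on Proposition~\ref{lem:H}. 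So both routes ultimately feed from the same transport fact $H_\pm(h)=h$; your (corrected) route gives a cleaner direct ``change of variables to $h$'' picture (essentially the two-variable extension of the proof of Proposition~\ref{prop:Eg(X) x/r}), while the paper's route isolates a single reusable statement --- Proposition~\ref{prop:cond-mean0} --- from which \eqref{eq:main} follows in two lines and which immediately yields the other identities in Subsection~\ref{vars}.
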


The proofs (whenever necessary) are deferred to Section~\ref{sec:proofs}. 

As one can see, 
Theorem~\ref{th:main} provides a complete description of the distribution of the ordered random pair $\big(X,\r(X,U)\big)$ -- as a mixture of two-point distributions on $\R^2$; each of these two-point distributions is supported by a two-point subset of $\R^2$ of the form $\{(a,b),(b,a)\}$ with $ab\le0$, and at that the mean of the projection of this two-point distribution onto the first coordinate axis is zero. As special cases, Theorem~\ref{th:main} contains  descriptions of the individual distributions of the r.v.'s $X$ and $\r(X,U)$ as mixtures of two-point distributions on $\R$:
for any Borel function $g\colon\R\to\R$ bounded from below (or from above) one has 
\begin{align}
	\E g(X)=\int_{\R\times[0,1]} 
	\E g\big(X_{x,\r(x,u)}\big)\,\P(X\in \d x)\,\d u
	\label{eq:Eg(X)};\\
	\E g\big(\r(X,U)\big)=\int_{\R\times[0,1]} 
	\E g\big(R_{x,\r(x,u)}\big)\,\P(X\in \d x)\,\d u.
	\notag
\end{align}	
This 
is illustrated by 

\begin{example}\label{ex:discrete}
Let $X$ have the discrete distribution
$\frac5{10}\,\de_{-1}+\frac1{10}\,\de_0+\frac3{10}\,\de_1+\frac1{10}\,\de_2$ on the finite set $\{-1,0,1,2\}$, where $\de_a$ denotes the (Dirac) probability distribution on the singleton set $\{a\}$. Then $m=\frac5{10}$ and, for $x\in\R$, $u\in[0,1]$, and $h\in[0,m]$,
\begin{gather*}
G(x)=\tfrac5{10}\ii{x\le-1}+\tfrac3{10}\ii{1\le x<2}+\tfrac5{10}\ii{2\le x},\\
x_+(h)=\ii{0<h\le\tfrac3{10}}+2\ii{\tfrac3{10}<h},\quad
x_-(h)=-\ii{0<h},\\
\tG(-1,u)=\tfrac5{10}\,u,\quad
\tG(0,u)=0,\quad
\tG(1,u)=\tfrac3{10}\,u,\quad
\tG(2,u)=\tfrac3{10}+\tfrac2{10}\,u,\\
\r(-1,u)=\ii{u\le\tfrac35}+2\ii{u>\tfrac35},\ \r(0,u)=0,\ \r(1,u)=-1,\ \r(2,u)=-1.
\end{gather*}
Therefore, the distribution of the random set $\{X,\r(X,U)\}$ is $\frac6{10}\,\de_{\{-1,1\}}+\frac3{10}\,\de_{\{-1,2\}}+\frac1{10}\,\de_{\{0\}}$, and the conditional distributions of $X$ given $\{X,\r(X,U)\}=\{-1,1\}$, \break
$\{X,\r(X,U)\}=\{-1,2\}$, and $\{X,\r(X,U)\}=\{0\}$ are the zero-mean distributions $\frac12\,\de_{-1}+\frac12\,\de_1$, $\frac23\,\de_{-1}+\frac13\,\de_2$, and $\de_0$, respectively.
Thus, the zero-mean distribution of $X$ is represented as a mixture of these two-point zero-mean distributions:
$$
\tfrac5{10}\,\de_{-1}+\tfrac1{10}\,\de_0+\tfrac3{10}\,\de_1+\tfrac1{10}\,\de_2
=
\tfrac6{10}\,(\tfrac12\,\de_{-1}+\tfrac12\,\de_1)
+\tfrac3{10}\,(\tfrac23\,\de_{-1}+\tfrac13\,\de_2)
+\tfrac1{10}\,\de_0.
$$ 
\end{example}

\subsection{Two-value zero-mean disintegration of several independent zero-mean r.v.'s and applications to self-normalized sums} \label{subsec:several}
Suppose here that 
$X_1,\dots,X_n$ are independent zero-mean r.v.'s
and 
$U_1,\dots,U_n$ are independent r.v.'s uniformly distributed on $[0,1]$, which are also independent of $X_1,\dots,X_n$.
For each $j=1,\dots,n$, let 
$R_j:=\r_j(X_j,U_j)$, where 
$\r_j$ denotes the reciprocating function for 
r.v.\ $X_j$. 
For any real $a_1,b_1,\dots,a_n,b_n$ such that $a_jb_j\le0$ for all $j$, let 
$$X_{1;a_1,b_1},\dots,X_{n;a_n,b_n}$$
be independent r.v.'s such that, for each $j\in\{1,\dots,n\}$, 
the r.v.\ 
$X_{j;a_j,b_j}$ is zero-mean 
and takes on its values in the two-point set $\{a_j,b_j\}$. For all $j$, let 
$$R_{j;a_j,b_j}:=a_j\,b_j/X_{j;a_j,b_j}$$
if $a_jb_j<0$ and $R_{j;a_j,b_j}:=0$ if $a_jb_j=0$. 

\begin{theorem} \label{th:F}
Let $g\colon\R^{2n}\to\R$ be any Borel function bounded from below (or from above). 
Then identity \eqref{eq:main} can be generalized as follows:
\begin{equation*}
\E g(X_1,R_1,\dots,X_n,R_n)
=\int_{(\R\times[0,1])^n}
\E g(X_{1;p_1},R_{1;p_1},\dots,X_{n;p_n},R_{n;p_n})\;\d p_1\cdots\d p_n,
\end{equation*}
where $p_j$ and $\d p_j$ stand, respectively, for $x_j,\r_j(x_j,u_j)$  
and $\P(X_j\in\d x_j)\,\d u_j$. 
Instead of the condition that $g$ be bounded from below or above, it is enough to require only that $g(x_1,r_1,\dots,x_n,r_n)-c_1x_1-\dots-c_nx_n$ be so for some real constants $c_1,\dots,c_n$ over all real $x_1,r_1,\dots,x_n,r_n$. 
\end{theorem}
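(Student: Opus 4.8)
The plan is to derive Theorem~\ref{th:F} from Theorem~\ref{th:main} by induction on $n$, exploiting the independence of the pairs $(X_j,U_j)$ together with Tonelli's theorem. Before the induction I would reduce to the case $g\ge0$. If $g(x_1,r_1,\dots,x_n,r_n)-c_1x_1-\dots-c_nx_n$ is bounded below, write $g=\hat g+\sum_{j=1}^n c_jx_j$ with $\hat g:=g-\sum_j c_jx_j$ bounded below, so that $\hat g=(\hat g+C)-C$ with $\hat g+C\ge0$ for a suitable real constant $C$. Since both sides of the asserted identity depend linearly on $g$ (no $\infty-\infty$ occurs, everything in sight being bounded below), it suffices to prove the identity for the nonnegative Borel function $\hat g+C$, for a constant, and for each monomial $(x_1,r_1,\dots)\mapsto c_jx_j$. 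The constant case is trivial, and the monomial case holds because both sides vanish: the left side is $c_j\E X_j=0$, while on the right side $\E\big[c_j X_{j;p_j}\big]=0$ for every $p_j$ because $X_{j;p_j}$ is zero-mean, the requisite integrability being $\E|X_j|<\infty$ (which holds since $X_j$ is zero-mean, and in fact $\int\E|X_{j;p_j}|\,\d p_j=\E|X_j|$ by Theorem~\ref{th:main} applied to $x\mapsto|x|$).

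So assume $g\ge0$. The case $n=1$ is exactly Theorem~\ref{th:main}. For the step from $n-1$ to $n$, put $\mathcal F:=\sigma(X_1,U_1,\dots,X_{n-1},U_{n-1})$. By independence, the conditional distribution of $(X_n,U_n)$ given $\mathcal F$ is its unconditional one (a zero-mean $X_n$ and an independent $U_n$ uniform on $[0,1]$), and the fresh two-point zero-mean r.v.'s may be taken independent of $\mathcal F$; hence Theorem~\ref{th:main}, applied conditionally on $\mathcal F$ to the nonnegative Borel function $(x_n,r_n)\mapsto g(X_1,R_1,\dots,X_{n-1},R_{n-1},x_n,r_n)$, yields
\begin{equation*}
\E\big[g(X_1,R_1,\dots,X_n,R_n)\,\big|\,\mathcal F\big]
=\int_{\R\times[0,1]}\E\big[g(X_1,R_1,\dots,X_{n-1},R_{n-1},X_{n;p_n},R_{n;p_n})\,\big|\,\mathcal F\big]\,\P(X_n\in\d x_n)\,\d u_n,
\end{equation*}
the inner conditional expectation being over the fresh r.v.\ $X_{n;p_n}$ only. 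Taking expectations and applying Tonelli's theorem (legitimate since $g\ge0$) to push the expectation over $(X_1,U_1,\dots,X_{n-1},U_{n-1})$ through the $\d p_n$-integration and through the expectation over $X_{n;p_n}$, I am left, for each fixed $p_n$ and each fixed value of $X_{n;p_n}$, with an expression of the form $\E h(X_1,R_1,\dots,X_{n-1},R_{n-1})$ for a nonnegative Borel function $h$ on $\R^{2(n-1)}$. The inductive hypothesis rewrites this as an integral over $(\R\times[0,1])^{n-1}$, and a final application of Tonelli merges all the integrations into the asserted identity for $n$. The passage back from $g\ge0$ to the stated hypothesis is the reduction already carried out.

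The main obstacle is bookkeeping rather than any new analytic difficulty. One must be sure that conditioning on $\mathcal F$ genuinely leaves the hypotheses of Theorem~\ref{th:main} in force for the $n$-th coordinate — this is precisely where the independence of the $(X_j,U_j)$ is used, and where one also uses that the auxiliary two-point r.v.'s can be realized independent of $\mathcal F$ on an enlarged space — and one must check that each section $(x_n,u_n)\mapsto\E_{X_{n;p_n}}[\,\cdot\,]$ is jointly Borel (which follows from the Borel measurability of $\r_n$, Remark~\ref{rem:cont}\eqref{Borel}) so that the several invocations of Tonelli's theorem are valid; the latter are legitimate only because $g$ has been reduced to a nonnegative function.
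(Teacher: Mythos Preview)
Your argument is correct and follows essentially the same route as the paper: both proofs reduce the $n$-coordinate statement to repeated applications of Theorem~\ref{th:main}, one coordinate at a time, using independence together with Fubini/Tonelli. The only difference is presentational---you phrase the iteration via induction and conditional expectations, while the paper introduces auxiliary functions $g_j$ and integrals $\I_j$ and shows directly that $\I_{j+1}=\I_j$ for each $j$ by pulling out the $j$-th integration, applying \eqref{eq:main}, and pushing it back in.
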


For every natural $\al$, let 
$\H\al$ denote
the class 
of all functions $f\colon\R\to\R$ such that $f$ has finite derivatives $f^{(0)}:=f,f^{(1)}:=f',\dots,f^{(\al-1)}$ on $\R$, $f^{(\al-1)}$ is convex on $\R$, and $f^{(j)}(-\infty+)=0$ for $j=0,1,\dots,\al-1$. 

Applying Theorem~\ref{th:F} along with results of \cite{normal,asymm} to the mentioned asymmetry-corrected versions of self-normalized sums, one can obtain the following results.

\begin{corollary} \label{cor:student-normal}
Consider the self-normalized sum 
\begin{equation*} 
S_W:=\frac{X_1+\dots+X_n}{\frac12\sqrt{W_1^2+\dots+W_n^2}},
\end{equation*}
where $W_i:=|X_i-\r_i(X_i,U_i)|$; here, $\frac00:=0$. 
Then
\begin{align} 
\E f(S_W) &\le \E f(Z)\quad\forall f\in\H5\quad\text{and} \label{eq:stud-f(Z)} 
\\
\P(S_W\ge x)&\le c_{5,0}\P(Z\ge x)\quad\forall x\in\R, \label{eq:stud-P(Z>x)}
\end{align}
where $c_{5,0}=5!(e/5)^5=5.699\dots$ and, as before, $Z$ denotes a standard normal r.v. 
\end{corollary}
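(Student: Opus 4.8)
The plan is to reduce the two inequalities to known ``symmetric'' results for self-normalized sums via the disintegration identity of Theorem~\ref{th:F}. First I would observe that, conditionally on the two-point structure, $W_i=|X_i-\r_i(X_i,U_i)|$ becomes the width $|a_i-b_i|$ of the corresponding two-point zero-mean distribution, which is a \emph{constant} once we fix the markers. Thus, applying Theorem~\ref{th:F} with $g$ taken to be $f(S_W)$ viewed as a function of $(X_1,R_1,\dots,X_n,R_n)$ (note $S_W$ depends on these through $X_1+\dots+X_n$ and through $W_i=|X_i-R_i|$), the expectation $\E f(S_W)$ becomes a mixture over the markers $p_j=(x_j,\r_j(x_j,u_j))$ of quantities of the form $\E f\big((X_{1;p_1}+\dots+X_{n;p_n})/(\tfrac12\sqrt{(a_1-b_1)^2+\dots+(a_n-b_n)^2})\big)$, where each $X_{j;p_j}$ is a two-point zero-mean r.v. on $\{a_j,b_j\}$ with $a_jb_j\le0$.

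The key structural point is then: a two-point zero-mean r.v. $X_{a,b}$ on $\{a,b\}$ with $ab\le0$, when divided by half its width $\tfrac12|a-b|$, has the \emph{same distribution} as $\vp\cdot c$ for a suitable... no --- more carefully, $X_{a,b}/(\tfrac12|a-b|)$ takes the two values $2a/(b-a)$ and $2b/(b-a)$ (up to sign conventions), which are \emph{not} $\pm1$ unless $a+b=0$. So the correct comparison is not directly with a Rademacher sum. Instead, the right move is to invoke the extension of the Khinchin--Eaton--Pinelis machinery to \emph{asymmetric} two-point distributions developed in \cite{normal,asymm}: there one compares, for $f\in\H5$, the expectation $\E f(\sum_i \xi_i)$ over independent zero-mean $\xi_i$ with $\E f(Z)$ under a normalization controlling $\sum_i \E\xi_i^2$ or a related quantity. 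Here each conditional summand is $X_{j;p_j}$, a zero-mean r.v. with $\E X_{j;p_j}^2 = -a_jb_j = |a_jb_j| \le \tfrac14(a_j-b_j)^2 = \tfrac14 W_j^2|_{\text{markers}}$ by AM--GM, so the normalizing factor $\tfrac12\sqrt{\sum W_i^2}$ dominates $\sqrt{\sum \E X_{j;p_j}^2}$. Thus each mixture component is $\E f$ of a sum of independent zero-mean r.v.'s normalized by at least their root-sum-of-variances, and the cited results (the $\H5$-version of the Khinchin--Whittle--Haagerup inequality, together with the Eaton-type bound giving the constant $c_{5,0}=5!(e/5)^5$ for the tail) apply componentwise. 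Averaging over the mixture then yields \eqref{eq:stud-f(Z)} and \eqref{eq:stud-P(Z>x)}.

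The main obstacle, I expect, is handling the two-point \emph{asymmetric} summands correctly: one must verify that the relevant inequality from \cite{normal,asymm} really does apply to independent zero-mean (not symmetric) bounded summands under the ``half-width'' normalization, and in particular that $S_W$ is genuinely sub-Gaussian-dominated in the $\H5$ sense with the stated constant. A secondary technical point is the degenerate cases --- the convention $\tfrac00:=0$ when all $W_i=0$ (forced when every relevant $X_j=0$), and markers with $a_jb_j=0$ (where $X_{j;p_j}=0$ a.s.) --- which must be checked not to spoil the mixture identity; these follow from the corresponding conventions already built into Theorem~\ref{th:F} and the definition of $R_{j;a_j,b_j}$. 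Finally, one should confirm that $f(S_W)$, as a function of $(X_1,R_1,\dots,X_n,R_n)$, satisfies the boundedness hypothesis of Theorem~\ref{th:F} (either $f$ bounded below, or the linear-correction version), which is immediate for $f\in\H5$ since such $f$ are convex with $f(-\infty+)=0$, hence bounded below.
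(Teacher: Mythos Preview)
Your overall strategy --- apply Theorem~\ref{th:F} to write $\E f(S_W)$ as a mixture over markers, then bound each mixture component by $\E f(Z)$ using the two-point results of \cite{normal} --- is exactly what the paper does. The paper simply takes the sup of the mixture components and invokes \cite[Theorem~2.1]{normal} directly, then \cite[Corollary~2.2]{normal} for the tail bound.

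Where you diverge is the AM--GM/variance detour, and that step is both unnecessary and, as written, has a gap. The result you need from \cite{normal} is already stated for the \emph{half-width} normalization: for independent zero-mean $\xi_j$ with $\xi_j\in[a_j,b_j]$ one has $\E f(\sum_j\xi_j)\le\E f\big(\tfrac12\sqrt{\sum_j(b_j-a_j)^2}\,Z\big)$ for $f\in\H5$. Applying this (with $f_D(\cdot):=f(\cdot/D)\in\H5$ for $D=\tfrac12\sqrt{\sum_j W_j^2}$) gives each mixture component $\le\E f(Z)$ immediately; the inequality $|a_jb_j|\le\tfrac14(a_j-b_j)^2$ plays no role. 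By contrast, your phrasing ``normalized by at least their root-sum-of-variances, and the cited results apply'' would require that replacing the denominator $\sigma$ by a larger $D$ can only decrease $\E f(\sum\xi_j/D)$. That monotonicity is false in general for $f\in\H5$: writing $S'=(\sigma/D)S$ with $t=\sigma/D\in(0,1]$, one does \emph{not} have $\E f(tS)\le\E f(S)$, since $f$ is nondecreasing but $tS>S$ on $\{S<0\}$. So drop the variance comparison and cite the half-width inequality from \cite{normal} directly; then your argument coincides with the paper's.
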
  


\begin{corollary} \label{cor:stud-asymm}
Consider the self-normalized sum 
\begin{equation*} \label{eq:stud-y} 
S_{Y,\la}:=\frac{X_1+\dots+X_n}
{(Y_1^\la+\dots+Y_n^\la)^{\frac1{2\la}}},
\end{equation*}
where $Y_i:=|X_i\,\r_i(X_i,U_i)|$.
Suppose that for some $p\in(0,1)$ and all $i\in\{1,\dots,n\}$
\begin{equation}\label{eq:bounded-asymm}
\frac{X_i}{|\r_i(X_i,U_i)|}\ii{X_i>0} \le \frac{1-p}p\ \text{a.s.}	
\end{equation}
Then for all 
\begin{equation}\label{eq:la_*}
\la\ge \la_*(p):=
\begin{cases}
\dfrac{1 + p + 2\,p^2}
   {2{\big( {\sqrt{p - p^2}} + 
       2\,p^2 \big) }} \quad & \text{if}\quad 0<p\le\frac{1}{2}, \\
1 \quad & \text{if}\quad \frac{1}{2}\le p<1,     
\end{cases}
\end{equation}
one has
\begin{align*} 
\E f(V_{Y,\la}) &\le \E f(T_n)\quad\forall f\in\H3\quad\text{and} 
\\
\P(V_{Y,\la}\ge x)&\le c_{3,0}\P^\lc(T_n\ge x)\quad\forall x\in\R, 
\end{align*}
where $T_n:=(Z_1+\dots+Z_n)/n^{1/(2\la)}$; 
$Z_1,\dots,Z_n$ are independent r.v.'s each having the standardized Bernoulli distribution with parameter $p$; 
the function $x\mapsto\P^\lc(T_n\ge x)$ is the least log-concave majorant of the function $x\mapsto\P(T_n\ge x)$ on $\R$; $c_{3,0}=2e^3/9=4.4634\ldots$. 
The upper bound $c_{3,0}\P^\lc(T_n\ge x)$ can be replaced by somewhat better ones, in accordance with \cite[Theorem~2.3]{binom} or \cite[Corollary~4]{asymm}.
The lower bound $\la_*(p)$ on $\la$ given by \eqref{eq:la_*} is the best possible one, for each $p$.
\end{corollary}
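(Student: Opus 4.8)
The proof follows the pattern announced just before the corollary: disintegrate via Theorem~\ref{th:F} and then apply the extremal inequalities of \cite{normal,asymm}. Write $S:=S_{Y,\la}$, and note that $S=\Psi(X_1,R_1,\dots,X_n,R_n)$, where $R_j=\r_j(X_j,U_j)$ and
\[
\Psi(x_1,r_1,\dots,x_n,r_n):=\frac{x_1+\dots+x_n}{\big(|x_1r_1|^\la+\dots+|x_nr_n|^\la\big)^{1/(2\la)}}\qquad(\tfrac00:=0).
\]
Every $f\in\H3$ is nonnegative on $\R$: since $f''$ is convex with $f''(-\infty+)=0$, it is nondecreasing and $\ge0$, so $f$ is convex and nondecreasing with $f(-\infty+)=0$. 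Hence $g:=f\circ\Psi$ is a nonnegative Borel function and Theorem~\ref{th:F} gives
\begin{equation}\label{eq:cor25-disint}
\E f(S)=\int_{(\R\times[0,1])^n}\E f\Big(\frac{X_{1;p_1}+\dots+X_{n;p_n}}{\big(|X_{1;p_1}R_{1;p_1}|^\la+\dots+|X_{n;p_n}R_{n;p_n}|^\la\big)^{1/(2\la)}}\Big)\,\d p_1\cdots\d p_n.
\end{equation}
The decisive point is that for each fixed $(p_1,\dots,p_n)$ the denominator in \eqref{eq:cor25-disint} is \emph{nonrandom}: writing $p_j=(a_j,b_j)$, one has $R_{j;a_j,b_j}=a_jb_j/X_{j;a_j,b_j}$ by definition, so $|X_{j;p_j}R_{j;p_j}|=|a_jb_j|$ regardless of the value of $X_{j;a_j,b_j}$ in $\{a_j,b_j\}$, and moreover $|a_jb_j|=\Var X_{j;a_j,b_j}$ by \eqref{eq:Xab}.

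Consequently, for fixed $(p_1,\dots,p_n)$ the inner expectation in \eqref{eq:cor25-disint} equals $\E f(\eta_1+\dots+\eta_n)$, where $\eta_j:=X_{j;a_j,b_j}/D$ and $D:=\big(\sum_j|a_jb_j|^\la\big)^{1/(2\la)}$ is a constant; the $\eta_j$ are independent, zero-mean, at most two-valued, their variances $w_j:=|a_jb_j|/D^2$ satisfy $\sum_j w_j^\la=1$, and -- by hypothesis \eqref{eq:bounded-asymm}, transported to the fibers -- the ratio of the positive value of $\eta_j$ to the absolute value of its negative value is at most $(1-p)/p$ for $\d p_1\cdots\d p_n$-a.e.\ $(p_1,\dots,p_n)$. (This transport is routine: on a fiber $\{a_j,b_j\}$ with $a_j<0<b_j$ the r.v.\ $X_{j;a_j,b_j}$ equals $b_j$ with positive conditional probability, so a violation $b_j/|a_j|>(1-p)/p$ on a set of fibers of positive measure would contradict \eqref{eq:bounded-asymm}.) It therefore suffices to prove, for every $f\in\H3$,
\begin{equation}\label{eq:cor25-reduced}
\E f(\eta_1+\dots+\eta_n)\le\E f(T_n)
\end{equation}
uniformly over all such families $(\eta_j)$, and then integrate \eqref{eq:cor25-reduced} over the fibers against the probability measure $\d p_1\cdots\d p_n$ and use \eqref{eq:cor25-disint}. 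Here $T_n=(Z_1+\dots+Z_n)/n^{1/(2\la)}$ is itself a family of the stated kind, with all variances equal to $n^{-1/\la}$ and all asymmetry ratios at the extreme value $(1-p)/p$ (indeed $n^{-1/(2\la)}Z_j$ is zero-mean two-valued with variance $n^{-1/\la}$ and asymmetry ratio $(1-p)/p$, and $\sum_j(n^{-1/\la})^\la=1$), so \eqref{eq:cor25-reduced} says precisely that this balanced, maximally asymmetric configuration is extremal.

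Inequality \eqref{eq:cor25-reduced} is exactly the $\ell^\la$-normalized extremal bound for sums of independent zero-mean summands with asymmetry controlled at level $p$ and moment functions in $\H3$, and it is here that the results of \cite{normal,asymm} (and, for the sharpened tail bounds, of \cite{binom}) enter; I would organize it in two maximizations. First, for fixed variances $w_j$, maximize $\E f(\sum_j\eta_j)$ over the admissible asymmetry ratios: for $f\in\H3$ the maximum is attained when each ratio is pushed to the boundary $(1-p)/p$, i.e.\ $\eta_j=w_j^{1/2}Z_j$ with $Z_j$ i.i.d.\ standardized Bernoulli with parameter $p$ (a one-dimensional comparison among zero-mean two-point r.v.'s of a given variance). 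Second, maximize $\E f\big(\sum_j w_j^{1/2}Z_j\big)$ over $\{w_j>0:\sum_j w_j^\la=1\}$; the assertion -- and this is where $\la\ge\la_*(p)$ is needed -- is that the balanced vector $w_j\equiv n^{-1/\la}$, which reproduces $T_n$, is a maximizer. By a splitting/merging argument it is enough to take $n=2$, i.e.\ to show that on the curve $c_1^{2\la}+c_2^{2\la}=1$ the quantity $\E f(c_1Z_1+c_2Z_2)$ is maximized at $c_1=c_2=2^{-1/(2\la)}$; testing this against the extreme generators $x\mapsto(x-t)_+^3$ ($t\in\R$) and $x\mapsto e^{\ka x}$ ($\ka>0$) of $\H3$ reduces it to an elementary but delicate inequality in $p$, $\la$ and the test parameter, whose threshold is precisely $\la_*(p)$ of \eqref{eq:la_*} -- for $0<p\le\frac12$ this is the value at which the second variation at the balanced point changes sign (which accounts for the algebraic form of $\la_*(p)$), while for $p\ge\frac12$ the inequality already holds at $\la=1$. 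Running the same computation in reverse (a small perturbation off the balanced point, or letting one coefficient tend to $0$) produces, for every $\la<\la_*(p)$, an explicit $f\in\H3$ and configuration violating \eqref{eq:cor25-reduced}, which gives the optimality of $\la_*(p)$. I expect this second maximization, together with the matching lower bound, to be the main obstacle; the disintegration step is immediate from Theorem~\ref{th:F}.

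Finally, the tail bound $\P(S\ge x)\le c_{3,0}\,\P^{\lc}(T_n\ge x)$ is extracted from the moment comparison $\E f(S)\le\E f(T_n)$, $f\in\H3$, by the standard Eaton--Pinelis device: bounding $\ii{\,\cdot\,\ge x}$ from above by suitable members of $\H3$ of exponential type and passing to the least log-concave majorant of $x\mapsto\P(T_n\ge x)$ produces the universal factor $c_{3,0}=2e^3/9$, known to be optimal in this context (cf.\ \cite{pin94,binom,asymm}); replacing the crude $c_{3,0}$-step by \cite[Theorem~2.3]{binom} or \cite[Corollary~4]{asymm} yields the sharper variants mentioned in the statement.
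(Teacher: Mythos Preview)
Your approach is essentially the paper's: disintegrate via Theorem~\ref{th:F}, observe that on each fiber the denominator is the nonrandom $|a_jb_j|=\Var X_{j;a_j,b_j}$, and then invoke the extremal inequalities of \cite{asymm} (specifically Theorem~4 and Corollary~3 there) fiberwise. Your extended sketch of how those extremal inequalities might be proved, including the role of $\la_*(p)$ and its optimality, is not part of the present corollary's proof --- the paper simply cites \cite{asymm} for all of that, as you also do at the outset.

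The one substantive difference is in how the bounded-asymmetry hypothesis \eqref{eq:bounded-asymm} is transported to the fibers. You argue, correctly, that applying identity~\eqref{eq:main} to $g(x,r)=\ii{x>0,\,x/|r|>\gamma}$ shows that the ratio $b/|a|$ is $\le\gamma$ on $\d p_j$-a.e.\ fiber (your parenthetical ``routine'' argument is exactly this, since $\E g(X_{a,b},R_{a,b})=\tfrac{-a}{b-a}\,\ii{b/|a|>\gamma}>0$ whenever the ratio is bad), and an a.e.\ statement suffices for the subsequent integration. The paper instead proves the stronger \emph{pointwise} assertion $\r_i(y,u)/|y|\le\gamma$ for \emph{all} $y<0$ and $u\in[0,1]$, via an approximation argument using the left-continuity of $x\mapsto\r(x,0)=x_-\bigl(G(x-)\bigr)$ on $(0,\infty)$: one shows $x_y:=\r(y,1)$ satisfies $|\r(x_y,0)|\le|y|$, then approximates $x_y$ from below by points of the full-measure set where \eqref{eq:bounded-asymm} holds. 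Your route is slicker here (it reuses the disintegration theorem as a tool rather than unpacking the structure of $x_\pm$ and $G$); the paper's route is more elementary in its inputs and yields a statement matching exactly the standing hypothesis \cite[(20)]{asymm}, which is presumably why it was chosen.
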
 

The bounded-asymmetry condition \eqref{eq:bounded-asymm} is likely to hold when the $X_i$'s are bounded i.i.d.\ r.v.'s.
For instance, \eqref{eq:bounded-asymm} holds with $p=\frac13$ for r.v.\ $X$ in Example~\ref{ex:discrete} in place of $X_i$.


\section{Statements of related results, with discussion} 
\label{discuss}
We begin this section with a number of propositions, collected in Subsections~\ref{r,x+-}. These propositions describe general properties of the reciprocating function $\r$ and the associated functions $x_+$ and $x_-$, and thus play a dual role. On the one hand, these properties of $\r$ and $x_\pm$ may  be of independent interest, each to its own extent. On the other hand, they will be used in the proofs of the basic Theorem~\ref{th:main} and related results to be stated and discussed in Subsections~\ref{vars}--\ref{model}. 

In Subsection~\ref{vars}, a generalization and various specializations of the mentioned two-point zero-mean disintegration are presented; methods of proofs are discussed and numerous relations of these results between themselves and with the mentioned result by Aizenman \emph{et al.} \cite{aizen} are also given.  
In Subsection~\ref{opt}, which exploits some of the results of Subsection~\ref{vars}, the disintegration based on the reciprocating function is shown to be optimal 
-- most symmetric, but also most inhomogeneous in the widths. 
In Subsection~\ref{charact}, various characterizations of the reciprocating function $\r$ (as well as of the functions $x_\pm$) are given. These characterizations are perhaps the most difficult results in this paper to obtain. They are then used in Subsection~\ref{model} for modeling. 

In all these results, the case when $X=0$ a.s.\ is trivial. So, henceforth let us assume by default that 
$\P(X=0)<1$.	
Also, unless specified otherwise, $\mu$ will stand for the distribution $\mu_X$ of $X$. 

\subsection{General properties of the functions $x_\pm$ and $\r$} \label{r,x+-}

Let us begin this subsection by stating, for easy reference, some elementary properties of the functions $x_\pm$ defined by \eqref{eq:x+} and \eqref{eq:x-}.

\begin{proposition}\label{lem:left-cont}
Take any $h\in[0,m]$ and $x\in[-\infty,\infty]$. Then 
\begin{align}
	x\ge x_+(h)\quad &\iff\quad x\ge0\ \&\ G(x)\ge h;\label{eq:L1}\\
	x\le x_-(h)\quad &\iff\quad x\le0\ \&\ G(x)\ge h.\label{eq:L1-}
\end{align}
It follows that 
\begin{align}
G(x)<h\text{ for all }x\in[0,x_+(h));\label{eq:less+}\\
G(x_+(h)-)\le h\le G(x_+(h));\label{eq:bet+}\\
G(x)<h\text{ for all }x\in(x_-(h),0];\label{eq:less-}\\
G(x_-(h)+)\le h\le G(x_-(h)).\label{eq:bet-}
\end{align}
Moreover, for any $h_1$, $h_2$, and $x$ one has the following implications: 
\begin{align}
& \big(0\le h_1<h_2\ \&\ x_+(h_1)=x_+(h_2)=x\big)\implies\big(\mu(\{x\})>0\ \&\ x>0\big);\label{eq:nonat+}\\
& \big(0\le h_1<h_2\ \&\ x_-(h_1)=x_-(h_2)=x\big)\implies\big(\mu(\{x\})>0\ \&\ x<0\big).\label{eq:nonat-}
\end{align}
Furthermore, the functions $x_+$ and $-x_-$ are 
\begin{enumerate}[(i)]
\item\label{x+-non-decr}
non-decreasing on $[0,m]$;
\item\label{x+-finite}
finite on $[0,m)$;
\item\label{x+-pos}
strictly positive on $(0,m]$;
\item\label{x+-left-cont}
left-continuous on $(0,m]$.
\end{enumerate}
\end{proposition}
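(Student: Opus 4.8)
The plan is to verify the properties in Proposition~\ref{lem:left-cont} in the order stated, since each builds on the previous ones, and to reduce everything to the monotonicity and one-sided continuity of $G$ recorded in \eqref{eq:G properties}, together with elementary facts about infima and suprema. I would focus on the $x_+$ statements; the $x_-$ statements follow by the obvious symmetry $x\mapsto -x$, which swaps the roles of \eqref{eq:G(x)}'s two branches and turns ``non-decreasing, right-continuous on $[0,\infty)$'' into ``non-increasing, left-continuous on $(-\infty,0]$''.

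First I would prove the two characterizations \eqref{eq:L1} and \eqref{eq:L1-}, as everything else is a corollary. For \eqref{eq:L1}: if $x\ge x_+(h)$ then in particular $x_+(h)<\infty$, so the defining set $\{y\in[0,\infty]\colon G(y)\ge h\}$ is nonempty; since $G$ is non-decreasing on $[0,\infty]$ and right-continuous on $[0,\infty)$, this set is a closed (in $[0,\infty]$) up-set, hence of the form $[x_+(h),\infty]$, so $G(x)\ge h$ and $x\ge x_+(h)\ge0$ (using $h\ge0$ and $x_+(h)\in[0,\infty]$). Conversely, if $x\ge0$ and $G(x)\ge h$ then $x$ belongs to the defining set, so $x\ge\inf$ of it $=x_+(h)$. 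The only subtlety is the right-continuity point: I should note that when $h=0$ the set is all of $[0,\infty]$ and $x_+(0)=0$, so the statement still holds, and when the infimum is attained at some finite point it is attained because of right-continuity of $G$ there (if the infimum were $\infty$, i.e.\ $h>m=G(\infty)$, this case is excluded since $h\le m$; if $h=m$ then $x_+(m)$ could be $\infty$ or finite, and \eqref{eq:L1} still reads correctly). I would spell this out carefully since it is the one place where a sloppy argument could go wrong.

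From \eqref{eq:L1} the consequences \eqref{eq:less+}--\eqref{eq:bet+} are immediate: \eqref{eq:less+} is the contrapositive of \eqref{eq:L1} for $x\in[0,x_+(h))$; for \eqref{eq:bet+}, the right inequality $h\le G(x_+(h))$ is \eqref{eq:L1} applied to $x=x_+(h)$ (when $x_+(h)<\infty$; if $x_+(h)=\infty$ interpret $G(\infty)=m\ge h$), and the left inequality $G(x_+(h)-)\le h$ follows from \eqref{eq:less+} by letting $x\uparrow x_+(h)$. For \eqref{eq:nonat+}: if $0\le h_1<h_2$ and $x_+(h_1)=x_+(h_2)=x$, then by \eqref{eq:bet+} applied with $h_2$ we get $G(x-)\le h_2$, while \eqref{eq:L1} with $h_1$ gives $G(x)\ge h_1$; but more usefully, \eqref{eq:less+} with $h_2$ gives $G(y)<h_2$ for $y<x$ hence $G(x-)\le h_2$, and \eqref{eq:bet+} with $h_1$ gives $G(x)\ge h_1$... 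I need the strict jump: from \eqref{eq:bet+} with $h_1$, $G(x-)\le h_1<h_2\le G(x)$, so $G$ jumps at $x$, which by the definition \eqref{eq:G(x)} of $G$ forces $\mu(\{x\})>0$ and $x>0$ (the jump of $G$ at $x>0$ is exactly $x\,\mu(\{x\})$, and $G$ is continuous at $0$). Then the four enumerated items follow: \eqref{x+-non-decr} monotonicity of $x_+$ is immediate from \eqref{eq:L1} (larger $h$ has a smaller defining set); \eqref{x+-finite} finiteness on $[0,m)$ holds because $G(\infty)=m>h$ together with right-continuity yields some finite $x$ with $G(x)\ge h$; \eqref{x+-pos} positivity on $(0,m]$ follows since $G(0)=0<h$ forces $x_+(h)>0$ by \eqref{eq:less+}; \eqref{x+-left-cont} left-continuity on $(0,m]$ is the assertion that $\lim_{h\uparrow h_0}x_+(h)=x_+(h_0)$: the limit $\ell$ exists by monotonicity and $\ell\le x_+(h_0)$; if $\ell<x_+(h_0)$ pick $y\in(\ell,x_+(h_0))$, then $G(y)<h_0$ by \eqref{eq:less+} but $G(y)\ge h$ for all $h<h_0$ with $x_+(h)\le y$, i.e.\ $G(y)\ge\sup\{h<h_0\}=h_0$, a contradiction.

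The main obstacle I anticipate is not conceptual difficulty but bookkeeping at the endpoints and at $\infty$: handling $h=0$, $h=m$, the value $x_+(h)=\infty$, and the convention $\inf\emptyset=\infty$ consistently, and making sure the one-sided-continuity hypotheses in \eqref{eq:G properties} are invoked at exactly the right spots (right-continuity of $G$ on $[0,\infty)$ is what makes the sublevel description of the defining set work and makes $x_+$ left-continuous rather than merely lower semicontinuous). I would therefore state once at the outset that $G\restriction[0,\infty]$ is non-decreasing and right-continuous with $G(0)=0$, $G(\infty)=m$, reduce all $x_+$-claims to this, and then invoke the reflection $x\mapsto -x$ to transfer to $x_-$ without rewriting the arguments.
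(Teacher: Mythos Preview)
Your proposal is correct and follows essentially the same route as the paper: establish \eqref{eq:L1} from the monotonicity and right-continuity of $G$, read off \eqref{eq:less+}--\eqref{eq:bet+} as immediate consequences, deduce \eqref{eq:nonat+} from the jump $G(x)-G(x-)=x\,\mu(\{x\})$, and then verify (i)--(iv) in order, using the reflection $x\mapsto -x$ for the $x_-$ statements. One small slip to fix: in your argument for \eqref{x+-finite}, the relevant property is not right-continuity of $G$ but rather $G(\infty-)=m$ (recorded in \eqref{eq:m}), which is what actually guarantees a finite $x$ with $G(x)\ge h$ whenever $h<m$.
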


Consider the lexicographic order $\prec$ on $[0,\infty]\times[0,1]$ defined by the formula
\begin{equation}\label{eq:lex_def}
 (x_1,u_1)\prec(x_2,u_2)\iff\big(x_1<x_2\text{ or }(x_1=x_2\ \&\ u_1<u_2)\big)
\end{equation}
for all $(x_1,u_1)$ and $(x_2,u_2)$ in $[0,\infty]\times[0,1]$. Extend this order symmetrically to $[-\infty,0]\times[0,1]$ by the formula 
\begin{equation*}
 (x_1,u_1)\prec(x_2,u_2)\iff(-x_1,u_1)\prec(-x_2,u_2) 
\end{equation*}
for all $(x_1,u_1)$ and $(x_2,u_2)$ in $[-\infty,0]\times[0,1]$. 

\begin{proposition}\label{prop:lex}
The function $\tG$ is $\prec$-nondecreasing on $[0,\infty]\times[0,1]$: if $(x_1,u_1)$ and $(x_2,u_2)$ are in $[0,\infty]\times[0,1]$ and $(x_1,u_1)\prec(x_2,u_2)$, then $\tG(x_1,u_1)\le\tG(x_2,u_2)$. 
Similarly, $\tG$ is $\prec$-nondecreasing on
$[-\infty,0]\times[0,1]$.
\end{proposition}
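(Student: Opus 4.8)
The plan is to verify the two alternatives in the definition \eqref{eq:lex_def} of $\prec$ separately and, for the ``$x_1<x_2$'' alternative, to reduce the desired inequality to the extreme values $u_1=1$, $u_2=0$ of the randomization parameter. Throughout I use only the monotonicity and one-sided continuity of $G$ recorded in \eqref{eq:G properties}, together with the remark that, $\nu$ being a finite measure, $G(x)\le x\,\nu(\R)<\infty$ for every finite $x>0$ (and symmetrically for $x<0$), so that $G$ can take the value $+\infty$ only at $\pm\infty$.

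Consider first $[0,\infty]\times[0,1]$. If $x_1=x_2=:x$ and $u_1<u_2$, then the coefficient $G(x)-G(x-)$ in \eqref{eq:H} is nonnegative since $G$ is non-decreasing on $[0,\infty]$, so $u\mapsto\tG(x,u)$ is non-decreasing and $\tG(x,u_1)\le\tG(x,u_2)$. If instead $x_1<x_2$, then from \eqref{eq:H} and $0\le u_1,u_2\le1$ we have $\tG(x_1,u_1)\le\tG(x_1,1)=G(x_1)$ and $\tG(x_2,u_2)\ge\tG(x_2,0)=G(x_2-)$, so it suffices to prove $G(x_1)\le G(x_2-)$. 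Choosing any $y$ with $x_1<y<x_2$ (for instance $y=(x_1+x_2)/2$ if $x_2<\infty$ and $y=x_1+1$ if $x_2=\infty$), monotonicity of $G$ gives $G(x_1)\le G(y)$ and $G(y)\le G(x_2-)$, which is what we need. This proves the first assertion.

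The assertion on $[-\infty,0]\times[0,1]$ follows by reflection. Let $\check\nu$ be the image of $\nu$ under $z\mapsto-z$, again a finite nonnegative Borel measure. A change of variables in \eqref{eq:G(x)} gives $G_{\check\nu}(x)=G_\nu(-x)$ for all $x\in[-\infty,\infty]$, whence (using also $G_{\check\nu}((-x)-)=G_\nu(x+)$) formula \eqref{eq:H} yields $\tG_\nu(x,u)=\tG_{\check\nu}(-x,u)$ for all $x\in[-\infty,0]$ and $u\in[0,1]$. By the symmetric extension of $\prec$, the relation $(x_1,u_1)\prec(x_2,u_2)$ on $[-\infty,0]\times[0,1]$ is equivalent to $(-x_1,u_1)\prec(-x_2,u_2)$ on $[0,\infty]\times[0,1]$, so the second assertion is exactly the first one applied to $\check\nu$. (Alternatively, one repeats the two-case argument above, now using that $G$ is non-increasing and left-continuous on $(-\infty,0]$, with $\tG(x_1,1)=G(x_1)$, $\tG(x_2,0)=G(x_2+)$, and an intermediate point $y$ with $x_2<y<x_1$.)

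The proof has essentially no hard step; it is just bookkeeping around \eqref{eq:G properties}. The only mild points of care are the endpoints $x=\pm\infty$, where $G$ may be $+\infty$ but the displayed inequalities still hold (with both sides infinite), and the elementary verification that the intermediate value $y$ exists.
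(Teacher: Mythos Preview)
Your proof is correct and follows essentially the same approach as the paper's: treat the two lexicographic cases separately, using $\tG(x_1,u_1)\le G(x_1)\le G(x_2-)\le\tG(x_2,u_2)$ when $x_1<x_2$ and the affine monotonicity in $u$ when $x_1=x_2$. The paper writes $G(x_1)\le G(x_2-)$ directly (it follows at once from $G(x_2-)=\sup_{z<x_2}G(z)$ and monotonicity), so your intermediate point $y$ is a harmless elaboration; likewise your reflection argument via $\check\nu$ is a correct way to unpack the paper's one-word ``Similarly''.
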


\begin{proposition}\label{lem:H}
For all $h\in[0,m]$ (recall definition \eqref{eq:m}), one has
\begin{align}
	H_+(h)&:=\E X\ii{X>0,\ \tG(X,U)\le h}=h,\label{eq:H+}\\
	H_-(h)&:=\E(-X)\ii{X<0,\ \tG(X,U)\le h}=h.\label{eq:H-}
\end{align}
\end{proposition}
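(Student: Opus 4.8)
The plan is to prove the first identity \eqref{eq:H+} and then deduce \eqref{eq:H-} from it by symmetry. From \eqref{eq:G(x)} one gets $G_{-X}(x)=G_X(-x)$ for all real $x$, and then from \eqref{eq:H} that $\tG_{-X}(x,u)=\tG_X(-x,u)$ for $x\ge0$; applying these with $x=-X$ on the event $\{X<0\}$ shows that $H_-(h)$ computed for $X$ equals $H_+(h)$ computed for the zero-mean r.v.\ $-X$, so \eqref{eq:H-} for $X$ follows once \eqref{eq:H+} is proved for every zero-mean r.v. To prove \eqref{eq:H+}, fix $h\in[0,m]$ and, by Tonelli's theorem, write
\begin{equation*}
H_+(h)=\int_{(0,\infty)}\ga(x)\,\la_+(\d x),\qquad
\ga(x):=\int_0^1\ii{\tG(x,u)\le h}\,\d u,
\end{equation*}
where $\la_+$ is the measure on $(0,\infty)$ given by $\la_+(\d x):=x\,\P(X\in\d x)$, so that $\la_+\big((0,x]\big)=G(x)$, $\la_+\big((0,x)\big)=G(x-)$, and $\la_+\big((0,\infty)\big)=\E X^+=m$ (the last equality by \eqref{eq:m} and $\E X=0$). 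Since, by \eqref{eq:H}, $u\mapsto\tG(x,u)=G(x-)+\big(G(x)-G(x-)\big)u$ is affine and non-decreasing, $\ga(x)$ is explicit: $\ga(x)=1$ if $G(x)\le h$; $\ga(x)=0$ if $G(x-)>h$; and $\ga(x)=\tfrac{h-G(x-)}{G(x)-G(x-)}$ if $G(x-)\le h<G(x)$.

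Next I would localize the integral using Proposition~\ref{lem:left-cont}. Put $\xi:=x_+(h)$. By \eqref{eq:less+}, $G(x)<h$ for $0<x<\xi$, so $\ga\equiv1$ on $(0,\xi)$. By \eqref{eq:bet+} and the monotonicity of $G$, $G(x-)\ge G(\xi)\ge h$ for $x>\xi$, so for such $x$ one has $\ga(x)>0$ only if $G(x-)=G(x)=h$; and the set $\{x>\xi\colon G(x-)=G(x)=h\}$ is $\la_+$-null, because $G$ — equivalently $x\mapsto\la_+\big((0,x]\big)$ — is constant (equal to $h$) on the interval $\{x\ge0\colon G(x)=h\}$, so $\mu$ charges none of its interior, while its endpoints carry no $\la_+$-mass since the condition $G(x-)=G(x)$ rules out atoms there. (If $\xi=\infty$, which can happen only for $h=m$, then $G(x)<m=h$ for every finite $x$, so $\ga\equiv1$ on $(0,\infty)$ and $H_+(m)=\la_+\big((0,\infty)\big)=m$; so below assume $\xi<\infty$.) Hence
\begin{equation*}
H_+(h)=\la_+\big((0,\xi)\big)+\ga(\xi)\,\la_+(\{\xi\})
=G(\xi-)+\ga(\xi)\big(G(\xi)-G(\xi-)\big).
\end{equation*}

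Finally I would evaluate $\ga(\xi)$. If $\P(X=\xi)>0$, then $G(\xi-)<G(\xi)$ and, by \eqref{eq:bet+}, $G(\xi-)\le h\le G(\xi)$, so $\ga(\xi)=\tfrac{h-G(\xi-)}{G(\xi)-G(\xi-)}$ and therefore $H_+(h)=G(\xi-)+\big(h-G(\xi-)\big)=h$. If $\P(X=\xi)=0$, then $G(\xi-)=G(\xi)$, and \eqref{eq:bet+} forces this common value to be $h$, so again $H_+(h)=h$, the $\ga(\xi)$-term being multiplied by $0$. This completes \eqref{eq:H+}, and \eqref{eq:H-} follows as explained above. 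The only genuinely delicate step is the $\la_+$-nullity of the ``overshoot'' set $\{x>\xi\colon G(x-)=G(x)=h\}$ together with the matching bookkeeping of a possible atom of $X$ exactly at $\xi$; everything else reduces to the elementary properties of $x_+$ and $G$ recorded in Proposition~\ref{lem:left-cont}.
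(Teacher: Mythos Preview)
Your proof is correct and follows essentially the same strategy as the paper's: reduce \eqref{eq:H-} to \eqref{eq:H+} by the substitution $X\mapsto-X$, then localize the integral at the point where $G$ crosses the level $h$ and account for a possible atom there. The one notable difference is the choice of that localizing point. The paper introduces an auxiliary $x_h:=\sup\{z\ge0\colon G(z-)\le h\}$, which is (roughly) the \emph{rightmost} such point; this makes the region $\{(x,u)\colon\tG(x,u)\le h,\ x>x_h\}$ empty, so no overshoot argument is needed and the set $M$ is described explicitly in each subcase. You instead use $\xi:=x_+(h)$, the \emph{leftmost} point with $G\ge h$, which has the advantage of being already defined and having its properties catalogued in Proposition~\ref{lem:left-cont}, at the small cost of the extra verification that the overshoot set $\{x>\xi\colon G(x)=h\}$ is $\la_+$-null (your argument for this is fine: the set is contained in an interval on which $G$, hence the $\la_+$-distribution function, is constant). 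Integrating $U$ out first via Tonelli to obtain the explicit $\ga$ is a clean way to organize the case split that the paper handles by describing the set $M$ directly; the two presentations are equivalent.
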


The following proposition is a useful corollary of Proposition~\ref{lem:H}.

\begin{proposition}\label{prop:cont}
One has $\P\big(X\ne0,\ \tG(X,U)=h\big)=0$ for all real $h$. Therefore, $\P\big(\tG(X,U)=h\big)=0$ for all real $h\ne0$; that is, the distribution of the ``randomized'' version $\tG(X,U)$ of $G(X)$ may have an atom only at $0$.   
\end{proposition}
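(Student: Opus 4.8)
The plan is to derive Proposition~\ref{prop:cont} from Proposition~\ref{lem:H} by a monotonicity-and-limits argument. First I would note that the event $\{X\neq 0\}$ splits into $\{X>0\}$ and $\{X<0\}$, so it suffices to show $\P\big(X>0,\ \tG(X,U)=h\big)=0$ and the symmetric statement for $X<0$; I will treat only the first, the second being entirely analogous. Fix a real $h$. If $h<0$ or $h>m$, then by the definition \eqref{eq:H} of $\tG$ and \eqref{eq:m} we have $0\le\tG(X,U)\le m$ everywhere, so the event $\{\tG(X,U)=h\}$ is empty (or, for $h=m$, possibly a null event to be absorbed into the argument below by taking $h\nearrow m$); hence we may assume $h\in[0,m]$, which is exactly the range in which Proposition~\ref{lem:H} applies.

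The key step is to compare $H_+$ evaluated at $h$ with $H_+$ evaluated at a slightly smaller argument. For $h\in(0,m]$ and $t\in[0,h)$, set
\begin{equation*}
D(t):=\E X\,\iii\{X>0,\ t<\tG(X,U)\le h\}=H_+(h)-H_+(t)=h-t,
\end{equation*}
using \eqref{eq:H+} twice. Since $X>0$ on the relevant event, $D(t)$ is the integral of a nonnegative random variable over the event $\{X>0,\ t<\tG(X,U)\le h\}$. As $t\nearrow h$, these events decrease to $\{X>0,\ \tG(X,U)=h\}$, so by monotone (or dominated, using $\E X\iii\{X>0\}=m<\infty$) convergence,
\begin{equation*}
\E X\,\iii\{X>0,\ \tG(X,U)=h\}=\lim_{t\nearrow h}D(t)=\lim_{t\nearrow h}(h-t)=0.
\end{equation*}
But $X>0$ on $\{X>0,\ \tG(X,U)=h\}$, so an integral of a strictly positive random variable over that event is zero only if the event itself is null; hence $\P\big(X>0,\ \tG(X,U)=h\big)=0$. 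The symmetric computation with $H_-$ and \eqref{eq:H-} gives $\P\big(X<0,\ \tG(X,U)=h\big)=0$, and adding the two yields $\P\big(X\neq0,\ \tG(X,U)=h\big)=0$ for every real $h$ (the cases $h=0$ and $h\notin[0,m]$ being covered by the reductions above, noting that for $h=0$ we instead let $t\searrow0$ from the negative side or argue directly that $\tG(X,U)=0$ with $X>0$ forces $G(X-)=0$, a measure-zero contingency).

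Finally, for the second assertion: for real $h\neq0$ we have
\begin{equation*}
\P\big(\tG(X,U)=h\big)=\P\big(X=0,\ \tG(X,U)=h\big)+\P\big(X\neq0,\ \tG(X,U)=h\big).
\end{equation*}
On $\{X=0\}$, property \eqref{eq:G properties} gives $G(0)=0$, and then \eqref{eq:H} yields $\tG(0,U)=0$, so the first term vanishes when $h\neq0$; the second term vanishes by what was just proved. Hence $\P\big(\tG(X,U)=h\big)=0$ for all real $h\neq0$, which is the claim. The only delicate point I anticipate is the bookkeeping at the endpoints $h=0$ and $h=m$ and the degenerate contributions where $\tG$ is pinned to an endpoint of its range; these are handled by the one-sided limits $t\nearrow h$ (respectively $t\searrow 0$) together with the finiteness $m<\infty$ from \eqref{eq:m}, so no genuine obstacle remains beyond careful case-splitting.
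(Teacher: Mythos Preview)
Your approach is essentially the same as the paper's: both compute $H_+(h)-H_+(h-)=0$ via Proposition~\ref{lem:H} and conclude that $\E X\,\iii\{X>0,\ \tG(X,U)=h\}=0$, whence the probability vanishes. The only wrinkle is your treatment of $h=0$: ``letting $t\searrow 0$ from the negative side'' is not available since $H_+$ is defined only on $[0,m]$, and the aside about ``$G(X-)=0$, a measure-zero contingency'' is not a proof. The clean fix is simply to use $H_+(0)=0$ directly: this says $\E X\,\iii\{X>0,\ \tG(X,U)\le 0\}=0$, and since $\tG\ge0$ this is exactly $\E X\,\iii\{X>0,\ \tG(X,U)=0\}=0$, giving $\P(X>0,\tG(X,U)=0)=0$ as before.
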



Along with the r.v.\ $X$, let 
$Y$, $Y_+$, $Y_-$ stand for any r.v.'s which are independent of $U$ and whose distributions are determined by the formulas
\begin{equation}\label{eq:P(Y)}
	\P(Y\in A)=\frac{\E|X|\ii{X\in A}}{\E|X|} \quad\text{and}\quad
		\P(Y_\pm\in A)=\frac{\E|X^\pm|\ii{X\in A}}{\E|X^\pm|}
\end{equation}
for all $A\in\B(\R)$; this is equivalent to 
\begin{equation}\label{eq:Ef(Y)}
	\E f(Y,U)=\frac1{2m}\E|X|\,f(X,U)  \quad\text{and}\quad
	\E f(Y_\pm,U)=\frac1m\E|X^\pm|\,f(X,U)
\end{equation}
for all Borel functions $f\colon\R^2\to\R$ bounded from below (or from above). 
Here and elsewhere, we use the standard notation $x^+:=\max(0,x)$ and $x^-:=\min(0,x)$. 
One should not confuse $Y_\pm$ with $Y^\pm$; in particular, by \eqref{eq:P(Y)}, 
$\P(Y^+=0)=\P(Y\le0)=\P(X\le0)\ne0$ (since $\E X=0$), while $\P(Y_+=0)=0$. 

Now one can state another corollary of Proposition~\ref{lem:H}: 

\begin{proposition}\label{prop:Y}
One has $\P(Y_+=0)=\P(Y_-=0)=\P(Y=0)=0$ and 
$\P\big(\tG(Y_+,U)\le h\big)=\P\big(\tG(Y_-,U)\le h\big)=\P\big(\tG(Y,U)\le h\big)=\frac hm$ for all $h\in[0,m]$. That is, the distribution of each of the three r.v's $\tG(Y_+,U)$, $\tG(Y_-,U)$, and $\tG(Y,U)$ is uniform on the interval $[0,m]$. 
\end{proposition}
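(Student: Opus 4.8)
The plan is to deduce Proposition~\ref{prop:Y} directly from Proposition~\ref{lem:H} by rewriting the expectations $H_\pm(h)$ and the analogous quantity for $Y$ in terms of the size-biased r.v.'s $Y_\pm$ and $Y$. The key observation is that the events appearing in Proposition~\ref{lem:H} are, up to the sign of $X$, exactly of the form $\{\tG(X,U)\le h\}$ intersected with a half-line, and that the weights $|X^\pm|$ and $|X|$ used to define $Y_\pm,Y$ via \eqref{eq:Ef(Y)} are precisely the ones that turn $\E X\ii{X>0,\,\cdot\,}$ and $\E(-X)\ii{X<0,\,\cdot\,}$ into (a multiple of) a plain probability under the law of $Y_\pm$ or $Y$.

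First I would handle $Y_+$. Apply the first displayed identity in \eqref{eq:Ef(Y)} (the $Y_+$ version) with the Borel function $f(x,u):=\ii{x>0,\ \tG(x,u)\le h}/x$ for $x>0$ and $f(x,u):=0$ for $x\le0$; this is bounded (between $0$ and $1/x$ on the relevant set, but more simply bounded from below by $0$), so the identity applies. Since $|X^+|=X$ on $\{X>0\}$, the right-hand side becomes $\frac1m\,\E X\ii{X>0,\ \tG(X,U)\le h}=\frac1m\,H_+(h)=\frac hm$ by \eqref{eq:H+}, while the left-hand side is $\E f(Y_+,U)$. To identify the left-hand side with $\P(\tG(Y_+,U)\le h)$ I need $f(Y_+,u)=\ii{\tG(Y_+,u)\le h}$ a.s., i.e.\ that $Y_+>0$ a.s.\ and hence the division by $Y_+$ is harmless; but $\P(Y_+=0)=\P(Y_+\le0)=\frac1m\,\E X^+\ii{X\le0}=0$ since $X^+=0$ on $\{X\le0\}$ — indeed this is exactly the claim $\P(Y_+=0)=0$, already noted after \eqref{eq:P(Y)}. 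A symmetric argument using \eqref{eq:H-} and the identity $|X^-|=-X$ on $\{X<0\}$ gives $\P(\tG(Y_-,U)\le h)=\frac hm$ and $\P(Y_-=0)=0$.

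For $Y$ itself, I would either repeat the computation with $f(x,u):=\ii{\tG(x,u)\le h}\,\sign(x)/x$ and $|X|$ in place of $|X^\pm|$, splitting $\E|X|\ii{X\in\cdot} = \E X\ii{X>0,\cdot}+\E(-X)\ii{X<0,\cdot}$ and invoking both \eqref{eq:H+} and \eqref{eq:H-} to get $\frac1{2m}(h+h)=\frac hm$; or, more cheaply, observe from \eqref{eq:P(Y)} that the law of $Y$ is the mixture $\tfrac{\E X^+}{\E|X|}\,\mathcal{L}(Y_+)+\tfrac{\E(-X^-)}{\E|X|}\,\mathcal{L}(Y_-)$ (using $\E X=0$, so $\E X^+ = \E(-X^-)=m$, giving equal weights $\tfrac12$), whence $\P(\tG(Y,U)\le h)=\tfrac12\cdot\frac hm+\tfrac12\cdot\frac hm=\frac hm$ and $\P(Y=0)\le\P(Y_+=0)+\P(Y_-=0)=0$. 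Since the common value is $\frac hm$ for every $h\in[0,m]$, ranging continuously from $0$ to $1$, each of $\tG(Y_+,U),\tG(Y_-,U),\tG(Y,U)$ is uniform on $[0,m]$, as claimed.

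The only delicate point — and the one I expect to require the most care — is the measurability and integrability bookkeeping when one divides by $x$ (or by $Y_\pm$): one must check that $f$ as chosen is genuinely Borel on $\R^2$ and bounded from below so that \eqref{eq:Ef(Y)} is legitimate, and that the exceptional null event $\{Y_\pm=0\}$ (resp.\ $\{X=0\}$ on the $X$-side) does not contribute, which is exactly where $\P(Y_\pm=0)=0$ enters and why that statement must be established first. Everything else is a routine substitution into Proposition~\ref{lem:H}.
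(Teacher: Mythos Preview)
Your overall plan is exactly the paper's: apply \eqref{eq:Ef(Y)} to rewrite $\P\big(\tG(Y_\pm,U)\le h\big)$ as $\tfrac1m H_\pm(h)$ and then invoke Proposition~\ref{lem:H}. However, your implementation contains a genuine slip that breaks both sides of the computation.

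You choose $f(x,u)=\ii{x>0,\ \tG(x,u)\le h}/x$. With this $f$, the right-hand side of \eqref{eq:Ef(Y)} is
\[
\tfrac1m\,\E|X^+|\,f(X,U)=\tfrac1m\,\E\,X\cdot\frac{\ii{X>0,\ \tG(X,U)\le h}}{X}
=\tfrac1m\,\P\big(X>0,\ \tG(X,U)\le h\big),
\]
which is \emph{not} $\tfrac1m H_+(h)$. Likewise, the left-hand side is $\E\,\ii{\tG(Y_+,U)\le h}/Y_+$, not $\P(\tG(Y_+,U)\le h)$: knowing $Y_+>0$ a.s.\ lets you drop the indicator $\ii{Y_+>0}$, but it does not make the factor $1/Y_+$ disappear.

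The fix is simply to remove the division by $x$: take $f(x,u)=\ii{\tG(x,u)\le h}$. Then the left side is $\P(\tG(Y_+,U)\le h)$ outright, and the right side is $\tfrac1m\,\E X^+\ii{\tG(X,U)\le h}=\tfrac1m\,\E X\ii{X>0,\ \tG(X,U)\le h}=\tfrac1m H_+(h)=\tfrac hm$, since $X^+=X\ii{X>0}$. This is exactly the one-line argument in the paper's proof, and your ``delicate point'' about dividing by $x$ evaporates entirely. The remainder of your argument (the $Y_-$ case by symmetry, and the $Y$ case either by the analogous computation with $|X|$ or by the $\tfrac12$--$\tfrac12$ mixture of the laws of $Y_\pm$) is fine.
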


At this point one is ready to admit that the very formulation of Theorem~\ref{th:main} may seem problematic for the following reasons. On the one hand, the two-value zero-mean r.v.'s $X_{a,b}$ are not defined (and cannot be reasonably defined) when one of the points $a$, $b$ is $\infty$ or $-\infty$ while the other one is nonzero. On the other hand, $\r(x,u)$ may take infinite values for some $u\in[0,1]$ and real nonzero $x$, which will make the r.v.\ $X_{x,\r(x,u)}$ undefined. For example, if $X$ has the zero-mean distribution (say $\mu_{\mathrm{Exp}}$) with density $e^{x-1}\ii{x<1}$, then $\r(x,u)=-\infty$ for all $(x,u)\in[1,\infty)\times[0,1]$; or, if $X$ has the distribution $\frac12\mu_{\mathrm{Exp}}+\frac14\de_{-1}+\frac14\de_1$, then $\r(x,u)=-\infty$ for $(x,u)\in\{(1,1)\}\cup\big((1,\infty)\times[0,1]\big)$. 

However, such concerns are taken care of by another corollary of Proposition~\ref{lem:H}: 

\begin{proposition}\label{prop:finite}
Almost surely, $|\r(X,U)|<\infty$. 
\end{proposition}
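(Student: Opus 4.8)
The plan is to deduce Proposition~\ref{prop:finite} from Proposition~\ref{lem:H} by ruling out the event $\{|\r(X,U)|=\infty\}$. By the definition \eqref{eq:r} of $\r$, we have $|\r(X,U)|=\infty$ only if $X\ne0$ and $\tG(X,U)$ exceeds every attainable value of $G$ on the appropriate half-line; more precisely, on the event $\{X>0\}$ one has $\r(X,U)=x_{-,\nu}(\tG(X,U))=-\infty$ exactly when $\tG(X,U)>G(-\infty)=m$ or ($\tG(X,U)=m$ and the defining set in \eqref{eq:x-} is empty), and symmetrically on $\{X<0\}$. Since $\tG(X,U)\le G(X)\le m$ always, the only way to get $|\r(X,U)|=\infty$ with $X\ne0$ is to have $\tG(X,U)=m$ together with the relevant $\sup$/$\inf$ in \eqref{eq:x+}--\eqref{eq:x-} being taken over the empty set.

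First I would reduce to showing $\P\big(X\ne0,\ \tG(X,U)=m\big)=0$, which would immediately suffice: on $\{X\ne0,\ \tG(X,U)<m\}$ the value $\tG(X,U)\in[0,m)$ lies in the range where $x_\pm$ are finite by Proposition~\ref{lem:left-cont}\eqref{x+-finite}, so $|\r(X,U)|<\infty$ there. To establish $\P\big(X\ne0,\ \tG(X,U)=m\big)=0$, I would invoke Proposition~\ref{lem:H}: applying \eqref{eq:H+} and \eqref{eq:H-} with $h=m$ gives $\E X\ii{X>0,\ \tG(X,U)\le m}=m=\frac12\E|X|=\E X\ii{X>0}$, and likewise for the negative part; hence $\E X\ii{X>0,\ \tG(X,U)>m}=0$, and since the integrand is nonnegative (and we already know $\tG(X,U)\le G(X)\le m$), this is automatic and gives nothing by itself. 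So the more useful route is Proposition~\ref{prop:cont}: it asserts $\P\big(X\ne0,\ \tG(X,U)=h\big)=0$ for \emph{all real} $h$, and $m$ is a (finite) real number. Thus $\P\big(X\ne0,\ \tG(X,U)=m\big)=0$ directly.

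Putting the pieces together: almost surely either $X=0$ (in which case $\r(X,U)=0$ by the $x\in\{0\}$ case of \eqref{eq:r}, using $\tG(0,u)=0$ and Proposition~\ref{lem:left-cont}\eqref{x+-finite}, so $\r$ is finite), or $X\ne0$ and then a.s.\ $\tG(X,U)<m$, so $\tG(X,U)\in[0,m)$ and by Proposition~\ref{lem:left-cont}\eqref{x+-finite} both $x_+$ and $x_-$ are finite at that argument; either way $|\r(X,U)|<\infty$ a.s. I would write this as a short three-line argument.

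The only place that requires a moment of care — the main obstacle, such as it is — is matching the possible infinite values of $\r$ against the exact threshold: one must check that $\tG(X,U)=m$ is genuinely the \emph{only} way an infinity can arise (i.e.\ that $\tG(X,U)\in[0,m)$ forces finiteness of $x_\pm$ at that point), which is exactly the content of Proposition~\ref{lem:left-cont}\eqref{x+-finite}, and that on $\{X\ne0\}$ we indeed always have $\tG(X,U)\le m$. The latter follows from \eqref{eq:H} and \eqref{eq:G properties}: for $x>0$, $\tG(x,u)$ is a convex combination of $G(x-)$ and $G(x)$, both of which are $\le G(\infty-)=m$ by \eqref{eq:m}; symmetrically for $x<0$. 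After that, Proposition~\ref{prop:cont} applied at the single point $h=m$ closes the argument with no further computation.
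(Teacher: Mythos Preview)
Your argument is correct and matches the paper's own proof essentially line for line: the paper simply observes that, by definition \eqref{eq:r} and Proposition~\ref{lem:left-cont}\eqref{x+-finite}, the event $\{|\r(X,U)|=\infty\}$ is contained in $\{X\ne0,\ \tG(X,U)=m\}$, and then invokes Proposition~\ref{prop:cont} at $h=m$. Your initial detour through Proposition~\ref{lem:H} is unnecessary (as you yourself note), but the final route you settle on is exactly the intended one.
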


An application of Proposition~\ref{prop:cont} is the following refinement of Proposition~\ref{prop:finite}.  
Let, as usual, $\supp\nu$ denote the support of a given nonnegative measure $\nu$, which is defined as the set of all points $x\in\R$ such that for any open neighborhood $O$ of $x$ one has $\nu(O)>0$. 
Then, also as usual, 
$\supp X$ is defined as the support of the distribution $\mu_X$ of $X$. 

\begin{proposition}\label{prop:supp}
One has $\P\big(X\ne0,\ \r(X,U)\notin(\supp X)\setminus\{0\}\big)=0$; that is, almost surely on the event $X\ne0$, the values of the r.v.\ $\r(X,U)$ are nonzero and belong to $\supp X$. 
In particular, $\P\big(X\ne0,\ \r(X,U)=0\big)=0$. 
(Obviously, $\r(X,U)=0$ on the event $\{X=0\}$.)
\end{proposition}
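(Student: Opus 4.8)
\textbf{Proof proposal for Proposition~\ref{prop:supp}.}

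The plan is to reduce everything to the already-established Proposition~\ref{prop:cont}, together with the precise inequalities for $x_\pm$ in Proposition~\ref{lem:left-cont}. By symmetry (replacing $X$ by $-X$, which swaps $x_+\leftrightarrow x_-$ and swaps the two clauses in the definitions of $\tG$ and $\r$), it suffices to control $\r(X,U)$ on the event $\{X>0\}$, where $\r(X,U)=x_-(\tG(X,U))$. So fix the goal: show that almost surely on $\{X>0\}$ one has $x_-(\tG(X,U))\in(\supp X)\setminus\{0\}$, i.e.\ $x_-(\tG(X,U))<0$ and $x_-(\tG(X,U))\in\supp X$.

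First I would handle the statement $\r(X,U)\ne0$. On $\{X>0\}$, the r.v.\ $h:=\tG(X,U)$ satisfies $h>0$ almost surely: indeed $\tG(x,u)=G(x-)+(G(x)-G(x-))u\ge G(x-)$ for $x>0$, and if this were $0$ we would need $G(x)=0$ too, but Proposition~\ref{prop:cont} gives $\P(X\ne0,\ \tG(X,U)=0)=\P(X\ne0,\ \tG(X,U)=h)\big|_{h=0}=0$ once we note that $\{X>0,\ \tG(X,U)=0\}\subseteq\{X>0,\ G(X)=0\}$ and $G(X)=0$ forces $\tG(X,U)=0$; more directly, $\P(X>0,\ \tG(X,U)=0)=0$ is exactly the content of Proposition~\ref{prop:cont} applied at the real number $h=0$ restricted to $X>0$ — here I would double-check whether Proposition~\ref{prop:cont} as stated covers $h=0$ on the half-line $\{X>0\}$; since it asserts $\P(X\ne0,\tG(X,U)=h)=0$ for \emph{all real} $h$, including $0$, this is fine. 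Hence $h>0$ a.s.\ on $\{X>0\}$, and then by property~\eqref{x+-pos} of Proposition~\ref{lem:left-cont} (applied to $-x_-$, strictly positive on $(0,m]$) we get $x_-(h)<0$ a.s.\ on $\{X>0\}$. This also yields the last sentence of the proposition: $\P(X\ne0,\ \r(X,U)=0)=0$.

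The remaining and main point is $x_-(\tG(X,U))\in\supp X$. Here I would argue by contradiction: suppose $x_-(h)=:x_0\notin\supp X$ with $x_0<0$, $0<h\le m$. Since $\supp X$ is closed, there is an open interval $(c,d)\ni x_0$ with $c\ge$ (the relevant part of $\R$) and $\mu((c,d))=0$. On such an interval the function $G$ is constant (by its definition~\eqref{eq:G(x)}, since $G$ only integrates against $\mu$), so $G(c)=G(x_0)=G(d-)$ and in fact $G(x_-(h))=G(c)$ as well; but the characterization~\eqref{eq:bet-}, $G(x_-(h)+)\le h\le G(x_-(h))$, combined with left-continuity of $G$ on $(-\infty,0]$ from~\eqref{eq:G properties}, shows that actually $h=G(x_-(h))=G(c)$. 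Then $x_-(h)=\sup\{x\le0:G(x)\ge h\}$ and $G\equiv G(c)=h$ on the whole interval $(c,d)$, so $x_-(h)\ge d>x_0$, contradicting $x_-(h)=x_0$. Thus $x_-(\tG(X,U))\in\supp X$ deterministically for every value of $X>0$ (given $h\in(0,m]$), and combined with the a.s.\ statement $h\in(0,m]$ on $\{X>0\}$ we are done. I expect the delicate bookkeeping to be exactly this last step: making sure that the generalized inverse $x_-$ cannot ``land in a gap'' of $\supp X$ — one must use both the defining sup and the inequality $h\le G(x_-(h))$ from~\eqref{eq:bet-} to pin $x_-(h)$ to the right endpoint of any such gap, which then forces it \emph{into} the support rather than the open gap. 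Everything else is routine symmetrization and an appeal to Proposition~\ref{prop:cont}.
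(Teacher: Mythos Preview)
Your approach is essentially the same as the paper's: show deterministically that $x_\pm(h)\in\supp X$ whenever $x_\pm(h)\in\R\setminus\{0\}$, and handle the case $\r(X,U)=0$ probabilistically via Proposition~\ref{prop:cont}. Your contradiction argument (``$x_-(h)$ cannot land in a gap of $\supp X$'') is a repackaging of the paper's direct argument, which simply notes that $G(x)<h\le G(x_+(h))$ for every $x\in[0,x_+(h))$, whence $\mu\big((x,x_+(h)]\big)>0$.

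There is, however, one genuine omission. Your argument for ``$x_-(h)\in\supp X$'' presupposes $x_0:=x_-(h)$ is a \emph{finite} negative number, and you conclude by asserting that a.s.\ on $\{X>0\}$ one has $h\in(0,m]$. But at the endpoint $h=m$ the value $x_-(m)$ may well be $-\infty$ (property~\eqref{x+-finite} of Proposition~\ref{lem:left-cont} guarantees finiteness only on $[0,m)$), and then $\r(X,U)=-\infty\notin(\supp X)\setminus\{0\}$. You need to rule out $|\r(X,U)|=\infty$ almost surely. The paper does this by invoking Proposition~\ref{prop:finite}; equivalently, you could apply Proposition~\ref{prop:cont} once more at $h=m$ to get $\P(X\ne0,\ \tG(X,U)=m)=0$, so that a.s.\ on $\{X>0\}$ one actually has $h\in(0,m)$, and then $x_-(h)$ is finite by property~\eqref{x+-finite}. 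With that one extra line your proof is complete.
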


In the sequel, the following definition will be quite helpful:
\begin{equation}\label{eq:hat}
	\hat x(x,u):=
	\begin{cases}
x_+(\tG(x,u)) & \text{ if }x\in[0,\infty], \\
x_-(\tG(x,u)) & \text{ if }x\in[-\infty,0] \\
\end{cases}
\end{equation}
for 
$u\in[0,1]$; 
cf.\ definition \eqref{eq:r} of the reciprocating function $\r$. 

\begin{proposition}\label{lem:hat}
Take any 
or $(x,u)\in[-\infty,\infty]\times[0,1]$ 
and let $h:=\tG(x,u)$ and, for brevity, $\hat x:=\hat x(x,u)$. Let $\mu$ stand for the distribution of $X$. Then
\begin{enumerate}[(i)]
	\item $0\le\hat x\le x$ if $x\ge0$;
		\item if $0\le\hat x<x$, then all of the following conditions must occur: 
		\begin{enumerate}
		\item
		$x_+(h+)>x_+(h)$; 
		\item
		$G(\hat x)=G(x-)=\tG(x,u)=h$; 
		\item
		$\mu\big((\hat x,x)\big)=0$; 
		\item
		$u=0$ or $\mu\big((\hat x,x]\big)=0$; 
		\item
		$u=0$ or $G(\hat x)=G(x)=h$; 
		\item
		$u=0$ or $x\ne x_+(h_1)$ for any $h_1\in[0,m]$; 
		\end{enumerate}
	\item $0\ge\hat x\ge x$ if $x\le0$;
\item if $0\ge\hat x>x$, then all of the following conditions must occur: 
		\begin{enumerate}
		\item
		$x_-(h+)<x_-(h)$; 
		\item
		$G(\hat x)=G(x+)=\tG(x,u)=h$; 
		\item
		$\mu\big((x,\hat x)\big)=0$; 
		\item
		$u=0$ or $\mu\big([x,\hat x)\big)=0$; 
		\item
		$u=0$ or $G(\hat x)=G(x)=h$; 
		\item
		$u=0$ or $x\ne x_-(h_1)$ for any $h_1\in[0,m]$; 
		\end{enumerate}
\item if $x=x_+(h_1)$ or $x=x_-(h_1)$ for some $h_1\in[0,m]$, then $\hat x(x,u)=x$ for all $u\in(0,1]$.
\end{enumerate}
\end{proposition}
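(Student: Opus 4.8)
The plan is to treat the case $x\ge0$ in full and then obtain the case $x\le0$ by the symmetry built into the definitions (replacing $x$ by $-x$, $G$ by its reflection, $x_+$ by $-x_-$, and using the second branch of \eqref{eq:H} and \eqref{eq:hat}); the point $x=0$ is trivial since $\tG(0,u)=0$ and $x_+(0)=0=x_-(0)$ by \eqref{eq:G properties} and \eqref{eq:x+}--\eqref{eq:x-}. So fix $x\in[0,\infty]$, set $h:=\tG(x,u)$ and $\hat x:=x_+(h)$. Part~(i), $0\le\hat x\le x$: nonnegativity of $\hat x$ is immediate from \eqref{eq:x+}. For $\hat x\le x$ it suffices, by the characterization \eqref{eq:L1}, to check that $G(x)\ge h$; but from \eqref{eq:H} we have $h=\tG(x,u)=G(x-)+(G(x)-G(x-))u\le G(x)$ since $0\le u\le1$ and $G(x-)\le G(x)$ by monotonicity of $G$ on $[0,\infty]$ (property \eqref{eq:G properties}). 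This also already gives part~(v) when $u=1$, and I handle general $u\in(0,1]$ below.

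Now suppose $0\le\hat x<x$; I must extract conditions (a)--(f). First, $h=\tG(x,u)\ge G(x-)$ and, since $\hat x<x$, \eqref{eq:less+} gives $G(y)<h$ for all $y\in[0,x)$, hence $G(x-)\le h$; combined with $h\le G(x)$ from the previous paragraph this pins $G(x-)\le h\le G(x)$. I claim in fact $G(x-)=h$: from \eqref{eq:H} we get $h-G(x-)=(G(x)-G(x-))u$, and I will argue $G(x-)\ge h$, which forces equality and, since $G(x)-G(x-)\ge0$ with $u\ge0$, also forces $u=0$ \emph{or} $G(x)=G(x-)$. To see $G(x-)\ge h$: by \eqref{eq:bet+}, $h\le G(\hat x)=G(x_+(h))$; on the other hand $\hat x<x$ means $\hat x\le x-$ in the order on $[0,\infty]$, and by right-continuity of $G$ on $[0,\infty)$ (property \eqref{eq:G properties}) together with $G(y)<h$ for $y<x$ we get $G(\hat x)\le G(x-)$. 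Hence $h\le G(\hat x)\le G(x-)\le h$, so all are equal: this is condition (b), $G(\hat x)=G(x-)=\tG(x,u)=h$. Condition (c), $\mu((\hat x,x))=0$: by \eqref{eq:G(x)} the increment $G(x-)-G(\hat x)=\int_{(\hat x,x)}z\,\mu(\d z)$ equals $0$, and the integrand $z$ is strictly positive on $(\hat x,x)\subseteq(0,\infty)$ since $\hat x\ge0$ and, if $\hat x=0$, the integrand is still positive on $(0,x)$; hence $\mu((\hat x,x))=0$. Condition (a), $x_+(h+)>x_+(h)=\hat x$: for any $h_1>h$ we have $G(\hat x)=h<h_1$ and $G(y)<h<h_1$ for all $y<x$, so by \eqref{eq:L1} no $y\le x-$ can satisfy $G(y)\ge h_1$, whence $x_+(h_1)\ge x$; thus $x_+(h+)\ge x>\hat x$. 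For the ``$u=0$ or \dots'' clauses (d), (e): if $u>0$ then from \eqref{eq:H}, $h=G(x-)+(G(x)-G(x-))u$ with $h=G(x-)$ forces $G(x)=G(x-)$, which is (e); and then $\mu((\hat x,x])=\mu((\hat x,x))+\mu(\{x\})$ where $\mu(\{x\})\,x=G(x)-G(x-)=0$ gives $\mu(\{x\})=0$ (using $x>0$), so with (c) we get (d). For (f): if $u>0$ and $x=x_+(h_1)$ for some $h_1\in[0,m]$, then \eqref{eq:bet+} gives $G(x-)\le h_1\le G(x)=G(x-)$, so $h_1=G(x-)=h$; but then $x_+(h)=x$, contradicting $\hat x=x_+(h)<x$. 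This establishes (a)--(f); parts~(iii)--(iv) for $x\le0$ follow by the reflection described above. Finally, part~(v): if $x=x_+(h_1)$ for some $h_1\in[0,m]$ (the case $x=x_-(h_1)$ being symmetric), then by \eqref{eq:bet+} $G(x-)\le h_1\le G(x)$; for $u\in(0,1]$, $\tG(x,u)=G(x-)+(G(x)-G(x-))u$, and I must show $x_+(\tG(x,u))=x$. By part~(i), $x_+(\tG(x,u))\le x$; if it were strictly less, then by what was just proved (conditions (b),(f) with $u>0$) we would get $x\ne x_+(h_1)$ for every $h_1\in[0,m]$, a contradiction. Hence $\hat x(x,u)=x$, as claimed.

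The main obstacle is bookkeeping the two-sided inequality $G(x-)\le h\le G(x)$ and squeezing it to the equalities in (b) by correctly combining \emph{right}-continuity of $G$ on $[0,\infty)$, the strict inequalities \eqref{eq:less+}, and the in-between relations \eqref{eq:bet+}; one must be careful that these hold at the possibly infinite endpoint $x=\infty$ (where $G(x)=G(x-)=m$ by \eqref{eq:m}, so the conclusions are vacuous or immediate) and at $\hat x=0$ (where positivity of the integrand in \eqref{eq:G(x)} on $(0,x)$ still gives (c)). Everything else is a routine transcription through the lexicographic/reflection symmetry and the already-established Proposition~\ref{lem:left-cont}.
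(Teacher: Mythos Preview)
Your argument follows essentially the same route as the paper's: establish (b) via the squeeze $G(x-)\ge G(\hat x)\ge h=\tG(x,u)\ge G(x-)$, read off (c) from \eqref{eq:G(x)}, then derive (a), (d), (e), (f) and finally (v) from (ii)(f) and (iv)(f). One slip to repair: the assertion ``\eqref{eq:less+} gives $G(y)<h$ for all $y\in[0,x)$'' is false---\eqref{eq:less+} yields the strict inequality only on $[0,\hat x)$, and indeed $G(\hat x)\ge h$ by \eqref{eq:bet+}, so your statement is self-contradictory at $y=\hat x$; the inequality $G(\hat x)\le G(x-)$ that you actually need follows directly from monotonicity of $G$ on $[0,\infty]$ (right-continuity plays no role), and in part (a) the correct intermediate claim is $G(y)\le h<h_1$ for all $y\in[0,x)$, which is immediate from (b) once that is in hand.
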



From Proposition~\ref{lem:hat}, we shall deduce

\begin{proposition}\label{lem:regul}
Almost surely, $\hat x(X,U)=X$.
\end{proposition}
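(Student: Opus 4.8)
The plan is to show that the event $\{\hat x(X,U)\neq X\}$ is contained, up to a null set, in a union of events each of which has probability zero by the structural conditions listed in Proposition~\ref{lem:hat}. By the symmetry between the cases $x\ge0$ and $x\le0$ (and since $\hat x(0,u)=0=X$ on $\{X=0\}$), it suffices to control $\P\big(X>0,\ \hat x(X,U)<X\big)$; the case $X<0$ is handled identically with $x_-$ in place of $x_+$.

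First I would fix notation: for $X>0$ set $H:=\tG(X,U)$ and $\hat X:=\hat x(X,U)=x_+(H)$. By part~(i) of Proposition~\ref{lem:hat}, $0\le\hat X\le X$ always, so $\{\hat X<X\}$ is the event of interest. On this event, part~(ii) applies, and I would split into the sub-case $U=0$ and the sub-case $U>0$. The sub-case $U=0$ is immediately negligible since $\P(U=0)=0$ and $U$ is independent of $X$. In the sub-case $U>0$, condition (ii)(f) of Proposition~\ref{lem:hat} tells us that $X\neq x_+(h_1)$ for any $h_1\in[0,m]$; equivalently, $X$ is not in the range of the generalized inverse $x_+$ on $[0,m]$. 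The key remaining point is to argue that this can happen only on a set of the form where $\tG(X,U)$ takes a ``bad'' value, and then to invoke Proposition~\ref{prop:cont} (or Proposition~\ref{prop:finite}) to kill it.

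The cleaner route, which I would actually follow, is to compare $\hat X$ directly with $\r(X,U)$ via the definitions \eqref{eq:r} and \eqref{eq:hat}: on $\{X>0\}$ one has $\r(X,U)=x_-(H)$ and $\hat x(X,U)=x_+(H)$, with the same argument $H=\tG(X,U)$. So $\hat x(X,U)$ and $\r(\r(X,U),\,\cdot\,)$ are linked through the value $H$, and the heart of the matter is that $H=G(\hat X)$ with $\hat X$ itself an admissible ``left endpoint'' — by (ii)(b), $G(\hat X)=G(X-)=H$. Thus on $\{\hat X<X,\ U>0\}$ we get $G(\hat X)=G(X)=H$ by (ii)(e) and $\mu\big((\hat X,X]\big)=0$ by (ii)(d). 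I would now integrate: the contribution of this event to the probability is $\int \ii{U>0}\,\ii{x_+(\tG(x,u))<x}\,\mu(\d x)\,\d u$, and using $G(x-)=\tG(x,u)$ on this event together with right-continuity of $G$ from \eqref{eq:G properties}, one sees $\tG(x,u)$ must equal $G(x-)$ exactly, forcing (for $u>0$) that $G(x)=G(x-)$, i.e. $\mu(\{x\})=0$; but when $\mu(\{x\})=0$ the randomization is irrelevant and $\tG(x,u)=G(x)$, so we would need $G(x-)=G(x)$ and simultaneously $x_+(G(x))<x$ — and by \eqref{eq:nonat+}/\eqref{eq:bet+} the latter forces $\mu$ to put mass at some point strictly less than $x$ equal in $G$-value, a contradiction once one notes $G$ is strictly increasing across any atom. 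Assembling these observations shows the integrand vanishes $\mu\otimes\mathrm{Leb}$-a.e., hence $\P\big(X>0,\ \hat x(X,U)<X\big)=0$.

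The main obstacle is bookkeeping rather than depth: one must carefully separate the $u=0$ stratum (trivially null by independence of $U$) from the $u>0$ stratum, and in the latter invoke exactly the right combination of items (a)–(f) of Proposition~\ref{lem:hat}(ii) to reduce to a statement about $G$ alone, then close the loop with \eqref{eq:nonat+} and the monotonicity/continuity properties \eqref{eq:G properties}. A slicker alternative, if one wants to avoid the case analysis, is to note that Proposition~\ref{prop:cont} gives $\P\big(X\ne0,\ \tG(X,U)=h\big)=0$ for every real $h$, and part~(ii)(b) says that on $\{\hat X<X\}$ the value $h=\tG(X,U)$ is pinned to $G(X-)$, a value that a Fubini argument shows is hit with probability zero unless $X$ lies in the (at most countable) set of atoms — on which, in turn, (ii)(d) with $u>0$ fails. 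I expect the referee-facing write-up to favour the Fubini-plus-Proposition~\ref{prop:cont} version for brevity, deferring the combinatorial details of Proposition~\ref{lem:hat}(ii) to where that proposition is proved.
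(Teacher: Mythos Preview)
Your proposal has the right overall architecture --- reduce to a countable union of events and kill each with Proposition~\ref{prop:cont} --- but the execution has a genuine gap in both branches you sketch.

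In your main route you arrive at the situation $\mu(\{x\})=0$ and $x_+(G(x))<x$, and then assert a contradiction via \eqref{eq:nonat+}/\eqref{eq:bet+}. There is no contradiction here: if $G$ is flat on an interval $(\hat x,x]$ (a perfectly generic ``gap'' in the support of $\mu_+$), one has exactly $G(\hat x)=G(x)$, $x_+(G(x))=\hat x<x$, and $\mu\big((\hat x,x]\big)=0$, with no inconsistency. Implication \eqref{eq:nonat+} concerns two \emph{different} $h$-values mapping to the same $x_+$, which is not what you have. So nothing in your argument forces the integrand to vanish pointwise; what you actually need is a measure-level statement that the set of such $x$ is $\mu$-null.

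In your ``slicker alternative'' you correctly want to invoke Proposition~\ref{prop:cont}, but that proposition is stated for each \emph{fixed} $h$, so you need the relevant $h$-values to lie in a countable set. You propose pinning $h$ to $G(X-)$ and then reducing to the atoms of $\mu$; this does not work, since $G(X-)$ can range over a continuum even when $\mu$ has no atoms. The piece of Proposition~\ref{lem:hat} you never invoke is precisely the one that does the job: condition~(ii)(a) (and its mirror (iv)(a)) says that on $\{\hat x(X,U)\ne X\}$ the value $h=\tG(X,U)$ is a \emph{discontinuity point of the monotone function $x_+$} (resp.\ $x_-$). That set of $h$'s is at most countable, and summing $\P\big(X\ne0,\ \tG(X,U)=h\big)=0$ over it gives the result immediately --- no case split on $U=0$ versus $U>0$, and no appeal to (d), (e), (f) is needed. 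This is exactly the paper's proof.
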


In view of Propositions~\ref{lem:regul} and \ref{lem:hat}, one may find it appropriate to 
refer to $\hat x(x,u)$ as the \emph{regularized} version of $x$, and to the function $\hat x$ as the  \emph{regularizing function} for (the distribution of) $X$.  

We shall use Proposition~\ref{lem:regul} to show that the mentioned in Introduction symmetry property $\r(-x)\equiv\r(x)$ of the reciprocating function for symmetric r.v.\ $X$ with a continuous strictly increasing d.f.\ essentially holds in general, without the latter two restrictions on the d.f.:

\begin{proposition}\label{prop:symm} 
The following conditions are equivalent to one another:
\begin{enumerate}[(i)]
	\item\label{symm1} $X$ is symmetric; 
	\item\label{symm2} $G$ is even; 
	\item\label{symm3} $x_-=-x_+$; 
	\item\label{symm4} $\r=-\hat x$; 
	\item\label{symm5} $\r(X,U)=-X$ a.s.  
\end{enumerate} 
\end{proposition}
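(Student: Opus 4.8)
The plan is to prove the cycle of equivalences $\eqref{symm1}\Rightarrow\eqref{symm2}\Rightarrow\eqref{symm3}\Rightarrow\eqref{symm4}\Rightarrow\eqref{symm5}\Rightarrow\eqref{symm1}$, using the regularization Proposition~\ref{lem:regul} only at the step $\eqref{symm4}\Rightarrow\eqref{symm5}$ (and, in a mild way, $\eqref{symm3}\Rightarrow\eqref{symm4}$).

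\textbf{The easy implications.} $\eqref{symm1}\Rightarrow\eqref{symm2}$: if $X$ is symmetric, then for $x>0$ one has $G(x)=\int_{(0,x]}z\,\mu(\d z)=\int_{[-x,0)}(-z)\,\mu(\d z)=G(-x)$ directly from \eqref{eq:G(x)} and the symmetry of $\mu$, and $G(0)=0=G(-0)$; so $G$ is even. $\eqref{symm2}\Rightarrow\eqref{symm3}$: if $G(x)=G(-x)$ for all $x$, then comparing the defining formulas \eqref{eq:x+} and \eqref{eq:x-}, the condition ``$x\in[0,\infty]$ and $G(x)\ge h$'' becomes, under $x\mapsto-x$, ``$-x\in[-\infty,0]$ and $G(-x)\ge h$'', whence $x_+(h)=-x_-(h)$ for every $h\in[0,m]$; i.e.\ $x_-=-x_+$. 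For $\eqref{symm3}\Rightarrow\eqref{symm4}$ I first observe that evenness of $G$ is equivalent to $x_-=-x_+$ (the reverse direction of the previous step, obtained by applying $G$ to both sides and using \eqref{eq:bet+}--\eqref{eq:bet-}, or more simply by noting $x_\pm$ determine $G$ on $[0,\infty)\cup(-\infty,0]$ off atoms), and that evenness of $G$ forces evenness of $G(\cdot+)$ and of $G(\cdot-)$ in the sense $G(x+)=G(-x-)$, $G(x-)=G(-x+)$; then from \eqref{eq:H}, $\tG(-x,u)=\tG(x,u)$ for all $(x,u)$. Hence, by \eqref{eq:r} and \eqref{eq:hat},
\[
\r(x,u)=x_{\mp}(\tG(x,u))=-x_{\pm}(\tG(x,u))=-x_{\pm}(\tG(-x,u))\cdot(-1)^{0}
\]
wait---more carefully: for $x\ge0$, $\r(x,u)=x_-(\tG(x,u))=-x_+(\tG(x,u))=-\hat x(x,u)$, and for $x\le0$, $\r(x,u)=x_+(\tG(x,u))=-x_-(\tG(x,u))=-\hat x(x,u)$; in both cases $\r=-\hat x$, which is \eqref{symm4}.

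\textbf{The two implications needing the machinery.} For $\eqref{symm4}\Rightarrow\eqref{symm5}$: by Proposition~\ref{lem:regul}, $\hat x(X,U)=X$ a.s., so $\r(X,U)=-\hat x(X,U)=-X$ a.s. For $\eqref{symm5}\Rightarrow\eqref{symm1}$: from $\r(X,U)=-X$ a.s.\ and Theorem~\ref{th:main} (or directly \eqref{eq:Eg(X)} together with the description of $(X,\r(X,U))$ as a mixture of the pairs $(X_{a,b},R_{a,b})$), the mixing two-point sets satisfy $\{X,\r(X,U)\}=\{X,-X\}$ a.s., so $b=-a$ for $\mu$-almost every choice of marker pair; but $X_{a,-a}$ is the symmetric two-point distribution $\tfrac12\de_{-a}+\tfrac12\de_a$ by \eqref{eq:Xab}, and a mixture of symmetric distributions is symmetric, hence $X$ is symmetric. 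Alternatively and more elementarily: $\r(X,U)=-X$ a.s.\ gives, via \eqref{eq:H+} with $h=m$ restricted appropriately, that $\E X\ii{X>0,\,-X\le -x}$ matches $\E(-X)\ii{X<0,\,X\ge -x}$-type identities forcing $\mu$ on $(0,\infty)$ to be the mirror image of $\mu$ on $(-\infty,0)$; combined with $\mu(\{0\})$ being its own mirror, $\mu$ is symmetric. I will use the mixture argument as the primary one since Theorem~\ref{th:main} is already available.

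\textbf{Main obstacle.} The genuinely nontrivial point is $\eqref{symm5}\Rightarrow\eqref{symm1}$: one must rule out the possibility that $\r(X,U)=-X$ a.s.\ while $\mu$ is asymmetric in a way invisible to this almost-sure identity---e.g.\ on a $\mu$-null set or via the randomization $U$ acting only on atoms. The clean resolution is to push everything through Theorem~\ref{th:main}: the identity $\r(X,U)=-X$ a.s.\ means that in the mixture representation \eqref{eq:main} every participating two-point zero-mean law $X_{x,\r(x,u)}$ is in fact $X_{x,-x}$, which is symmetric, and a (Borel) mixture of symmetric laws is symmetric; this simultaneously handles atoms, null sets, and the randomization. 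The remaining steps are bookkeeping with the monotone-inverse identities of Proposition~\ref{lem:left-cont} and the left-/right-continuity bookkeeping for $G$, which are routine given \eqref{eq:G properties}.
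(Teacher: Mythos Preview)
Your cycle is correct, and the easy implications match the paper's treatment (the paper just says they ``follow straight from the corresponding definitions,'' and your eventual argument for \eqref{symm3}$\Rightarrow$\eqref{symm4} after the ``wait---more carefully'' is exactly that: $\r(x,u)=x_\mp(\tG(x,u))=-x_\pm(\tG(x,u))=-\hat x(x,u)$, with no need for the detour through evenness of $G$). The step \eqref{symm4}$\Rightarrow$\eqref{symm5} via Proposition~\ref{lem:regul} is identical to the paper's.

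The genuine difference is in closing the cycle. The paper does \emph{not} prove \eqref{symm5}$\Rightarrow$\eqref{symm1}; instead it proves \eqref{symm5}$\Rightarrow$\eqref{symm3} by a self-contained argument: using that $\tG(Y,U)$ is uniform on $[0,m]$ (Proposition~\ref{prop:Y}) and Proposition~\ref{lem:regul}, one shows $\int_0^m \ii{x_-(h)\ne -x_+(h)}\,\d h=0$, whence $x_-=-x_+$ a.e.\ on $[0,m]$ and then everywhere by left-continuity. Separately the paper records \eqref{symm2}$\Rightarrow$\eqref{symm1} via $\P(X\in A)=\int_A \tfrac1x\,\d G(x)$. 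Your route \eqref{symm5}$\Rightarrow$\eqref{symm1} through Theorem~\ref{th:main} is valid and conceptually clean---$\r(X,U)=-X$ a.s.\ forces $\r(x,u)=-x$ for $(\mu\times\mathrm{Leb})$-a.e.\ $(x,u)$, so in \eqref{eq:Eg(X)} each $X_{x,\r(x,u)}$ is a.e.\ the symmetric law $X_{x,-x}$, and a mixture of symmetric laws is symmetric. The trade-off is organizational: in the paper's logical order Theorem~\ref{th:main} is proved \emph{after} all the propositions of Section~\ref{discuss}, so invoking it here is a forward reference (though not a circular one, since the proof of Theorem~\ref{th:main} via Proposition~\ref{prop:cond-mean0} nowhere uses Proposition~\ref{prop:symm}). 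The paper's argument buys independence from the main theorem; yours buys a shorter and more transparent closing step.
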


Propositions~\ref{prop:cont} and \ref{lem:regul} can also be used to show that the term ``reciprocating function'' remains appropriate even when the d.f.\ of $X$ is not necessarily strictly increasing. Toward that end, let us first state 

\begin{proposition}\label{lem:hat=rr}
For any given $(x,u)\in\R\times[0,1]$, let 
\begin{equation*}
v:=\mathsf{v}(x,u):=
\begin{cases}
\frac{h-G(y+)}{G(y)-G(y+)} &\text{ if } x\ge0\ \&\ G(y)\ne G(y+),\\
\frac{h-G(y-)}{G(y)-G(y-)} &\text{ if } x\le0\ \&\ G(y)\ne G(y-),\\
1 &\text{otherwise,}
\end{cases}
\end{equation*}
where $h:=\tG(x,u)$ and $y:=\r(x,u)$; then  
\begin{equation}\label{eq:hat=rr}
	\r\big(\r(x,u),v\big)=\hat x(x,u).
\end{equation}
Moreover, the function $\mathsf{v}$ is Borel and takes its values in the interval $[0,1]$.  
\end{proposition}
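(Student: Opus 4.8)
The plan is to verify the identity \eqref{eq:hat=rr} by a direct case analysis, tracking how $\tG$ and the generalized inverses $x_\pm$ transform under one application of $\r$. The key observation is that, by the very definition \eqref{eq:r}, $\r$ swaps the two "sides" of the real line: for $x\ge0$ we have $\r(x,u)=x_-(h)\le0$ with $h=\tG(x,u)$, and applying $\r$ again lands us back on $[0,\infty]$ via $x_+$. So, writing $y:=\r(x,u)$ and $h:=\tG(x,u)$, we must show that $x_+\bigl(\tG(y,v)\bigr)=\hat x(x,u)=x_+(h)$ when $x\ge0$ (and symmetrically $x_-\bigl(\tG(y,v)\bigr)=x_-(h)$ when $x\le0$). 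Thus everything reduces to the claim that $\tG\bigl(\r(x,u),\mathsf{v}(x,u)\bigr)=h$, i.e.\ that the randomization level $v=\mathsf{v}(x,u)$ is chosen precisely so that the randomized $G$-value at $y$ equals the one we started from. First I would reduce to the case $x\ge0$ (the case $x\le0$ being symmetric under $x\mapsto-x$, using that $G$-on-the-negative-side mirrors $G$-on-the-positive-side in definitions \eqref{eq:H}--\eqref{eq:r}), and also dispose of the trivial case $x=0$, where $h=0$, $y=0$, and both sides of \eqref{eq:hat=rr} are $0$.

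So assume $x>0$ and $h=\tG(x,u)\in(0,m]$; then $y=x_-(h)\in[-\infty,0)$, and by Proposition~\ref{prop:finite} (or rather by the bound $h\le m$ together with property \eqref{x+-finite} of Proposition~\ref{lem:left-cont}) $y$ is finite. Next I would compute $\tG(y,v)$ from its definition \eqref{eq:H} on $[-\infty,0]$: $\tG(y,v)=G(y+)+\bigl(G(y)-G(y+)\bigr)v$. Now I split into the two branches of the definition of $\mathsf{v}$. If $G(y)\ne G(y+)$, then $v=\dfrac{h-G(y+)}{G(y)-G(y+)}$, and substituting gives $\tG(y,v)=G(y+)+(h-G(y+))=h$ — provided $v$ really lies in $[0,1]$, which is exactly the inequality $G(y+)\le h\le G(y)$; but this is precisely \eqref{eq:bet-} applied with $y=x_-(h)$. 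If instead $G(y)=G(y+)$, then $v=1$ and $\tG(y,v)=G(y)$, so I must show $G(y)=h$; here I would invoke \eqref{eq:bet-} again, which gives $G(x_-(h))\ge h$ and $G(x_-(h)+)\le h$, so $G(y)\ge h\ge G(y+)=G(y)$, forcing $G(y)=h$. In either branch $\tG(y,v)=h$, and then $\r(y,v)=x_+(\tG(y,v))=x_+(h)=\hat x(x,u)$ by definitions \eqref{eq:r} and \eqref{eq:hat}, which is \eqref{eq:hat=rr}.

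It remains to confirm the measurability claim and the range claim for $\mathsf{v}$. The range claim $\mathsf{v}(x,u)\in[0,1]$ is covered by the case analysis above: in the "otherwise" branch $v=1$, and in the two nontrivial branches the containment $v\in[0,1]$ is exactly \eqref{eq:bet-} (for $x\le0$, \eqref{eq:bet+}) as used above, after one checks the edge case where $x=0$ separately. For Borel measurability: $h=\tG(x,u)$ is Borel by Remark~\ref{rem:cont}\eqref{Borel}, $y=\r(x,u)$ is Borel by the same remark, the one-sided limits $G(\cdot+)$ and $G(\cdot-)$ are Borel because $G$ is monotone on each half-line, hence $G(y)$, $G(y+)$, $G(y-)$ are Borel functions of $(x,u)$; the three pieces of the definition of $\mathsf{v}$ are therefore Borel, and the sets on which each piece applies — namely $\{x\ge0,\ G(y)\ne G(y+)\}$, $\{x\le0,\ G(y)\ne G(y-)\}$, and the complement — are Borel, so $\mathsf{v}$ is a Borel function pieced together from Borel functions on Borel sets.

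The main obstacle I anticipate is bookkeeping rather than conceptual: one must be careful that the two overlapping branch-conditions ($x\ge0$ and $x\le0$ both include $x=0$) are consistent — at $x=0$ one has $h=0$, $y=0$, and $\tG(0,\cdot)=0$, so $\mathsf{v}(0,u)=1$ (the "otherwise" branch, since $G(0)=G(0+)=G(0-)=0$), and \eqref{eq:hat=rr} reads $\r(0,1)=\hat x(0,u)$, i.e.\ $0=0$, so there is no conflict — and that the degenerate possibilities $h=0$ (giving $y=x_-(0)=\sup\{x\le0\colon G(x)\ge0\}=0$) and $h=m$ are handled by the same formulas. I would also double-check the direction of the one-sided limits: on $[-\infty,0]$ the function $G$ is non-increasing and left-continuous by \eqref{eq:G properties}, so $G(y-)=G(y)$ and the jump of $G$ at $y$ is $G(y)-G(y+)\ge0$, which is why the relevant branch of $\mathsf{v}$ for $x\ge0$ uses $G(y+)$ and not $G(y-)$; matching this to \eqref{eq:H} and \eqref{eq:bet-} is the only place where a sign or side error could creep in.
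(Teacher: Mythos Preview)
Your proof is correct and follows essentially the same approach as the paper's: reduce to $x\ge0$, write $y=\r(x,u)=x_-(h)$, invoke \eqref{eq:bet-} to get $G(y+)\le h\le G(y)$ so that $v\in[0,1]$ and $\tG(y,v)=h$, and conclude $\r(y,v)=x_+(h)=\hat x(x,u)$. Your version is simply more explicit (splitting the two branches of $\mathsf{v}$ and treating edge cases separately); one small remark is that the claim ``$y$ is finite'' is unnecessary and not always true when $h=m$, but the argument goes through verbatim for $y=-\infty$ since then $G(y)=G(y+)=m$ and you land in the ``otherwise'' branch.
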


Now one is ready for 

\begin{proposition}\label{prop:recip}
There exists a r.v.\ $V$ taking its values in $[0,1]$ (and possibly dependent on $(X,U)$) such that $\r\big(\r(X,U),V\big)=X$ a.s. 
In parti\-cular, for any continuous r.v.\ $X$ one has $\r(\r(X))=X$ a.s.\ (recall here part~\eqref{r for cont} of Remark~\ref{rem:cont}). 
\end{proposition}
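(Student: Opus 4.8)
The plan is to deduce Proposition~\ref{prop:recip} almost immediately from the two preceding results, Proposition~\ref{lem:hat=rr} and Proposition~\ref{lem:regul}, using also Proposition~\ref{prop:finite} and the measurability already recorded in Remark~\ref{rem:cont}. The guiding idea is that Proposition~\ref{lem:hat=rr} already establishes the pointwise identity $\r\big(\r(x,u),\mathsf v(x,u)\big)=\hat x(x,u)$ for \emph{every} $(x,u)$, with $\mathsf v$ a Borel $[0,1]$-valued function; so once we know $\hat x(X,U)=X$ almost surely, the composition $V:=\mathsf v(X,U)$ does the job.

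Here is the sequence of steps. First, invoke Proposition~\ref{prop:finite} to fix a set of full probability on which $|\r(X,U)|<\infty$; on this set the quantity $\r(x,u)$ is a genuine real number, so the inner argument of $\mathsf v$ and of the outer $\r$ is in $\R$, and formula \eqref{eq:hat=rr} applies verbatim. Second, set $V:=\mathsf v(X,U)$; since $\mathsf v\colon\R\times[0,1]\to[0,1]$ is Borel by the last sentence of Proposition~\ref{lem:hat=rr} and $(X,U)$ is a random vector, $V$ is a bona fide $[0,1]$-valued r.v.\ (possibly dependent on $(X,U)$, exactly as the statement permits). Third, substitute $(x,u)=(X,U)$ into \eqref{eq:hat=rr} to get $\r\big(\r(X,U),V\big)=\hat x(X,U)$ almost surely. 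Fourth, apply Proposition~\ref{lem:regul}, which says $\hat x(X,U)=X$ almost surely, and combine the two a.s.\ identities to conclude $\r\big(\r(X,U),V\big)=X$ a.s.

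For the final ``in particular'' clause about continuous $X$: when $X$ is continuous its distribution $\mu$ is non-atomic, so by part~\eqref{r for cont} of Remark~\ref{rem:cont} the function $\r(x,u)$ does not depend on $u$, and we write $\r(x)$; likewise $\hat x(x,u)$ reduces to $\hat x(x)$ and $\mathsf v$ becomes immaterial. Then the identity above reads $\r(\r(X))=X$ a.s. Alternatively, and perhaps more transparently, one can note that for a continuous d.f.\ the function $G=G_X$ is continuous and $x_\pm$ are strictly monotone on the relevant ranges, so $x=x_+(h_1)$ for suitable $h_1$ for a.e.\ value of $x>0$ (and symmetrically for $x<0$), whence by part~(v) of Proposition~\ref{lem:hat} $\hat x(x)=x$ directly, recovering Proposition~\ref{lem:regul} in this special case.

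I do not anticipate a genuine obstacle: the two hard results have already been proved (Proposition~\ref{lem:hat=rr} is flagged in the text as among the most delicate), so this proposition is essentially a bookkeeping corollary. The only points requiring a word of care are (a) making sure the composition is well-defined, which is precisely what Proposition~\ref{prop:finite} guarantees, and (b) confirming that $V$ is measurable, which follows from the asserted Borel-ness of $\mathsf v$; both are one-line checks.
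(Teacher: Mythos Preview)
Your proposal is correct and follows exactly the paper's approach: the paper's proof is the single line ``This follows immediately from Propositions~\ref{lem:hat=rr} and \ref{lem:regul}, on letting $V:=\mathsf{v}(X,U)$,'' and you have simply unpacked that line with the appropriate measurability and well-definedness checks. One small remark: the appeal to Proposition~\ref{prop:finite} is not strictly necessary, since Proposition~\ref{lem:hat=rr} is stated for all $(x,u)\in\R\times[0,1]$ and $\r$ is defined on $[-\infty,\infty]\times[0,1]$, so the identity \eqref{eq:hat=rr} holds pointwise without first restricting to the finite-value set; but invoking it does no harm.
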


\begin{remark*}\label{rem:symm}
In general, the identity $\r(x,u)=-x$ for a symmetric r.v.\ $X$ does not have to hold for all $x\in\R$ and $u\in[0,1]$, even if $X$ is continuous. For example, let $X$ be uniformly distributed on $[-1,1]$ and $x>1$; then $\r(x,u)=\r(x)=-1\ne-x$ for all $u$. 
Moreover, then $\r(\r(x))=1\ne x$, so that the identity $\r(\r(x))=x$ does not have to hold for all $x\in\R$, even if $X$ is continuous. 
Furthermore, if $X$ is not continuous and $V$ is not allowed to depend on $(X,U)$, then 
the conclusion $\r\big(\r(X,U),V\big)=X$ a.s.\ 
in Proposition~\ref{prop:recip} will not hold in general. 
For instance, in Example~\ref{ex:discrete} one has $\r\big(\r(1,u),v\big)=\r\big(\r(2,u),v\big)=
\ii{v\le\tfrac35}+2\ii{v>\tfrac35}$ for all $u$ and $v$ in $[0,1]$; 
so, for any r.v.\ $V$ taking its values in $[0,1]$ and independent of $(X,U)$, one has
$\P\big(\r\big(\r(X,U),V\big)\ne X\big)\ge\frac3{10}\P(V\le\frac35)+\frac1{10}\P(V>\frac35)\ge\frac1{10}>0$.
\end{remark*}

\subsection{Variations on the disintegration theme} \label{vars} 
In this subsection we shall consider a formal extension of Theorem~\ref{th:main}, stated as Proposition~\ref{prop:main}, which is in fact equivalent to Theorem~\ref{th:main}, and yet is more convenient in certain applications. 
A number of propositions which are corollaries to Theorem~\ref{th:main} or Proposition~\ref{prop:main} will be considered here, including certain identities for the joint distribution of $X$ and $\r(X,U)$. As noted before,  Theorem~\ref{th:main} implies a certain disintegration of the zero-mean distribution of $X$ into a mixture of two-point zero-mean distributions \big(recall \eqref{eq:Eg(X)}\big). We shall prove that such a disintegration can be obtained directly as well, and that proof is much simpler than the proof of Theorem~\ref{th:main}.  

Let us now proceed by noting first a special case of \eqref{eq:main}, with $g(x,r):=\ii{x=0,r\ne0}$ for all real $x$ and $r$. Then it follows  that $\r(X,U)\ne0$ almost surely on the event $\{X\ne0\}$: 
\begin{equation}\label{eq:Xne0,R=0}
\P\big(X\ne0,\ \r(X,U)=0\big)=0,	
\end{equation}
since $\P\big(X_{a,b}\ne0,\ R_{a,b}=0\big)=0$ for any $a$ and $b$ with $ab\le0$. \big(In fact, \eqref{eq:Xne0,R=0} is part of Proposition~\ref{prop:supp}, which will be proved in Subsection~\ref{proofs:props} -- of course, without relying on \eqref{eq:main} -- and then used in the proof Theorem~\ref{eq:main}.\big) Since $\r(x,u)=0$ if $x=0$, \eqref{eq:Xne0,R=0} can be rewritten in the symmetric form, as
\begin{equation}\label{eq:XR<0 or X=R=0}
\P\big(X\,\r(X,U)<0\text{ or }X=\r(X,U)=0\big)=1.	
\end{equation}

Next, note that the formalization of \eqref{eq:cond-pair} given in Theorem~\ref{th:main} differs somewhat from the way in which the notion of the conditional distribution is usually understood. 
Yet, Theorem~\ref{th:main} and its extension, Theorem~\ref{th:F}, are quite convenient in the applications, such as Corollaries~\ref{cor:student-normal} and \ref{cor:stud-asymm}, and others. 
However, Theorem~\ref{th:main} can be presented in a more general form -- as a statement on the joint distribution of the ordered pair $\big(X,\r(X,U)\big)$ and the (unordered) set $\{X,\r(X,U)\}$, which may appear to be in better accordance with informal statement \eqref{eq:cond-pair}: 
\begin{proposition}\label{prop:main}
Let $g\colon\R^2\times\R^2\to\R$ be any Borel function bounded from below (or from above), which is symmetric in the pair $(\tx,\tr)$ of its last two arguments: 
\begin{equation}\label{eq:g-symm}
g(x,r;\tr,\tx)=g(x,r;\tx,\tr)
\end{equation}
for all real $x,r,\tx,\tr$. Then
\begin{equation*}
\begin{split}
	\E g\big(X,\r(X,U);X,\r(X,U) & \big)\\
	=\int_{\R\times[0,1]} 
	&\E g\big(X_{x,\r(x,u)},R_{x,\r(x,u)};x,\r(x,u)\big)\,\P(X\in \d x)\,\d u.
	\end{split}
\end{equation*}
Instead of the condition that $g$ be bounded from below or above, it is enough to require only that $g(x,r;\tx,\tr)-cx-\tilde c\tr$ be so for some real constants $c$, $\tilde c$ -- over all real $x,r,\tx,\tr$. 
\end{proposition}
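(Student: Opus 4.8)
The plan is to deduce Proposition~\ref{prop:main} directly from Theorem~\ref{th:main} by a simple change of variables, exploiting the hypothesis \eqref{eq:g-symm} that $g$ is symmetric in its last two arguments. First I would observe that, for fixed real $x$ and $r$ with $xr\le0$, the two-point set $\{x,r\}$ and the pair $(x,r)$ are both recoverable from the random pair $(X_{x,r},R_{x,r})$: indeed, by the definitions in \eqref{eq:Xab}--\eqref{eq:r_ab}, one has a.s.\ either $(X_{x,r},R_{x,r})=(x,r)$ or $(X_{x,r},R_{x,r})=(r,x)$, and the unordered pair $\{X_{x,r},R_{x,r}\}$ equals $\{x,r\}$ a.s.\ (this is noted in the text right after \eqref{eq:r_ab}). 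Consequently, by the symmetry \eqref{eq:g-symm} in the last two slots, we have the pointwise (a.s.) identity
\[
g\big(X_{x,r},R_{x,r};x,r\big)=g\big(X_{x,r},R_{x,r};X_{x,r},R_{x,r}\big),
\]
since the last two arguments $(x,r)$ and $(X_{x,r},R_{x,r})$ are either equal or a transposition of one another, and in either case plugging them into the symmetric-in-its-last-two-slots function $g$ gives the same value.

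Next I would define the auxiliary Borel function $h\colon\R^2\to\R$ by $h(\xi,\rho):=g(\xi,\rho;\xi,\rho)$, so that $h$ inherits measurability from $g$, and it is bounded from below (or above) precisely when $g$ is, or more generally $h(\xi,\rho)-c\xi$ is bounded below when $g(x,r;\tx,\tr)-cx-\tilde c\tr$ is, upon taking the constant to be $c+\tilde c$ along the diagonal $\tx=\xi,\tr=\rho$; I would check this accounting carefully. Applying Theorem~\ref{th:main} to $h$ gives
\[
\E h\big(X,\r(X,U)\big)=\int_{\R\times[0,1]}\E h\big(X_{x,\r(x,u)},R_{x,\r(x,u)}\big)\,\P(X\in\d x)\,\d u.
\]
The left-hand side is exactly $\E g\big(X,\r(X,U);X,\r(X,U)\big)$ by the definition of $h$. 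For the right-hand side, the integrand is $\E h\big(X_{x,\r(x,u)},R_{x,\r(x,u)}\big)=\E g\big(X_{x,\r(x,u)},R_{x,\r(x,u)};X_{x,\r(x,u)},R_{x,\r(x,u)}\big)$, and by the a.s.\ identity established above (with $r=\r(x,u)$, which satisfies $x\,r\le0$ by \eqref{eq:XR<0 or X=R=0}-type reasoning at the level of $X_{x,r}$, or more simply because $R_{x,r}\in\{x,r\}$), this equals $\E g\big(X_{x,\r(x,u)},R_{x,\r(x,u)};x,\r(x,u)\big)$. Substituting yields precisely the asserted identity.

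The main obstacle, such as it is, is bookkeeping rather than substance: one must make sure the integrability/boundedness hypotheses transfer correctly from $g$ to $h$, and that the exceptional $u=0$-type cases and the degenerate case $x\,\r(x,u)=0$ (where $X_{x,r}=R_{x,r}=0$) do not break the pointwise identity $g(X_{x,r},R_{x,r};x,r)=g(X_{x,r},R_{x,r};X_{x,r},R_{x,r})$ — but in the degenerate case all four arguments are $0$, so the identity is trivial there too. One should also verify that the case $\r(x,u)=\pm\infty$ causes no trouble; by Proposition~\ref{prop:finite} this happens only on a $\P(X\in\d x)\,\d u$-null set, exactly as in the proof of Theorem~\ref{th:main} itself, so it is harmless. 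Thus the proof is essentially a one-line reduction once the symmetry observation is in place.
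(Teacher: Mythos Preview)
Your reduction is correct and is in fact precisely the alternative route the paper itself sketches in the paragraph following Proposition~\ref{prop:main}: define $\tilde g(a,b):=g(a,b;a,b)$, apply Theorem~\ref{th:main} to $\tilde g$, and use the symmetry \eqref{eq:g-symm} together with the a.s.\ identity $\{X_{a,b},R_{a,b}\}=\{a,b\}$ (valid whenever $ab<0$ or $a=b=0$) to recover the four-argument form on the right-hand side. The paper's \emph{actual} proof, however, goes in the opposite direction: it proves Proposition~\ref{prop:main} directly from Proposition~\ref{prop:cond-mean0} (the conditional-mean-zero identity $\E X\psi(X,\r(X,U))=0$), by writing the difference of the two sides as $\E X\psi(X,\r(X,U))$ for the symmetric function $\psi(x,r):=\frac{g(x,r;x,r)-g(r,x;r,x)}{x-r}\,\ii{xr\le0,\,x\ne r}$, and only afterwards obtains Theorem~\ref{th:main} as a special case. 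So be aware of the circularity: in the paper's logical order Theorem~\ref{th:main} is deduced \emph{from} Proposition~\ref{prop:main}, not conversely; your argument is self-contained only if Theorem~\ref{th:main} has already been given an independent proof (which, as the paper notes, it can be --- by essentially the same computation).

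One small correction: in the degenerate case $x\ne0$, $\r(x,u)=0$ it is \emph{not} true that ``all four arguments are $0$'' --- only the first two are, since $X_{x,0}=R_{x,0}=0$ a.s.\ while the last two are $(x,0)$ --- so your pointwise identity $g(X_{x,r},R_{x,r};x,r)=g(X_{x,r},R_{x,r};X_{x,r},R_{x,r})$ actually fails there. What saves you is that, by \eqref{eq:XR<0 or X=R=0} (equivalently Proposition~\ref{prop:supp}), this occurs only on a $\P(X\in\d x)\,\d u$-null set; this is exactly why the paper's sketch restricts to pairs $(a,b)$ with ``$ab<0$ or $a=b=0$''.
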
 

Symmetry restriction \eqref{eq:g-symm} imposed on the functions $g$ in Proposition~\ref{prop:main} corresponds to the fact that the conditioning in \eqref{eq:cond} and \eqref{eq:cond-pair} 
is on the (unordered) set $\{X,\r(X,U)\}$, and of course not on the ordered pair $\big(X,\r(X,U)\big)$. Indeed, the
natural conditions $\psi(a,b)=\psi(b,a)=\tilde\psi(\{a,b\})$ (for all real $a$ and $b$) establish 
a one-to-one correspondence between the symmetric functions $(a,b)\mapsto\psi(a,b)$ of the ordered pairs $(a,b)$ and the functions $\{a,b\}\mapsto\tilde\psi(\{a,b\})$ of the sets $\{a,b\}$. This correspondence can be used to \emph{define} the Borel $\sigma$-algebra on the set of all sets of the form $\{a,b\}$ with real $a$ and $b$ as the $\sigma$-algebra generated by all symmetric Borel functions on $\R^2$. It is then with respect to this $\sigma$-algebra that the conditioning in the informal equation \eqref{eq:cond-pair} should be understood.  

Even if more cumbersome than Theorem~\ref{th:main}, Proposition~\ref{prop:main} will sometimes be more convenient to use. 
We shall prove Proposition~\ref{prop:main} (later in Section~\ref{sec:proofs}) and then simply note that Theorem~\ref{th:main} is a special case of Proposition~\ref{prop:main}. 

Alternatively, one could first prove Theorem~\ref{th:main} -- in a virtually the same way as Proposition~\ref{prop:main} is proved in this paper
\big(one only would have to use $g(a,b)$ instead of $g(a,b;a,b)[=g(a,b;b,a)]$\big), and then it would be easy to 
deduce the ostensibly more general Proposition~\ref{prop:main} 
from Theorem~\ref{th:main}, in view of \eqref{eq:XR<0 or X=R=0}.  Indeed, for any function $g$ as in Proposition~\ref{prop:main}, one can observe that $\E g\big(X,\r(X,U);X,\r(X,U)\big)=\E\tilde g\big(X,\r(X,U)\big)$ and 
$\E g\big(X_{a,b},R_{a,b};a,b\big)=\E\tilde g\big(X_{a,b},R_{a,b}\big)$ for all real $a$ and $b$ such that either $ab<0$ or $a=b=0$, 
where 
$\tilde g(a,b):=g(a,b;a,b)$. 

The following proposition, convenient in some applications, is a corollary of Proposition~\ref{prop:main}.  

\begin{proposition}\label{prop:g1g2}
Let $g:=g_1-g_2$, where 
$g_i\colon\R^2\times\R^2\to\R$ ($i=1,2$) are any Borel functions bounded from below (or from above), symmetric in their last two arguments. Suppose that
\begin{equation}\label{eq:g}
 \begin{gathered}
  g(0,0;0,0)=0;  \\ 
g(x,r;x,r)\,r=g(r,x;r,x)\,x \text{ for all real $x$ and $r$ with $xr<0$}.
 \end{gathered}
\end{equation}
Then $\E g_1\big(X,\r(X,U);X,\r(X,U)\big)=\E g_2\big(X,\r(X,U);X,\r(X,U)\big)$. 
\end{proposition}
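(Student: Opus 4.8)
The plan is to deduce Proposition~\ref{prop:g1g2} directly from Proposition~\ref{prop:main} by applying the latter to the function $g=g_1-g_2$ and then showing that the right-hand side integral vanishes. First I would set up the bookkeeping: by linearity and by Proposition~\ref{prop:main} applied to each $g_i$, we have
\begin{equation*}
\E g_1\big(X,\r(X,U);X,\r(X,U)\big)-\E g_2\big(X,\r(X,U);X,\r(X,U)\big)
=\int_{\R\times[0,1]}\E g\big(X_{x,\r(x,u)},R_{x,\r(x,u)};x,\r(x,u)\big)\,\P(X\in\d x)\,\d u,
\end{equation*}
so it suffices to prove that for $\P(X\in\d x)\,\d u$-almost every $(x,u)$ one has $\E g\big(X_{a,b},R_{a,b};a,b\big)=0$, where $a:=x$ and $b:=\r(x,u)$. (A small preliminary remark: Proposition~\ref{prop:main} requires $g$ itself to be bounded from below or above, or at least $g(x,r;\tx,\tr)-cx-\tilde c\tr$ to be so; since $g_1,g_2$ are each bounded from one side, $g=g_1-g_2$ satisfies the relaxed two-sided condition with $c=\tilde c=0$ after absorbing the two one-sided bounds, and one should check this fits the hypothesis — if not, one can run the argument separately on the positive and negative parts, or note that the conclusion only involves finite expectations so no integrability obstruction arises.)

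The core computation is the pointwise evaluation of $\E g\big(X_{a,b},R_{a,b};a,b\big)$ for fixed $a,b$ with $ab\le0$. There are two cases. If $ab=0$, then by the discussion around \eqref{eq:r_ab} one has $X_{a,b}=R_{a,b}=0$ a.s., and since (by Proposition~\ref{prop:supp}, or equivalently \eqref{eq:Xne0,R=0}) the relevant $(x,u)$ with $x\ne0$ have $\r(x,u)\ne0$, the only contribution from $ab=0$ comes from $x=0$, where $a=b=0$ and $g(0,0;0,0)=0$ by the first line of \eqref{eq:g}; so these $(x,u)$ contribute nothing. If $ab<0$, write $a=x$, $b=\r(x,u)$ with $xb<0$. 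Using \eqref{eq:Xab} and the fact (from the text after \eqref{eq:r_ab}) that $R_{a,b}=b$ a.s.\ on $\{X_{a,b}=a\}$ and $R_{a,b}=a$ a.s.\ on $\{X_{a,b}=b\}$, together with the symmetry $g(x,r;\tr,\tx)=g(x,r;\tx,\tr)$ which lets me replace the last pair of arguments $(\tx,\tr)$ by the unordered data $\{a,b\}$ (so effectively a function of $(X_{a,b},R_{a,b})$ alone), I get
\begin{equation*}
\E g\big(X_{a,b},R_{a,b};a,b\big)=\frac{b}{b-a}\,g(a,b;a,b)+\frac{a}{a-b}\,g(b,a;b,a)
=\frac{1}{b-a}\big(b\,g(a,b;a,b)-a\,g(b,a;b,a)\big),
\end{equation*}
and the second line of \eqref{eq:g} (with the roles $x\leftrightarrow a$, $r\leftrightarrow b$, noting $ab<0$) says exactly $b\,g(a,b;a,b)=a\,g(b,a;b,a)$, so the bracket is zero. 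Hence $\E g\big(X_{a,b},R_{a,b};a,b\big)=0$ for all $(a,b)$ with $ab<0$ as well, the integrand vanishes identically (off a null set for the $ab=0$ part), and the integral is zero, giving the claim.

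I expect the main obstacle to be purely a matter of matching the hypotheses of Proposition~\ref{prop:main} to the difference $g=g_1-g_2$ — specifically confirming that the ``$g(x,r;\tx,\tr)-cx-\tilde c\tr$ bounded from one side'' relaxation covers the case of a difference of two one-sidedly bounded functions, and being careful that all the expectations appearing are genuinely finite so that the subtraction $\E g_1-\E g_2$ is legitimate (this is why the relaxed integrability clause of Proposition~\ref{prop:main} is stated, and it should be invoked explicitly). The algebraic heart — the two-case evaluation using \eqref{eq:Xab}, the a.s.\ identities for $R_{a,b}$, and the symmetry \eqref{eq:g-symm} to pass from ordered pairs to the unordered set — is routine once the conditions \eqref{eq:g} are read in the right coordinates; the only subtlety is handling the degenerate locus $ab=0$, which is dispatched by $g(0,0;0,0)=0$ together with \eqref{eq:Xne0,R=0}.
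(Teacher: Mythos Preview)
Your proposal is correct and follows essentially the same route as the paper's proof, which simply says the result ``follows immediately from Proposition~\ref{prop:main} and \eqref{eq:XR<0 or X=R=0}''; you have unpacked exactly those two ingredients. One small clean-up: your parenthetical worry about whether $g=g_1-g_2$ meets the hypotheses of Proposition~\ref{prop:main} is unnecessary, since you actually apply Proposition~\ref{prop:main} to $g_1$ and $g_2$ separately (each bounded from one side), obtain two identities with nonnegative integrands (after shifting by a constant), show the integrands agree a.e., and conclude equality of the left-hand sides in $[0,\infty]$ --- no subtraction at the level of possibly-infinite expectations is needed.
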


Proposition~\ref{prop:g1g2} allows one to easily obtain identities for the distribution of the ordered pair $\big(X,\r(X,U)\big)$ or, more generally, for the conditional distribution of $\big(X,\r(X,U)\big)$ given the random set $\{X,\r(X,U)\}$. 

For instance, letting $g(x,r;\tx,\tr):=x\,\psi(\tx,\tr)$, one obtains 
the following proposition, which states that the conditional expectation of $X$ given the random set $\{X,\r(X,U)\}$ is zero:
$$\E\big(X\,|\,\{X,\r(X,U)\}\big)=0.$$
More formally, one has
\begin{proposition}\label{prop:cond-mean0}
Suppose that $\psi\colon\R^2\to\R$ is a symmetric Borel function, so that $\psi(x,r)=\psi(r,x)$ for all real $x$ and $r$. Suppose also that the function $(x,r)\mapsto x\,\psi(x,r)$ is bounded on $\R^2$. Then
\begin{equation*}
	\E X\psi\big(X,\r(X,U)\big)=0.
\end{equation*}
\end{proposition}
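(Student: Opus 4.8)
The plan is to derive Proposition~\ref{prop:cond-mean0} as an immediate specialization of Proposition~\ref{prop:g1g2} with a suitable choice of $g_1$ and $g_2$. Specifically, I would set
\[
g_1(x,r;\tx,\tr):=\big(x+r\big)^+\,\psi(\tx,\tr)
\qquad\text{and}\qquad
g_2(x,r;\tx,\tr):=\big(x+r\big)^-\,\psi(\tx,\tr),
\]
so that $g:=g_1-g_2$ satisfies $g(x,r;\tx,\tr)=(x+r)\,\psi(\tx,\tr)$; more simply, one may directly take $g(x,r;\tx,\tr):=x\,\psi(\tx,\tr)$ once one checks this fits the hypotheses of Proposition~\ref{prop:g1g2} after splitting into positive and negative parts. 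Either $g_i$ is symmetric in the last two arguments because $\psi$ is, and each is bounded from below (or above) using the assumed boundedness of $(x,r)\mapsto x\,\psi(x,r)$ together with the symmetry $\psi(x,r)=\psi(r,x)$, which lets one bound $r\,\psi(x,r)$ as well. Then $\E g_1(\dots)=\E g_2(\dots)$ gives $\E\big((X+\r(X,U))\,\psi(X,\r(X,U))\big)=0$.

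The key step is to verify condition \eqref{eq:g}. The first part, $g(0,0;0,0)=0$, is clear since $g(0,0;0,0)=0\cdot\psi(0,0)=0$. For the second part, one needs
\[
g(x,r;x,r)\,r=g(r,x;r,x)\,x\quad\text{for all real }x,r\text{ with }xr<0.
\]
With $g(x,r;\tx,\tr)=x\,\psi(\tx,\tr)$ one computes $g(x,r;x,r)\,r=x\,\psi(x,r)\,r$ and $g(r,x;r,x)\,x=r\,\psi(r,x)\,x$, and these agree because $\psi(x,r)=\psi(r,x)$. Hence Proposition~\ref{prop:g1g2} applies and yields
\[
\E\big(X\,\psi(X,\r(X,U))\big)=\E\big((-\r(X,U))\,\psi(X,\r(X,U))\big).
\]

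To conclude, I would combine this with the symmetric statement obtained by the same argument applied with the roles of the two coordinates exchanged, or more directly observe that by \eqref{eq:XR<0 or X=R=0} we have $X\,\r(X,U)<0$ or $X=\r(X,U)=0$ almost surely, so on the event $\{X=\r(X,U)=0\}$ the integrand $X\,\psi(X,\r(X,U))$ vanishes, and on the complement one may apply Proposition~\ref{prop:g1g2} with the choice $g(x,r;\tx,\tr):=\tfrac12(x-r)\,\psi(\tx,\tr)$: this $g$ is still symmetric in its last two arguments, satisfies $g(0,0;0,0)=0$, and the identity $g(x,r;x,r)\,r=g(r,x;r,x)\,x$ for $xr<0$ reduces (using $\psi(x,r)=\psi(r,x)$) to $\tfrac12(x-r)\,r=\tfrac12(r-x)\,x$, i.e. $(x-r)r=(r-x)x$, i.e. $(x-r)(r+x)=0$ — which need not hold. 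So instead I would simply run Proposition~\ref{prop:g1g2} once with $g(x,r;\tx,\tr):=x\,\psi(\tx,\tr)$ \emph{and} once with $g(x,r;\tx,\tr):=r\,\psi(\tx,\tr)$; each choice independently satisfies \eqref{eq:g} (the second because $\psi$ is symmetric, so $r\psi(x,r)\cdot r$ versus $x\psi(r,x)\cdot x$... ) — actually the cleanest route is: apply it with $g(x,r;\tx,\tr):=x\,\psi(\tx,\tr)$ to get $\E X\psi=\E(-\r(X,U))\psi$, apply it with $g(x,r;\tx,\tr):=r\,\psi(\tx,\tr)$ to get $\E\r(X,U)\psi=\E(-X)\psi$ (these are consistent), and then use that the conditional distribution of $X$ given $\{X,\r(X,U)\}=\{a,b\}$ is $X_{a,b}$ with $\E X_{a,b}=0$ via \eqref{eq:Eg(X)} applied to the function $x\mapsto x$ — precisely, take $g(x):=x\,\psi$-weighted appropriately in \eqref{eq:Eg(X)}, noting $\E X_{x,\r(x,u)}=0$. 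The main obstacle I anticipate is purely bookkeeping: ensuring the chosen $g_i$ (or the single $g$) genuinely meets the boundedness-from-one-side hypothesis and the algebraic identity \eqref{eq:g} simultaneously; the symmetry of $\psi$ is exactly what makes \eqref{eq:g} work, and once that is in place the result is immediate.
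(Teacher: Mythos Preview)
Your approach is circular. In this paper, Proposition~\ref{prop:g1g2} is deduced from Proposition~\ref{prop:main}, and Proposition~\ref{prop:main} is deduced from Proposition~\ref{prop:cond-mean0} (see the proof of Proposition~\ref{prop:main}: ``It remains to refer to Proposition~\ref{prop:cond-mean0}''). The paper says this explicitly right after stating Proposition~\ref{prop:cond-mean0}: it \emph{is} formally a special case of Proposition~\ref{prop:g1g2}, but logically it is the base case from which \ref{prop:main} and \ref{prop:g1g2} are derived. So invoking Proposition~\ref{prop:g1g2} here assumes what you are trying to prove. The same circularity applies to your fallback via \eqref{eq:Eg(X)}, which is a corollary of Theorem~\ref{th:main}, itself a special case of Proposition~\ref{prop:main}.

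Even setting the circularity aside, your argument does not reach the conclusion. With $g(x,r;\tx,\tr)=x\,\psi(\tx,\tr)$, Proposition~\ref{prop:g1g2} would give you $\E X\psi(X,R)=\E R\psi(X,R)$ (writing $R=\r(X,U)$), not $\E X\psi(X,R)=0$; your attempts to close this gap (swapping roles, taking $\tfrac12(x-r)\psi$, etc.) either repeat the same identity or fail the algebraic check in \eqref{eq:g}, as you yourself observe. There is also a boundedness issue: the hypothesis is that $(x,r)\mapsto x\,\psi(x,r)$ is bounded on $\R^2$, which does not make $(x,r;\tx,\tr)\mapsto (x+r)^{\pm}\psi(\tx,\tr)$ bounded from one side on $\R^4$.

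The paper's actual proof is independent of Propositions~\ref{prop:main} and \ref{prop:g1g2}: it reduces, via a layer-cake decomposition of $\psi$, to checking that the signed measure $A\mapsto\E X\psi_A(X,R)$ vanishes on sets $A=(-\infty,a)\times(b,\infty)$, and this is established directly from Proposition~\ref{lem:H} through Lemmas~\ref{lem:e1} and \ref{lem:e}.
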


While Proposition~\ref{prop:cond-mean0} is a special case of Proposition~\ref{prop:g1g2} and hence of Proposition~\ref{prop:main}, the general case presented in Proposition~\ref{prop:main} will be shown to follow rather easily from this special case; essentially, this easiness is due to the fact that a distribution on a given two-point set is uniquely determined if the mean of the distribution is known -- to be zero, say, or to be any other given value.


Looking back at \eqref{eq:Xne0,R=0}, one can see that the ratio $\frac X{\r(X,U)}$ can be conventionally defined almost surely on the event $\{X\ne0\}$; let also $\frac X{\r(X,U)}:=-1$ on the event $\{X=0\}$. 
Letting then $g(x,r;\tr,\tx):=\big(\psi(r,x)+\psi(x,r)\,\frac xr\big)\ii{xr<0}\,\vpi(\tx,\tr)$ for all real $x,r,\tx,\tr$, 
where $\psi$ is any nonnegative Borel function and $\vpi$ is any symmetric nonnegative Borel function, 
one obtains from Proposition~\ref{prop:g1g2} the identity
\begin{equation}\label{eq:1+x/r}
	\E\psi\big(X,\r(X,U)\big)\,\frac X{\r(X,U)}\,
\vpi\big(X,\r(X,U)\big)=
-\E\psi\big(\r(X,U),X\big)\,
\vpi\big(X,\r(X,U)\big).
\end{equation}
In particular, letting here $\psi=1$, one sees
that the conditional expectation of $\dfrac X{\r(X,U)}$ given the two-point set $\{X,\r(X,U)\}$ is $-1$:
\begin{equation*}
		\E\Big(\frac X{\r(X,U)}\Big|\,\{X,\r(X,U)\}\Big)
=-1.
\end{equation*}
It further follows that   
\begin{equation}\label{eq:Ex/r}
	\E\frac X{\r(X,U)}=-1.
\end{equation}

On the other hand, letting $\frac{\r(X,U)}X:=-1$ on the event $\{X=0\}$, one has 

\begin{proposition}\label{prop:Er/x}
   If $X$ is symmetric, then $\E\frac{\r(X,U)}X=-1$; otherwise,   
\begin{equation}\label{eq:Er/x}
 \E\frac{\r(X,U)}X<-1. 
\end{equation}
\end{proposition}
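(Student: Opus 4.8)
The plan is to derive the statement from the already-established identity $\E\frac{X}{\r(X,U)}=-1$ (equation \eqref{eq:Ex/r}), combined with a pointwise almost-sure bound and the symmetry criterion of Proposition~\ref{prop:symm}. This route avoids applying the disintegration Theorem~\ref{th:main} to the unbounded function $(x,r)\mapsto r/x$ (which is bounded neither from below nor from above, even after subtracting a multiple of $x$), so no truncation is needed.

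First I would record the pointwise fact that, almost surely,
\[
\frac{X}{\r(X,U)}+\frac{\r(X,U)}X\le-2,
\]
with equality if and only if $\r(X,U)=-X$. Indeed, by \eqref{eq:XR<0 or X=R=0} together with Proposition~\ref{prop:finite}, almost surely either $X=\r(X,U)=0$ — in which case both ratios equal $-1$ by the adopted convention and the sum is $-2$ — or $X\,\r(X,U)<0$ with $X$ and $\r(X,U)$ finite and nonzero; in the latter case $\frac st+\frac ts=\frac{s^2+t^2}{st}\le\frac{-2st}{st}=-2$, since division by the negative number $st$ reverses the inequality $s^2+t^2\ge2|st|=-2st$, with equality precisely when $|s|=|t|$, i.e.\ when $t=-s$. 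In particular $\frac{\r(X,U)}X<0$ a.s., so $\E\frac{\r(X,U)}X$ is a well-defined element of $[-\infty,0)$.

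Next I would set $A:=-2-\frac{X}{\r(X,U)}-\frac{\r(X,U)}X\ge0$ a.s. Since $\E\frac{X}{\r(X,U)}=-1$ is finite by \eqref{eq:Ex/r} and $\frac{\r(X,U)}X$ is a.s.\ of one sign, additivity of expectation applies and gives $\E A=-2+1-\E\frac{\r(X,U)}X=-1-\E\frac{\r(X,U)}X$. From $A\ge0$ we immediately obtain $\E\frac{\r(X,U)}X\le-1$. To separate the two cases: if $X$ is symmetric then $\r(X,U)=-X$ a.s.\ by Proposition~\ref{prop:symm} (equivalence of \eqref{symm1} and \eqref{symm5}), whence $\frac{\r(X,U)}X=-1$ a.s.\ and $\E\frac{\r(X,U)}X=-1$; conversely, $\E\frac{\r(X,U)}X=-1$ forces $\E A=0$, hence $A=0$ a.s., hence $\r(X,U)=-X$ a.s.\ by the equality clause above, hence $X$ symmetric again by Proposition~\ref{prop:symm}. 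Therefore, when $X$ is not symmetric, $A>0$ on a set of positive probability, so $\E A>0$ and $\E\frac{\r(X,U)}X<-1$.

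I expect the only delicate point to be the bookkeeping in the middle step — justifying that $\E\bigl[\frac X{\r(X,U)}+\frac{\r(X,U)}X\bigr]$ splits as the sum of the two individual expectations. This is harmless here: one summand is integrable by \eqref{eq:Ex/r} and the other is a.s.\ non-positive, so no $\infty-\infty$ indeterminacy arises, and the argument goes through verbatim even in the (a priori possible) case $\E\frac{\r(X,U)}X=-\infty$.
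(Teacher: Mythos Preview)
Your proof is correct. Both your argument and the paper's ultimately rest on the same elementary inequality $\frac st+\frac ts\le-2$ for $st<0$ (with equality iff $t=-s$) together with the equivalence \eqref{symm1}$\Leftrightarrow$\eqref{symm5} of Proposition~\ref{prop:symm}; the difference is only in how that inequality is packaged. The paper applies Theorem~\ref{th:main} directly to $g(x,r)=r/x$ and reads off the bound from the formula $\E\frac{R_{a,b}}{X_{a,b}}=-1+\frac{(a+b)^2}{ab}$ in \eqref{eq:ER/X}, whereas you start from the already-established scalar identity \eqref{eq:Ex/r} and use the pointwise inequality on $X$ and $\r(X,U)$ themselves. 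Your route is a shade more explicit about integrability: the paper's use of Theorem~\ref{th:main} tacitly relies on the fact that, by \eqref{eq:XR<0 or X=R=0}, one may redefine $g$ to be (say) $-1$ off $\{xr<0\}\cup\{x=r=0\}$ without altering either side of \eqref{eq:main}, after which $g\le0$ and the theorem applies; you sidestep that small bookkeeping step altogether.
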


The contrast between \eqref{eq:Ex/r} and \eqref{eq:Er/x} may appear surprising, as an ostensible absence of interchangeability between $X$ and $\r(X,U)$. However, this does not mean that the construction of the reciprocating function is deficient in any sense. In fact, as mentioned before, the disintegration based on $\r$ will be shown to be optimal in a variety of senses. 
Also, 
such ``non-interchangeability'' of $X$ and $\r(X,U)$ manifests itself even in the case of a ``pure'' two-point zero-mean distribution:  
\begin{equation}\label{eq:ER/X}
	\E\frac{X_{a,b}}{R_{a,b}}=-1,\quad
	\E\frac{R_{a,b}}{X_{a,b}}=-1+\frac{(a+b)^2}{ab}
\end{equation}
for all $a$ and $b$ with $ab<0$; recall \eqref{eq:r_ab}. 

The ``strange'' inequality $\E\frac X{\r(X,U)}\ne\E\frac{\r(X,U)}X$ (unless $X$ is symmetric) is caused only by the use of an inappropriate averaging measure -- which is the distribution of r.v.\ $X$, just one r.v.\ of the pair $\big(X,\r(X,U)\big)$ -- and this choice of one r.v.\ over the other breaks the symmetry. Here is how this concern is properly addressed:

\begin{proposition}\label{prop:Y,r(Y)}
For r.v.'s $Y$ and $Y_\pm$ described in the paragraph containing  \eqref{eq:P(Y)}, 
\begin{gather}
	\big(Y,\r(Y,U)\big)\D\big(\r(Y,U),Y\big);\label{eq:Yinterchange}\\
\big(Y_+,\r(Y_+,U)\big)\D\big(\r(Y_-,U),Y_-\big)
\D\big(x_+(H),x_-(H)\big);\label{eq:Y+-interchange} \\
\big\{Y,\r(Y,U)\big\}\D\big\{Y_+,\r(Y_+,U)\big\}
\D\big\{Y_-,\r(Y_-,U)\big\}
\D\big\{x_+(H),x_-(H)\big\},\label{eq:Ysets}
\end{gather} 
where $H$ is any r.v.\ uniformly distributed on $[0,m]$. 
In particular, $\r(Y,U)\D Y$, $\r(Y_+,U)\D Y_-\D x_-(H)$, $\r(Y_-,U)\D Y_+\D x_+(H)$, $\dfrac{\r(Y,U)}Y\D\dfrac Y{\r(Y,U)}$, and 
$\E\dfrac{\r(Y,U)}Y=\E\dfrac Y{\r(Y,U)}\big(<-1$ except when $X$ is symmetric, in which case one has ``$=-1$'' in place of ``$<-1$''\big); recall that $Y$, $Y_+$, and $Y_-$ are almost surely nonzero, by Proposition~\ref{prop:Y}.
\end{proposition}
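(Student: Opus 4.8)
The plan is to derive everything in Proposition~\ref{prop:Y,r(Y)} from the disintegration (Theorem~\ref{th:main} / Proposition~\ref{prop:main}) together with Propositions~\ref{prop:Y} and~\ref{prop:Y,r(Y)}'s weighting identities \eqref{eq:Ef(Y)}. First I would establish \eqref{eq:Yinterchange}. Using \eqref{eq:Ef(Y)}, for any bounded Borel $f\colon\R^2\to\R$ one has $\E f\big(Y,\r(Y,U)\big)=\tfrac1{2m}\,\E|X|\,f\big(X,\r(X,U)\big)$. Now apply Proposition~\ref{prop:main} (or directly Proposition~\ref{prop:g1g2}) with the integrand $g(x,r;\tx,\tr):=|x|\,f(x,r)$ versus $\tilde g(x,r;\tx,\tr):=|r|\,f(r,x)$: on the two-point set $\{a,b\}$ with $ab<0$ one checks that $\E\big[|X_{a,b}|\,f(X_{a,b},R_{a,b})\big]=\E\big[|R_{a,b}|\,f(R_{a,b},X_{a,b})\big]$, because $X_{a,b}$ takes value $a$ with probability $\tfrac{b}{b-a}$ (so $|a|\cdot\tfrac{b}{b-a}=\tfrac{|ab|}{|a-b|}$, symmetric in $a,b$) and in that case $R_{a,b}=b$; the $ab=0$ case is trivial since then $X_{a,b}=R_{a,b}=0$ a.s. Averaging this equality against $\P(X\in\d x)\,\d u$ gives $\E|X|\,f\big(X,\r(X,U)\big)=\E|X|\,f\big(\r(X,U),X\big)$ — here I would use \eqref{eq:XR<0 or X=R=0} to know that $|X|=|\r(X,U)|$ would be false, so I must be careful: actually the cleaner route is to observe directly that $|R_{a,b}|\cdot\tfrac{a}{a-b}=\tfrac{|ab|}{|a-b|}$ as well, so the weight $\tfrac1{2m}|X|$ gets carried to the $R$-slot correctly. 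Dividing by $2m$ yields \eqref{eq:Yinterchange}.

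Next, \eqref{eq:Y+-interchange}. The plan is to use the second identity in \eqref{eq:Ef(Y)}: $\E f\big(Y_+,\r(Y_+,U)\big)=\tfrac1m\,\E|X^+|\,f\big(X,\r(X,U)\big)=\tfrac1m\,\E X\ii{X>0}\,f\big(X,\r(X,U)\big)$. Apply the disintegration: on a two-point set $\{a,b\}$ with, say, $a<0<b$, the term $X_{a,b}\ii{X_{a,b}>0}$ equals $b$ on the event $\{X_{a,b}=b\}$, which has probability $\tfrac{a}{a-b}=\tfrac{|a|}{|a|+b}$, and there $R_{a,b}=a=x_-$; meanwhile the first coordinate of the pair $\big(x_+(H),x_-(H)\big)$ is $b=x_+$ when the markers land on this set. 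So the weighted restriction of $\big(X_{a,b},R_{a,b}\big)$ to $\{X_{a,b}>0\}$ is a point mass at $(b,a)$ with mass $\tfrac{|ab|}{m(|a|+b)}\cdot\tfrac{1}{?}$ — and by Proposition~\ref{prop:Y} the distribution of $\tG(Y,U)$ (equivalently the mixing over markers) is uniform on $[0,m]$, which is exactly the distribution of $H$; so the mixture assembles into $\big(x_+(H),x_-(H)\big)$. The symmetric computation with $X^-$ gives $\big(Y_-,\r(Y_-,U)\big)\D\big(x_-(H),x_+(H)\big)$, and composing with the coordinate swap gives $\big(\r(Y_-,U),Y_-\big)\D\big(x_+(H),x_-(H)\big)$; combining with the $Y_+$ computation closes \eqref{eq:Y+-interchange}. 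The key bookkeeping fact making this work is Proposition~\ref{prop:Y}: $\tG(Y_\pm,U)$ and $\tG(Y,U)$ are all uniform on $[0,m]$, and $\r$ applied to $x\ge0$ is $x_-(\tG(x,u))$, so $\big(\hat x(X,U),\r(X,U)\big)$ on $\{X>0\}$, reweighted, literally becomes $\big(x_+(H),x_-(H)\big)$ after invoking Proposition~\ref{lem:regul} to replace $\hat x(X,U)$ by $X$ a.s.

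Then \eqref{eq:Ysets} is an immediate consequence: the unordered set $\{x_+(H),x_-(H)\}$ is invariant under the coordinate swap, so all four ordered-pair distributions appearing in \eqref{eq:Yinterchange}–\eqref{eq:Y+-interchange} push forward to the same law of the unordered pair. The ``in particular'' clauses follow by taking marginals: from \eqref{eq:Yinterchange} the law of $\big(Y,\r(Y,U)\big)$ is swap-invariant, hence $\r(Y,U)\D Y$ and $\tfrac{\r(Y,U)}{Y}\D\tfrac{Y}{\r(Y,U)}$ (the ratio map $(x,r)\mapsto r/x$ becomes $(x,r)\mapsto x/r$ under the swap, and on the relevant event $X\r(X,U)<0$ these are well-defined and finite by \eqref{eq:XR<0 or X=R=0} and Proposition~\ref{prop:finite}); taking expectations gives $\E\tfrac{\r(Y,U)}Y=\E\tfrac Y{\r(Y,U)}$, and the value ($=-1$ in the symmetric case, $<-1$ otherwise) is read off from \eqref{eq:Ex/r} and Proposition~\ref{prop:Er/x} after transporting the identity from the $X$-average to the $Y$-average. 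From \eqref{eq:Y+-interchange} the marginals give $\r(Y_+,U)\D Y_-\D x_-(H)$ and $\r(Y_-,U)\D Y_+\D x_+(H)$.

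The main obstacle I anticipate is the careful handling of the weighting in \eqref{eq:Y+-interchange}: one must verify that restricting the disintegrated pair $\big(X_{x,\r(x,u)},R_{x,\r(x,u)}\big)$ to the event $\{$first coordinate $>0\}$ and reweighting by $|X^+|/m$ produces precisely the law of $\big(x_+(H),x_-(H)\big)$ with $H$ uniform on $[0,m]$ — this requires matching the marker-level probabilities $\tfrac{a}{a-b}$ against the uniform-on-$[0,m]$ statement of Proposition~\ref{prop:Y} and correctly accounting for atoms via the randomization $U$, which is exactly the content that Propositions~\ref{prop:cont}, \ref{prop:Y}, and \ref{lem:regul} are designed to supply. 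Once that identification is set up cleanly, everything else is routine marginalization and swap-invariance.
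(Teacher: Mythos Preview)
Your approach is essentially the same as the paper's. For \eqref{eq:Yinterchange} and the first equality in \eqref{eq:Y+-interchange}, the paper applies Proposition~\ref{prop:g1g2} with $g_1(x,r;\tx,\tr)=|x|\psi(x,r)$, $g_2(x,r;\tx,\tr)=|x|\psi(r,x)$ (respectively $x^+\psi(x,r)$ and $-x^-\psi(r,x)$), which is exactly your two-point verification repackaged. For the second equality in \eqref{eq:Y+-interchange}, the paper does precisely what you outline: set $H:=\tG(Y_-,U)$, invoke Proposition~\ref{prop:Y} for uniformity, then use Proposition~\ref{lem:regul} to get $x_-(H)=\hat x(Y_-,U)\overset{\text{a.s.}}{=}Y_-$ and the definition \eqref{eq:r} to get $x_+(H)=\r(Y_-,U)$. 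Your derivation of \eqref{eq:Ysets} and the marginal consequences matches as well.

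One small point where you are vaguer than the paper: the strict inequality $<-1$ does not come from ``transporting'' Proposition~\ref{prop:Er/x} or \eqref{eq:Ex/r} to the $Y$-average---the weightings differ and this transport is not immediate. The paper instead argues directly: since $\tfrac{a}{b}+\tfrac{b}{a}<-2$ whenever $ab<0$ and $a\ne -b$, the already-established equality $\E\tfrac{\r(Y,U)}{Y}=\E\tfrac{Y}{\r(Y,U)}$ forces both to equal $\tfrac12\big(\E\tfrac{\r(Y,U)}{Y}+\E\tfrac{Y}{\r(Y,U)}\big)<-1$ unless $\r(Y,U)=-Y$ a.s., and then one appeals to \eqref{eq:P(Y)} and Proposition~\ref{prop:symm} to translate this into symmetry of $X$.
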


Just as in Proposition~\ref{prop:main} versus \eqref{eq:cond-pair}, the equalities in distribution of the random two-point sets in \eqref{eq:Ysets} are understood as the equalities of the expected values of (say all nonnegative Borel) \emph{symmetric} functions of the corresponding ordered pairs of r.v.'s. 

Proposition~\ref{prop:Y,r(Y)} and, especially, relations \eqref{eq:Y+-interchange} suggest an alternative way to construct the reciprocating function $\r$. Namely, one could start with an arbitrary r.v.\ $H$ uniformly distributed in $[0,m]$ and then let $Y_\pm:=x_\pm(H)$. Then, by a disintegration theorem for the joint distribution of two r.v.'s (see e.g.\ \cite[Proposition~B.1]{gangbo}), there exist measurable functions $\r_\pm$ such that $\big(Y_+,\r_-(Y_+,U)\big)\D(Y_+,Y_-)
\D\big(\r_+(Y_-,U),Y_-\big)$; cf.\ \eqref{eq:Y+-interchange}. Finally, one would let $\r(y,u):=\r_\pm(y,u)$ if $\mp y>0$. 
However, this approach appears less constructive than the one represented by \eqref{eq:r} and thus will not be pursued here. 

Going back to \eqref{eq:1+x/r} and letting there $\vpi=1$ and $\psi(x,r)\equiv\ii{(x,r)\in A}$ for an arbitrary $A\in\B(\R^2)$, one has 
\begin{equation*}
 \mu_{(R,X)}(A)=\int_A\frac{-x}r\;\mu_{(X,R)}(\d x\times\d r),
\end{equation*}
where $R:=\r(X,U)$ and $\mu_Z$ denotes the distribution of a random point $Z$, with the rule $\frac{-0}0:=1$.  
This means that the distribution of the random point $\big(\r(X,U),X\big)$ is absolutely continuous relative to that of $\big(X,\r(X,U)\big)$, with the function  $(x,r)\mapsto\ii{x=r=0}+\frac{-x}r\,\ii{xr<0}$ as a Radon-Nikodym derivative.  

Specializing further, with $A$ of the form $B\times\R$ for some $B\in\B(\R)$, one has 
\begin{align*}
 \P\big(\r(X,U)\in B\big)
&=\E\ii{X\in B}\,\frac{-X}{\r(X,U)} \\
&=\int_{B\times[0,1]}\frac{-x}{\r(x,u)}\,\P(X\in\d x)\,\d u
=\int_B\P(X\in\d x)\,\int_0^1\frac{-x}{\r(x,u)}\,\d u,
\end{align*}
so that the distribution of $\r(X,U)$ is absolutely continuous relative to that of $X$, with the function $x\mapsto\int_0^1\frac{-x}{r(x,u)}\,\d u$ as a Radon-Nikodym derivative.  

Recall now the special case \eqref{eq:Eg(X)} of \eqref{eq:main}. In particular, identity \eqref{eq:Eg(X)} implies that an arbitrary zero-mean distribution can be represented as the mixture of two-point zero-mean distributions. 
However, such a mixture representation by itself is much easier to prove (and even to state) than Theorem~\ref{th:main}. For instance, one has 

\begin{proposition}\label{prop:mix}
Let $g\colon\R\to\R$ be any Borel function bounded from below (or from above) such that $g(0)=0$. Then
\begin{equation}\label{eq:Eg(X) other}
	\E g(X)=\int_0^m 
	\E g(X_h)\,\frac{\d h}{\E X_h^{\,+}},
\end{equation}
where $X_h:=X_{x_+(h),x_-(h)}$. 
\end{proposition}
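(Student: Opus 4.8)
The plan is to prove Proposition~\ref{prop:mix} directly, without invoking Theorem~\ref{th:main}, by exploiting the change of variables $h = \tG(X,U)$ together with the uniformity statement in Proposition~\ref{prop:Y}. The key observation is that, for $h\in[0,m]$, the two-point zero-mean random variable $X_h := X_{x_+(h),x_-(h)}$ has an explicit distribution given by \eqref{eq:Xab}: writing $a := x_+(h)\ge0$ and $b := x_-(h)\le0$, one has $\P(X_h = a) = \frac{-b}{a-b} = \frac{|b|}{a+|b|}$ and $\P(X_h = b) = \frac{a}{a+|b|}$ (with the degenerate case $X_h = 0$ a.s.\ when $a = 0$ or $b = 0$, which by Proposition~\ref{lem:left-cont}\eqref{x+-pos} happens only at $h = 0$, a single point of Lebesgue measure zero, so it can be ignored in the integral). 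A short computation gives $\E X_h^{\,+} = a\,\P(X_h = a) = \frac{a|b|}{a+|b|} = \frac{x_+(h)\,|x_-(h)|}{x_+(h)-x_-(h)}$ and, crucially, the same value for $\E(-X_h^{\,-})$. Hence $\E g(X_h) = g(a)\,\P(X_h=a) + g(b)\,\P(X_h=b)$, and dividing by $\E X_h^{\,+}$ turns the weights into $\frac{g(a)}{a}$ and $\frac{g(b)}{-b}$ respectively; that is,
\begin{equation*}
	\frac{\E g(X_h)}{\E X_h^{\,+}}
	= \frac{g(x_+(h))}{x_+(h)} + \frac{g(x_-(h))}{-x_-(h)}
	= \frac{g(x_+(h))}{x_+(h)} - \frac{g(x_-(h))}{x_-(h)}.
\end{equation*}

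So it suffices to show that $\int_0^m \frac{g(x_+(h))}{x_+(h)}\,\d h = \E g(X)\ii{X>0}$ and, symmetrically, $\int_0^m \frac{g(x_-(h))}{-x_-(h)}\,\d h = \E g(X)\ii{X<0}$; adding these (and using $g(0)=0$ to handle the event $\{X=0\}$) yields \eqref{eq:Eg(X) other}. For the positive part I would argue as follows. By Proposition~\ref{prop:Y}, the random variable $\tG(Y_+,U)$ is uniform on $[0,m]$, where $Y_+$ has the size-biased distribution $\P(Y_+\in A) = \E|X^+|\ii{X\in A}/\E X^+$ (note $\E X^+ = \E(-X^-) = m$ since $\E X = 0$, using \eqref{eq:m}). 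Therefore, for any bounded Borel $\phi$,
\begin{equation*}
	\int_0^m \phi(h)\,\frac{\d h}{m} = \E\,\phi\big(\tG(Y_+,U)\big)
	= \frac1m\,\E X^+\,\phi\big(\tG(X,U)\big)
	= \frac1m\,\E X\ii{X>0}\,\phi\big(\tG(X,U)\big),
\end{equation*}
by the second identity in \eqref{eq:Ef(Y)}. Now I want to take $\phi(h) := g(x_+(h))/x_+(h)$ (interpreted appropriately at $h=0$, which is harmless), so that the left side becomes $\frac1m\int_0^m g(x_+(h))/x_+(h)\,\d h$ and the right side becomes $\frac1m\,\E g\big(x_+(\tG(X,U))\big)\,\frac{X}{x_+(\tG(X,U))}\,\ii{X>0}$. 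The remaining point is to identify $x_+(\tG(X,U))$ with $X$ almost surely on $\{X>0\}$: this is exactly what Proposition~\ref{lem:regul} provides, since $\hat x(X,U) = X$ a.s., and on $\{X>0\}$ one has $\hat x(X,U) = x_+(\tG(X,U))$ by definition \eqref{eq:hat}. Consequently $\frac{X}{x_+(\tG(X,U))} = 1$ a.s.\ on $\{X>0\}$ and the right side collapses to $\frac1m\,\E g(X)\ii{X>0}$, as desired. The negative part is handled in the same way using $Y_-$ and the left-inverse $x_-$. One should, of course, first establish the identity for $g$ bounded (where all expectations and integrals are finite and the change of variables is unproblematic), and then remove the boundedness by the usual monotone/truncation argument allowed by the hypothesis that $g$ is bounded from below or above and $g(0)=0$ — truncating $g$ at level $N$, applying the bounded case, and passing to the limit by monotone convergence on each of the two sides, noting that everything is finite or $+\infty$ consistently.

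The main obstacle I anticipate is the careful bookkeeping at the degenerate values of $h$ and the precise justification that one may substitute $\phi(h) = g(x_+(h))/x_+(h)$ into the displayed identity: one needs $x_+(h) > 0$ for $h$ in the relevant range (granted by Proposition~\ref{lem:left-cont}\eqref{x+-pos} for $h\in(0,m]$, with $h=0$ a null set), and one needs $\phi\circ\tG$ to be well-defined and Borel, which follows from Remark~\ref{rem:cont}\eqref{Borel} and Proposition~\ref{lem:left-cont}\eqref{x+-non-decr}. A secondary subtlety is that Proposition~\ref{prop:Y} gives uniformity on $[0,m]$ only as a statement about $\tG(Y_\pm,U)$, so I must be scrupulous about which randomization ($Y_+$ versus $Y_-$) is paired with which inverse ($x_+$ versus $x_-$), but the structure \eqref{eq:hat}–\eqref{eq:r} makes this matching forced. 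None of this is deep; it is just a matter of assembling Propositions~\ref{lem:left-cont}, \ref{prop:Y}, and \ref{lem:regul} in the right order, together with the elementary computation of the two-point weights from \eqref{eq:Xab}.
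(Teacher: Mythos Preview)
Your proof is correct. The two-point computation $\frac{\E g(X_h)}{\E X_h^{\,+}}=\frac{g(x_+(h))}{x_+(h)}+\frac{g(x_-(h))}{-x_-(h)}$ is right, and the chain ``uniformity of $\tG(Y_+,U)$ $\Rightarrow$ transfer to $X$ via \eqref{eq:Ef(Y)} $\Rightarrow$ replace $x_+(\tG(X,U))$ by $X$ on $\{X>0\}$ via Proposition~\ref{lem:regul}'' is valid. One small remark: for bounded $g$ your intermediate $\phi(h)=g(x_+(h))/x_+(h)$ need not be bounded (since $x_+(h)\to0$ as $h\downarrow0$), so the cleanest reduction is not ``bounded $g$ first'' but rather ``nonnegative $g$ first'' (split $g=g_+-g_-$, both vanishing at $0$), where Tonelli applies throughout and nothing hinges on finiteness.

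However, your route is \emph{not} the one the paper takes. The paper's Proof~1 is deliberately elementary: it reduces by Fubini to indicators, then checks directly that both sides of \eqref{eq:Eg(X) other} agree on the test functions $g_a(x)=x\,\ii{0<x\le a}$, using only the equivalence \eqref{eq:L1} (so that $\int_0^m\ii{x_+(h)\le a}\,\d h=\int_0^m\ii{G(a)\ge h}\,\d h=G(a)$). No appeal to Propositions~\ref{prop:Y} or~\ref{lem:regul} is made. Your argument, by contrast, consumes that heavier machinery---which is fine (there is no circularity; those propositions rest only on Proposition~\ref{lem:H})---and is in fact exactly the mechanism the paper later uses to \emph{derive} Proposition~\ref{prop:Eg(X) x/r} \emph{from} Proposition~\ref{prop:mix}. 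So you have essentially reversed that deduction: instead of proving the mixture identity first and then pushing it through $\tG(Y_\pm,U)$, you push through $\tG(Y_\pm,U)$ to obtain the mixture identity. What the paper's approach buys is minimality of assumptions and a proof that stands on its own; what yours buys is a transparent change-of-variables picture once the uniformity and regularization lemmas are in hand.
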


We shall give a very short and simple proof of Proposition~\ref{prop:mix} (see Proof~1 on page~\pageref{eq:g_a}), which relies only on such elementary properties of the functions $x_+$ and $x_-$ as \eqref{eq:L1} and \eqref{x+-pos} of Proposition~\ref{lem:left-cont}. We shall also give an alternative proof of Proposition~\ref{prop:mix}, based on \cite[Theorem~2.2]{aizen} as well on some properties of the functions $x_+$ and $x_-$ provided by Propositions~\ref{lem:hat} and \ref{lem:left-cont} of this paper. The direct proof is a bit shorter and, in our view, simpler. 

This simplicity of the proof might be explained by the observation that -- while Proposition~\ref{prop:mix} \big(or, for that matter, identity \eqref{eq:Eg(X)}\big) describes the one-dimensional distribution of $X$ (as a certain mixture) -- Theorem~\ref{th:main} provides a mixture representation of the  two-dimensional distribution of the pair $\big(X,\r(X,U)\big)$, even though the distribution of this pair is completely determined by the distribution of $X$. Note that the random pair $\big(X,\r(X,U)\big)$ is expressed in terms of the reciprocating function, which in turn depends, in a nonlinear and rather complicated manner, on the distribution of $X$. 
Another indication of the simplicity of identity \eqref{eq:Eg(X) other} is that it \big(in contrast with \eqref{eq:main} and even with \eqref{eq:Eg(X)}\big) does not contain the randomizing random variable $U$. 
On the other hand, an obvious advantage of disintegration \eqref{eq:main} is that it admits such applications to self-normalized sums as Corollaries~\ref{cor:student-normal}  and \ref{cor:stud-asymm}.

However, there are a number of ways to rewrite \eqref{eq:Eg(X) other} in terms similar to those of \eqref{eq:Eg(X)}. Towards that end, for each function $g$ as in Proposition~\ref{prop:mix}, introduce the function $\Psi_g$ defined by the formula 
\begin{equation}\label{eq:Psi}
\Psi_g(h):=\frac{\E g(X_h)}{\E(X_h)^+}\quad\text{for all $h\in(0,m)$. }
\end{equation}
Then \eqref{eq:Eg(X) other} can be rewritten as 
\begin{equation}\label{eq:EPsi}
\E g(X)=m\E\Psi_g(H),
\end{equation}
where $H$ is any r.v.\ uniformly distributed on the interval $[0,m]$. One such r.v.\ is $m\tF(X,U)$, where $\tF(x,u):=F(x-)+u\cdot\big(F(x)-F(x-)\big)$ and $F$ is the d.f.\ of $X$. This follows in view of 
 
\begin{proposition}\label{prop:tF}
The r.v.\ $\tF(X,U)$ is uniformly distributed on the interval $[0,1]$; cf.\ Proposition~\ref{prop:Y}. 
\end{proposition}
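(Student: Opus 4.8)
The claim is that $\tF(X,U)$ is uniformly distributed on $[0,1]$, where $\tF(x,u):=F(x-)+u\cdot\big(F(x)-F(x-)\big)$ and $F$ is the distribution function of $X$. The plan is to verify directly that $\P\big(\tF(X,U)\le t\big)=t$ for every $t\in(0,1)$, by decomposing the event according to whether $X$ lands strictly below, strictly above, or exactly at the generalized quantile associated with level $t$. Concretely, set $q:=q_t:=\inf\{x\in\R\colon F(x)\ge t\}$. On the event $\{X<q\}$ one has $F(X)\le t$ (indeed $F(X)< t$ would be automatic except possibly at jump points, but at worst $F(X)\le t$), hence $F(X-)\le F(X)\le t$ and so $\tF(X,u)\le t$ for every $u\in[0,1]$; thus $\{X<q\}\subseteq\{\tF(X,U)\le t\}$. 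On the event $\{X>q\}$ one has $F(X-)\ge F(q)\ge t$ with the crucial point that on this set $F(X-)>t$ except on a $\P$-null set, so $\tF(X,u)\ge F(X-)> t$; thus (up to null sets) $\{X>q\}\subseteq\{\tF(X,U)> t\}$. This reduces the computation to the atom at $q$.

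First I would handle the two ``easy'' inclusions above carefully, making sure the boundary cases at jump points of $F$ are accounted for --- the only subtlety is that $F(X-)$ can equal $t$ with positive probability only when $q$ is an atom, which is exactly the case treated separately. Next I would compute the contribution of the atom: on the event $\{X=q\}$, which has probability $\P(X=q)=F(q)-F(q-)$, the conditional distribution of $\tF(X,U)=F(q-)+U\cdot\big(F(q)-F(q-)\big)$ is, since $U$ is independent of $X$ and uniform on $[0,1]$, uniform on the interval $\big[F(q-),F(q)\big]$. By the definition of $q$ as the infimum we have $F(q-)\le t\le F(q)$, so $\P\big(\tF(X,U)\le t\mid X=q\big)=\dfrac{t-F(q-)}{F(q)-F(q-)}$ when $F(q)>F(q-)$, and the atom simply contributes nothing when $F(q)=F(q-)$.

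Finally I would assemble the three pieces. We get
\begin{equation*}
\P\big(\tF(X,U)\le t\big)=\P(X<q)+\P(X=q)\cdot\frac{t-F(q-)}{F(q)-F(q-)}=F(q-)+\big(t-F(q-)\big)=t,
\end{equation*}
using $\P(X<q)=F(q-)$ and $\P(X=q)=F(q)-F(q-)$; when $q$ is not an atom the middle term is absent and $F(q-)=F(q)=t$ directly (here one uses that $F$ is continuous at $q$, so by definition of $q$ the value $F(q)$ cannot exceed $t$, and right-continuity forces $F(q)=t$). This proves $\tF(X,U)$ is uniform on $[0,1]$.

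The main obstacle, such as it is, is purely bookkeeping at the jumps of $F$: one must be scrupulous that ``$X<q$'' versus ``$X=q$'' versus ``$X>q$'' partitions the probability space up to null sets and that the borderline contributions $F(X)=t$ or $F(X-)=t$ on the open events $\{X<q\}$, $\{X>q\}$ occur only on null sets (they can occur with positive probability only at the single point $q$, which is why the atom at $q$ is isolated out). There is no analytic difficulty; once the decomposition is set up correctly the identity falls out immediately, which is why this proposition is stated as an easy companion to Proposition~\ref{prop:Y}.
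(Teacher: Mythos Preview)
Your proof is correct and follows essentially the same approach the paper has in mind. The paper does not spell out the argument at all, saying only that it is ``quite similar to the proof of Proposition~\ref{prop:Y}''; that proof (via Proposition~\ref{lem:H}) fixes a level, introduces the associated threshold $x_h:=\sup\{z\colon G(z-)\le h\}$, and splits into the cases below/at/above the threshold, with the atom at the threshold handled by the affine interpolation in the randomized version $\tG$. You do exactly the same thing with $F$ in place of $G$, using the standard quantile $q_t:=\inf\{x\colon F(x)\ge t\}$ instead of the $\sup$-type inverse; the two choices differ only on flat stretches of $F$, which carry no mass, so the case analysis and the final computation are the same.
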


Hence, for all $g$ as in Proposition~\ref{prop:mix}, one has an identity similar in form to \eqref{eq:Eg(X)}:
\begin{align*}
		\E g(X)&=m\int_{\R\times[0,1]}\Psi_g\big(m\tF(x,u)\big)\,\P(X\in \d x)\,\d u \\
		&=
		m\int_{\R\times[0,1]} 
	\E g\big(X_{a_+(x,u),a_-(x,u)}\big)
			\frac{	\,\P(X\in \d x)\,\d u}
	{
	\E(X_{a_+(x,u),a_-(x,u)})^+
	},
\end{align*}
where $a_+(x,u)$ and $a_-(x,u)$ stand for $x_+\big(m\tF(x,u)\big)$ and $x_-\big(m\tF(x,u)\big)$, respectively.

However, more interesting mixture representations are obtained if one uses Proposition~\ref{prop:Y} (and also Proposition~\ref{lem:regul}) instead of Proposition~\ref{prop:tF}:

\begin{proposition}\label{prop:Eg(X) x/r}
Let $g\colon\R\to\R$ is any Borel function bounded from below (or from above). 
Then, assuming the rule $\frac{0}{\r(0,u)}:=-1$ for all $u\in[0,1]$, one has 
\begin{align}
	\E g(X)&=
	\int_{\R\times[0,1]} 
	\E g\big(X_{x,\r(x,u)}\big)\,
	\P(X\in \d x)\,\d u; \label{eq:Eg(X) again}\\
	\E g(X)&=
	\int_{\R\times[0,1]} 
	\E g\big(X_{x,\r(x,u)}\big)\,
	\frac{-x}{\r(x,u)}\,
	\P(X\in \d x)\,\d u; \label{eq:Eg(X) -x/r}\\
	\E g(X)&=\frac12\,
	\int_{\R\times[0,1]} 
	\E g\big(X_{x,\r(x,u)}\big)\,
	\Big(1-\frac x{\r(x,u)}\Big)\,
	\P(X\in \d x)\,\d u. \label{eq:Eg(X) 1-x/r}
\end{align} 
\end{proposition}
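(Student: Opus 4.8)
The plan is to deduce Proposition~\ref{prop:Eg(X) x/r} from the machinery already assembled, rather than re-proving a disintegration from scratch. The first identity \eqref{eq:Eg(X) again} is simply a restatement of \eqref{eq:Eg(X)}, once we observe that the function $g$ there need not satisfy $g(0)=0$: the point is that $X_{0,0}=0$ a.s., so $\E g(X_{x,\r(x,u)})$ at $x=0$ equals $g(0)$, and the integral over $\{x=0\}$ contributes $g(0)\,\P(X=0)$, which is exactly what \eqref{eq:Eg(X)} gives. So \eqref{eq:Eg(X) again} requires only a sentence observing that \eqref{eq:Eg(X)} applied to $g-g(0)$, plus the trivial mass at $0$, yields the claim for general bounded-below $g$ (the constant shift is harmless since $\P$ is a probability measure). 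The convention $\frac{0}{\r(0,u)}:=-1$ is only needed to make the integrands in \eqref{eq:Eg(X) -x/r} and \eqref{eq:Eg(X) 1-x/r} well-defined at $x=0$.

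For \eqref{eq:Eg(X) -x/r}, the plan is to invoke Proposition~\ref{prop:Y,r(Y)} together with the absolute-continuity statement derived just before Proposition~\ref{prop:mix}: the distribution of $\r(X,U)$ relative to that of $X$ has Radon--Nikodym derivative $x\mapsto\int_0^1\frac{-x}{\r(x,u)}\,\d u$. More directly, one applies identity \eqref{eq:1+x/r} with $\psi\equiv1$, $\vpi(x,r):=\tilde h(\{x,r\})$ chosen so that $\tilde h(\{a,b\})=\E g(X_{a,b})$ (this is a symmetric Borel function of the ordered pair, since $(X_{a,b},R_{a,b})\D(X_{b,a},R_{b,a})$), obtaining
\begin{equation*}
\E\,\frac{X}{\r(X,U)}\,\E g\big(X_{X,\r(X,U)}\big)=-\E\,\E g\big(X_{X,\r(X,U)}\big).
\end{equation*}
But by \eqref{eq:Eg(X) again} the right-hand side is $-\E g(X)$, and expanding the left-hand side as an integral against $\P(X\in\d x)\,\d u$ gives $-\int\E g(X_{x,\r(x,u)})\,\frac{x}{\r(x,u)}\,\P(X\in\d x)\,\d u$; rearranging yields \eqref{eq:Eg(X) -x/r}. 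Here one must be careful with integrability: since $g$ is only bounded from below, one should first handle $g\geq0$ by monotone convergence and then pass to general $g$ by the constant-shift trick, noting that $\int\frac{-x}{\r(x,u)}\,\P(X\in\d x)\,\d u=1$ by \eqref{eq:Ex/r} so the shift is again harmless. The contribution of the event $\{X=0\}$ is consistent because both sides assign it $g(0)\,\P(X=0)$, using the convention $\frac{0}{\r(0,u)}:=-1$.

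Finally, \eqref{eq:Eg(X) 1-x/r} is the average of \eqref{eq:Eg(X) again} and \eqref{eq:Eg(X) -x/r}: adding the two identities and dividing by $2$ replaces the weight $1$ (resp.\ $\frac{-x}{\r(x,u)}$) by $\frac12\big(1+\frac{-x}{\r(x,u)}\big)=\frac12\big(1-\frac{x}{\r(x,u)}\big)$. This last step is purely formal once the integrability in the previous step has been secured.

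The main obstacle is the integrability bookkeeping in \eqref{eq:Eg(X) -x/r}: the weight $\frac{-x}{\r(x,u)}$ is nonnegative (by \eqref{eq:XR<0 or X=R=0}) but unbounded, so one cannot directly apply \eqref{eq:1+x/r} to an arbitrary bounded-below $g$ without first reducing to the nonnegative case and justifying the interchange of expectation and the mixture integral; the cleanest route is to note that $(x,u)\mapsto\E g(X_{x,\r(x,u)})$ inherits a lower bound $c$ from $g$ (indeed $\E g(X_{a,b})\geq\inf g$ since $X_{a,b}$ is a genuine r.v.), apply the nonnegative case to $g-c$, and then use $\E\frac{-X}{\r(X,U)}=1$ to re-add the constant. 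Everything else is a direct assembly of Propositions~\ref{prop:main}, \ref{prop:g1g2}, \ref{prop:Y,r(Y)}, and identities \eqref{eq:Eg(X)}, \eqref{eq:1+x/r}, \eqref{eq:Ex/r} already in hand.
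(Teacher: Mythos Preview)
Your proposal is correct, and the route you take—reading \eqref{eq:Eg(X) again} off from \eqref{eq:Eg(X)}, then obtaining \eqref{eq:Eg(X) -x/r} by applying \eqref{eq:1+x/r} with the symmetric weight $\vpi(x,r)=\E g(X_{x,r})$, and finally averaging for \eqref{eq:Eg(X) 1-x/r}—is precisely the alternative derivation the paper itself sketches in the paragraph immediately following Proposition~\ref{prop:Eg(X) x/r}. The paper's \emph{formal} proof, however, proceeds differently: it starts from the elementary mixture identity \eqref{eq:Eg(X) other} of Proposition~\ref{prop:mix}, splits $\Psi_g=\Psi_{g,+}+\Psi_{g,-}$ with $\Psi_{g,\pm}(h)=\pm\E g(X_h)/x_\pm(h)$, and uses that $\tG(Y_\pm,U)$ and $\tG(Y,U)$ are uniform on $[0,m]$ (Proposition~\ref{prop:Y}); the three identities then arise from the three pairings $\Psi_{g,+}(\tG(Y_+,U))+\Psi_{g,-}(\tG(Y_-,U))$, $\Psi_{g,+}(\tG(Y_-,U))+\Psi_{g,-}(\tG(Y_+,U))$, and $2\Psi_g(\tG(Y,U))$. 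The point of that approach is to re-derive \eqref{eq:Eg(X) again} independently of the heavier Theorem~\ref{th:main}; your approach is shorter but takes Theorem~\ref{th:main} and \eqref{eq:1+x/r} as black boxes. One minor clean-up: your constant-shift discussion for \eqref{eq:Eg(X) again} is unnecessary, since \eqref{eq:Eg(X)} already applies to any bounded-below $g$ with no condition at $0$; you are conflating it with \eqref{eq:Eg(X) other}, which does require $g(0)=0$.
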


Going back to 
\eqref{eq:1+x/r} and letting therein $\psi(x,r)\equiv\E g(X_{x,r})$ and $\vpi=1$, one can rewrite the right-hand side of identity \eqref{eq:Eg(X) -x/r} as 
$\E\psi(X,R)\frac{-X}R=\E\psi(R,X)
=\int_{\R^2}\psi(r,x)\,\mu_{(X,R)}(\d x,\d r) 
=\int_{\R^2}\E g(X_{r,x})\,\mu_{(X,R)}(\d x,\d r)$,
so that \eqref{eq:Eg(X) -x/r} can be rewritten as
\begin{equation*}
	\E g(X)=\int_{\R^2}\E g(X_{r,x})\,\mu_{(X,R)}(\d x,\d r);
\end{equation*}
here, as before, $R:=\r(X,U)$. 
Similarly \big(but in a simpler say, without using \eqref{eq:1+x/r}\big), identity \eqref{eq:Eg(X) again} can be rewritten as
\begin{equation*}
	\E g(X)=\int_{\R^2}\E g(X_{x,r})\,\mu_{(X,R)}(\d x,\d r).
\end{equation*}
Now it is immediately clear why the right-hand sides of \eqref{eq:Eg(X) again} and \eqref{eq:Eg(X) -x/r} are identical to each other: because $X_{x,r}\D X_{r,x}$. This is another way to derive \eqref{eq:Eg(X) -x/r}: from \eqref{eq:Eg(X) again} and \eqref{eq:1+x/r}. 
Of course, identity \eqref{eq:Eg(X) again} is the same as \eqref{eq:Eg(X)}, which was obtained as a special case of \eqref{eq:main}. Here, the point is that identity \eqref{eq:Eg(X)} can be alternatively deduced from the simple -- to state and to prove -- identity \eqref{eq:Eg(X) other}. 

However, no simple way is seen to deduce \eqref{eq:main} from \eqref{eq:Eg(X) other}. Toward such an end, one might start with the obvious identity $\E g\big(X,\r(X,U)\big)=\E g_1(X)$, where $g_1(x):=\int_0^1 g\big(x,\r(x,v)\big)\,\d v$. Then one might try to use \eqref{eq:Eg(X) again} with $g_1$ in place of $g$, which yields 
\begin{align*}
\E g\big(X,\r(X,U))
	&=\int_{\R\times[0,1]^2} 
	\E g\big(X_{x,\r(x,u)},\r\big(X_{x,\r(x,u)},v\big)\big)\,\P(X\in \d x)\,\d u\,\d v.
\end{align*}
At that, $\E g\big(X_{x,\r(x,u)},\r\big(X_{x,\r(x,u)},v\big)\big)
=\frac
	{g\big(x,\r(x,v)\big)\,\r(x,u)
	-g\big(\r(x,u),\r\big(\r(x,u),v\big)\big)\,x}
	{\r(x,u)-x}$. 
From this, one would be able to get \eqref{eq:main} if one could replace here the terms $\r(x,v)$ and $\r\big(\r(x,u),v\big)$ by $\r(x,u)$ and $x$, respectively, and it is not clear how this could be easily done, unless the distribution of $X$ is non-atomic (cf.\ Propositions~\ref{lem:hat=rr} and \ref{prop:recip}). Anyway, such an alternative proof would hardly be simpler than the proof of disintegration \eqref{eq:main} given in this paper. 


\subsection{Optimality properties of the two-point disintegration}\label{opt} 
Two-value zero-mean disintegration is not unique. 
For example, consider the symmetric distribution 
$\frac1{10}\de_{-2}+\frac4{10}\de_{-1}+\frac4{10}\de_{1}+\frac1{10}\de_{2}$ (cf. Example~\ref{ex:discrete}). This distribution can be represented either as the mixture 
$\frac3{10}(\frac13\de_{-2}+\frac23\de_{1})+
\frac3{10}(\frac13\de_{2}+\frac23\de_{-1})+
\frac4{10}(\frac12\de_{-1}+\frac12\de_{1})$ of two asymmetric and one symmetric two-point zero-mean distributions or as the mixture
$\frac15(\frac12\de_{-2}+\frac12\de_{2})+
\frac45(\frac12\de_{-1}+\frac12\de_{1})$ of two symmetric two-point zero-mean distributions; 
the latter representation is a special case of \eqref{eq:Eg(X)} or, equivalently, \eqref{eq:Eg(X) other}. 

We shall show that, in a variety of senses (indexed by the continuous superadditive functions as described below), representation \eqref{eq:Eg(X)} of an arbitrary zero-mean distribution as the mixture of two-point zero-mean distributions is on an average most symmetric. 
The proof of this optimality property is based on the stated below variants of a well-known theorem on optimal transportation of mass, which are most convenient for our purposes; cf.\ e.g.\  \cite{hoeff40} (translated in \cite[pp.\ 57--107] {hoeff-coll}), \cite{camb}, \cite{tchen}, \cite{rach}. 
We need to introduce some definitions. 

Let $I_1$ and $I_2$ be intervals on the real line. 
A function $k\colon I_1\times I_2\to\R$ is called \emph{superadditive} if  
\begin{equation*}
	k(a,c)+k(b,d)\ge k(a,d)+k(b,c)
\end{equation*}
for all $a,b$ in $I_1$ and $c,d$ in $I_2$ such that $a<b$ and $c<d$. 
So, superadditive functions are like the distribution functions on $\R^2$. 
For a function $k\colon I_1\times I_2\to\R$ to be superadditive, it is enough that it be continuous on $I_1\times I_2$ and twice continuously differentiable in the interior of $I_1\times I_2$ with a nonnegative second mixed partial derivative. 

Let $X_1$ and $X_2$ be any r.v.'s with values in the intervals $I_1$ and $I_2$, respectively. 
Let 
\begin{equation}\label{eq:tX}
\tX_1:=\tx_1(H)\quad\text{and}\quad\tX_2:=\tx_2(H),	
\end{equation}
where $H$ is any non-atomic r.v., and $\tx_1\colon\R\to I_1$ and $\tx_2\colon\R\to I_2$ are any nondecreasing left-continuous functions such that 
\begin{equation}\label{eq:tX=X}
\tX_1\D X_1 \quad\text{and}\quad \tX_2\D X_2. 	
\end{equation}  

\begin{proposition}\label{prop:trans1}
Let each of the intervals $I_1$ and $I_2$ be of the form $[a,b)$, where $-\infty<a<b\le\infty$. Suppose that a function $k$ is superadditive, right-continuous, and bounded from below on $I_1\times I_2$. Then
\begin{equation}\label{eq:trans}
	\E k(X_1,X_2)\le\E k(\tX_1,\tX_2).
\end{equation}
\end{proposition}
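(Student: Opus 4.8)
The plan is to reduce Proposition~\ref{prop:trans1} to the classical Fréchet--Hoeffding / Kantorovich--Rubinstein statement that, among all couplings of two fixed one-dimensional marginals, the \emph{comonotone} coupling maximizes $\E k(X_1,X_2)$ for superadditive $k$. Note that $(\tX_1,\tX_2)=(\tx_1(H),\tx_2(H))$ with $\tx_1,\tx_2$ nondecreasing is precisely a comonotone pair, and by \eqref{eq:tX=X} it has the same marginals as $(X_1,X_2)$; so \eqref{eq:trans} is exactly the assertion that the comonotone coupling is optimal. First I would record the elementary ``rearrangement'' inequality at the heart of the matter: for any two independent copies $(X_1',X_2')$ and $(X_1'',X_2'')$ of $(X_1,X_2)$, superadditivity of $k$ gives
\begin{equation*}
\E\big[k(X_1'\vee X_1'',\,X_2'\vee X_2'')+k(X_1'\wedge X_1'',\,X_2'\wedge X_2'')\big]\ge \E\big[k(X_1',X_2')+k(X_1'',X_2'')\big]=2\,\E k(X_1,X_2),
\end{equation*}
so that passing to the ``sorted'' pair never decreases the expectation.

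Second, I would turn this into the full inequality via the standard two-step argument. Write $F_i$ for the d.f.\ of $X_i$ and let $F_i^{-1}$ be its left-continuous generalized inverse; since $\tx_i$ is nondecreasing, left-continuous, and satisfies $\tx_i(H)\D X_i$, one checks that $\tx_i(H)\as F_i^{-1}(G(H))$ for a suitable monotone transformation $G$ of $H$, so without loss of generality $(\tX_1,\tX_2)=(F_1^{-1}(V),F_2^{-1}(V))$ for $V$ uniform on $(0,1)$ (here the non-atomicity of $H$ is what lets us produce such a uniform $V$). Then the classical fact is that for any coupling $(X_1,X_2)$ and any superadditive, suitably integrable $k$,
\begin{equation*}
\E k(X_1,X_2)\le \E k\big(F_1^{-1}(V),F_2^{-1}(V)\big);
\end{equation*}
one proves it either by the Hoeffding-type covariance identity $\E k(X_1,X_2)-k(x_1^0,x_2^0)=\iint \big[\P(X_1\le s, X_2\le t)-\P(X_1\le s)\P(X_2\le t)\big]$-style formula after reducing $k$ to a combination of products of indicators, or by a direct approximation of $k$ by finite nonnegative combinations of functions $\ii{x_1\ge a}\ii{x_2\ge c}$ (which are superadditive and for which the inequality is the trivial $\P(X_1\ge a, X_2\ge c)\le\min(\P(X_1\ge a),\P(X_2\ge c))=\P(F_1^{-1}(V)\ge a, F_2^{-1}(V)\ge c)$). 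I would take the latter route because right-continuity and boundedness from below of $k$ are exactly the hypotheses one needs to justify the monotone/dominated passage to the limit in such an approximation, and because it explains why those two regularity conditions appear in the statement.

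Third, I would handle the integrability/sign bookkeeping: boundedness from below of $k$ means both sides of \eqref{eq:trans} are well-defined in $(-\infty,+\infty]$, so no cancellation issues arise; the half-open form $I_i=[a,b)$ guarantees the lower endpoint is attained (giving a genuine minimum of each $X_i$) while the upper endpoint may be $+\infty$, which is harmless since we only ever push mass toward larger values and $k$ is bounded below. The main obstacle, and the step I would spend the most care on, is the reduction of a general superadditive right-continuous $k$ to nonnegative combinations of ``upper-orthant'' indicators together with a controllable remainder: superadditivity alone only gives that the mixed ``increments'' $k(a,c)+k(b,d)-k(a,d)-k(b,c)$ are nonnegative, i.e.\ that $k$ is a (signed) measure-like object with nonnegative ``$\partial_1\partial_2$'' part, but $k$ itself need not be monotone in each variable, so one must add affine-in-each-variable correction terms $x_1\mapsto k(x_1,c_0)$ and $x_2\mapsto k(a_0,x_2)$ which, being functions of a single coordinate, have equal expectation under every coupling with the given marginals and therefore drop out of the comparison. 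Making this decomposition rigorous on a possibly unbounded rectangle, with only right-continuity and a one-sided bound, is the technical crux; everything else is the rearrangement inequality above plus routine limiting.
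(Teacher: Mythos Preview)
Your proposal is correct and follows essentially the same route as the paper: subtract off the single-variable ``margins'' $k(\cdot,a_0)$, $k(a_0,\cdot)$ (which have equal expectation under any coupling with the given marginals), and represent what remains as an integral of upper-orthant indicators $\ii{x_1\le X_1,\,x_2\le X_2}$ against the nonnegative measure $\mu_k$ determined by the mixed increments of $k$; the comparison then reduces to $\P(X_1\ge x_1,\,X_2\ge x_2)\le\P(X_1\ge x_1)\wedge\P(X_2\ge x_2)=\P(\tX_1\ge x_1,\,\tX_2\ge x_2)$. The paper writes this decomposition explicitly as a Fubini identity (so no limiting approximation step is needed), and uses right-continuity precisely to make $\mu_k$ a bona fide measure and the pointwise identity $k(X_1,X_2)=k(X_1,0)+k(0,X_2)-k(0,0)+\iint\ii{x_1\le X_1,\,x_2\le X_2}\,\mu_k(\d x_1,\d x_2)$ valid; your rearrangement-of-independent-copies remark is a correct motivational observation but is not needed in the actual argument.
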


\begin{proposition}\label{prop:trans2}
Suppose that a function $k$ is superadditive, continuous, and bounded from above on $(0,\infty)^2$. 
Suppose that $X_1>0$ and $X_2>0$ a.s. 
Then \eqref{eq:trans} holds. 
\end{proposition}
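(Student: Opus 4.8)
The plan is to reduce Proposition~\ref{prop:trans2} to Proposition~\ref{prop:trans1} through a compact exhaustion of $(0,\infty)^2$, the bridge being that the comonotone pair $(\tX_1,\tX_2)=(\tx_1(H),\tx_2(H))$ realizes the Fréchet upper bound $\P(\tX_1<s,\tX_2<t)=\min(\P(X_1<s),\P(X_2<t))\ge\P(X_1<s,X_2<t)$ among all couplings of the marginals (using that $H$ is non-atomic). Two elementary facts are used throughout: precomposing a superadditive function with nondecreasing maps in each coordinate keeps it superadditive; and, for $0<a<b<\infty$, the function $(x,y)\mapsto k(x\wedge b,y\wedge b)$ is therefore superadditive, right-continuous, and (by continuity of $k$ on the compact $[a,b]^2$) bounded below on $[a,\infty)^2$.

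The \emph{compact case} then follows by applying Proposition~\ref{prop:trans1} on $I_1=I_2=[a,\infty)$ to this capped function and to the truncated variables $\operatorname{med}(a,X_i,b)\in[a,b]$, whose comonotone rearrangements are $\operatorname{med}(a,\tX_i,b)=(\operatorname{med}(a,\cdot,b)\circ\tx_i)(H)$ with $\operatorname{med}(a,\cdot,b)\circ\tx_i$ nondecreasing and left-continuous; dropping the (now harmless) caps gives $\E k(\operatorname{med}(a,X_1,b),\operatorname{med}(a,X_2,b))\le\E k(\operatorname{med}(a,\tX_1,b),\operatorname{med}(a,\tX_2,b))$. Letting $a=1/n\downarrow 0$, $b=n\uparrow\infty$, and using $X_1,X_2>0$ a.s.\ (so each truncated variable eventually equals the untruncated one, whence the integrands converge a.s.\ to $k(X_1,X_2)$ and $k(\tX_1,\tX_2)$ by continuity), the reverse Fatou lemma (available because $k$ is bounded above) bounds the right-hand sides: $\limsup_n\E k(\operatorname{med}(1/n,\tX_1,n),\operatorname{med}(1/n,\tX_2,n))\le\E k(\tX_1,\tX_2)$. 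It therefore remains to show $\E k(X_1,X_2)\le\liminf_n\E k(\operatorname{med}(1/n,X_1,n),\operatorname{med}(1/n,X_2,n))$ for the left-hand sides.

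This last step is where the work lies, since $k$ is not bounded below and ordinary Fatou is unavailable. I would handle it by a structural decomposition: after subtracting $\sup k$ (so $k\le 0$), write $k(x,y)=k_0(x,y)+k(x,y_0)+k(x_0,y)-k(x_0,y_0)$ for a fixed reference $(x_0,y_0)$; the single-variable terms depend only on the marginals (so match on both sides of \eqref{eq:trans}), and the superadditive function $k_0$ vanishes on the cross $\{x=x_0\}\cup\{y=y_0\}$, hence splits exactly into $k_0(x\vee x_0,y\vee y_0)+k_0(x\wedge x_0,y\wedge y_0)+k_0(x\vee x_0,y\wedge y_0)+k_0(x\wedge x_0,y\vee y_0)$ — four superadditive pieces, two $\ge 0$ and two $\le 0$, each monotone in each coordinate and each nontrivial on only one of the four quadrants at $(x_0,y_0)$. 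For each such one-signed monotone piece $g$ the $\operatorname{med}$-truncations are eventually monotone in $n$, so $\E g(\operatorname{med}(1/n,X_1,n),\operatorname{med}(1/n,X_2,n))\to\E g(X_1,X_2)$ by monotone convergence (and likewise for $\tX$); passing to the limit in the compact estimate $\E g(\operatorname{med}(1/n,X_1,n),\operatorname{med}(1/n,X_2,n))\le\E g(\operatorname{med}(1/n,\tX_1,n),\operatorname{med}(1/n,\tX_2,n))$ gives $\E g(X_1,X_2)\le\E g(\tX_1,\tX_2)$, and summing the four contributions yields \eqref{eq:trans}. The remaining technical obstacle — the main one — is the bookkeeping of possibly-infinite expectations in this sum (and in the decomposition itself); I expect this to be forced by the one-signedness of the pieces, together with treating the easy case $\E k(X_1,X_2)=-\infty$ separately.
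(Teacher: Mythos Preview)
Your overall route—truncate to a compact box, apply Proposition~\ref{prop:trans1} there, then send the box to $(0,\infty)^2$ via Fatou-type limits—is exactly what the paper does: it follows Tchen's Corollary~2.2(a), replacing $(-B,B]$ by $(\varepsilon,1/\varepsilon]$ and correcting a ``$\lim$'' to a ``$\liminf$'' in Tchen's text. Your reverse-Fatou step on the $\tX$-side is correct and is the easy half.

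The quadrant decomposition is a nice device, and in fact the piecewise convergence $\E g_i(\operatorname{med}_n(X))\to\E g_i(X_1,X_2)$ can be justified—though by \emph{domination}, not by monotone convergence: for $n\ge\max(x_0,y_0,1/x_0,1/y_0)$ the cross-vanishing $k_0(x_0,\cdot)=k_0(\cdot,y_0)=0$ forces $|g_i(\operatorname{med}_n(X))|\le|g_i(X_1,X_2)|$ pointwise for each of the four pieces (check the four monotonicity patterns against what $\operatorname{med}_n$ does). So your ``eventually monotone $\Rightarrow$ MCT'' step, while loosely argued (eventual monotonicity after a \emph{random} threshold does not give MCT), lands on a true conclusion.

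The genuine gap is the one you flag last: recombining the pieces. This is not mere bookkeeping. Take $k(x,y)=-x/y$ (superadditive, continuous, $\le 0$) and $X_2=X_1\cdot\eta$ with $\eta$ independent, bounded in $[c,C]\subset(0,\infty)$, and $\E X_1=\infty$. Then $\E k(X_1,X_2)=-\E(1/\eta)\in(-\infty,0)$, yet for every choice of $(x_0,y_0)$ one has $\E k(X_1,y_0)=-\E X_1/y_0=-\infty$ and (correspondingly) $\E g_1(X_1,X_2)=+\infty$. The identity $\E k=\sum_i\E g_i+\E k(\cdot,y_0)+\E k(x_0,\cdot)-k(x_0,y_0)$ therefore reads ``finite $=\infty-\infty$'', and neither the piecewise inequalities $\E g_i(X)\le\E g_i(\tX)$ nor the marginal cancellation of the single-variable terms can be summed to recover $\E k(X)\le\E k(\tX)$. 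So your expectation that the one-signedness ``forces'' the bookkeeping is not borne out.

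The paper (via Tchen) sidesteps this entirely: no decomposition of $k$ is used. The limiting step is carried out directly on $k$, with Fatou applied once to the nonnegative integrand $\sup k-k$ under the upper Fr\'echet coupling $\overline H$; that is precisely the content of the paper's remark that ``$\int(h-\varphi)\,d\overline H$'' and ``$\liminf$'' should replace ``$\int(h-\varphi)\,dH$'' and ``$\lim$'' on Tchen's page~820.
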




\vspace*{-1.5cm}

\parbox{.53in}
{
\hspace*{-.4cm}
\pspicture(0,0)(6,3.6)
\psset{xunit=1.85mm}
\psset{yunit=5mm}
\psset{linewidth=.2mm}
\psline[linewidth=.3mm](0,0)(8,0)
\uput[270](4,-.0){$I_1$}
\psline[linewidth=.3mm](1,3)(6,3)
\uput[90](3.5,3.){$I_2$}
\psline[linestyle=dashed,dash=.1 .1]{->}(0,0)(1,3)
\psline[linestyle=dashed,dash=.1 .1]{->}(2,0)(1.625,3)
\psline[linestyle=dashed,dash=.1 .1]{->}(4,0)(3.,3)
\psline[linestyle=dashed,dash=.1 .1]{->}(6,0)(5.5,3)
\psline[linestyle=dashed,dash=.1 .1]{->}(8,0)(6,3)
\endpspicture
}
\parbox[t]{4.25in}
{Propositions~\ref{prop:trans1} and \ref{prop:trans2} essentially mean that, if the unit-trans\-portation cost function $k$ is superadditive, then a costliest plan of transportation of mass distribution $\mu_{X_1}$ on interval $I_1$ to mass distribution $\mu_{X_2}$ on $I_2$ is such that no two arrows in the picture here on the left may cross over; that is, smaller (respectively, larger) values in $I_1$ are matched with appropriate smaller (respectively, larger) values in $I_2$.}

\vspace*{3pt}

Note that no integrability conditions are required in Proposition~\ref{prop:trans1} or \ref{prop:trans2} except for the boundedness of $k$ from below or above; at that, either or both sides of inequality \eqref{eq:trans} may be infinite. 

Proposition~\ref{prop:trans2} is essentially borrowed from \cite[Corollary~2.2.(a)]{tchen}. 


\begin{proposition}\label{prop:best}
Suppose that one has a two-point zero-mean mixture representation of the distribution of a zero-mean r.v.\ $X$:  
\begin{equation}\label{eq:Eg(X) nu}
	\E g(X)=
	\int_S \E g(X_{y_+(s),y_-(s)})\,\nu(\d s)
\end{equation}
for all Borel functions $g\colon\R\to\R$ bounded from below or from above, 
where $\nu$ is a probability measure on a measurable space $(S,\Sigma)$, and $y_+\colon S\to(0,\infty)$ and $y_-\colon S\to(-\infty,0)$ are $\Sigma$-measurable functions. 
Then
\begin{enumerate}[(i)]
	\item\label{tilde nu} 
	equation 
	\begin{equation}\label{eq:tilde nu}
	\tilde\nu(\d s):=\frac{\E X_{y_+(s),y_-(s)}^{\ +}}m\;\nu(\d s)
\end{equation}
defines a probability measure $\tilde\nu$ on $(S,\Sigma)$, so that the functions $y_+$ and $y_-$ can (and will be) considered as r.v.'s on the probability space $(S,\Sigma,\tilde\nu)$;  
	\item\label{y+-} 
then, $y_+\D Y_+$ and $y_-\D Y_-$, where $Y_\pm$ are r.v.'s as in \eqref{eq:P(Y)};
	\item\label{best}
	let $H$ be any r.v.\ uniformly distributed on $[0,m]$; suppose also that a superaddtive function $k$ is either as in Proposition~\ref{prop:trans1} \big(with $I_1=I_2=[0,\infty)$\big) or as in Proposition~\ref{prop:trans2}; 
then  
	\begin{equation}\label{eq:best-k}
	\begin{split}
	\E k( & y_+, -y_-)\le
\E k\big(x_+(H),-x_-(H)\big)\\
	&=\E k\big(Y_+,-\r(Y_+,U)\big)
	=\E k\big(\r(Y_-,U),-Y_-\big)
	\overset{\mathrm{(symm)}}=
	\E k\big(\r(Y,U),-Y\big), 
	\end{split}
\end{equation}
where the symbol ``$\overset{\mathrm{(symm)}}=$'' means an equality which takes place in the case when the additional symmetry condition $k(x,-r)=k(r,-x)$ holds for all real $x$ and $r$ such that $xr<0$; 
in particular, for any $p>0$ and $\|Z\|_p:=(\E|Z|^p)^{1/p}$,
	\begin{gather}
	\label{eq:best-p-ratios}
	\Big\|\frac{y_\pm}{y_\mp}\Big\|_p\ge
	\Big\|\frac{x_\pm(H)}{x_\mp(H)}\Big\|_p 
	=\Big\|\frac{Y_\pm}{\r(Y_\pm,U)}\Big\|_p
	=\Big\|\frac{\r(Y_\mp,U)}{Y_\mp}\Big\|_p,  
\\
	\begin{split}\label{eq:best-p-ratio}	\Big\|\frac{y_+}{y_-}\Big\|_p^p+\Big\|\frac{y_-}{y_+}\Big\|_p^p&\ge	\Big\|\frac{x_+(H)}{x_-(H)}\Big\|_p^p+\Big\|\frac{x_-(H)}{x_+(H)}\Big\|_p^p \\	&=\Big\|\frac{Y_+}{\r(Y_+,U)}\Big\|_p^p+\Big\|\frac{\r(Y_+,U)}{Y_+}\Big\|_p^p	=\Big\|\frac{\r(Y_-,U)}{Y_-}\Big\|_p^p+\Big\|\frac{Y_-}{\r(Y_-,U)}\Big\|_p^p \\
&=2\Big\|\frac{\r(Y,U)}{Y}\Big\|_p^p=2\Big\|\frac{Y}{\r(Y,U)}\Big\|_p^p; 
	\end{split} 
	\end{gather}
for any $p\ge1$, 
	\begin{gather}
	\label{eq:best-p-symm}
	\begin{split}
	\|y_+ + y_-\|_p&\ge
	 \big\|x_+(H)+x_-(H)\big\|_p \\
	&=\big\|Y_+ +\r(Y_+,U)\big\|_p
	=\big\|\r(Y_-,U)+Y_-\big\|^p=\big\|\r(Y,U)+Y\big\|_p,  
	\end{split} \\
	\label{eq:best-p-width}
	\begin{split}
	\|y_+ - y_-\|_p&\le
	 \big\|x_+(H)-x_-(H)\big\|_p\\
	&=\big\|Y_+ -\r(Y_+,U)\big\|_p
	=\big\|\r(Y_-,U)-Y_-\big\|_p=\big\|\r(Y,U)-Y\big\|_p;  
	\end{split} \\
	\intertext{for any $p\le0$,}
	\label{eq:best-p-width,neg}
	\begin{split}
	\E(y_+ - y_-)^p&\le
	 \E\big(x_+(H)-x_-(H)\big)^p\\
	&=\E\big(Y_+ -\r(Y_+,U)\big)^p
	=\E\big(\r(Y_-,U)-Y_-\big)^p=\E\big|\r(Y,U)-Y\big|^p. 
	\end{split}
\end{gather}	
\end{enumerate}
\end{proposition}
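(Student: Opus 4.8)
The plan is to establish the three parts in order, each feeding into the next. For part~\eqref{tilde nu}, I would first check that the density $\E X_{y_+(s),y_-(s)}^{\,+}/m$ integrates to $1$ against $\nu$. Applying the mixture identity \eqref{eq:Eg(X) nu} to the bounded Borel function $g(x):=x^+$ (which satisfies $g(0)=0$) gives $\E X^+=\int_S\E X_{y_+(s),y_-(s)}^{\,+}\,\nu(\d s)$; since $\E X=0$ forces $\E X^+=m$, the normalization follows, so $\tilde\nu$ is a genuine probability measure. For part~\eqref{y+-}, I would apply \eqref{eq:Eg(X) nu} to $g(x):=x^+f(x)$ for an arbitrary bounded Borel $f$, use the explicit two-point weights \eqref{eq:Xab} to compute $\E X_{y_+(s),y_-(s)}^{\,+}f(X_{y_+(s),y_-(s)})=\frac{-y_-(s)}{y_+(s)-y_-(s)}\,y_+(s)\,f(y_+(s))=\E X_{y_+(s),y_-(s)}^{\,+}\cdot f(y_+(s))$, and conclude $\E X^+f(X)=\int_S \E X_{y_+(s),y_-(s)}^{\,+}\,f(y_+(s))\,\nu(\d s)=m\,\E_{\tilde\nu}f(y_+)$; comparison with the defining formula \eqref{eq:P(Y)} for $Y_+$ (equivalently \eqref{eq:Ef(Y)}) yields $y_+\D Y_+$ under $\tilde\nu$, and symmetrically $y_-\D Y_-$ using $g(x):=(-x^-)f(x)$.

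For part~\eqref{best} — the actual optimality assertion — I would recast the problem as an optimal-transport comparison. On the probability space $(S,\Sigma,\tilde\nu)$ the pair $(y_+,-y_-)$ takes values in $(0,\infty)^2$, its marginals being the laws of $Y_+$ and $-Y_-$ by part~\eqref{y+-}. Now take $H$ uniform on $[0,m]$: by Proposition~\ref{lem:left-cont}, the functions $x_+$ and $-x_-$ are nondecreasing and (after regularization) left-continuous on $(0,m]$ and strictly positive there, so $\bigl(x_+(H),-x_-(H)\bigr)$ is exactly the monotone (comonotone) coupling of the same two marginals — here I invoke Proposition~\ref{prop:Y} to identify $x_+(H)\D Y_+$ and $-x_-(H)\D -Y_-$, and \eqref{eq:tX}–\eqref{eq:tX=X} to see it is the canonical monotone rearrangement. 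Then Proposition~\ref{prop:trans1} (with $I_1=I_2=[0,\infty)$, when $k$ is right-continuous and bounded below) or Proposition~\ref{prop:trans2} (when $k$ is continuous and bounded above, using $y_\pm\ne 0$ a.s.) applies directly with $X_1:=y_+$, $X_2:=-y_-$, giving $\E_{\tilde\nu}k(y_+,-y_-)\le\E k\bigl(x_+(H),-x_-(H)\bigr)$, which is the first inequality in \eqref{eq:best-k}. The chain of equalities on the second line of \eqref{eq:best-k} then comes from Proposition~\ref{prop:Y,r(Y)}: relations \eqref{eq:Y+-interchange} give $\bigl(x_+(H),x_-(H)\bigr)\D\bigl(Y_+,\r(Y_+,U)\bigr)\D\bigl(\r(Y_-,U),Y_-\bigr)$, and the ``$\overset{\mathrm{(symm)}}=$'' step uses $\bigl\{Y,\r(Y,U)\bigr\}\D\bigl\{x_+(H),x_-(H)\bigr\}$ together with the hypothesized symmetry $k(x,-r)=k(r,-x)$ to replace $(Y_+,Y_-)$-data by $(Y,\r(Y,U))$-data.

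Finally, the specializations \eqref{eq:best-p-ratios}–\eqref{eq:best-p-width,neg} are obtained by choosing $k$ appropriately and checking superadditivity plus the relevant boundedness and, where needed, the symmetry $k(x,-r)=k(r,-x)$: for \eqref{eq:best-p-ratios} take $k(x,r):=-(x/r)^p$ or $-(r/x)^p$ on $(0,\infty)^2$ (continuous, bounded above, superadditive, and symmetric under $x\leftrightarrow r$ after the sign change $r\mapsto -r$); for \eqref{eq:best-p-ratio} add the two; for \eqref{eq:best-p-symm} take $k(x,r):=|x-r|^p$ on $[0,\infty)^2$ with $p\ge 1$, noting $y_++y_-=y_+-(-y_-)$ so this is $\|X_1-X_2\|_p$ with $X_1=y_+,X_2=-y_-$, and $k$ is superadditive and bounded below, giving the reversed inequality form; for \eqref{eq:best-p-width} take $k(x,r):=-|x+r|^p=-(x+r)^p$ on $[0,\infty)^2$ (concave-type mixed behavior makes it superadditive with the minus sign, bounded above), and for \eqref{eq:best-p-width,neg} take $k(x,r):=-(x+r)^p$ with $p\le 0$. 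In each case one verifies the sign of $\partial^2 k/\partial x\,\partial r$ to confirm superadditivity and then reads off the inequality from \eqref{eq:best-k}, flipping direction whenever $k$ carries a minus sign. The main obstacle I anticipate is bookkeeping the superadditivity and boundedness hypotheses for each kernel $k$ — in particular matching the ``bounded below / right-continuous'' case of Proposition~\ref{prop:trans1} against the ``bounded above / continuous'' case of Proposition~\ref{prop:trans2}, and tracking which specializations need the extra symmetry $k(x,-r)=k(r,-x)$ to license the last equality in \eqref{eq:best-k}; everything else is a routine application of the already-established Propositions~\ref{lem:left-cont}, \ref{prop:Y}, \ref{prop:Y,r(Y)}, \ref{prop:trans1}, and \ref{prop:trans2}.
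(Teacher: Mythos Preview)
Your overall strategy matches the paper's exactly: parts~(i) and~(ii) come from plugging $x^+g(x)$-type test functions into \eqref{eq:Eg(X) nu} and comparing with \eqref{eq:Ef(Y)}, and part~(iii) is the optimal-transport comparison of the coupling $(y_+,-y_-)$ against the comonotone coupling $\bigl(x_+(H),-x_-(H)\bigr)$ via Propositions~\ref{prop:trans1}/\ref{prop:trans2}, with the equalities in \eqref{eq:best-k} read off from \eqref{eq:Y+-interchange} and \eqref{eq:Ysets}.

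There are, however, a few technical slips in your specialization step that would need repair. First, the signs on several kernels are reversed: for \eqref{eq:best-p-symm} the superadditive choice is $k(x,r)=-|x-r|^p$, not $|x-r|^p$ (a convex function of $x-r$ is \emph{sub}additive in the paper's sense, so you need the minus to flip it), and for \eqref{eq:best-p-width} and \eqref{eq:best-p-width,neg} the superadditive choice is $k(x,r)=(x+r)^p$, not $-(x+r)^p$. Second, for \eqref{eq:best-p-width,neg} with $p<0$ the kernel $(x+r)^p$ is unbounded above on $(0,\infty)^2$ and undefined at the corner of $[0,\infty)^2$, so neither transport proposition applies directly; the paper handles this by regularizing to $k_\vp(x,r)=(x+r+\vp)^p$, applying Proposition~\ref{prop:trans1}, and letting $\vp\downarrow0$ with monotone convergence. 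Third, two small citation points: $g(x)=x^+$ is bounded \emph{from below} rather than bounded, and the identification $x_+(H)\D Y_+$ comes from \eqref{eq:Y+-interchange} in Proposition~\ref{prop:Y,r(Y)}, not from Proposition~\ref{prop:Y} alone.
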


\begin{remark*} Observe that 
the probability measure $\tilde\nu$ defined by \eqref{eq:tilde nu} -- which, according to part (ii) of Proposition~\ref{prop:best}, equalizes $y_\pm$ with $Y_\pm$ in distribution -- is quite natural, as one considers 
the problem of the most symmetric disintegration of an arbitrary zero-mean distribution into the mixture of two-point zero-mean distributions as 
the problem of the most symmetric transportation (or, in other words, matching) of the measure $A\mapsto\E X^+\ii{X\in A}$ to the measure $A\mapsto\E(-X^-)\ii{X\in A}$ (of the same total mass) or, equivalently, the most symmetric matching of the distribution of $Y_+$ with that of $Y_-$. 
Observe also that, in terms of $\tilde\nu$, mixture representation \eqref{eq:Eg(X) nu} can be rewritten in the form matching that of \eqref{eq:Eg(X) other}: 
\begin{equation*}
		\E g(X)=
	\int_S \E g(X_{y_+(s),y_-(s)})\,
	\frac{m\,\tilde\nu(\d s)}{\E X_{y_+(s),y_-(s)}^{\ +}},  
\end{equation*}
and at that $\int_S m\,\tilde\nu(\d s)=m=\int_0^m\d h$.
\end{remark*} 

\begin{remark}\label{rem:best} 
Inequality \eqref{eq:best-p-ratios} means that the two-point zero-mean disintegration given in this paper is, on an average, both least-skewed to the right and least-skewed to the left, where the averaging is done according to the distribution of $(Y,U)$ \big(or that of $(Y_+,U)$ or $(Y_-,U)$\big). Inequality \eqref{eq:best-p-ratios} is obtained  
as a special case of \eqref{eq:best-k} (in view of Proposition~\ref{prop:trans2}) with $k(y_1,y_2)=-\frac{y_1^p}{y_2^p}$ or $k(y_1,y_2)=-\frac{y_2^p}{y_1^p}$ for positive $y_1,y_2$; inequality \eqref{eq:best-p-ratio} is a ``two-sided'' version of \eqref{eq:best-p-ratios}.
Generalizing both these one- and two-sided versions, one can take $k(y_1,y_2)\equiv-\frac{f_1(y_1)}{g_1(y_2)}-\frac{f_2(y_2)}{g_2(y_1)}$, where the functions $f_1,f_2$ are nonnegative, continuous, and  nondecreasing, and the functions $g_1,g_2$ are strictly positive,  continuous, and nondecreasing. 

Another two-sided expression of least average skewness is given by \eqref{eq:best-p-symm}, which is obtained as a special case of \eqref{eq:best-k} (again in view of Proposition~\ref{prop:trans2}) with $k(y_1,y_2)\equiv-|y_1-y_2|^p$, for positive $y_1,y_2$; 
using $-|(y_1-y_2)^\pm|^p$ instead of $-|y_1-y_2|^p$, one will have the corresponding right- and left-sided versions; note that, in any of these versions, the condition that 
$\|Y_+\|_p<\infty$ or $\|Y_-\|_p<\infty$ is not needed. More generally, one can take $k(y_1,y_2)\equiv-f(c_1y_1-c_2y_2)$, where $f$ is any nonnegative convex function and $c_1,c_2$ are any  nonnegative constants. 

On the other hand, \eqref{eq:best-p-width} implies that our disintegration has the greatest $p$-average width $|y-\r(y,u)|$. This two-sided version is obtained 
by \eqref{eq:best-k} in view of Proposition~\ref{prop:trans1} 
with $k(y_1,y_2)\equiv|y_1+y_2|^p$, again  for positive $y_1,y_2$; using $|(y_1+y_2)^\pm|^p$ instead will provide the corresponding right- and left-sided versions. 
The largest-$p$-average-width property can also be expressed by taking $|y_1y_2|^p$ or $|y_1^\pm y_2^\pm|^p$ in place of $|y_1+y_2|^p$ or $|(y_1+y_2)^\pm|^p$. 

More generally, one can take $k(y_1,y_2)\equiv f(c_1y_1+c_2y_2)$, where $f$ is any nonnegative convex function and $c_1,c_2$ are again any  nonnegative constants; cf.\ \eqref{eq:best-p-width,neg}. 
%
Thus, our disintegration can be seen as most inhomogeneous in the widths of the two-point zero-mean distributions constituting the mixture. 

Another way to see this is to take any nonnegative $a,b$ and then, in Proposition~\ref{prop:best}, the superadditive function $k(y_1,y_2)\equiv\ii{y_1\ge a,y_2\ge b}$ or $k(y_1,y_2)\equiv\ii{y_1<a,y_2<b}$. 
Then one sees that 
our disintegration 
makes each of the two probabilities --
the large-width probability $\P(y_-\le-a,y_+\ge b)$ and the small-width probability $\P(-a<y_-,y_+<b)$ -- the greatest possible (over all the two-point zero-mean disintegrations, determined by the functions $y_\pm$ as in Proposition~\ref{prop:best}). 

Moreover, \emph{each} of these two properties -- most-large-widths and most-small-widths -- is equivalent to \emph{each} of the two least-average-skewness properties: the least-right-skewness and the least-left-skewness. 
Indeed, for our disintegration, the right-skewness probability $\P(y_->-a,y_+\ge b)$ is the least possible, since it complements the large-width probability $\P(y_-\le-a,y_+\ge b)$ to
$\P(y_+\ge b)$, and it complements the small-width probability $\P(-a<y_-,y_+<b)$ to $\P(y_->-a)$, and at that each of the probabilities $\P(y_+\ge b)$ and $\P(y_->-a)$ is the same over all the disintegrations -- recall part (ii) of Proposition~\ref{prop:best}. Similarly one shows that the least left-skewness is equivalent to each of the properties:  most-large-widths and most-small-widths.
So, there is a rigid trade-off between average skewness and width homogeneity.  

On the other hand, reviewing the proofs of Propositions~\ref{prop:trans1} and \ref{prop:trans2} (especially, see \eqref{eq:Fubini}), one realizes that the superadditive functions $k$ of the form $k(y_1,y_2)\equiv\ii{y_1\ge a,y_2\ge b}$ serve as elementary building blocks; more exactly, these elementary superadditive functions 
(together with the functions that depend only on one of the two arguments) 
represent the extreme rays of the convex cone that is the set of all superadditive functions.  
From these elementary superadditive functions, 
an arbitrary superadditive function can be obtained by mixing and/or limit transition. 
One can now conclude that the exact equivalence between 
the least average skewness and the most inhomogeneous width (of a two-point zero-mean disintegration) occurs at the fundamental, elementary level.   
\end{remark} 
 

\begin{remark*} 
It is rather similar (and even slightly simpler) to obtain  an analogue of Proposition~\ref{prop:best} for the mentioned disintegration (given in \cite[Theorem~2.2]{aizen}) of any probability distribution into the mixture of two-point distributions with the same skewness coefficients (but possibly with different means). In fact, 
a same-skewness analogue of \eqref{eq:best-p-width} in the limit case $p=\infty$ was obtained in \cite[Theorem~2.3]{aizen}; note that the corresponding $L_\infty$ norm of the width equals $\infty$ unless the support of the distribution is bounded. In this paper, we shall not further pursue the matters mentioned in this paragraph.   
\end{remark*}



\subsection{Characteristic properties of reciprocating functions}\label{charact}
To model reciprocating functions, one needs to characterize them. 
Let us begin here with some identities which follow from Proposition~\ref{prop:mix}: 

\begin{proposition}\label{prop:x+- idents}
One has
\begin{equation}\label{eq:x+- idents}
	\int_0^m\frac{\d h}{x_+(h)}=\P(X>0);\quad
	\int_0^m\frac{\d h}{-x_-(h)}=\P(X<0).
\end{equation}
\end{proposition}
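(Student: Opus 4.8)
The plan is to apply Proposition~\ref{prop:mix} with a carefully chosen test function $g$ that isolates one of the two atoms of each two-point zero-mean distribution $X_h=X_{x_+(h),x_-(h)}$. Recall from \eqref{eq:Xab} that for $ab<0$ one has $\P(X_{a,b}=a)=\tfrac b{b-a}$, so for $a=x_+(h)>0$ and $b=x_-(h)<0$ (for $h\in(0,m)$, using \eqref{x+-pos} and \eqref{x+-finite} of Proposition~\ref{lem:left-cont}) we get $\P\big(X_h=x_+(h)\big)=\dfrac{-x_-(h)}{x_+(h)-x_-(h)}$ and $\P\big(X_h=x_-(h)\big)=\dfrac{x_+(h)}{x_+(h)-x_-(h)}$. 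Also $\E(X_h)^+=x_+(h)\cdot\P\big(X_h=x_+(h)\big)=\dfrac{x_+(h)\,(-x_-(h))}{x_+(h)-x_-(h)}$.

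First I would take $g(x):=\ii{x>0}$, which is Borel, bounded, and satisfies $g(0)=0$, so Proposition~\ref{prop:mix} applies. The left-hand side of \eqref{eq:Eg(X) other} is $\E g(X)=\P(X>0)$. On the right-hand side, $\E g(X_h)=\P\big(X_h>0\big)=\P\big(X_h=x_+(h)\big)=\dfrac{-x_-(h)}{x_+(h)-x_-(h)}$, so
\begin{equation*}
\frac{\E g(X_h)}{\E(X_h)^+}=\frac{-x_-(h)}{x_+(h)-x_-(h)}\cdot\frac{x_+(h)-x_-(h)}{x_+(h)\,(-x_-(h))}=\frac1{x_+(h)}.
\end{equation*}
Substituting into \eqref{eq:Eg(X) other} gives $\P(X>0)=\int_0^m\frac{\d h}{x_+(h)}$, the first identity. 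For the second, I would instead take $g(x):=\ii{x<0}$; then $\E g(X)=\P(X<0)$, while $\E g(X_h)=\P\big(X_h=x_-(h)\big)=\dfrac{x_+(h)}{x_+(h)-x_-(h)}$, and dividing by $\E(X_h)^+$ yields $\dfrac1{-x_-(h)}$, so \eqref{eq:Eg(X) other} gives $\P(X<0)=\int_0^m\frac{\d h}{-x_-(h)}$.

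There is essentially no obstacle here: the statement is a direct specialization of Proposition~\ref{prop:mix}, and the only minor point to be careful about is that the integrand formulas for $x_\pm(h)$ are valid (finite and of the correct sign) for $h$ ranging over $(0,m)$, which is exactly where the integration in \eqref{eq:Eg(X) other} takes place — the endpoints $h=0$ and $h=m$ form a null set and can be ignored. One may also sanity-check the result by adding the two identities: since $\P(X>0)+\P(X<0)=\P(X\ne0)$ (recall the standing assumption $\P(X=0)<1$), one obtains $\int_0^m\big(\tfrac1{x_+(h)}+\tfrac1{-x_-(h)}\big)\d h=\P(X\ne0)$, which is consistent with taking $g(x)=\ii{x\ne0}$ directly in Proposition~\ref{prop:mix}.
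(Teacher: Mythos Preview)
Your proof is correct and follows exactly the paper's approach: the paper's proof consists of the single sentence that the first identity is a special case of Proposition~\ref{prop:mix} with $g(x)\equiv\ii{x>0}$ and the second is similar. You have simply spelled out the short computation that the paper leaves to the reader.
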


It is interesting that identities \eqref{eq:x+- idents} together with properties \eqref{x+-non-decr}--\eqref{x+-left-cont} of Proposition~\ref{lem:left-cont} 
completely characterize the functions $x_+$ and $x_-$. This allows effective modeling of asymmetry patterns of a zero-mean distribution.  
 
\begin{proposition}\label{prop:x+-charact}
For an arbitrary $m\in(0,\infty)$, 
let $y_+\colon[0,m]\to[0,\infty]$ and \break 
$-y_-\colon[0,m]\to[0,\infty]$ be arbitrary functions with $y_+(0)=y_-(0)=0$ and properties \eqref{x+-non-decr}--\eqref{x+-left-cont} of Proposition~\ref{lem:left-cont} such that \big(cf.\ \eqref{eq:x+- idents}\big)
\begin{equation}\label{eq:y+- idents}
	\int_0^m\frac{\d h}{y_+(h)}
	+\int_0^m\frac{\d h}{-y_-(h)}\le1.
\end{equation}
Then there exists a unique zero-mean distribution for which the functions $x_+$ and $x_-$ coincide on $[0,m]$ with the given functions $y_+$ and $y_-$, respectively. 
\end{proposition}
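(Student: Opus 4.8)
The plan is to reconstruct the zero-mean distribution directly from the data $(y_+,y_-)$, by first building the analogue of $G$ and then of $\mu$. Given the functions $y_+$ and $-y_-$ on $[0,m]$ with the stated monotonicity, finiteness, positivity and left-continuity properties, I would define a candidate for $G$ on $[0,\infty]$ by setting $G(x):=\sup\{h\in[0,m]\colon y_+(h)\le x\}$ for $x\ge0$ (with $\sup\emptyset:=0$), and symmetrically $G(x):=\sup\{h\in[0,m]\colon -y_-(h)\le -x\}$ for $x\le0$; this is just the generalized inverse construction of Proposition~\ref{lem:left-cont} run backwards. One checks, using \eqref{eq:L1}--\eqref{eq:bet+} as a template, that $G$ so defined is non-decreasing and right-continuous on $[0,\infty)$, non-increasing and left-continuous on $(-\infty,0]$, satisfies $G(0)=0$, $G(\pm\infty)=m$, and that the generalized inverses of this $G$ recover exactly $y_+$ and $y_-$ (left-continuity of $y_\pm$ is what makes the recovery exact, via \eqref{x+-left-cont}).

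The second step is to produce the measure $\mu$. The function $G$ on $[0,\infty]$ is, by \eqref{eq:G(x)}, the indefinite ``$z\,\nu(\d z)$-integral'', so I would define a measure $\mu^+$ on $(0,\infty]$ by $\mu^+(\d z):=\frac1z\,\d G(z)$ — more precisely, for a Borel set $A\subseteq(0,\infty)$, $\mu^+(A):=\int_A \frac1z\,\d G_{|[0,\infty)}(z)$ in the Lebesgue--Stieltjes sense — and similarly $\mu^-$ on $[-\infty,0)$ from the left branch of $G$. Since $G$ is bounded by $m$, these are finite measures (finiteness of the $\frac1z$-weighted integral near $0$ follows because $G(0)=0$ and $G$ is continuous at $0$, so $\int_{(0,\vp]}\frac1z\,\d G(z)\le\frac1\vp G(\vp)\to$ something controlled — more carefully one integrates by parts). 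Then set $\mu:=\mu^-+\mu^+ + c\,\de_0$, where the atom size $c\ge0$ at $0$ is chosen so that $\mu$ is a probability measure: $c:=1-\mu^-(\R)-\mu^+(\R)$, and the content of inequality \eqref{eq:y+- idents} is precisely that $c\ge0$. Indeed, Proposition~\ref{prop:x+- idents} gives $\int_0^m\frac{\d h}{y_+(h)}=\mu^+(\R)=\P(X>0)$ and likewise $\int_0^m\frac{\d h}{-y_-(h)}=\mu^-(\R)=\P(X<0)$ for the distribution we are building, so \eqref{eq:y+- idents} says $\P(X>0)+\P(X<0)\le1$, leaving exactly $c=\P(X=0)\ge0$ for the atom.

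Third, I must verify that the measure $\mu$ just built is zero-mean and that its $G_\mu$ equals the $G$ I started from, which then forces $x_{\pm,\mu}=y_\pm$ by the generalized-inverse recovery of step one. That $G_\mu=G$ on $[0,\infty)$ is immediate from $\mu^+(\d z)=\frac1z\d G(z)$ together with $\int_{(0,x]}z\,\mu^+(\d z)=\int_{(0,x]}\d G(z)=G(x)-G(0)=G(x)$; likewise on the negative side. The zero-mean property is then $\frac12\E|X|=G(\infty)=G(-\infty)$, i.e. $\int_{(0,\infty)}z\,\mu(\d z)=\int_{(-\infty,0)}(-z)\,\mu(\d z)$, both equal to $m$ by construction (recall \eqref{eq:m}); hence $\E X=0$. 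The finiteness $m<\infty$ holds because $m$ was given finite. For uniqueness, if two zero-mean distributions $\mu_1,\mu_2$ both have $x_{\pm}$ equal to $y_\pm$ on $[0,m]$, then they have the same $m$ (namely $\lim_{h\uparrow m} $ behaviour, or just the common domain endpoint) and the same $G$ on $[0,\infty]$ — because $G$ is determined by its generalized inverse $x_+$ on $(0,m]$ via right-continuity and $G(0)=0$, $G(\infty)=m$ — hence, by \eqref{eq:G(x)}, $\int_{(0,x]}z\,\mu_1(\d z)=\int_{(0,x]}z\,\mu_2(\d z)$ for all $x>0$, which gives $\mu_1=\mu_2$ on $(0,\infty)$; symmetrically on $(-\infty,0)$; and the atom at $0$ is then forced by total mass $1$.

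The main obstacle I anticipate is the careful handling of the reconstruction $G\leftrightarrow x_\pm$ when $G$ has flats and jumps: a flat of $G$ corresponds to a gap in the range of the data unused by $y_\pm$ (an interval on which $\mu$ puts no mass), while a jump of $G$ at a point $x>0$ corresponds to $y_+$ being constant on an interval of $h$-values, i.e. to an atom of $\mu$ at $x$; keeping the left-/right-continuity bookkeeping straight so that the generalized inverses come back to the \emph{given} $y_\pm$ exactly — not to some left- or right-continuous modification — is the delicate point, and it is exactly there that hypotheses \eqref{x+-non-decr}--\eqref{x+-left-cont} are used in full. Everything else is a Lebesgue--Stieltjes bookkeeping exercise built on Propositions~\ref{lem:left-cont}, \ref{prop:mix}, and \ref{prop:x+- idents}.
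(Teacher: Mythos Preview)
Your proposal is correct and follows essentially the same route as the paper's proof: the paper too defines the candidate $G$ (there called $L$) as the generalized inverse of $y_\pm$, sets $\nu(A):=\int_{A\setminus\{0\}}\bigl|\frac1x\,\d L(x)\bigr|$, computes $\nu\bigl((0,\infty)\bigr)=\int_0^m\frac{\d h}{y_+(h)}$ directly via Fubini (rather than by invoking Proposition~\ref{prop:x+- idents}, which would be mildly circular before $\mu$ is known to be a zero-mean probability measure), places the residual mass at $0$, and argues uniqueness exactly as you do. The only substantive caution is that your appeal to Proposition~\ref{prop:x+- idents} for the identity $\int_0^m\frac{\d h}{y_+(h)}=\mu^+(\R)$ should be replaced by the direct Fubini computation the paper carries out, since that identity is needed \emph{before} $\mu$ is shown to be a probability measure.
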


For example, take $m=1$ and let $x_-(h)=-c$ and $x_+(h)=\frac c{1-h}$ for all $h\in(0,1)$, where the constant $c$ is chosen so that the sum of the two integrals in \eqref{eq:x+- idents} be $\frac12$. Then $\P(X=0)=\frac12$, $c=3$, $G(x)=\ii{x\le-3}+(1-\frac3x)\ii{x\ge3}$, and 
$\P(X\le x)=\frac13\ii{-3\le x<0}+\frac56\ii{0\le x<3}+
(1-\frac3{2x^2})\ii{x\ge3}$ for all real $x$.

In applications such as Corollaries~\ref{cor:student-normal} and \ref{cor:stud-asymm}, which are stated in terms of the reciprocating function $\r$, it is preferable to model $\r$ (rather than the functions $x_\pm$). 
Toward that end, let us provide various characterizations of the reciprocating function $\r$. 

For any (nonnegative) measure $\mu$ on $\B(\R)$, let $\mu_+$ be the measure defined by the formula 
\begin{equation}\label{eq:mu+}
	\mu_+(A):=\mu\big(A\cap[0,\infty)\big) 
\end{equation}
for all $A\in\B(\R)$.

For any function $r\colon[-\infty,\infty]\times[0,1]\to[-\infty,\infty]$, let $r_+$ denote the restriction of $r$ to the set $[0,\infty]\times[0,1]$:
\begin{equation*}
 r_+:=r|_{[0,\infty]\times[0,1]}.
\end{equation*}

\begin{proposition}\label{prop:s,nu}
Take any function $\sss\colon[0,\infty]\times[0,1]\to[-\infty,\infty]$ and any  measure $\nu\colon\B(\R)\to[0,\infty)$. Then the following two conditions are equivalent to each other:
\begin{enumerate}[(I)]
	\item
there exists a zero-mean probability measure $\mu$ on $\B(\R)$ 
such that $\mu_+=\nu$ and $(\r_\mu)_+=\sss$; 
\item all of the following conditions hold:
\end{enumerate}
\begin{enumerate}[\qquad(a)]
 \item\label{0} 
 $\sss(0,u)=0$ for all $u\in[0,1]$;
\item\label{nonincr} 
$\sss$ is $\prec$-nonincreasing \big(recall definition \eqref{eq:lex_def}\big);
\item\label{x-l.c} 
$\sss(x,0)$ is left-continuous in $x\in(0,\infty]$;
\item\label{u-l.c} 
$\sss(x,u)$ is left-continuous in $u\in(0,1]$ for each $x\in[0,\infty]$;
\item\label{nu-mass} 
$\nu\big((-\infty,0)\big)=0$; 
\item\label{m<infty} 
$m_\nu:=G_\nu(\infty)<\infty$; 
\item\label{nu,s cont1} 
$\big(\nu(\{x\})=0\ \&\ x\in(0,\infty]\big)
\implies\sss(x,1)=\sss(x,0)$; 
\item\label{nu,s cont2} 
$\Big(\nu\big((x,y)\big)=0\ \&\ 0\le x<y\le\infty\Big)
\implies\sss(x,1)=\sss(y,0)$; 
\item\label{fin} 
$\big(\tG_\nu(x,u)<m_\nu\ \&\ (x,u)\in[0,\infty)\times[0,1]\big)
\implies\sss(x,u)>-\infty$; 
\item\label{s<0} 
$\big(\tG_\nu(x,u)>0\ \&\ (x,u)\in(0,\infty]\times[0,1]\big)
\implies\sss(x,u)<0$; 
\item\label{mu<1} 
$\displaystyle\int_{(0,\infty)\times[0,1]}
\Big(1-\dfrac x{\sss(x,u)}\Big)\,\nu(\d x)\,\d u\le1$.
\end{enumerate}
Moreover, under condition (II), the measure $\mu$ as in condition (I) 
is unique. 
\end{proposition}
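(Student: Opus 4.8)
The plan is to establish the two implications (I)$\Rightarrow$(II) and (II)$\Rightarrow$(I) together with the uniqueness assertion, separately. Throughout I will use that, by \eqref{eq:r}, for $x\ge0$ one has $\r_\mu(x,u)=x_{-,\mu}\big(\tG_\mu(x,u)\big)$; that $\tG_\mu$ restricted to $[0,\infty]\times[0,1]$ depends only on $G_\nu$ (hence only on $\mu_+=\nu$); and that its range is all of $[0,m_\nu]$ by Proposition~\ref{prop:lex}. For the direction (I)$\Rightarrow$(II), with $\mu$ given, $\nu:=\mu_+$ and $\sss:=(\r_\mu)_+$, I verify (a)--(k) one at a time. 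Condition (a) is $\r_\mu(0,u)=0$; (e) and (f) are immediate from \eqref{eq:G(x)} and \eqref{eq:m}; (b), (c), (d) follow from Proposition~\ref{prop:lex}, the (left-)continuity properties \eqref{eq:G properties} of $G$ and $G(\cdot-)$, and properties \eqref{x+-non-decr}, \eqref{x+-left-cont} of Proposition~\ref{lem:left-cont} for $x_{-,\mu}$, composed with the monotone left-continuous maps $x\mapsto\tG_\mu(x,0)$ and $u\mapsto\tG_\mu(x,u)$; (g) is part~\eqref{r for cont} of Remark~\ref{rem:cont}; (h) holds because $\nu\big((x,y)\big)=0$ forces $G_\nu(x)=G_\nu(y-)$, whence $\tG_\mu(x,1)=\tG_\mu(y,0)$; (i) and (j) come from properties \eqref{x+-finite}, \eqref{x+-pos} of Proposition~\ref{lem:left-cont} for $x_{-,\mu}$ together with $0\le\tG_\mu(x,u)\le m_\nu$. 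The only step with real content is (k): applying identity \eqref{eq:Eg(X) 1-x/r} with $g\equiv1$, splitting the integral over $\{X=0\}$, $\{X>0\}$, $\{X<0\}$, and invoking \eqref{eq:Xne0,R=0} to pin down the convention at $0$, one finds that the left-hand side of (k) equals $\P(X\ne0)\le1$ (indeed it equals $1-\nu(\{0\})$).

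For (II)$\Rightarrow$(I), assume (a)--(k). Since $\prec$ is a total order and $\tG_\nu$ is $\prec$-nondecreasing and onto $[0,m_\nu]$, conditions (b), (g), (h) force $\sss(x,u)$ to depend on $(x,u)$ only through $h:=\tG_\nu(x,u)$; this unambiguously defines $y_-\colon[0,m_\nu]\to[-\infty,0]$ by $y_-(h):=\sss(x,u)$ for any $(x,u)$ with $\tG_\nu(x,u)=h$. Then (a) gives $y_-(0)=0$, (b) gives $-y_-$ non-decreasing, (c) and (d) give left-continuity on $(0,m_\nu]$, (i) gives finiteness on $[0,m_\nu)$, and (j) gives strict negativity on $(0,m_\nu]$. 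Put $y_+:=x_{+,\nu}$, which by Proposition~\ref{lem:left-cont} enjoys the matching properties \eqref{x+-non-decr}--\eqref{x+-left-cont}. By the change-of-variables identity underlying Proposition~\ref{prop:Y}, in the form $\int_{(0,\infty)\times[0,1]}x\,f\big(\tG_\nu(x,u)\big)\,\nu(\d x)\,\d u=\int_0^{m_\nu}f(h)\,\d h$, applied with $f=1/(-y_-)$, together with the $\nu$-version $\nu\big((0,\infty)\big)=\int_0^{m_\nu}\d h/y_+(h)$ of Proposition~\ref{prop:x+- idents}, condition (k) transcribes into the normalization \eqref{eq:y+- idents} for the pair $y_\pm$ (this is where the atom of $\nu$ at $0$ must be bookkept carefully). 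Proposition~\ref{prop:x+-charact} then supplies a zero-mean probability measure $\mu$ with $x_{\pm,\mu}=y_\pm$ on $[0,m_\nu]$; in particular $G_\mu=G_\nu$ on $[0,\infty)$, so $\mu_+=\nu$, and for $x\ge0$, $\r_\mu(x,u)=x_{-,\mu}\big(\tG_\mu(x,u)\big)=y_-\big(\tG_\nu(x,u)\big)=\sss(x,u)$, i.e.\ $(\r_\mu)_+=\sss$. Uniqueness follows from the same identifications: if $\mu,\mu'$ both satisfy (I), then $\mu_+=\nu=\mu'_+$ gives agreement on $[0,\infty)$, while $\sss=y_-\circ\tG_\nu$ with $\tG_\nu$ onto $[0,m_\nu]$ forces $x_{-,\mu}=y_-=x_{-,\mu'}$ on $[0,m_\nu]$, hence $G_\mu=G_{\mu'}$ on $[-\infty,0]$ and $\mu=\mu'$ on $(-\infty,0)$.

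I expect the heart of the argument, and the main obstacle, to be the reduction in (II)$\Rightarrow$(I): proving that (b), (g), (h), (i), (j) are exactly what is needed for $\sss$ to factor as $y_-\circ\tG_\nu$ with $y_-$ a bona fide negative generalized inverse, inheriting all four regularity properties, and that condition (k) is precisely the normalization \eqref{eq:y+- idents} for $y_\pm$. This demands a careful case analysis of how two distinct pairs $(x,u)$, $(x',u')$ can satisfy $\tG_\nu(x,u)=\tG_\nu(x',u')$ — via an atom of $\nu$, a gap in $\supp\nu$, or the endpoint cases $u\in\{0,1\}$ — and then matching the outcome against the hypotheses of Proposition~\ref{prop:x+-charact}. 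The remaining verifications, including the treatment of $\pm\infty$ and of the atom at $0$, are routine.
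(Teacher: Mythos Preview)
Your proposal is correct and follows essentially the same architecture as the paper's proof: for (I)$\Rightarrow$(II) verify (a)--(k) directly from the definitions and the elementary properties of $x_{-,\mu}$ in Proposition~\ref{lem:left-cont}; for (II)$\Rightarrow$(I) show that $\sss$ factors as $y_-\circ\tG_\nu$, check that $y_-$ inherits properties \eqref{x+-non-decr}--\eqref{x+-left-cont}, transcribe (k) into the normalization \eqref{eq:y+- idents}, and invoke Proposition~\ref{prop:x+-charact}. The only noticeable difference is in your verification of (k): you appeal to identity \eqref{eq:Eg(X) 1-x/r} (together with \eqref{eq:1+x/r} or the symmetry $X\leftrightarrow-X$ to extract the $(0,\infty)$-half), whereas the paper computes $\int_0^{m_\nu}\d h/x_{-,\mu}(h)$ directly via Proposition~\ref{prop:Y} and Proposition~\ref{prop:x+- idents}, obtaining the same value $\mu(\R\setminus\{0\})$; both routes are short and self-contained.
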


In the sequel, we shall be referring to conditions \eqref{0}--\eqref{mu<1} listed in Proposition~\ref{prop:s,nu} as \ref{prop:s,nu}(II)\eqref{0}--\eqref{mu<1} or as \ref{prop:s,nu}(II)(\ref{0}--\ref{mu<1}). Similar references will be being made to conditions listed in other propositions. 

Proposition~\ref{prop:s,nu} implies, in particular, that the set of all zero-mean probability measures $\mu$ on $\B(\R)$ is ``parameterized'' via the one-to-one mapping 
$$\mu\longleftrightarrow(\sss,\nu)=\big(\mu_+,(\r_\mu)_+\big);$$
also, Proposition~\ref{prop:s,nu} provides a complete description of the ``parameter space'' (say $\M$) consisting of all such pairs $(\sss,\nu)=\big(\mu_+,(\r_\mu)_+\big)$. 

Next, we characterize the projection of the parameter space $\M$ onto the ``first coordinate axis''; that is, the set of all functions $\sss$ such that $(\sss,\nu)$ is in $\M$ for some measure $\nu$. In other words, we are now going to characterize the set of the ``positive parts'' $\r_+$ of the reciprocating functions of all zero-mean probability measures $\mu$ on $\B(\R)$. 
Toward that end, with  any function $\sss\colon[0,\infty]\times[0,1]$ associate the ``level'' sets 
\begin{equation}\label{eq:M_s}
	M_\sss(z):=\{(x,u)\in[0,\infty]\times[0,1]\colon\sss(x,u)=z\}, 
\end{equation}
for all $z\in[-\infty,\infty]$, 
and also 
\begin{equation}\label{eq:a,b}
\begin{aligned}
a_\sss &:=\sup\{x\in[0,\infty]\colon\exists u\in[0,1]\ (x,u)\in M_\sss(0)\};\\
b_\sss &:=\inf\{x\in[0,\infty]\colon\exists u\in[0,1]\ (x,u)\in M_\sss(-\infty)\}; 
\end{aligned}	
\end{equation}
here $\inf\emptyset:=\infty$; note that the set of which $a_\sss$ is the supremum contains the point $0$ and hence is never empty.

\begin{proposition}\label{prop:s(x,u)}
Take any function $\sss\colon[0,\infty]\times[0,1]\to[-\infty,\infty]$. 
Then the following two conditions are equivalent to each other:
\begin{enumerate}[(I)]
	\item
	there exists a zero-mean probability measure $\mu$ on $\B(\R)$ 
such that $(\r_\mu)_+=\sss$; 
	\item
conditions \ref{prop:s,nu}(II)(\ref{0}--\ref{u-l.c}) hold, along with these three conditions:  
\begin{enumerate}[\ ]
\item[(\ref{fin}')]\label{fin-s} 
the set $M_\sss(-\infty)$ has one of the following three forms:
\begin{itemize}
	\item 
	$\emptyset$ or 
	\item
	$[b,\infty]\times[0,1]$ for some $b\in(0,\infty]$ or
	\item
	$\{(b,1)\}\cup\big((b,\infty]\times[0,1]\big)$ for some $b\in(0,\infty)$; 
\end{itemize}
in fact, this $b$ necessarily coincides with $b_\sss$;
\item[(\ref{s<0}')]\label{s<0_s} 
the set $M_\sss(0)$ has one of the following two forms:
\begin{itemize}
	\item
	$[0,a]\times[0,1]$ for some $a\in[0,\infty]$ or
	\item
	$\big([0,a)\times[0,1]\big)\cup\{(a,0)\}$ for some $a\in(0,\infty)$; 
\end{itemize}
in fact, this $a$ necessarily coincides with $a_\sss$;
\item[(\ref{mu<1}')]\label{mu<1_s}
$\displaystyle\int_0^1
\dfrac{\d u}{\sss(a_\sss,u)}>-\infty$ if $\sss(a_\sss,1)\ne\sss(a_\sss,0)$.
\end{enumerate}
\end{enumerate}
\end{proposition}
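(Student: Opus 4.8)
The plan is to derive Proposition~\ref{prop:s(x,u)} from Proposition~\ref{prop:s,nu} by ``projecting out'' the measure $\nu$. The implication (I)$\implies$(II) is the easy direction: if $(\r_\mu)_+=\sss$, then $(\sss,\mu_+)$ satisfies all of \ref{prop:s,nu}(II), so in particular conditions \ref{prop:s,nu}(II)(\ref{0}--\ref{u-l.c}) hold verbatim, and it remains only to read off the extra structural facts (\ref{fin}'), (\ref{s<0}'), (\ref{mu<1}') from the joint conditions \ref{prop:s,nu}(II)(\ref{fin}), (\ref{s<0}), (\ref{mu<1}) together with the definition \eqref{eq:r} of $\r$. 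For (\ref{s<0}'): by \ref{prop:s,nu}(II)(\ref{s<0}), $\sss(x,u)<0$ whenever $\tG_\nu(x,u)>0$; conversely $\sss(x,u)=x_{-,\nu}(\tG_\nu(x,u))=0$ exactly when $\tG_\nu(x,u)=0$ (using \eqref{eq:x-} and $G_\nu(0)=0$, $m_\nu>0$). By Proposition~\ref{prop:lex} the set $\{\tG_\nu=0\}$ in $[0,\infty]\times[0,1]$ is a $\prec$-down-set, and by right-continuity of $G_\nu$ and the form of $\tG_\nu$ in \eqref{eq:H} it is precisely one of the two stated shapes, with $a=a_\sss=x_{+,\nu}(0+)$ (the largest $x$ with $G_\nu(x)=0$). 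Symmetrically, $\{\tG_\nu=m_\nu\}$ is a $\prec$-up-set, and $\sss=-\infty$ on it by \ref{prop:s,nu}(II)(\ref{fin}) (contrapositive) and $=-\infty$ only on it (if $\tG_\nu(x,u)<m_\nu$ then $x_{-,\nu}(\tG_\nu(x,u))>-\infty$ by \eqref{eq:x-} and \ref{lem:left-cont}(\ref{x+-finite})), giving (\ref{fin}') with $b=b_\sss$. Finally (\ref{mu<1}') is the ``boundary contribution'' of \ref{prop:s,nu}(II)(\ref{mu<1}): on the set $M_\sss(0)^\circ$ the integrand $1-x/\sss$ is $+\infty$ on a $\nu$-null neighborhood issue unless $\nu$ is handled — more precisely, finiteness of the integral in \ref{prop:s,nu}(II)(\ref{mu<1}) forces $\int_0^1 \d u/\sss(a_\sss,u)>-\infty$ whenever the $x=a_\sss$ slice is nondegenerate, i.e. $\sss(a_\sss,1)\ne\sss(a_\sss,0)$, because that is the only place where both $\nu(\{a_\sss\})>0$ is allowed and $\sss$ can approach $0$.

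**The main work: (II)$\implies$(I).** Given $\sss$ satisfying (II), I must manufacture a measure $\nu$ so that $(\sss,\nu)$ satisfies \ref{prop:s,nu}(II), and then invoke Proposition~\ref{prop:s,nu} to get the zero-mean $\mu$ with $(\r_\mu)_+=\sss$. The natural candidate is forced by condition \ref{prop:s,nu}(II)(\ref{nu,s cont2}): $\nu$ must put no mass on any open interval $(x,y)\subset[0,\infty)$ on which $\sss(x,1)\ne\sss(y,0)$, and by \ref{prop:s,nu}(II)(\ref{nu,s cont1}) no atom at $x$ unless $\sss(x,1)=\sss(x,0)$. So the support of $\nu$ on $[0,\infty)$ is dictated by where $\sss(\cdot\,,0)$ actually moves, and the masses are dictated by \ref{prop:s,nu}(II)(\ref{mu<1}) being an equality after we also build the negative part. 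Concretely I would proceed as follows. First set $a:=a_\sss$, $b:=b_\sss$. On $(a,b)$ the function $x\mapsto -\sss(x,0)$ is, by \ref{prop:s,nu}(II)(\ref{nonincr}) and (\ref{x-l.c}), positive, nondecreasing in $|\sss|$... — actually $\sss$ is $\prec$-nonincreasing so $-\sss(\cdot\,,0)$ is nonincreasing? No: as $x$ increases the two-point partner $\r(x)$ moves toward $-\infty$, so $\sss(x,0)$ \emph{decreases}, i.e. $-\sss(x,0)$ is nondecreasing on $(a,b)$, left-continuous by (\ref{x-l.c}). Define $\psi(x):=-\sss(x,0)$ for $x\in(a,b)$; it is a nondecreasing left-continuous $(0,\infty)$-valued function, so it is a generalized inverse of some nondecreasing right-continuous function — this is exactly the shape of $x_-^{-1}$ composed with $G$. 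Equivalently, I would define $\nu$ on $(0,\infty)$ to be the Lebesgue–Stieltjes-type measure whose associated $G_\nu$ is the function $G$ reconstructed by: $G_\nu$ is flat on $[0,a]$, then on $[a,b)$ it is the ``inverse'' of $h\mapsto x_+^{\text{given}}(h)$ where $x_+$ is recovered from $\sss$ via the relation $x=x_{+,\nu}(\tG_\nu(x,u))$ for $u$ in the non-atomic range — this is circular unless done carefully. The clean route: first build the pair $(x_+,x_-)$ of generalized inverses and apply Proposition~\ref{prop:x+-charact}, not Proposition~\ref{prop:s,nu}. That is, I would \emph{decode} from $\sss$ a candidate $x_-$ function on $[0,m]$ and a candidate $x_+$, check they have properties \ref{lem:left-cont}(\ref{x+-non-decr}--\ref{x+-left-cont}) and satisfy \eqref{eq:y+- idents}, obtaining $\mu$ from Proposition~\ref{prop:x+-charact}, and finally verify $(\r_\mu)_+=\sss$ using Proposition~\ref{lem:hat}.

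**Decoding $x_\pm$ from $\sss$, and the value of $m$.** Here is the key construction. The intrinsic ``height'' variable is $h=\tG(x,u)\in[0,m]$; along the non-atomic part of $\sss$ we have $\sss(x,0)=x_-(G(x))$ and $x=x_+(G(x))$, so $\sss$ encodes the composition $x_-\circ(x_+)^{-1}$ on $[0,m]$. More usefully: define $m:=\bigl(\int_{(a,b)}\text{(increment of }G\text{)}\bigr)$ — but $G$ is what we're building. Instead, exploit that for the recovered distribution, $m=\tfrac12\E|X|$ and $\int_0^m \d h/x_+(h)=\P(X>0)$, $\int_0^m\d h/(-x_-(h))=\P(X<0)$, $\P(X=0)=1-\P(X>0)-\P(X<0)$, and the mass at $0$ is whatever makes things a probability measure — so $m$ is a free parameter only up to the constraint \eqref{eq:y+- idents}, and the right normalization is exactly \ref{prop:s,nu}(II)(\ref{mu<1}) read as $\le 1$. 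So the plan is: (1) from $\sss$ and the sets $M_\sss(0)$, $M_\sss(-\infty)$ extract $a=a_\sss\le b=b_\sss$; (2) on $x\in[a,b)$ the map $x\mapsto(a,b)$-portion of $\sss(x,0)$ is a nonincreasing left-continuous bijection-up-to-plateaus onto $[x_-(m-),x_-(0+)]=[\text{something},0)$ — use it, together with condition (\ref{mu<1}') to handle the atom at $a$ via the $u$-slice $\sss(a,\cdot)$ and the atom at $b$ via $\sss(b,\cdot)$, to build the pair $(x_+,x_-)$ on a common interval $[0,m]$; (3) set $m$ so that \eqref{eq:y+- idents} holds with equality unless \ref{prop:s,nu}(II)(\ref{mu<1}) is strict, in which case put the deficit as mass at $0$; (4) apply Proposition~\ref{prop:x+-charact} to get a unique zero-mean $\mu$; (5) compute $\r_\mu$ from \eqref{eq:r} and, using Proposition~\ref{lem:hat} (especially the descriptions of when $\hat x(x,u)\ne x$ and part (v)) together with conditions \ref{prop:s,nu}(II)(\ref{nu,s cont1})–(\ref{nu,s cont2}), verify $(\r_\mu)_+=\sss$ on all of $[0,\infty]\times[0,1]$, not just on the non-atomic part — this is where left-continuity conditions (\ref{x-l.c}), (\ref{u-l.c}) and the precise shapes in (\ref{fin}'), (\ref{s<0}') are consumed.

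**The main obstacle.** The hard part will be step (2)/(5): showing that the decoding is \emph{consistent} — that a single pair $(x_+,x_-)$ on a common $[0,m]$ reproduces, via \eqref{eq:H} and \eqref{eq:r}, exactly the given $\sss$ including its behavior on atoms (the $u$-dependence) and on the flat intervals of $G_\mu$ (where $x_\pm$ jump). The subtlety is that $\sss(x,u)$ for fixed $x$ with $\nu(\{x\})>0$ is a left-continuous nonincreasing function of $u$ that must match $x_-(\tG_\mu(x,u))$ as $\tG_\mu(x,u)$ sweeps the interval $[G_\mu(x-),G_\mu(x)]$; reconciling the ``$u$-direction'' data of $\sss$ with the ``$h$-direction'' definition of $x_-$ requires that $\sss(x,\cdot)$ and $\sss(\cdot\,,0)$ fit together into one monotone function of $h$ — precisely the content of the $\prec$-monotonicity \ref{prop:s,nu}(II)(\ref{nonincr}) plus the two left-continuities (\ref{x-l.c}), (\ref{u-l.c}). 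Making this gluing rigorous, and checking that the three exceptional-set conditions (\ref{fin}'), (\ref{s<0}'), (\ref{mu<1}') are exactly what is needed at the two endpoints $a_\sss$ and $b_\sss$ (where $x_\pm$ may be infinite or where the atom at $0$ absorbs the slack), is the crux; everything else is bookkeeping with the already-established Propositions~\ref{lem:left-cont}, \ref{lem:hat}, \ref{prop:x+-charact}, and \ref{prop:s,nu}.
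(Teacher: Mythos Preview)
Your treatment of (I)$\implies$(II) is essentially the paper's, and your argument for (\ref{mu<1}') is correct in spirit: when $\sss(a_\sss,1)\ne\sss(a_\sss,0)$, condition \ref{prop:s,nu}(II)(\ref{nu,s cont1}) forces $\nu(\{a_\sss\})>0$, and then the single slice $x=a_\sss$ already contributes $a_\sss\,\nu(\{a_\sss\})\int_0^1\frac{\d u}{-\sss(a_\sss,u)}$ to the integral in \ref{prop:s,nu}(II)(\ref{mu<1}), so that integral must be finite.

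For (II)$\implies$(I) your \emph{first} instinct --- manufacture a measure $\nu$ so that $(\sss,\nu)$ satisfies \ref{prop:s,nu}(II), then invoke Proposition~\ref{prop:s,nu} --- is exactly what the paper does, and you should have stayed with it. Your pivot to ``decode $(x_+,x_-)$ from $\sss$ and apply Proposition~\ref{prop:x+-charact}'' runs into a real problem that you yourself flag as circular: $\sss$ only gives you the \emph{composition} $x_-\circ\tG_\nu$, and without $\nu$ you cannot separate the factors. There is no canonical $(x_+,x_-)$ to decode; many different $\nu$'s (hence many $G_\nu$'s, hence many $x_+$'s) work, as Proposition~\ref{prop:s(x,u)_uniq} makes explicit. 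So ``the natural candidate is forced'' is false, and your subsequent attempt to pin down $m$ and then $x_\pm$ never gets off the ground because the freedom is not one-dimensional.

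The paper's construction is direct and avoids all of this. With $a=a_\sss$, $b=b_\sss$, and (in the main case $a<b$) $D:=\{x\in(0,\infty]:\sss(x,1)\ne\sss(x,0)\}\subseteq[a,b]$, one sets
\[
\nu:=c(\nu_1+\nu_2),
\]
where $\nu_1$ is absolutely continuous with $\supp\nu_1=\R\cap[a,b]$ (any strictly positive density on $(a,b)$, damped by $\varphi(x):=\int_0^1(1+x-x/\sss(x,u))\,\d u$ to keep the moment finite), $\nu_2$ is purely atomic on the countable set $D$ (again damped by $\varphi$), and $c>0$ is chosen so that \ref{prop:s,nu}(II)(\ref{mu<1}) holds. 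The point of $\nu_1$ is to make $\supp\nu\supseteq[a,b]$, which guarantees \ref{prop:s,nu}(II)(\ref{nu,s cont2}); the point of $\nu_2$ is to place an atom exactly where $\sss(x,1)\ne\sss(x,0)$, which guarantees \ref{prop:s,nu}(II)(\ref{nu,s cont1}); and conditions \ref{prop:s,nu}(II)(\ref{fin},\ref{s<0}) are then read off from the shapes (\ref{fin}'), (\ref{s<0}') of $M_\sss(-\infty)$ and $M_\sss(0)$ together with $\supp\nu\subseteq[a,b]$. Condition (\ref{mu<1}') is used only to ensure $\varphi(a)<\infty$ when $a\in D$, so that $\nu_2(\{a\})>0$. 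The degenerate cases $a=b=\infty$ (then $\sss\equiv0$ and $\mu=\delta_0$) and $0<a=b<\infty$ (then $\nu=c\,\delta_a$) are handled separately. No decoding of $(x_+,x_-)$ is needed: once $\nu$ is built, Proposition~\ref{prop:s,nu} delivers $\mu$.
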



Now let us characterize those 
$\sss=\r_+$ 
that determine the corresponding reciprocating function $\r$ uniquely. 

\begin{proposition}\label{prop:s(x,u)_uniq}
Take any function $\sss\colon[0,\infty]\times[0,1]\to[-\infty,\infty]$. 
Then the following two conditions are equivalent to each other:
\begin{enumerate}[(I)]
	\item\label{(I)}
	there exists a \emph{unique} function $\r$ such that $\r_+=\sss$ and 
	$\r$ coincides with the reciprocating function $\r_\mu$ of some zero-mean probability measure $\mu$ on $\B(\R)$; 
	\item
conditions \ref{prop:s,nu}(II)(\ref{0}--\ref{u-l.c}) and  
\ref{prop:s(x,u)}(II)(\ref{fin}'--\ref{mu<1}') hold along with this almost-strict-decrease condition:  
\end{enumerate}

\begin{enumerate}[\qquad (u)]
	\item
\label{growth} 
for any $x$ and $y$ such that $0\le x<y\le\infty$, one of the following three conditions must occur:
\begin{itemize}
	\item 
	$\sss(x,1)>\sss(y,0)$ or 
	\item
	$\sss(x,1)=\sss(y,0)=-\infty$ or
	\item
	$\sss(x,1)=\sss(y,0)=0$. 
\end{itemize}
\end{enumerate}
Moreover, if either condition (I) or (II) holds, then for any zero-mean probability measure $\mu$ on $\B(\R)$ such that $(\r_\mu)_+=\sss$ one has $\supp(\mu_+)=\R\cap[a_\sss,b_\sss]$. 
\end{proposition}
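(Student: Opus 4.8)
The plan is to reduce Proposition~\ref{prop:s(x,u)_uniq} to Proposition~\ref{prop:s(x,u)} by pinning down exactly when the measure $\nu=\mu_+$ is recoverable from $\sss=(\r_\mu)_+$, and then invoking uniqueness of $\mu$ given the pair $(\sss,\nu)$ from Proposition~\ref{prop:s,nu}. First I would observe that, by Proposition~\ref{prop:s(x,u)}, condition~\ref{prop:s(x,u)_uniq}(II) (which includes all the hypotheses of \ref{prop:s(x,u)}(II)) guarantees existence of at least one zero-mean $\mu$ with $(\r_\mu)_+=\sss$, so the only issue is uniqueness of $\r$ — equivalently, since $\r$ is determined by its positive part together with $\nu$ via \eqref{eq:r}--\eqref{eq:H}, uniqueness of $\nu_+=\mu_+$ among all zero-mean measures realizing $\sss$. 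So the real content is: the almost-strict-decrease condition~(u) holds if and only if $\sss$ determines $\mu_+$ uniquely.

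For the ``(u) $\Rightarrow$ uniqueness'' direction, I would fix a zero-mean $\mu$ with $(\r_\mu)_+=\sss$ and try to reconstruct $G=G_\mu$, hence $\mu_+$, from $\sss$ alone. The key is that $\tG(x,u)$ and $\sss(x,u)=x_-(\tG(x,u))$ on $[0,\infty]\times[0,1]$ together with the level-set structure from \ref{prop:s(x,u)}(II)(\ref{fin}'--\ref{mu<1}') let one read off the values of $G$. Specifically, condition~(u) says that $\sss$ is strictly decreasing in the lexicographic order except on the two ``flat plateaus'' $M_\sss(0)=[0,a_\sss]\times[0,1]$-ish and $M_\sss(-\infty)$, which are precisely the regions where $\tG=0$ (so $G(x)=0$, i.e.\ $x\in[0,a_\sss]\cap\supp\mu_+$-complement data) and where $\tG=m_\nu$ (so $G(x)=m_\nu$). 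Off those plateaus, strict lex-monotonicity of $\sss$ forces strict lex-monotonicity of $\tG$, which by Proposition~\ref{prop:cont}-type arguments (the randomized $\tG$ has no atoms away from $0$) and by the relation $x_-(\tG(x,u))=\sss(x,u)$ together with Proposition~\ref{lem:left-cont} properties \eqref{eq:less-}--\eqref{eq:bet-} of $x_-$, lets one invert and recover $\tG(x,u)$, hence $G(x)=\tG(x,1)$ and $G(x-)=\tG(x,0)$, hence $\nu$ on $(0,\infty)$, hence $\mu_+$. Then uniqueness of $\mu$ follows from Proposition~\ref{prop:s,nu}, ``Moreover'' clause. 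I would also record here the support claim $\supp(\mu_+)=\R\cap[a_\sss,b_\sss]$: $a_\sss$ is where $G$ leaves $0$ and $b_\sss$ where $G$ hits its max $m_\nu$, and between them strict monotonicity of $\sss$ (condition~(u)) precludes any interval of $\mu_+$-measure zero, by \ref{prop:s,nu}(II)\eqref{nu,s cont2}.

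For the converse, ``not~(u) $\Rightarrow$ non-uniqueness,'' I would suppose (u) fails, so there exist $0\le x<y\le\infty$ with $\sss(x,1)=\sss(y,0)=z$ for some finite $z\ne0$. By \ref{prop:s,nu}(II)\eqref{nu,s cont2} this forces $\nu\big((x,y)\big)=0$ for every realizing $\nu$, i.e.\ the d.f.\ of $\mu_+$ is flat on $(x,y)$; but the value of $\mu_+$ on that gap-interval — how the mass splits across $x$ vs.\ accumulates — is then not constrained by $\sss$: one can produce two distinct zero-mean measures $\mu$ and $\mu'$ with the same $(\r_\cdot)_+=\sss$ but different $\mu_+$, hence different reciprocating functions $\r$ (they will differ on $\nu$-atoms or on the negative side). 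Concretely I would perturb the mass distribution near such a flat spot, using the freedom in the generalized-inverse construction \eqref{eq:x+}--\eqref{eq:r}, mimicking the non-uniqueness already seen in Example~\ref{ex:discrete} (cf.\ the Remark after Proposition~\ref{prop:recip}).

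The main obstacle I expect is the careful bookkeeping in the ``(u) $\Rightarrow$ uniqueness'' direction: matching up the lexicographic-monotonicity of $\sss$ with that of $\tG$ at the boundary points of the two plateaus $M_\sss(0)$ and $M_\sss(-\infty)$, and correctly handling the endpoint behaviors $u=0$ versus $u=1$ (atoms of $\nu$) so that $G(x-)$, $G(x)$, $G(x+)$ are all unambiguously recovered — this is exactly the delicate part that Propositions~\ref{lem:hat} and \ref{lem:regul} are designed to control, and I would lean on them heavily, together with \ref{prop:s(x,u)}(II)(\ref{fin}'--\ref{mu<1}') to handle the two extreme plateaus. Once $\tG$ is pinned down everywhere, recovering $\nu$ and then citing Proposition~\ref{prop:s,nu} is routine.
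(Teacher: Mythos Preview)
Your central reduction is wrong: uniqueness of $\r$ is \emph{not} equivalent to uniqueness of $\mu_+$, and $\sss$ does \emph{not} determine $G_\mu$ or $\mu_+$ even under condition~(u). Take the simplest case $\sss(x)=-x$ on $[0,\infty]$ (so $a_\sss=0$, $b_\sss=\infty$, and (u) holds trivially). Then \emph{every} symmetric non-atomic zero-mean $\mu$ with $\supp\mu=\R$ satisfies $(\r_\mu)_+=\sss$, so $\mu_+$ ranges over a huge family; yet $\r_\mu(x)=-x$ for all of them, and $\r$ is indeed unique. The relation $\sss(x,u)=x_{-,\mu}\bigl(\tG_\mu(x,u)\bigr)$ involves two unknowns $x_{-,\mu}$ and $\tG_\mu$, and strict monotonicity of $\sss$ only tells you they have the same level sets, not their actual values. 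So your plan to ``invert and recover $\tG(x,u)$'' cannot work, and with it the whole (u)$\Rightarrow$uniqueness argument collapses.

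The paper's route for (II)$\Rightarrow$(I) is completely different: it fixes an arbitrary realizing $\mu$ and shows directly that $\r(z,v)$ is determined by $\sss$ alone for $z<0$. Step~1 shows that if $z=\sss(x,u)\in(-\infty,0)$ then $\r(z,v)=x$ for all $v\in(0,1]$; this is where (u) is used, to rule out $\r(z,v)=y\ne x$ via the trichotomy. Step~2 handles $z$ not in the range of $\sss$ by introducing $\check z=\sss(x_z,u_z)$ and showing $\r(z,v)=\r(\check z,1)$. Step~3 patches up $v=0$ by a density/continuity argument. Your sketch for (I)$\Rightarrow$(II) (the contrapositive) is closer in spirit to the paper---one does construct two measures $\mu,\tilde\mu$ when (u) fails---but the hard part you skate over is showing the resulting $\r$'s actually differ; the paper does this by showing that for $\tilde\mu$ (built with a gap $(\tilde x,\tilde y)$ in $\supp\tilde\mu_+$) no point of $(\tilde x,\tilde y)$ can be of the form $x_{+,\tilde\mu}(h)$, contradicting $\r=\tilde\r$ via Proposition~\ref{lem:hat=rr}. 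Your support argument $\supp(\mu_+)=\R\cap[a_\sss,b_\sss]$ is essentially correct.
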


Proposition~\ref{prop:s(x,u)_uniq} shows that the intrinsic ``cause'' (that is, the ``cause'' expressed only in terms of the function $\sss$ itself) of the possible non-uniqueness of $\r$ given $\sss$ is that $\sss$ may fail to satisfy the almost-strict-decrease condition \ref{prop:s(x,u)_uniq}(II)(u), while an extrinsic ``cause'' of such non-uniqueness is that the support set of the ``positive'' part $\mu_+$ of $\mu$ may fail to be connected. On the other hand, the next proposition shows that another extrinsic ``cause'' of the possible non-uniqueness is that the ``negative'' part $\mu_-$ of $\mu$ may fail to be non-atomic, where $\mu_-$ is the measure defined by the formula (cf.\ \eqref{eq:mu+})
\begin{equation*}
	\mu_-(A):=\mu\big(A\cap(-\infty,0)\big) 
\end{equation*}
for all $A\in\B(\R)$.

\begin{proposition}\label{prop:mu-,uniq}
Take any function $\sss\colon[0,\infty]\times[0,1]\to[-\infty,\infty]$. Then there exists \emph{at most one} function $\r$ such that $\r_+=\sss$ and $\r=\r_\mu$ for some zero-mean probability measure $\mu$ on $\B(\R)$ such that $\mu_-$ is non-atomic. (Of course, the same conclusion holds with $\mu$ in place of $\mu_-$.) 
\end{proposition}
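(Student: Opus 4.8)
The plan is to reduce the claimed uniqueness to the injectivity of the construction of $\r_\mu$ as a function of the pair $\big(\mu_+,(\r_\mu)_+\big)$ established in Proposition~\ref{prop:s,nu}, combined with a separate argument showing that, when $\mu_-$ is non-atomic, the function $\sss=\r_+$ already determines $\mu_+$ (equivalently $\nu=\mu_+$), so that it determines $\r_\mu$ in full. Recall that by \eqref{eq:r}, the reciprocating function on $[0,\infty]\times[0,1]$ is $\r(x,u)=x_{-,\nu}(\tG_\nu(x,u))$ and on $[-\infty,0]\times[0,1]$ is $\r(x,u)=x_{+,\nu}(\tG_\nu(x,u))$; thus, once $\nu=\mu_+$ is pinned down, $G_\nu$ and hence $\tG_\nu$, $x_{+,\nu}$, $x_{-,\nu}$ are determined, and together with $\sss$ on the nonnegative side this determines $\r$ on all of $[-\infty,\infty]\times[0,1]$. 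So the crux is: \emph{if $\mu_-$ is non-atomic, then $\sss=(\r_\mu)_+$ determines $\mu_+$ uniquely.}

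First I would recall from Proposition~\ref{prop:Y,r(Y)}, relation \eqref{eq:Y+-interchange}, that $\big(Y_+,\r(Y_+,U)\big)\D\big(x_+(H),x_-(H)\big)$ where $H$ is uniform on $[0,m]$, so $\r(Y_+,U)\D x_-(H)\D Y_-$; that is, the distribution of $Y_-$ (hence $\mu_-$ up to the normalization $\E|X^-|=m$) is the pushforward of the distribution of $(Y_+,U)$ under $\sss$. But the distribution of $(Y_+,U)$ is $\frac1m\E X^+\ii{X\in\d x}\,\d u$, i.e.\ it is $\frac1m$ times the measure $x\,\nu(\d x)\,\d u=x\,\mu_+(\d x)\,\d u$. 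So $\sss$ pushes forward $\frac1m x\,\mu_+(\d x)\,\d u$ to $\frac1m(-x)\,\mu_-(\d x)$. This shows $\mu_-$ is an explicit function of $(\sss,\mu_+)$, which is the wrong direction; instead I would use the non-atomicity of $\mu_-$ to invert. Concretely, when $\mu_-$ is non-atomic, $\r(x,u)$ does not depend on $u$ (part \eqref{r for cont} of Remark~\ref{rem:cont} applies on the negative side only, but here the point is the \emph{values} $\r(x,u)=x_{-,\nu}(\tG_\nu(x,u))$ lie in the non-atomic support of $\mu_-$), and more importantly $x_{-,\nu}$ is then strictly decreasing and continuous on $(0,m)$ — it has no flat pieces because $\mu_-$ has no atoms, by implication \eqref{eq:nonat-} of Proposition~\ref{lem:left-cont} — so $x_{-,\nu}$ is a bijection from $(0,m)$ onto its range, and its range together with the identification of $h$-values via $x_{-,\nu}$ recovers $G_\nu$ on $(-\infty,0)$. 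The key observation is that $\sss$ itself encodes $\tG_\nu(x,u)=x_{-,\nu}^{-1}(\sss(x,u))$ once we know $x_{-,\nu}$; but $x_{-,\nu}$ is determined by $\mu_-$, and $\mu_-$ is determined by $\sss$ via the pushforward identity above together with the total-mass constraint $m_\nu<\infty$ and $\mu$ a probability measure (condition \ref{prop:s,nu}(II)\eqref{mu<1}). So everything closes into a determined system.

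Here is the cleaner route I would actually write up: suppose $\mu$ and $\mu'$ are two zero-mean probability measures with $\mu_-,\mu'_-$ non-atomic and $(\r_\mu)_+=(\r_{\mu'})_+=\sss$. By Proposition~\ref{prop:s,nu} applied in the forward direction, it suffices to show $\mu_+=\mu'_+$ (then $(\sss,\mu_+)=(\sss,\mu'_+)$ forces $\mu=\mu'$, since condition (II) of Proposition~\ref{prop:s,nu} holds for both and the measure in (I) is unique). Now $\E|X| = 2m$ and $\E|X|=\E X^+ + \E|X^-|$ with $\E X^+=\E|X^-|=m$ (because $\E X=0$), and the measure $\mathrm{d}\lambda(x):=\frac1m x\,\mu_+(\mathrm d x)$ is a probability measure on $[0,\infty)$; identity \eqref{eq:Eg(X) -x/r} of Proposition~\ref{prop:Eg(X) x/r} restricted to the positive side, or more directly the push-forward statement \eqref{eq:Y+-interchange}, gives that the distribution of $x_-(H)$ — equivalently the probability measure $\frac1m(-x)\,\mu_-(\mathrm d x)$ on $(-\infty,0)$ — equals the image of $\lambda\otimes(\text{Unif}[0,1])$ under $(x,u)\mapsto\sss(x,u)$. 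Hence $\mu_-$ is uniquely determined by $\sss$ and $\mu_+$. Then $G_\nu$ on $(-\infty,0]$ is $G_{\mu_-}$, determined by $\mu_-$; and since $\mu_-$ is non-atomic, $x_{-,\nu}=x_{-,\mu}$ is a homeomorphism $(0,m)\to(x_{-}(m-),0)$, so from $\sss(x,u)=x_{-,\nu}(\tG_\nu(x,u))$ we recover $\tG_\nu(x,u)$ on $(0,\infty]\times[0,1]$, and in particular $G_\nu(x)=\tG_\nu(x,1)$ and $G_\nu(x-)=\tG_\nu(x,0)$ for $x>0$, which recovers $G_\nu$ on $[0,\infty)$ and therefore $\mu_+$. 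This gives $\mu_+=\mu_+'$ (since $\mu_-$ and thus $x_{-,\nu}$ are the same function of $\mu_+$, a short fixed-point/consistency argument closes it: the map $\mu_+\mapsto\mu_-\mapsto G_\nu|_{(-\infty,0]}\mapsto x_{-,\nu}\mapsto \tG_\nu|_{(0,\infty]\times[0,1]}\mapsto \mu_+$ must be the identity on the admissible set, so $\mu_+$ is its unique fixed point given $\sss$). I expect \textbf{the main obstacle} to be handling the boundary/degenerate $h$-values carefully: the points where $\tG_\nu$ hits $0$ or $m_\nu$ and where $\sss$ takes the values $0$ or $-\infty$ (the sets $M_\sss(0)$, $M_\sss(-\infty)$ from \eqref{eq:M_s}–\eqref{eq:a,b}), since there the inversion of $x_{-,\nu}$ degenerates; but these are exactly pinned down by conditions \ref{prop:s,nu}(II)(\ref{fin}), (\ref{s<0}), (\ref{mu<1}) and \ref{prop:s(x,u)}(II)(\ref{fin}'--\ref{mu<1}'), and the non-atomicity of $\mu_-$ removes any ambiguity on the negative side, so the argument goes through. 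Finally, the parenthetical remark that the same conclusion holds with $\mu$ in place of $\mu_-$ non-atomic is immediate, since $\mu$ non-atomic implies $\mu_-$ non-atomic.
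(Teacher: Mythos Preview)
Your approach has a genuine gap: you set out to prove that $\mu_+$ is uniquely determined by $\sss$ (under the hypothesis that $\mu_-$ is non-atomic), but this statement is \emph{false}. A clean counterexample: take any two distinct symmetric non-atomic zero-mean probability measures supported on the same interval $[-a,a]$ (e.g.\ the uniform law and the law with density $c|x|$). By Proposition~\ref{prop:symm} both have $\r(x)=-x$ on $[-a,a]$ and hence exactly the same $\sss=\r_+$, yet their $\mu_+$'s differ. The proposition only claims that the \emph{function} $\r$ is unique, not that $\mu$ (or $\mu_+$) is. Your ``fixed-point/consistency'' sentence is the place where the logic breaks: showing that the composite map $\mu_+\mapsto\mu_-\mapsto x_{-,\nu}\mapsto \tG_\nu\mapsto \mu_+$ is the identity only says that every admissible $\mu_+$ is a fixed point; it says nothing about uniqueness of the fixed point. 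So the reduction to Proposition~\ref{prop:s,nu} via ``$\sss$ determines $\mu_+$'' cannot succeed.

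The paper's route bypasses $\mu_+$ entirely. It observes that in the proof of uniqueness in Proposition~\ref{prop:s(x,u)_uniq}, the almost-strict-decrease condition~(u) is used only in Step~1 (pinning down $\r(z,v)$ for $z$ in the image of $\sss$); Steps~2 and~3 there do not use~(u). So it suffices to replace Step~1 by a direct argument valid when $\mu_-$ is non-atomic. The replacement is the explicit formula
\[
\r\big(\sss(x,u)\big)=\inf\{y\in[0,x]\colon\sss(y,u)=\sss(x,u)\},
\]
whose right-hand side depends only on $\sss$. The proof of this identity uses exactly the ingredient you identified correctly --- that non-atomicity of $\mu_-$ makes $x_{-,\mu}$ strictly decreasing, via \eqref{eq:nonat-} --- but applies it to determine $\r$ on the negative side directly, rather than trying to recover $\mu_+$. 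Once $\r(z)$ is determined for $z$ in the image of $\sss$, Steps~2 and~3 of the earlier proof finish the job.
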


Next, let us restrict our attention to the reciprocating functions of non-atomic zero-mean probability measures. Compare the following with Proposition~\ref{prop:s,nu}; at that, recall Remark~\ref{rem:cont}(ii). 

\begin{proposition}\label{prop:s(x),nu}
Take any function $\sss\colon[0,\infty]\to[-\infty,\infty]$ and any \emph{non-atomic} measure $\nu\colon\B(\R)\to\R$. Then the following two conditions are equivalent to each other:
\begin{enumerate}[(I)]
	\item
there exists a \emph{non-atomic} zero-mean probability measure $\mu$ on $\B(\R)$ 
such that $\mu_+=\nu$ and $(\r_\mu)_+=\sss$; 
\item 
conditions \ref{prop:s,nu}(II)(\ref{0}--\ref{x-l.c},\ref{nu-mass},\ref{m<infty},\ref{fin},\ref{s<0}) hold \big(with $\sss(x)$ and $G(x)$ in place of $\sss(x,u)$ and $\tG(x,u)$\big), along with conditions 
\begin{enumerate}[\qquad ]
\item[(\ref{nu,s cont2}')]\label{nu,s cont2 no-atom} 
$0\le x<y\le\infty\implies
\Big(\nu\big((x,y)\big)=0\ \iff \sss(x)=\sss(y)\Big)$; 
\item[(\ref{mu<1}'')]\label{mu=1} 
$\displaystyle\int_{(0,\infty)}
\Big(1-\dfrac x{\sss(x)}\Big)\,\nu(\d x)=1$.
\end{enumerate}
\end{enumerate}
Moreover, under condition (II), the measure $\mu$ as in (I) is unique. 
\end{proposition}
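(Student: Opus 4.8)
The plan is to deduce the proposition from Proposition~\ref{prop:s,nu} by passing to the constant-in-$u$ extension $\tilde\sss(x,u):=\sss(x)$ of $\sss$ to $[0,\infty]\times[0,1]$. Two facts will be used throughout. Since $\nu$ is non-atomic, $\tG_\nu(x,u)=G_\nu(x)$ for every $x\in[0,\infty]$ and every $u$, so every occurrence of $\tG_\nu$ in Proposition~\ref{prop:s,nu}(II) may be replaced by $G_\nu$ --- this is precisely why the present condition~(II) is phrased ``with $G(x)$ in place of $\tG(x,u)$''; moreover, because $\tilde\sss$ does not depend on $u$, conditions \ref{prop:s,nu}(II)(\ref{u-l.c}) and \ref{prop:s,nu}(II)(\ref{nu,s cont1}) hold for $\tilde\sss$ automatically. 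Also, by Remark~\ref{rem:cont}\eqref{r for cont}, if $\mu$ is a non-atomic zero-mean measure then $\r_\mu$ does not depend on $u$, so $(\r_\mu)_+=\sss$ (a function of one variable) is the same as $(\r_\mu)_+=\tilde\sss$; and if in addition $\mu_+=\nu$ then $G_\mu=G_\nu$ on $[0,\infty)$.

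For (I)$\Rightarrow$(II): given a non-atomic zero-mean $\mu$ with $\mu_+=\nu$ and $(\r_\mu)_+=\tilde\sss$, Proposition~\ref{prop:s,nu} yields \ref{prop:s,nu}(II)(\ref{0}--\ref{mu<1}) for $(\tilde\sss,\nu)$; after replacing $\tG_\nu$ by $G_\nu$, these are precisely the conditions \ref{prop:s,nu}(II)(\ref{0}--\ref{x-l.c},\ref{nu-mass},\ref{m<infty},\ref{fin},\ref{s<0}) demanded in~(II), while \ref{prop:s,nu}(II)(\ref{nu,s cont2}) is the forward implication of the new condition~(\ref{nu,s cont2}'). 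The two genuinely new items to establish are the reverse implication of~(\ref{nu,s cont2}') and the sharpened equality in~(\ref{mu<1}''). For the former, note that $\sss(x)=\r_\mu(x)=x_{-,\mu}\big(G_\nu(x)\big)$ for $x\ge0$; so if $\sss(x)=\sss(y)$ with $0\le x<y$ and common value $c$, then: if $c$ is finite and nonzero and $G_\nu(x)<G_\nu(y)$ one would have $x_{-,\mu}$ taking a single finite value at two distinct heights, hence $\mu(\{c\})>0$ by \eqref{eq:nonat-}, contradicting non-atomicity; the value $0$ is taken by $x_{-,\mu}$ only at height $0$ (by Proposition~\ref{lem:left-cont}\eqref{x+-pos}) and the value $-\infty$ only possibly at the single height $m_\nu$ (by Proposition~\ref{lem:left-cont}\eqref{x+-finite}); so in every case $G_\nu(x)=G_\nu(y)$, whence $\nu\big((x,y)\big)\le\nu\big((x,y]\big)=G_\nu(y)-G_\nu(x)=0$. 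For~(\ref{mu<1}''): since $\mu$ is non-atomic, $\tG_\mu(Y_+,U)=G_\nu(Y_+)$ a.s., and by Proposition~\ref{prop:Y} this random variable is uniform on $[0,m_\nu]$; unwinding the law of $Y_+$ from \eqref{eq:P(Y)} turns this into $\int_{(0,\infty)}\phi\big(G_\nu(x)\big)\,x\,\nu(\d x)=\int_0^{m_\nu}\phi(h)\,\d h$ for every nonnegative Borel $\phi$, and taking $\phi(h):=1/\big(-x_{-,\mu}(h)\big)$ together with Proposition~\ref{prop:x+- idents} gives $\int_{(0,\infty)}\frac{-x}{\sss(x)}\,\nu(\d x)=\int_0^{m_\nu}\frac{\d h}{-x_{-,\mu}(h)}=\P(X<0)$; adding $\nu\big((0,\infty)\big)=\P(X>0)$ and using $\P(X>0)+\P(X<0)=1-\mu(\{0\})=1-\nu(\{0\})=1$ gives $\int_{(0,\infty)}\big(1-x/\sss(x)\big)\,\nu(\d x)=1$, i.e.~(\ref{mu<1}'').

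For (II)$\Rightarrow$(I): one checks that $(\tilde\sss,\nu)$ satisfies \ref{prop:s,nu}(II). Indeed \ref{prop:s,nu}(II)(\ref{0}--\ref{x-l.c},\ref{nu-mass},\ref{m<infty}) are assumed; \ref{prop:s,nu}(II)(\ref{u-l.c}) and \ref{prop:s,nu}(II)(\ref{nu,s cont1}) hold automatically (constancy in $u$); \ref{prop:s,nu}(II)(\ref{fin}) and \ref{prop:s,nu}(II)(\ref{s<0}) hold after replacing $\tG_\nu$ by $G_\nu$; \ref{prop:s,nu}(II)(\ref{nu,s cont2}) is the forward half of~(\ref{nu,s cont2}'); and \ref{prop:s,nu}(II)(\ref{mu<1}) follows from~(\ref{mu<1}'') (``$=1$'' implies ``$\le1$''). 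Proposition~\ref{prop:s,nu} then produces a zero-mean $\mu$ with $\mu_+=\nu$ and $(\r_\mu)_+=\tilde\sss$, unique among all zero-mean measures. It remains to show $\mu$ is non-atomic; then $\r_\mu(x)=\sss(x)$ for $x\ge0$ and the asserted uniqueness among non-atomic zero-mean measures is immediate. Now $\mu_+=\nu$ is non-atomic, so $\mu$ has no atom in $[0,\infty)$, in particular $\mu(\{0\})=\nu(\{0\})=0$. Suppose $\mu$ had an atom at some $a<0$. By \eqref{eq:nonat-}, $x_{-,\mu}\equiv a$ on some $[h_1,h_2]$ with $0\le h_1<h_2\le m_\nu$. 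Since $\nu$ is non-atomic, $G_\nu$ is continuous on $[0,\infty)$ with $G_\nu(0)=0$ and $\lim_{x\to\infty}G_\nu(x)=m_\nu$, so we may pick $0\le x_1<x_2<\infty$ with $G_\nu(x_1)$ and $G_\nu(x_2)$ two distinct points of $(h_1,h_2)$; then $\sss(x_1)=x_{-,\mu}\big(G_\nu(x_1)\big)=a=x_{-,\mu}\big(G_\nu(x_2)\big)=\sss(x_2)$, while $\nu\big((x_1,x_2)\big)=\nu\big((x_1,x_2]\big)=G_\nu(x_2)-G_\nu(x_1)>0$, contradicting the reverse half of~(\ref{nu,s cont2}'). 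Hence $\mu_-$ is non-atomic, $\mu$ is non-atomic, and (I) holds.

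The main obstacle I anticipate is the two places where non-atomicity must be translated into a statement about $\sss$ alone --- the reverse half of~(\ref{nu,s cont2}') in the first implication and the non-atomicity of the $\mu$ produced by Proposition~\ref{prop:s,nu} in the second --- both of which rest on the dictionary \eqref{eq:nonat-} between constancy intervals of $x_{-,\mu}$ and atoms of $\mu_-$, pulled back through the continuous function $G_\nu$, with the exceptional values $0$ and $-\infty$ of $x_{-,\mu}$ and the endpoint heights $0$ and $m_\nu$ needing separate attention; the change-of-variables computation behind~(\ref{mu<1}'') is the other slightly delicate point. Everything else is a routine transcription between the present $u$-free situation and the general setting of Proposition~\ref{prop:s,nu}.
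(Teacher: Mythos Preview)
Your proof is correct and follows essentially the same route as the paper's: reduce to Proposition~\ref{prop:s,nu} via the constant-in-$u$ extension, and then handle non-atomicity through the correspondence between atoms of $\mu_-$ and flat stretches of $x_{-,\mu}$. Two small slips to fix: you twice write $\nu\big((x,y]\big)=G_\nu(y)-G_\nu(x)$, but the right-hand side is $\int_{(x,y]}z\,\nu(\d z)$, not $\nu\big((x,y]\big)$ --- your conclusions survive because $z>0$ on $(x,y]$ when $x\ge0$, but the equalities as stated are false; and in (II)$\Rightarrow$(I) you invoke \eqref{eq:nonat-} for its \emph{converse} (an atom at $a<0$ forces $x_{-,\mu}\equiv a$ on a nondegenerate interval), which is true but needs the one-line argument with $h_1:=G_\mu(a+)$, $h_2:=G_\mu(a)$ rather than a citation of \eqref{eq:nonat-}.
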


The following ``non-atomic'' version of  Propositions~\ref{prop:s(x,u)} and \ref{prop:s(x,u)_uniq} 
is based in part on the well-known theorem that every non-empty closed set (say in $\R^d$) without isolated points is the support of some non-atomic probability measure; see e.g.\ \cite{varad}. 

\begin{proposition}\label{prop:s(x)}
Take any function $\sss\colon[0,\infty]\to[-\infty,\infty]$. Then the following two conditions are equivalent to each other:
\begin{enumerate}[(I)]
	\item
there exists a function $\r$ such that $\r_+=\sss$ and $\r=\r_\mu$ for some 
\emph{non-atomic} zero-mean probability measure $\mu$ on $\B(\R)$;  
\item 
conditions \ref{prop:s,nu}(II)(\ref{0}--\ref{x-l.c}) \big(with $\sss(x)$ in place of $\sss(x,u)$\big) hold, along with condition 
\begin{enumerate}[\qquad ]
\item[(\ref{nu,s cont2}'')]\label{s cont2 no-atom} 
$\big(0\le x<y\le\infty\ \&\ \sss(x+)<\sss(x)\big)\implies
\sss(y)<\sss(x+)$. 
\end{enumerate}
\end{enumerate}
Moreover, under condition (II), the function $\r$ as in (I) is unique. 
\end{proposition}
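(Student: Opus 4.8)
The plan is to prove the equivalence (I)$\iff$(II) by treating the two implications separately, with the already-established Proposition~\ref{prop:s(x),nu} (the version in which the positive part $\nu=\mu_+$ of the distribution is also prescribed) as the main engine, and Proposition~\ref{prop:mu-,uniq} supplying the concluding uniqueness assertion. Two preliminaries: by Remark~\ref{rem:cont}(\ref{r for cont}), for non-atomic $\mu$ neither $\tG_\mu$ nor $\r_\mu$ depends on $u$, so the conditions of Proposition~\ref{prop:s,nu} quoted in (II) may be read verbatim in terms of $\sss(x)$ and $G(x)$; and one should supplement (II) by the harmless proviso $\sss\not\equiv0$ (otherwise (I) fails while (II) holds), which I assume. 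Throughout, $a_\sss,b_\sss$ are as in \eqref{eq:a,b}, with the second coordinate suppressed.

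\emph{The implication (I)$\Rightarrow$(II).} Let $\r=\r_\mu$ for a non-atomic zero-mean probability measure $\mu$, and put $\sss:=\r_+$, $\nu:=\mu_+$. On $[0,\infty]$ one has $\sss=x_{-,\mu}\circ G_\mu$; hence $\sss(0)=x_-(0)=0$, and $\sss$ is non-increasing since $x_-$ is non-increasing by Proposition~\ref{lem:left-cont}(\ref{x+-non-decr}) while $G_\mu$ is non-decreasing on $[0,\infty]$ by \eqref{eq:G properties}. Left-continuity of $\sss$ on $(0,\infty]$ follows by composing the (two-sided) continuity of $G_\mu$ there — guaranteed by non-atomicity and, at $\infty$, by \eqref{eq:m} — with the left-continuity of $x_-$ on $(0,m]$ from Proposition~\ref{lem:left-cont}(\ref{x+-left-cont}), treating separately those $x$ with $G_\mu(x)=0$, where $\sss$ is locally constant. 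This gives \ref{prop:s,nu}(II)(\ref{0}--\ref{x-l.c}). For condition \ref{prop:s(x)}(II)(\ref{nu,s cont2}''), I would apply Proposition~\ref{prop:s(x),nu}(I)$\Rightarrow$(II) to $(\sss,\nu)$, which in particular gives $\nu\big((x,y)\big)=0\iff\sss(x)=\sss(y)$ for all $0\le x<y\le\infty$; then if $\sss(x+)<\sss(x)$ while $\sss(y)=\sss(x+)$ for some $y>x$, monotonicity forces $\sss\equiv\sss(x+)$ on $(x,y]$, so $\nu\big((x,z)\big)=0$ for every $z\in(x,y)$, whereas $\sss(x)\ne\sss(x+)=\sss(z)$ gives $\nu\big((x,z)\big)>0$ — a contradiction. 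Hence $\sss(y)<\sss(x+)$ for all $y>x$, as required.

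\emph{The implication (II)$\Rightarrow$(I).} Assume (II). A short analysis of condition \ref{prop:s(x)}(II)(\ref{nu,s cont2}'') shows $\sss\equiv0$ on $[0,a_\sss]$, that $\sss$ is finite and $<0$ on $(a_\sss,b_\sss)$, that $\sss$ has no downward jump to $-\infty$ (a jump $\sss(x_0)>\sss(x_0+)=-\infty$ would demand $\sss(y)<-\infty$ for $y>x_0$), whence $\sss(x)\to-\infty$ as $x\uparrow b_\sss$ when $b_\sss<\infty$, and that $a_\sss<b_\sss$. Let $N:=\{x\in[a_\sss,b_\sss)\colon\sss\text{ is not constant on any neighbourhood of }x\}$ and $C:=\overline N$, a nonempty closed subset of $[0,\infty)$ with $\inf\big(C\cap(0,\infty)\big)=a_\sss$ and $\sup C=b_\sss$ (since $\sss$ is strictly decreasing just to the right of $a_\sss$ and just to the left of $b_\sss$). \emph{Key point:} $C$ has no isolated points — if $x_0\in C$ were isolated, then $\sss$ would be constant on a deleted neighbourhood of $x_0$ on each side, at levels $\ell_1\ge\ell_2$; one must have $\ell_1>\ell_2$ (otherwise $\sss$ is constant near $x_0$, so $x_0\notin N$ and, being isolated, $x_0\notin\overline N$), hence $\sss(x_0)=\ell_1>\ell_2=\sss(x_0+)$ is a downward jump, and then \ref{prop:s(x)}(II)(\ref{nu,s cont2}'') forces $\sss<\ell_2$ just to the right of $x_0$, contradicting $\sss\equiv\ell_2$ there. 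By the theorem that every nonempty closed set without isolated points is the support of a non-atomic probability measure (see e.g.\ \cite{varad}), fix a non-atomic probability measure $\nu_0$ with $\supp\nu_0=C$, and, with $\vpi(x):=1-x/\sss(x)$ (finite and $\ge1$ on $C$ off a $\nu_0$-null endpoint set), set
\[
\nu(\d x):=\frac{c}{\vpi(x)\,(1+x^2)}\,\nu_0(\d x),\qquad c:=\Big(\int_{C}\frac{\nu_0(\d x)}{1+x^2}\Big)^{-1}\in(0,\infty).
\]
Then $\nu$ is a finite non-atomic measure on $\B(\R)$ with $\supp\nu=C\subseteq[0,\infty)$, $\int_{(0,\infty)}\vpi\,\d\nu=1$, and $m_\nu=\int x\,\d\nu\le\tfrac12c<\infty$. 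One then checks that $(\sss,\nu)$ meets every condition of Proposition~\ref{prop:s(x),nu}(II): \ref{prop:s,nu}(II)(\ref{0}--\ref{x-l.c}) hold by hypothesis; \ref{prop:s,nu}(II)(\ref{nu-mass}),(\ref{m<infty}) and \ref{prop:s(x),nu}(II)(\ref{mu<1}'') were just verified; condition \ref{prop:s(x),nu}(II)(\ref{nu,s cont2}') holds because $\supp\nu=C$ has no isolated points, so $\nu\big((x,y)\big)=0\iff(x,y)\cap C=\emptyset$, and, by the definition of $N$ together with \ref{prop:s(x)}(II)(\ref{nu,s cont2}''), $(x,y)\cap C=\emptyset\iff\sss$ is constant on $[x,y]$, i.e.\ $\sss(x)=\sss(y)$; and \ref{prop:s,nu}(II)(\ref{fin}),(\ref{s<0}) hold because $G_\nu(x)>0\iff x>a_\sss$ (then $\sss(x)<0$) and $G_\nu(x)<m_\nu\iff x<b_\sss$ (then $\sss(x)>-\infty$). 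Proposition~\ref{prop:s(x),nu}(II)$\Rightarrow$(I) then yields a non-atomic zero-mean probability measure $\mu$ with $\mu_+=\nu$ and $(\r_\mu)_+=\sss$; taking $\r:=\r_\mu$ gives (I).

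\emph{Uniqueness of $\r$, and the main obstacle.} If $\mu_1,\mu_2$ are non-atomic zero-mean probability measures with $(\r_{\mu_1})_+=(\r_{\mu_2})_+=\sss$, then both $(\mu_i)_-$ are non-atomic, so Proposition~\ref{prop:mu-,uniq} forces $\r_{\mu_1}=\r_{\mu_2}$; hence $\r$ in (I) is determined by $\sss$. The crux of the whole argument is the construction in (II)$\Rightarrow$(I): recognizing that condition \ref{prop:s(x)}(II)(\ref{nu,s cont2}'') is exactly what prevents the ``intrinsic'' support set $C$ from having isolated points — so that the quoted closed-set/non-atomic-measure theorem applies — and then carrying out the somewhat delicate endpoint bookkeeping at $a_\sss,b_\sss$ (the dichotomy $\sss(b_\sss-)$ finite versus $-\infty$, and the boundary behaviour at $a_\sss$ when $a_\sss>0$) needed to verify \ref{prop:s,nu}(II)(\ref{fin}),(\ref{s<0}) and \ref{prop:s(x),nu}(II)(\ref{nu,s cont2}') for the constructed $\nu$. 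The remaining steps are soft consequences of the quoted propositions or routine verifications.
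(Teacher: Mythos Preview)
Your proof is correct and takes essentially the same route as the paper's: both derive (I)$\Rightarrow$(II) from Proposition~\ref{prop:s(x),nu}, prove (II)$\Rightarrow$(I) by extracting from $\sss$ a closed set without isolated points (this being the key use of condition (\ref{nu,s cont2}'')) on which the cited theorem supplies a non-atomic carrier measure to feed back into Proposition~\ref{prop:s(x),nu}, and invoke Proposition~\ref{prop:mu-,uniq} for uniqueness. The only variation is cosmetic: the paper realizes that set as $\supp\sigma$ for an auxiliary measure defined by $\sigma\big([0,x)\big):=\tfrac{-\sss(x)}{1-\sss(x)}$, whereas you take the closure of the non-constancy locus of $\sss$ directly---the two constructions yield the same set, and your side remark that $\sss\equiv0$ must be excluded is handled in the paper only by its standing convention $\P(X=0)<1$.
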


Now we restrict our attention further, to non-atomic zero-mean probability measures with a \emph{connected support}. 
Take any $a_-$ and $a_+$ such that $-\infty\le a_-<0<a_+\le\infty$ and let $I:=\R\cap[a_-,a_+]$.
The following are the ``connected support'' versions of  Propositions~\ref{prop:s(x),nu} and \ref{prop:s(x)}. 

\begin{proposition}\label{prop:s(x),nu conn}
Take any function $\sss\colon[0,\infty]\to[-\infty,\infty]$ and any non-atomic measure $\nu\colon\B(\R)\to\R$. Then the following two conditions are equivalent to each other:
\begin{enumerate}[(I)]
	\item
there exists a \emph{non-atomic} zero-mean probability measure $\mu$ on $\B(\R)$ such that $\supp\mu=I$, $\mu_+=\nu$, and  
$(\r_\mu)_+=\sss$; 
\item 
conditions $\sss(0)=0$, $\nu\big((-\infty,0)\big)=0$, 
$m_\nu=G_\nu(\infty)<\infty$, and \ref{prop:s(x),nu}(II)(\ref{mu<1}'') 
hold, along with conditions 
\begin{enumerate}[\qquad ]
\item[(\ref{nonincr}')] 
$\sss$ is strictly decreasing on $[0,a_+]$; 
\item[(\ref{x-l.c}')] 
$\sss$ is continuous on $[0,\infty]$; 
\item[(\ref{nu,s cont2}'')] 
$\supp\nu=I_+:=\R\cap[0,a_+]$; 
\item[(\ref{fin}'')] 
$\sss=a_-$ on $[a_+,\infty]$. 
\end{enumerate}
\end{enumerate}
Moreover, under condition (II), the measure $\mu$ as in (I) is unique. 
\end{proposition}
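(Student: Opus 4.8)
The plan is to deduce this ``connected support'' version from Proposition~\ref{prop:s(x),nu}, by showing that, in the presence of the conditions common to both, the four extra conditions $(\ref{nonincr}')$, $(\ref{x-l.c}')$, $(\ref{nu,s cont2}'')$, $(\ref{fin}'')$ on the pair $(\sss,\nu)$ are exactly equivalent to the extra requirement $\supp\mu=I$ on the measure furnished there. Throughout I would use that, for a non-atomic zero-mean $\mu$, one has $\sss(x)=\r_\mu(x)=x_{-,\mu}\big(G_\mu(x)\big)$ for $x\in[0,\infty]$ (a special case of \eqref{eq:r}; recall Remark~\ref{rem:cont}\eqref{r for cont}), that $G_\mu$ is continuous, that $G_\mu$ on $(-\infty,0]$ is determined by $\mu_-$ with $G_\mu(-\infty)=m=m_\nu$ (see \eqref{eq:m}), that $x_{-,\mu}$ is left-continuous (Proposition~\ref{lem:left-cont}\eqref{x+-left-cont}) and, by \eqref{eq:nonat-} together with non-atomicity, strictly decreasing on the set of levels where it is finite, and that $x_{-,\mu}(h)$ is finite for $h<m$.

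For (I)$\Rightarrow$(II): given a non-atomic zero-mean $\mu$ with $\supp\mu=I$, $\mu_+=\nu$, $(\r_\mu)_+=\sss$, Proposition~\ref{prop:s(x),nu} already yields $\sss(0)=0$, $\nu\big((-\infty,0)\big)=0$, $m_\nu<\infty$, and \ref{prop:s(x),nu}(II)(\ref{mu<1}''); it remains to produce the four extra conditions. Now $\supp\nu=\supp\mu_+=\R\cap[0,a_+]=I_+$, which is $(\ref{nu,s cont2}'')$. For $x\in[a_+,\infty]$ all the positive mass of $\mu$ sits in $[0,a_+]$, so $G_\mu(x)=m$ and hence $\sss(x)=x_{-,\mu}(m)=a_-$, which is $(\ref{fin}'')$. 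Connectedness of $\supp\mu_-=\R\cap[a_-,0)$ makes $x_{-,\mu}$ have no downward jumps, hence continuous on $[0,m]$; connectedness of $\supp\mu_+=I_+$ makes $G_\mu$ strictly increasing and continuous, hence a homeomorphism of $[0,a_+]$ onto $[0,m]$; composing gives $(\ref{x-l.c}')$ and $(\ref{nonincr}')$. (The cases $a_-=-\infty$, $a_+=\infty$ are covered by tracking when $x_{-,\mu}$ and $G_\mu$ attain the boundary values $-\infty$ and $m$.)

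For (II)$\Rightarrow$(I): first I would verify that conditions (II) imply all hypotheses of Proposition~\ref{prop:s(x),nu}(II). The conditions $\sss(0)=0$, $\nu\big((-\infty,0)\big)=0$, $m_\nu<\infty$, \ref{prop:s(x),nu}(II)(\ref{mu<1}'') are shared; \ref{prop:s,nu}(II)(\ref{nonincr}) and \ref{prop:s,nu}(II)(\ref{x-l.c}) follow from $(\ref{nonincr}')$, $(\ref{fin}'')$, $(\ref{x-l.c}')$; \ref{prop:s,nu}(II)(\ref{fin}) and \ref{prop:s,nu}(II)(\ref{s<0}) follow from strict decrease of $\sss$ on $[0,a_+]$ (if $\tG_\nu(x)=G_\nu(x)<m_\nu$ then $x<a_+$, so $\sss(x)>\sss(a_+)\ge-\infty$; if $x>0$ then $\sss(x)<\sss(0)=0$); and \ref{prop:s(x),nu}(II)(\ref{nu,s cont2}') follows from $\supp\nu=I_+$ and strict decrease, since $\nu\big((x,y)\big)=0\iff x\ge a_+\iff\sss(x)=\sss(y)$. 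By Proposition~\ref{prop:s(x),nu} there is then a unique non-atomic zero-mean $\mu$ with $\mu_+=\nu$ and $(\r_\mu)_+=\sss$, so it remains only to check $\supp\mu=I$. Here $\supp\mu_+=\supp\nu=I_+$; since $\mu_+$ is non-atomic with this support, $G_\mu$ maps $[0,a_+]$ continuously and strictly increasingly onto $[0,m]$, so the range of $x_{-,\mu}$ over $[0,m]$ equals the range of $\sss=x_{-,\mu}\circ G_\mu$ over $[0,a_+]$, which by $(\ref{x-l.c}')$, $(\ref{nonincr}')$, $\sss(0)=0$ and $\sss(a_+)=a_-$ equals $\R\cap[a_-,0]$. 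A gap in $\supp\mu_-$ would force a gap in the range of $x_{-,\mu}$, contradicting that this range is an interval; hence $\supp\mu_-=\R\cap[a_-,0)$, and so $\supp\mu=\supp\mu_+\cup\supp\mu_-=I$.

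Uniqueness under (II) is inherited verbatim from the uniqueness clause of Proposition~\ref{prop:s(x),nu}. The step I expect to be most delicate is the two-way bridge between the geometric condition $\supp\mu=I$ and the analytic conditions on $\sss$: precisely how continuity and strict monotonicity of $\sss=x_{-,\mu}\circ G_\mu$ encode the connectedness and the two endpoints of $\supp\mu_-$, and making sure every argument survives the extended-real-line cases $a_-=-\infty$ and $a_+=\infty$, together with a few endpoint subtleties at $0$ and at $a_+$.
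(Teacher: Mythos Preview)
Your approach is correct and essentially the same as the paper's: both directions are reduced to Proposition~\ref{prop:s(x),nu}, after which only the extra condition $\supp\mu=I$ needs to be reconciled with the four analytic conditions on $(\sss,\nu)$. The one place where you and the paper diverge a bit is the verification of $\supp\mu_-=\R\cap[a_-,0)$ in (II)$\Rightarrow$(I): the paper argues pointwise, picking $a_-<z_1<z_2<0$, lifting them through $\sss$ to $x_1\ne x_2$, invoking the identity $\r(\sss(x_i))=x_i$ from \eqref{eq:r(s)}, and then applying the ``mirror'' of \ref{prop:s(x),nu}(II)(\ref{nu,s cont2}') to get $\mu\big((z_1,z_2)\big)>0$; you instead argue globally, that $G_\mu$ maps $[0,a_+]$ homeomorphically onto $[0,m]$ so the range of $x_{-,\mu}$ equals the range of $\sss$ on $[0,a_+]$, which is the full interval $\R\cap[a_-,0]$, and hence $\supp\mu_-$ can have no gaps. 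Your route is a touch more conceptual, but to make it airtight you should also record why the range being $[a_-,0]$ forces $\mu\big((-\infty,a_-)\big)=0$ (namely $x_{-,\mu}(m)=a_-$ gives $G_\mu(a_-)=m$) and why $a_-$ itself lies in $\supp\mu$ when finite (since $a_-=x_{-,\mu}(m)$ and values of $x_{-,\mu}$ in $(-\infty,0)$ land in $\supp\mu$, as in the proof of Proposition~\ref{prop:supp}).
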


\begin{proposition}\label{prop:s(x) conn}
Take any function $\sss\colon[0,\infty]\to[-\infty,\infty]$. Then the following two conditions are equivalent to each other:
\begin{enumerate}[(I)]
	\item
there exists a 
function $\r$ such that $\r_+=\sss$ and $\r=\r_\mu$ for some 
\emph{non-atomic} zero-mean probability measure $\mu$ on $\B(\R)$ with $\supp\mu=I$; 
\item 
conditions $\sss(0)=0$ and \ref{prop:s(x),nu conn}(II)(\ref{nonincr}',\ref{x-l.c}',\ref{fin}'') hold. 
\end{enumerate} 
Moreover, under condition (II), the function $\r$ as in (I) is unique.  
\end{proposition}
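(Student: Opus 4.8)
The plan is to obtain this proposition as a quick corollary of its ``$(\sss,\nu)$''-counterpart, Proposition~\ref{prop:s(x),nu conn}, by regarding the companion measure $\nu$ as free data. Throughout, $a_-,a_+,I$ are as in the statement and $I_+:=\R\cap[0,a_+]$. The implication (I)$\implies$(II) is immediate: given a non-atomic zero-mean $\mu$ with $\supp\mu=I$ and $(\r_\mu)_+=\sss$, put $\nu:=\mu_+$, which is non-atomic; then the pair $(\sss,\nu)$ verifies condition~(I) of Proposition~\ref{prop:s(x),nu conn} (witnessed by this very $\mu$), hence condition~(II) of that proposition, and condition~(II) here is precisely the sublist of $\sss$-only requirements it contains, namely $\sss(0)=0$ together with \ref{prop:s(x),nu conn}(II)(\ref{nonincr}',\ref{x-l.c}',\ref{fin}'').

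For (II)$\implies$(I) the only real task is to manufacture an admissible measure $\nu$ so that the full condition~(II) of Proposition~\ref{prop:s(x),nu conn} holds. Under (II), $\sss$ is continuous with $\sss(0)=0$ and strictly decreasing on $[0,a_+]$, so $\sss(x)\in(a_-,0)$ for $x\in(0,a_+)$; hence $w(x):=1-x/\sss(x)=1+x/|\sss(x)|>1$ there, and $\phi:=1/w$ is continuous with values in $(0,1)$ on $(0,a_+)$. Let $\lambda$ be the uniform (normalized Lebesgue) probability measure on $[0,a_+]$ if $a_+<\infty$, and the standard exponential distribution if $a_+=\infty$; in either case $\lambda$ is a non-atomic probability measure, absolutely continuous with density strictly positive on the interior of $I_+$, $\supp\lambda=I_+$, and $\int x\,\lambda(\d x)<\infty$. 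Define $\nu$ by $\d\nu:=\phi\,\d\lambda$ (the values of $\phi$ at the $\lambda$-null points $0,a_+$ being irrelevant). Then $\nu$ is non-atomic, supported on $[0,\infty)$ with $\supp\nu=I_+$, and $m_\nu=\int x\phi\,\d\lambda\le\int x\,\d\lambda<\infty$. The point of taking the density equal to $1/w$ is that the moment identity \ref{prop:s(x),nu}(II)(\ref{mu<1}'') now holds with equality automatically:
\[
\int_{(0,\infty)}\Big(1-\frac x{\sss(x)}\Big)\,\nu(\d x)
=\int_{(0,a_+)}w(x)\,\phi(x)\,\lambda(\d x)=\lambda\big((0,a_+)\big)=1.
\]
Together with the hypotheses in (II), this shows that $(\sss,\nu)$ satisfies condition~(II) of Proposition~\ref{prop:s(x),nu conn}; that proposition then delivers a (unique) non-atomic zero-mean probability measure $\mu$ with $\supp\mu=I$, $\mu_+=\nu$, and $(\r_\mu)_+=\sss$, and $\r:=\r_\mu$ witnesses (I).

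Finally, the uniqueness clause needs no further argument: any $\r$ as in (I) equals $\r_\mu$ for a zero-mean $\mu$ with $\mu_-$ non-atomic (indeed $\mu$ itself is non-atomic), so Proposition~\ref{prop:mu-,uniq} already guarantees that at most one such $\r$ can have $\r_+=\sss$. (Alternatively, the hypotheses in (II) trivially imply those of Proposition~\ref{prop:s(x)}(II) -- most of its conditions being vacuous since $\sss$ is here continuous and monotone -- and the uniqueness part of Proposition~\ref{prop:s(x)} applies.) The one place that calls for a little care, and the closest thing to an obstacle, is arranging for $\nu$ to have support exactly $I_+$ \emph{and} finite first moment at the same time while meeting the moment identity; this is handled by choosing $\lambda$ with a strictly positive density and, in the unbounded case $a_+=\infty$, with light (exponential) tails, after which everything reduces to bookkeeping against Proposition~\ref{prop:s(x),nu conn}.
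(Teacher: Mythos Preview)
Your proof is correct. For (I)$\Rightarrow$(II) and for uniqueness you do exactly what the paper does (invoke Proposition~\ref{prop:s(x),nu conn}, respectively Proposition~\ref{prop:mu-,uniq} or \ref{prop:s(x)}). For (II)$\Rightarrow$(I) your route differs slightly from the paper's: the paper simply observes that the hypotheses of Proposition~\ref{prop:s(x)}(II) are met and invokes that proposition, whereas you go through Proposition~\ref{prop:s(x),nu conn} by constructing an explicit companion measure $\nu$ with density $\phi=1/w$ against a convenient base measure $\lambda$, so that the moment identity (\ref{mu<1}'') becomes automatic.

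Your route has the virtue of yielding $\supp\mu=I$ directly from the statement of Proposition~\ref{prop:s(x),nu conn}, while the paper's citation of Proposition~\ref{prop:s(x)} only literally produces a non-atomic $\mu$ with $(\r_\mu)_+=\sss$ and relies implicitly on the specific $\nu$ built inside the proof of that proposition to get the connected support. The density trick $\phi=1/w$ is a neat shortcut; the paper's construction instead renormalizes by a scalar to hit the moment identity. Both approaches are short, but yours is more self-contained at the level of statements.
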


In contrast with Proposition~\ref{prop:s(x) conn}, the following proposition characterizes the reciprocating functions $\r$ of non-atomic zero-mean probability measures with a connected support 
(rather than the ``positive parts'' $\sss=\r_+$ of such functions $\r$).  

\begin{proposition}\label{prop:r cont charact}\ 
Take any function $\r\colon[-\infty,\infty]\to[-\infty,\infty]$. Then the following two conditions are equivalent to each other:
\begin{enumerate}[(I)]
	\item
there exists a \emph{non-atomic} zero-mean probability measure $\mu$ on $\B(\R)$ such that $\supp\mu=I$ and  
$\r_\mu=\r$; 
\item 
condition $\r(0)=0$ holds, along with the following:  
\begin{enumerate}[\qquad ]
\item[(\ref{nonincr}'')] 
$\r$ is strictly decreasing on $[a_-,a_+]$; 
\item[(\ref{x-l.c}'')] 
$\r$ is continuous on $[-\infty,\infty]$; 
\item[(\ref{fin}''')] 
$\r=a_+$ on $[-\infty,a_-]$ and $\r=a_-$ on $[a_+,\infty]$;
\item[($\r\circ\r$)] 
$\r\big(\r(x)\big)=x$ for all $x\in[a_-,a_+]$.  
\end{enumerate} 
\end{enumerate}
\end{proposition}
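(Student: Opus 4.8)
The plan is to prove both implications by reducing to results already established in the excerpt, chiefly Proposition~\ref{prop:s(x) conn} together with Proposition~\ref{prop:symm}-style symmetry reasoning and the fundamental identity $\r(\r(x))=x$ from Proposition~\ref{prop:recip}. Write $\sss:=\r_+=\r|_{[0,\infty]}$ and $\sss_-:=\r|_{[-\infty,0]}$; the content of the proposition is that a function $\r$ on $[-\infty,\infty]$ is the (full) reciprocating function of a non-atomic zero-mean measure $\mu$ with $\supp\mu=I$ exactly when it is strictly decreasing and continuous, carries $x$ to the correct endpoint outside $[a_-,a_+]$, fixes $0$, and is an involution on $[a_-,a_+]$. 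For $(\mathrm{I})\implies(\mathrm{II})$ I would first apply Proposition~\ref{prop:s(x) conn} to the restriction $\sss=\r_+$: since $\r=\r_\mu$ with $\mu$ non-atomic and $\supp\mu=I$, condition $(\mathrm{I})$ of that proposition holds, so $\sss(0)=0$, $\sss$ is strictly decreasing on $[0,a_+]$, continuous on $[0,\infty]$, and $\sss=a_-$ on $[a_+,\infty]$. By the symmetric construction (the analogue of Proposition~\ref{prop:s(x) conn} with the roles of the positive and negative axes interchanged, which is immediate from the symmetric definitions of $G$, $x_\pm$, $\r$ in \eqref{eq:G(x)}--\eqref{eq:r}) the same holds for $\sss_-$ on $[-\infty,0]$: strictly decreasing on $[a_-,0]$, continuous, equal to $a_+$ on $[-\infty,a_-]$. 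Patching these at $0$ (where both give the value $0$) yields $(\ref{nonincr}'')$, $(\ref{x-l.c}'')$, $(\ref{fin}''')$, and $\r(0)=0$. Finally $(\r\circ\r)$ on $[a_-,a_+]$ follows from Proposition~\ref{prop:recip}: for non-atomic $X$ one has $\r(\r(X))=X$ a.s., and since $\mu$ is non-atomic with $\supp\mu=I$, the set $\{x\in I:\r(\r(x))=x\}$ is dense in $I$; continuity of $\r$ on $I$ (hence of $\r\circ\r$) upgrades this to all of $[a_-,a_+]$.

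For the converse $(\mathrm{II})\implies(\mathrm{I})$, the plan is to construct $\mu$ via Proposition~\ref{prop:s(x) conn} applied to $\sss:=\r|_{[0,\infty]}$ and then verify that the \emph{full} reciprocating function $\r_\mu$ of the resulting $\mu$ equals the given $\r$. First check that $\sss$ satisfies condition $(\mathrm{II})$ of Proposition~\ref{prop:s(x) conn}: $\sss(0)=\r(0)=0$; $\sss$ is strictly decreasing on $[0,a_+]$ and continuous on $[0,\infty]$ by $(\ref{nonincr}'')$ and $(\ref{x-l.c}'')$ restricted to the positive axis; and $\sss=\r=a_-$ on $[a_+,\infty]$ by $(\ref{fin}''')$. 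Hence there is a unique non-atomic zero-mean $\mu$ with $\supp\mu=I$ and $(\r_\mu)_+=\sss$. It remains to show $(\r_\mu)_-=\r|_{[-\infty,0]}$ as well. Here I would use the involution hypothesis $(\r\circ\r)$ together with the structural fact that the full reciprocating function of any $\mu$ is itself an involution on $\supp\mu\setminus\{0\}$ up to the regularization in Propositions~\ref{lem:hat=rr}--\ref{prop:recip} — and in the non-atomic connected-support case, $\r_\mu$ is a genuine continuous strictly decreasing involution on $I$ by the already-proved direction $(\mathrm{I})\implies(\mathrm{II})$. Two continuous strictly decreasing involutions of $I$ that agree on $I_+=[0,a_+]$ (where both equal $\sss$) must agree on all of $I$: for $x\in[a_-,0)$, writing $y:=\r(x)\in(0,a_+]$ we have $\r(y)=x$ by $(\r\circ\r)$, while $\r_\mu(y)=\sss(y)=\r(y)=x$ and applying the $\mu$-involution gives $\r_\mu(x)=y=\r(x)$. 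Outside $I$ both are constant equal to the appropriate endpoint by $(\ref{fin}''')$ and the established direction. Thus $\r_\mu=\r$ everywhere.

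The main obstacle I anticipate is the bookkeeping at the ``seam'' — ensuring that the positive-axis data $\sss$ and the negative-axis behavior forced by $(\r\circ\r)$ are mutually consistent, i.e.\ that the $\mu$ produced by Proposition~\ref{prop:s(x) conn} from $\sss$ alone automatically has the prescribed negative tail and negative-axis support. The key point making this work is that for a non-atomic measure with connected support the reciprocating function is an honest continuous involution of $I$ (no randomization, no multivaluedness), so it is \emph{determined} by its restriction to $I_+$; once $(\r_\mu)_+=\sss=\r_+$ is arranged, the involution property of both functions forces full agreement. A secondary technical point is justifying the density-plus-continuity argument for $(\r\circ\r)$ in the first direction, which uses only that $\supp\mu=I$ and that an a.s.\ statement about a non-atomic $X$ holds on a set whose closure is $I$; this is routine given Proposition~\ref{prop:recip}. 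One should also remember to handle the degenerate endpoint cases $a_-=-\infty$ and/or $a_+=+\infty$, where the ``constant on $[-\infty,a_-]$'' or ``constant on $[a_+,\infty]$'' clauses are vacuous, but continuity of $\r$ at $\pm\infty$ is still asserted and is compatible since $\r$ is monotone.
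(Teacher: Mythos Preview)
Your proposal is correct, and for the direction $(\mathrm{II})\Rightarrow(\mathrm{I})$ it matches the paper's argument: both invoke Proposition~\ref{prop:s(x) conn} on $\sss:=\r|_{[0,\infty]}$ to produce a non-atomic zero-mean $\mu$ with $\supp\mu=I$ and $(\r_\mu)_+=\sss$, and then (you explicitly, the paper implicitly via the uniqueness clause) use the fact that a continuous strictly decreasing involution of $[a_-,a_+]$ is determined by its restriction to $[0,a_+]$ to conclude $\r_\mu=\r$ on all of $[-\infty,\infty]$.

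For $(\mathrm{I})\Rightarrow(\mathrm{II})$ your route differs from the paper's. The paper establishes these properties \emph{directly} inside the proof of Proposition~\ref{prop:s(x),nu conn}: since $\mu$ is non-atomic with $\supp\mu=I$, the restrictions $G_+:=G|_{[0,a_+]}$ and $G_-:=G|_{[a_-,0]}$ are continuous strict bijections onto $[0,m]$, so $x_\pm=G_\pm^{-1}$ are continuous and strictly monotone; from $\r=x_\mp\circ G$ one reads off $(\ref{nonincr}'')$, $(\ref{x-l.c}'')$, $(\ref{fin}''')$, and the involution $(\r\circ\r)$ follows by the one-line computation $\r\circ\r=G_-^{-1}\circ G_+\circ G_+^{-1}\circ G_-=\mathrm{id}$ on $[a_-,0]$ (and symmetrically on $[0,a_+]$). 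You instead assemble the monotonicity/continuity/constancy from Proposition~\ref{prop:s(x) conn} plus its mirror image, and derive $(\r\circ\r)$ from Proposition~\ref{prop:recip} via density-plus-continuity. Both arguments are sound; the paper's is shorter and self-contained (no appeal to Proposition~\ref{prop:recip} or a symmetry meta-argument), while yours is more modular, reusing statements already proved rather than recomputing with $G_\pm$.
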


Our final characterization concerns the case when it is desirable to avoid zero-mean probability measures $\mu$ with a density that is discontinuous at $0$ (say, as an unlikely shape). 

\begin{proposition}\label{prop:loc-symm}\ 
Take any function $\r\colon[-\infty,\infty]\to[-\infty,\infty]$. Then the following two conditions are equivalent to each other:
\begin{enumerate}[(I)]
	\item
there exists a \emph{non-atomic} zero-mean probability measure $\mu$ on $\B(\R)$ such that $\r_\mu=\r$, $\supp\mu=I$, and in a neighborhood of $0$ measure $\mu$ has a continuous strictly positive density;  
\item 
condition \ref{prop:r cont charact}(II) holds, along with the following: $\r$ is continuously differentiable in a neighborhood of $0$.  
\end{enumerate}
Moreover, if either condition (I) or (II) holds, then necessarily $\r'(0)=-1$, that is, one has the approximate local symmetry condition  $\r(x)\sim-x$ as $x\to0$.
\end{proposition}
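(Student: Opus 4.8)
The plan is to leverage Proposition~\ref{prop:r cont charact} (which already characterizes when $\r$ is the reciprocating function of some non-atomic zero-mean $\mu$ with $\supp\mu=I$) together with Proposition~\ref{prop:s(x),nu conn} (which lets one freely prescribe the positive part $\mu_+=\nu$). First note the easy half of the supplementary claim: if condition (II) holds, then \ref{prop:r cont charact}(II) gives $\r(\r(x))=x$ for all $x$ in the interval $[a_-,a_+]$, a neighborhood of $0$; differentiating this at $0$ (legitimate since $\r\in C^1$ near $0$ and $\r(0)=0$) yields $\r'(0)^2=1$, and $\r'(0)\le0$ because $\r$ is decreasing, whence $\r'(0)=-1$.

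For (I)$\Rightarrow$(II): given $\mu$ as in (I), Proposition~\ref{prop:r cont charact} supplies all of \ref{prop:r cont charact}(II) (in particular $\r$ is continuous and $\r(0)=0$), and it remains to show $\r\in C^1$ near $0$. Let $f$ be the continuous strictly positive density of $\mu$ on some $(-\eta,\eta)$. Then $G=G_\mu$ is $C^1$ on $(-\eta,\eta)$ with $G'(x)=xf(x)$ and $G(0)=0$, so $G$ is strictly increasing on $[0,\eta)$ and strictly decreasing on $(-\eta,0]$, and $G(x)=\tfrac{f(0)}2x^2(1+o(1))$ as $x\to0$ on each side. Since $\mu$ is non-atomic, for $x$ near $0$ the definition of $\r$ reduces (via $\tG=G$ and the genuine invertibility of $G$ on each side) to: $\r(x)$ is the unique point of sign opposite to $x$ with $G(\r(x))=G(x)$; substituting the quadratic asymptotics gives $\r(x)^2=x^2(1+o(1))$, hence $\r(x)/x\to-1$. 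For $x\ne0$ near $0$, the implicit function theorem applied to $(x,y)\mapsto G(x)-G(y)$ at $(x,\r(x))$, where $\partial_y=-G'(\r(x))\ne0$, shows $\r$ is $C^1$ near $x$ with $\r'(x)=\dfrac{xf(x)}{\r(x)f(\r(x))}=\dfrac x{\r(x)}\cdot\dfrac{f(x)}{f(\r(x))}$, which tends to $(-1)\cdot1=-1$ as $x\to0$. A standard mean-value argument then gives that $\r$ is differentiable at $0$ with $\r'(0)=-1$ and that $\r'$ is continuous at $0$; hence $\r\in C^1$ near $0$, and (II) together with $\r'(0)=-1$ follows.

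For (II)$\Rightarrow$(I): by the first paragraph $\r'(0)=-1$. Put $\sss:=\r|_{[0,\infty]}$; then \ref{prop:r cont charact}(II) yields all the $\sss$-requirements in Proposition~\ref{prop:s(x),nu conn}(II) (namely $\sss(0)=0$, $\sss$ strictly decreasing on $[0,a_+]$, $\sss$ continuous on $[0,\infty]$, and $\sss=a_-$ on $[a_+,\infty]$). Choose a finite non-atomic measure $\nu$ on $\B(\R)$ with $\supp\nu=I_+:=\R\cap[0,a_+]$, $m_\nu<\infty$, admitting a continuous strictly positive density $f_\nu$ on $I_+$ (take any such density, with sufficiently fast decay when $a_+=\infty$); since the functional $\nu\mapsto\int_{(0,\infty)}(1-x/\r(x))\,\nu(\d x)$ is finite and strictly positive, rescaling $\nu$ by a positive constant makes it equal $1$, giving the remaining condition of Proposition~\ref{prop:s(x),nu conn}(II). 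That proposition produces a non-atomic zero-mean probability measure $\mu$ with $\supp\mu=I$, $\mu_+=\nu$, and $(\r_\mu)_+=\sss$; using the involution structure of \ref{prop:r cont charact}(II) for both $\r_\mu$ and $\r$, one checks $\r_\mu=\r$. It remains to verify the density near $0$: on $[0,\delta)$, $\mu$ has density $f_\nu$, so $G_\mu$ is $C^1$ there with $G_\mu'(x)=xf_\nu(x)$; on a small $(-\delta',0]$ the identity $\r_\mu=\r$ gives $G_\mu(y)=G_\mu(\r(y))$, and as $\r\in C^1$ near $0$ this makes $G_\mu$ $C^1$ on $(-\delta',0]$ with $G_\mu'(y)=\r(y)f_\nu(\r(y))\r'(y)$. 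Hence $\mu$ has on $(-\delta',0)$ the density $g(y)=G_\mu'(y)/y=\tfrac{\r(y)}{y}f_\nu(\r(y))\r'(y)$, which extends continuously to $y=0$ with value $\r'(0)^2f_\nu(0)=f_\nu(0)>0$ (using $\r'(0)=-1$). Thus the one-sided densities $g$ (on the left) and $f_\nu$ (on the right) agree at $0$, so $\mu$ has a continuous strictly positive density on $(-\delta',\delta)$, and (I) holds.

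The main obstacle is the differentiability-at-$0$ step inside (I)$\Rightarrow$(II): one cannot apply the implicit function theorem at $(0,0)$ itself, since $G'(0)=0$, so one must first extract the weak fact $\r(x)/x\to-1$ from the quadratic behaviour of $G$, and only then upgrade to $\r\in C^1$ by combining $C^1$-ness off $0$ with the limit of $\r'$. Dually, in (II)$\Rightarrow$(I) the matching of the two one-sided densities of the constructed $\mu$ at $0$ is precisely what needs, and is pinned down by, the relation $\r'(0)^2=1$; everything else is routine bookkeeping with Propositions~\ref{prop:r cont charact} and~\ref{prop:s(x),nu conn}.
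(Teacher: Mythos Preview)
Your proof is correct and follows essentially the same route as the paper's: both directions rely on Propositions~\ref{prop:r cont charact} and~\ref{prop:s(x),nu conn} together with the identity $G(\r(x))=G(x)$, the implicit/inverse function theorem away from $0$, and the explicit formula $\r'(x)=\dfrac{x\,f(x)}{\r(x)\,f(\r(x))}$; the construction in (II)$\Rightarrow$(I) and the density computation on the left of $0$ are the same. The only noteworthy differences are cosmetic: you extract $\r(x)\sim -x$ from the Taylor expansion $G(x)=\tfrac{f(0)}2x^2(1+o(1))$ and the level-set identity, whereas the paper obtains it by integrating $2\r\r'\sim 2z$; and you derive $\r'(0)=-1$ under (II) up front from $\r\circ\r=\mathrm{id}$, while the paper gets it as a byproduct of the (I)$\Rightarrow$(II) computation.
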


\subsection{Modeling reciprocating functions}\label{model} 
As pointed out by Bartlett \cite{bartlett} and confirmed by Ratcliffe \cite{ratcl}, skewness affects the $t$ distribution (and hence that of the self-normalized sum) more than kurtosis does. These results are in agreement with the result by Hall and Wang \cite{hall-wang}.

Tukey \cite[page 206]{tukey-biol} wrote, ``It would be highly desirable to have a modified version of the $t$-test with a greater resistance to skewness... .'' This concern is addressed in the present paper by such results as Corollaries~\ref{cor:student-normal} and \ref{cor:stud-asymm}.  

Closely related to this 
is the question of modeling asymmetry. Tukey \cite{tukey-trans} proposed using the power-like transformation functions of the form 
$z(y)=a(y+c)^p+b$, $y>-c$,
with the purpose of symmetrizing the data.  
To deal with asymmetry and heavy tails, Tukey also proposed (see Kafadar \cite[page 328]{kafadar} and Hoaglin \cite{hoaglin}) the so-called $g$-$h$ technology, whereby to fit the data to a $g$-$h$ distribution, which is the distribution of a r.v.\ of the form $
e^{hZ^2/2}(e^{gZ}-1)/g$, where $Z\sim N(0,1)$, so that the parameters $g$ and $h$ are responsible, respectively, for the skewness of the distribution and the heaviness of the tails. 

We propose modeling asymmetry using reciprocating functions.  
In view of Propositions~\ref{prop:s(x) conn} and \ref{prop:r cont charact}, the reciprocating function $\r$ of any non-atomic zero-mean probability measure $\mu$ with a connected support can be constructed as follows. 

\begin{constr}\label{constr:s}
\begin{enumerate}[(i)]
	\item
	Take any $a_-$ and $a_+$ such that $-\infty\le a_-<0<a_+\le\infty$ and let $I_+:=\R\cap[0,a_+]$.
	\item
	Take any function $\sss\colon[0,\infty]\to[-\infty,\infty]$ such that $\sss(0)=0$, $\sss$ equals $a_-$ on $[a_+,\infty]$ and is strictly decreasing and continuous on $[0,a_+]$. 
	\item
	Define $\r$ by the formula	
\begin{equation*}
\r:=
	\begin{cases}
a_+ &\text{ on }[-\infty,a_-];\\
\ts^{-1} &\text{ on }[a_-,0];\\
\sss &\text{ on }[0,\infty],
\end{cases}	
\end{equation*}
where $\ts:=\sss|_{[0,a_+]}$, the restriction of the function $\sss$ to the interval $[0,a_+]$. 
\end{enumerate} 
\end{constr}

\begin{example}\label{ex:r-powers}
In accordance with Construction~\ref{constr:s} and Proposition~\ref{prop:loc-symm}, one can suggest the two-parameter family of reciprocating functions defined by the formula: 
\begin{equation*}
	\r(x):=\r_{p,c}(x):=
	\begin{cases}
\frac cp\,\big(1-(1+x/c)^p\big) &\text{ for }x\in[0,\infty];\\
c\,\big((1-px/c)^{1/p}-1\big) &\text{ for }x\in(-\infty,0]\text{ such that }px<c,
\end{cases}
\end{equation*}
with the convention that $\r(\infty):=\r(\infty-)$; here, $p\in\R\setminus\{0\}$ and $c>0$ are real numbers, which may be referred to as the shape and scale parameters, respectively. Indeed, one can see that mere re-scaling of $\mu$ (or a corresponding r.v.\ $X$) results only in a change of $c$: if $\r_{p,1}=\r_X$ for some zero-mean r.v.\ $X$, then $\r_{p,c}=\r_{cX}$. 
For $x\in[-\infty,0]$ such that $px\ge c$ (that is, for $x\in[-\infty,\frac cp]$ when $p<0$), we set $\r_{p,c}(x):=\infty$, in accordance with the general description of Construction~\ref{constr:s}. 
Let us also extend the family of functions $\r_{p,c}$ to $p=0$ by continuity:
\begin{equation*}
	\r_{0,c}(x):=\lim_{p\to0}\r_{p,c}(x)=
	\begin{cases}
-c\,\ln(1+x/c) &\text{ for }x\in[0,\infty];\\
c\,(e^{-x/c}-1) &\text{ for }x\in[-\infty,0].
\end{cases}
\end{equation*}
The corresponding intervals $[a_-,a_+]$ here coincide with  $[-\infty,\infty]$ if $p\ge0$ and with $[\frac cp,\infty]$ if $p<0$. 

Case $p=1$ corresponds to the pattern of perfect symmetry of $\mu$; that is, $\r(x)=-x$ for all $x$ (recall Proposition~\ref{prop:symm}). Case $p>1$ corresponds to a comparatively long (or, equivalently, heavy) left tail of $\mu$, so that $\mu$ will be skewed to the left. Similarly, case $p<1$ corresponds to a comparatively long (or heavy) right tail of $\mu$. Thus, $p$ can be considered as the asymmetry parameter. 

Another limit case is when $p\to\pm\infty$ and $c\to\infty$ in such a manner that $\frac cp\to\pm\la$, for some $\la\in(0,\infty)$, 
and this limit is given by
\begin{equation*}
	\r_{\pm\infty,\la}(x):=
	\begin{cases}
\rule[-8pt]{0pt}{18pt}
\pm\la\,(1-e^{\pm x/\la}) &\text{ for }x\in[0,\infty];\\
\pm\la\,\ln(1\mp x/\la) &\text{ for $x\in[-\infty,0]$ such that $\pm x<\la$},
\end{cases}
\end{equation*}
where $\la$ and $\pm\infty$ play, respectively, the roles of the scale and shape (or, more specifically, asymmetry) parameters. 

Yet another limit case is when $c\to0$ and $p\to1$ in such a manner that $c^{p-1}\to\ka$, for some $\ka\in(0,\infty)$, 
and this limit is given by
\begin{equation*}
	\r_{1,0;\ka}(x):=
	\begin{cases}
-x/\ka &\text{ for }x\in[0,\infty];\\
-\ka x &\text{ for }x\in[-\infty,0],
\end{cases}
\end{equation*}
However, in this case the property $\r'(0)=-1$ is lost (in fact, $\r_{1,0;\ka}$ is not differentiable at $0$) unless $\ka=1$, so that, by Proposition~\ref{prop:loc-symm}, no corresponding zero-mean distribution $\mu$ can have a density that is strictly positive and continuous at $0$. 

\smallskip
\parbox{1.3in}{
\includegraphics[width=1.2in,height=1.2in]{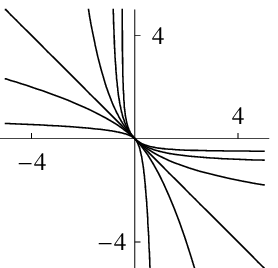}
}
\parbox{3.5in}{
Here on the left one can see parts of the graphs $\{\big(x,\r_{p,1}(x)\big)\colon x\in[a_-,a_+]\}$ with $p=-2,-1,0,1,2,8$. 
Each graph is symmetric about the diagonal $\Delta:=\{(x,x)\colon x\in[-\infty,\infty]\}$, as it should be according to the reciprocity property \ref{prop:r cont charact}(II)($\r\circ\r$). The tighter the graph of the reciprocating function embraces the first quadrant, the more skewed is the corresponding distribution to the right; and the tighter the graph embraces the third quadrant, the more skewed is the distribution to the left. In 
}

\medskip
\noindent this example, the greater is $p$, the more skewed to the left must the corresponding zero-mean distribution $\mu$ be. 
\end{example}

\begin{constr}\label{constr:hyperb}
Reciprocity property \ref{prop:r cont charact}(II)($\r\circ\r$) of $\r$ implies that the graph $\{\big(x,\r(x)\big)\colon x\in[a_-,a_+]\}$ can be obtained in the form $\{\big(x,y\big)\colon F(x,y)=0, x\in[a_-,a_+], y\in[a_-,a_+]\}$, where $F$ is a symmetric function, which must also satisfy condition $F(0,0)=0$, since $\r(0)=0$. 
\end{constr}
A simplest such function is the quadratic function $F$ given by the formula
\begin{equation}\label{eq:F}
	F(x,y)\equiv Ax^2+2Bxy+Ay^2+cx+cy, 
\end{equation}
so that the graphs are elliptic or hyperbolic arcs symmetric about the diagonal $\Delta$ and passing through the origin. However, here we shall not consider this construction in detail. 

Instead, let us turn to 
\begin{constr}\label{constr:a}
The symmetry of the graph $\{\big(x,\r(x)\big)\colon x\in(a_-,a_+)\}$ of a reciprocating function $\r$ about the diagonal $\Delta$ suggests that 
$\r$ is uniquely determined by 
a function (say $\aaa$) that maps, for each $x\in(a_-,a_+)$, the width $\w(x):=|x-\r(x)|$ to the asymmetry 
$\al(x):=x+\r(x)$ of the zero-mean distribution on the two-point set $\{x,\r(x)\}$. 
\big(Note that, by \ref{prop:r cont charact}(II)(\ref{nonincr}'',\ref{x-l.c}''), the width function $\w$ is continuous on $[-\infty,\infty]$, strictly increasing on $[0,a_+]$ (from $0$ to $a_+-a_-$), and strictly decreasing on $[a_-,0]$ (from $a_+-a_-$ to $0$).\big) 
The function $\aaa$ may be referred to as the \emph{asymmetry pattern function} of a given zero-mean distribution. 
\end{constr}

Details of Construction~\ref{constr:a} are presented in 
\begin{proposition}\label{prop:a}\ 

\begin{enumerate}[(I)]
	\item If $\r$ is a reciprocating function as in Proposition~\ref{prop:r cont charact}, then there exists a unique function $\aaa\colon[0,a_+-a_-)\to\R$ 
such that $\aaa(0)=0$, 
\begin{equation}\label{eq:a}
	x+\r(x)=\aaa\big(|x-\r(x)|\big)
\end{equation}
for all $x\in(a_-,a_+)$, and the following strict Lip(1) condition (Lipschitz with constant factor 1)  holds: 
\begin{equation*}
	|\aaa(w_2)-\aaa(w_1)|<w_2-w_1
\end{equation*}
for all $w_1$ and $w_2$ such that $0\le w_1<w_2<a_+-a_-$. 
Also, $\aaa(w)\to a_+ + a_-$ as $w\uparrow a_+ - a_-$. 
\item
Vice versa, if a function $\aaa\colon[0,a_+-a_-)\to\R$ is strictly Lip(1), $\aaa(0)=0$, and $\aaa(w)\to a_+ + a_-$ as $w\uparrow a_+ - a_-$, then there exists a unique reciprocating function $\r$ such as in Proposition~\ref{prop:r cont charact} that satisfies condition \eqref{eq:a}. In fact, then one necessarily has 
\begin{equation}\label{eq:r,xi,rho}
	\r(x)=
	\begin{cases}
-\rho\big(\xi^{-1}(x)\big) & \text{ if }x\in[0,a_+);\\
\xi\big(\rho^{-1}(-x)\big) & \text{ if }x\in(a_-,0],
	\end{cases}
\end{equation}
where the functions $\xi$ and $\rho$ are defined by 
\begin{equation}\label{eq:xi,rho}
	\text{$\xi(w):=\tfrac12(w+\aaa(w))$ and $\rho(w):=\tfrac12(w-\aaa(w))$ 
for all $w\in[0,a_+-a_-)$,}
\end{equation}
and they are continuously and strictly increasing on $[0,a_+-a_-)$ from $0$ to $a_+$ and $-a_-$, respectively. 
\item
Moreover, a reciprocating function $\r$ such as in Proposition~\ref{prop:r cont charact} 
is continuously differentiable in a neighborhood of $0$ 
if and only if the corresponding asymmetry pattern function $\aaa$ is continuously differentiable in an open right neighborhood (r.n.) of $0$ and $\aaa'(0+)=0$.  
\end{enumerate}
\end{proposition}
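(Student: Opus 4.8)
The plan rests on the diagonal symmetry of the graph of $\r$ forced by condition \ref{prop:r cont charact}(II)($\r\circ\r$): since $\r(\r(x))=x$ on $[a_-,a_+]$, both the width $\w(x):=|x-\r(x)|$ and the asymmetry $\al(x):=x+\r(x)$ are unchanged under $x\mapsto\r(x)$, hence are genuinely functions of the unordered set $\{x,\r(x)\}$. Because $\r$ is continuous and strictly decreasing on $[a_-,a_+]$ with $\r(0)=0$, it carries $[0,a_+]$ order-reversingly onto $[a_-,0]$; thus $\w=\mathrm{id}-\r$ is continuous and strictly increasing on $[0,a_+]$, mapping it bijectively onto $[0,a_+-a_-]$, and $\xi:=\big(\w|_{[0,a_+]}\big)^{-1}$ is a homeomorphism of $[0,a_+-a_-)$ onto $[0,a_+)$. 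For part~(I) I set $\aaa(w):=\xi(w)+\r(\xi(w))$, so that $\aaa(0)=0$; for $x\in[0,a_+)$ one has $x=\xi(\w(x))$ and hence $\al(x)=\aaa(\w(x))$, while for $x\in(a_-,0]$ one has $\r(x)\in[0,a_+)$ and $\w(x)=\w(\r(x))$, so $\al(x)=\al(\r(x))=\aaa(\w(x))$; this is \eqref{eq:a} on all of $(a_-,a_+)$, and its case $x\in[0,a_+)$ already determines $\aaa$ uniquely. For the strict Lip(1) bound take $0\le w_1<w_2<a_+-a_-$ and put $x_i:=\xi(w_i)$, $y_i:=\r(x_i)$; then $0\le x_1<x_2<a_+$ and $0\ge y_1>y_2>a_-$ (all finite), so $w_i=x_i-y_i$ and $\aaa(w_i)=x_i+y_i$, whence with $A:=x_2-x_1>0$ and $B:=y_1-y_2>0$ we get $w_2-w_1=A+B$ and $|\aaa(w_2)-\aaa(w_1)|=|A-B|\le\max(A,B)<A+B=w_2-w_1$. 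Finally $\xi(w)\to a_+$ and, by continuity of $\r$ at $a_+$, $\r(\xi(w))\to a_-$ as $w\uparrow a_+-a_-$, so $\aaa(w)\to a_++a_-$ in $[-\infty,\infty]$.

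For part~(II), starting from a strictly Lip(1) function $\aaa$ with $\aaa(0)=0$ and $\aaa(w)\to a_++a_-$, define $\xi,\rho$ by \eqref{eq:xi,rho}. The Lip(1) bound gives $|\aaa(w_2)-\aaa(w_1)|<w_2-w_1$ for $w_1<w_2$, hence $\xi(w_2)-\xi(w_1)=\tfrac12\big((w_2-w_1)+(\aaa(w_2)-\aaa(w_1))\big)>0$ and likewise $\rho(w_2)-\rho(w_1)>0$, so (using also that Lip(1) forces continuity, $\xi(0)=\rho(0)=0$, $\xi(w)\to a_+$, $\rho(w)\to-a_-$) the maps $\xi$ and $\rho$ are homeomorphisms of $[0,a_+-a_-)$ onto $[0,a_+)$ and $[0,-a_-)$. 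Define $\r$ by \eqref{eq:r,xi,rho} on $(a_-,a_+)$ and extend it by $\r\equiv a_+$ on $[-\infty,a_-]$ and $\r\equiv a_-$ on $[a_+,\infty]$. I then check the hypotheses of \ref{prop:r cont charact}(II): $\r(0)=-\rho(\xi^{-1}(0))=0$; $\r$ is strictly decreasing on $[a_-,a_+]$ since each branch is a composition of strictly monotone maps, the two branches agree at $0$, and they match the endpoint values $a_\mp$ in the limit; $\r$ is continuous on $[-\infty,\infty]$ (continuous on $(a_-,a_+)$ and matching its constant extensions at $a_\pm$); $\r=a_+$ on $[-\infty,a_-]$ and $\r=a_-$ on $[a_+,\infty]$; and $\r(\r(x))=x$ on $[a_-,a_+]$ by direct substitution, e.g.\ for $x\in[0,a_+)$ one has $\r(x)=-\rho(\xi^{-1}(x))\in(a_-,0]$ and $\r(\r(x))=\xi\big(\rho^{-1}(\rho(\xi^{-1}(x)))\big)=x$, and at the endpoints via $\r(a_\pm)=a_\mp$. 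Proposition~\ref{prop:r cont charact} then gives a non-atomic zero-mean $\mu$ with $\supp\mu=I$ and $\r_\mu=\r$; and for $x=\xi(w)\ge0$ one has $x+\r(x)=\xi(w)-\rho(w)=\aaa(w)$ and $x-\r(x)=\xi(w)+\rho(w)=w$, so \eqref{eq:a} holds (and then for $x\le0$ by reciprocity). Uniqueness: any reciprocating function $\r'$ as in Proposition~\ref{prop:r cont charact} satisfying \eqref{eq:a} must, for $x\in[0,a_+)$ (where $\r'(x)\le0$, so $w':=|x-\r'(x)|=x-\r'(x)$), obey $x=\tfrac12(w'+\aaa(w'))=\xi(w')$ and $\r'(x)=-\tfrac12(w'-\aaa(w'))=-\rho(\xi^{-1}(x))$, and analogously on $(a_-,0]$; the values on $[-\infty,a_-]\cup[a_+,\infty]$ and at $a_\pm$ are forced by the structure, so $\r'=\r$ and formula \eqref{eq:r,xi,rho} holds.

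For part~(III): if $\r$ is $C^1$ near $0$, then, as $0\in(a_-,a_+)$, differentiating the identity $\r(\r(x))=x$ at $0$ yields $\r'(0)^2=1$, and strict decrease forces $\r'(0)=-1$; hence $\w=\mathrm{id}-\r$ is $C^1$ near $0$ with $\w'(0+)=1-\r'(0)=2\ne0$, so $\xi=\w^{-1}$ is $C^1$ near $0$ and $\aaa=(\mathrm{id}+\r)\circ\xi$ is $C^1$ on a right neighborhood of $0$ with $\aaa'(0+)=\big(1+\r'(0)\big)\xi'(0+)=0$. Conversely, if $\aaa$ is $C^1$ on a right neighborhood of $0$ with $\aaa'(0+)=0$, then $\xi'(0+)=\rho'(0+)=\tfrac12>0$, so by the inverse function theorem (valid at the endpoint $w=0$, where $\xi(0)=0$ and $\xi'(0+)>0$ make the one-sided difference quotients of $\xi^{-1}$ converge to $1/\xi'(0+)$) both $\xi^{-1}$ and $\rho^{-1}$ are $C^1$ near $0$ with one-sided derivative $2$; then $\r=-\rho\circ\xi^{-1}$ is $C^1$ on a right neighborhood of $0$ and $\r=\xi\circ\rho^{-1}\circ(-\mathrm{id})$ on a left one, both with one-sided derivative $-1$ at $0$, so $\r$ is $C^1$ near $0$. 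The main obstacle is not conceptual but bookkeeping: making the one-sided statements in part~(III) rigorous — especially the inverse function theorem at the boundary point $w=0$ — and handling uniformly the cases in which $a_-$ and/or $a_+$ is infinite, so that $\w,\xi,\rho,\aaa$ and all the limiting identities are read in the extended reals (with the degenerate doubly-infinite case interpreted in the obvious way).
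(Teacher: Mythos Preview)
Your proof is correct and follows essentially the same route as the paper's: both parametrize the graph of $\r$ by the width $\w$, define $\aaa$ on the positive branch and extend to the negative branch via the reciprocity $\r\circ\r=\mathrm{id}$, and reconstruct $\r$ from $\aaa$ through the auxiliary maps $\xi,\rho$ of \eqref{eq:xi,rho}. Your strict-Lip(1) argument (writing $w_2-w_1=A+B$, $|\aaa(w_2)-\aaa(w_1)|=|A-B|$) is a clean direct computation where the paper instead observes that strict Lip(1) is equivalent to $\xi,\rho$ being strictly increasing and then identifies $\xi=\w_+^{-1}$, $\rho=-\r\circ\w_+^{-1}$; and in part~(III) you obtain $\r'(0)=-1$ by differentiating $\r\circ\r=\mathrm{id}$ rather than invoking Proposition~\ref{prop:loc-symm}---both minor presentational variants of the same argument.
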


In particular, Proposition~\ref{prop:a} shows that the asymmetry pattern function $\aaa$ is necessarily Lipschitz and hence absolutely continuous, with a density $\aaa'(w)=\frac{\d\aaa(w)}{\d w}$ such that 
\begin{equation*}
 -1<\aaa'(w)<1
\end{equation*}
for almost all $w\in[0,a_+-a_-)$. In view of \eqref{eq:a}, this density $\aaa'$ may be considered as the rate of change of asymmetry $\al=x+\r(x)$ relative to the varying width $w=|x-\r(x)|$ of the constituent zero-mean distribution on the two point set $\{x,\r(x)\}$. For instance, if at the given width $w$ this rate $\aaa'(w)$ is close to 1, then at this width $w$ the distribution's skewness to the right is growing fast. Also, for all $w\in(0,a_+-a_-)$, the ratio 
$$\frac{\aaa(w)}w=\frac1w\int_0^w \aaa'(v)\,\d v$$
represents the average asymmetry-to-width rate over all widths from $0$ to $w$. 
Thus, Construction~\ref{constr:a} provides a flexible and sensitive tool to model asymmetry patterns.  


One can see that in Example~\ref{ex:r-powers} the asymmetry-to-width rate $\aaa'$ strictly increases or decreases from $0$ to $1$ or $-1$ as $w$ increases from $0$ to $\infty$, depending on whether $p<1$ or $p>1$, and $\aaa'(w)=0$ for all $w\in[0,\infty)$ if $p=1$. Moreover, 
\begin{equation*}
	\aaa'(w)\sim
	\begin{cases}
1-c_1\,e^{-w/\la} & \text{ if } p=-\infty,\\
1-c_2\,w^{p-1} & \text{ if } -\infty<p\le0,\\
1-c_3\,w^{1-1/p} & \text{ if } 0<p<1,\\
-1+c_4\,w^{1/p-1} & \text{ if } 1<p\le\infty
	\end{cases}	
\end{equation*}
as $w\to\infty$, where $c_1,\dots,c_4$ are positive real constants, depending only on the parameters $p$ and $c$
; here 
$1/p-1:=-1$ for $p=\infty$. 

Let us now provide examples of two parametric families of reciprocating functions obtained using the asymmetry-to-width rate $\aaa'$ as the starting point. 

\begin{example}\label{ex:a}
Take any $\al\in[-1,1]$ and $c\in(0,\infty)$, and consider the asymmetry-to-width rate of the form
\begin{equation*}
 \aaa'(w)=\al\Big(1-\frac{c^2}{(c+w)^2}\Big)
\end{equation*}
for all $w\in[0,\infty)$, so that, for $\al\in(0,1]$, the rate $\aaa'(w)$ increases from $0$ to $\al$ as $w$ increases from $0$ to $\infty$; similarly, for $\al\in[-1,0)$, the rate $\aaa'(w)$ decreases from $0$ to $\al$ as $w$ increases from $0$ to $\infty$. Then the corresponding asymmetry pattern function $\aaa$ is given by
\begin{equation*}
 \aaa(w)=\aaa_{\al,c}(w)=\al\frac{w^2}{c+w}
\end{equation*}
for all $w\in[0,\infty)$, and, by \eqref{eq:r,xi,rho}, the corresponding reciprocating function $\r$ is given by
\begin{equation*}
 \r(x)=\r_{\al,c}(x)=
\frac{c+2\al x-\sqrt{(c+2|x|)^2+8\al cx}}{2(\al+\sign x)}
\end{equation*}
for $\al\in(-1,1)$ and 
all $x\in\R$; expressions for $\r_{1,c}$ and $\r_{-1,c}$ are of different forms. 
Note that $\r_{\al,c}(x)\sim\frac{\al\mp1}{\al\pm1}\,x$ as $x\to\pm\infty$, for each $\al\in(-1,1)$; on the other hand, $\r_{1,c}((-\frac c2)+)=\infty$, $\r_{1,c}(\infty-)=-\frac c2$, $\r_{-1,c}(\frac c2-)=-\infty$, $\r_{-1,c}((-\infty)+)=\frac c2$. 
The parameters $\al$ and $c$ are, respectively, the shape (or, more specifically, asymmetry) and scale parameters. The graph of $\r_{\al,c}$ is the union of two hyperbolic arcs of two different hyperbolas: $-(1+\al)r^2+2\al xr+(1-\al)x^2+cr+cx=0$ (used for $x\ge0$) and $(1-\al)r^2+2\al xr-(1+\al)x^2+cr+cx=0$ (used for $x\le0$) -- cf. \eqref{eq:F}.  

\smallskip
\parbox{1.3in}{
\includegraphics[width=1.2in,height=1.15in]{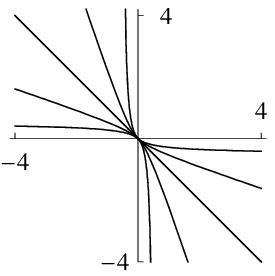}
}
\parbox{3.5in}
{Yet, by Proposition~\ref{prop:a}, all these reciprocating functions $\r_{\al,c}$ are continuously differentiable in neighborhood of $0$ (in fact, they are so wherever on $\R$ they take finite values). 
On the left one can see parts of the graphs $\{\big(x,\r_{\al,1}(x)\big)\colon x\in[a_-,a_+]\}$ with $\al=-1,-\frac12,0,\frac12,1$. 
In such an example, the shape (or, more specifically, asymmetry) parameter $\al$ can also be considered as a scale parameter -- but in the direction of the diagonal $\Delta=\{(x,x)\colon x\in[-\infty,\infty]\}$.  
}

\medskip

\end{example}

\begin{example}\label{ex:aa}
Take any $\al\in[-1,1]$ and $c\in(0,\infty)$, and consider the asymmetry-to-width rate of the form
\begin{equation*}
 \aaa'(w)=
 \frac{16\al c^3}{3\sqrt3} \,\frac w{(c^2+w^2)^2}
\end{equation*}
for all $w\in[0,\infty)$, 
so that, for $\al\in(0,1]$, the rate $\aaa'(w)$ increases from $0$ to $\al$ and then decreases from $\al$ to $0$ 
as $w$ increases from $0$ to $c/\sqrt3$ to $\infty$; similarly, for $\al\in[-1,0)$, the rate $\aaa'(w)$ decreases from $0$ to $\al$ and then increases from $\al$ to $0$ as $w$ increases from $0$ to $c/\sqrt3$ to $\infty$.
The corresponding asymmetry pattern function $\aaa$ is given by
\begin{equation*}
 \aaa(w)=\frac{8\al c}{3\sqrt3} \,\frac{w^2}{c^2+w^2}
\end{equation*}
for all $w\in[0,\infty)$, and, using \eqref{eq:r,xi,rho}, one can see that the corresponding reciprocating function $\r=\r_{\al,c}$ is given by an algebraic expression involving certain cubics. 
In particular, $\r_{\al,c}(x)\sim-x+\frac{8\al c}{3\sqrt3}$ as $|x|\to\infty$. 
Again, the parameters $\al$ and $c$ are, respectively, the shape (or, more specifically, asymmetry) and scale parameters. 
Alternatively, in this example as well, the shape/asymmetry parameter $\al$ can also be considered as a scale parameter, in the direction of the diagonal $\Delta$. 

\smallskip
\parbox{1.3in}{
\includegraphics[width=1.2in,height=1.2in]{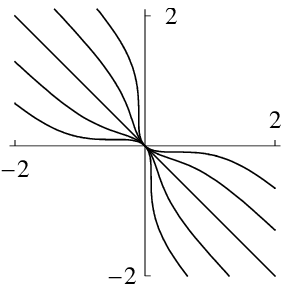}
}
\parbox{3.5in}{
Again, the parameters $\al$ and $c$ are, respectively, the shape (or, more specifically, asymmetry) and scale parameters. 
Alternatively, in this example as well, the shape/asymmetry parameter $\al$ can also be considered as a scale parameter, in the direction of the diagonal $\Delta$. 
Here on the left one can see parts of the graphs $\{\big(x,\r_{\al,1}(x)\big)\colon x\in[a_-,a_+]\}$ with $\al=-1,-\frac12,0,\frac12,1$. 
}

\medskip
\end{example}

\begin{constr}\label{constr:xi,rho}
Looking back at Proposition~\ref{prop:a}, one can see that yet another way to construct an arbitrary 
reciprocating function $\r$ as in Proposition~\ref{prop:r cont charact} is by using \eqref{eq:r,xi,rho} with arbitrary functions $\xi$ and $\rho$ that are continuously and strictly increasing on $[0,a_+-a_-)$ from $0$ to $a_+$ and $-a_-$, respectively \big(and also using condition \ref{prop:r cont charact}(II)(\ref{fin}''') to complete the construction of $\r$\big). In fact, the functions $\xi$ and $\rho$ defined by \eqref{eq:xi,rho} also satisfy the strict Lip(1) condition; still, even if $\xi$ or $\rho$ violates this Lip(1) restriction, the function $\r$ defined by \eqref{eq:r,xi,rho} will have all the characteristic properties \ref{prop:r cont charact}(II)(\ref{nonincr}'')--($\r\circ\r$).    
However, in this paper we shall not pursue this construction further.
\end{constr} 

Examples~\ref{ex:r-powers}, \ref{ex:a}, \ref{ex:aa} of parametric families of reciprocating functions already appear to represent a wide enough variety. 
Moreover, Constructions~1--4 given in this subsection appear sufficiently convenient and flexible for efficient modeling of  asymmetry patterns that may arise in statistical practice. 
In any case, each of these constructions -- of reciprocating functions for non-atomic distributions with connected support -- is quite universal. 
\big(For discrete distributions, it appears more convenient to model asymmetry patterns based on the characterization of the functions $x_\pm$ provided by Proposition~\ref{prop:x+-charact}.\big) 
In any such parametric or nonparametric model, the reciprocating function can be estimated in a standard manner, as follows: substituting the empirical distribution for the ``true'' unknown distribution $\mu$, one obtains empirical estimates of the function $G$ and hence empirical estimates of the functions $x_\pm$ and $\r$; then, if desired, the empirical estimate of $\r$ can be fit into an appropriate parametric family of reciprocating functions. 

\section{Proofs
} \label{sec:proofs}
In Subsection~\ref{proofs:props}) we shall prove the propositions stated in Section~\ref{discuss} and then, in Subsection~\ref{proofs:main}, 
the theorems and corollaries stated in Section~\ref{sec:results}. 

\subsection{Proofs of propositions} \label{proofs:props} 

\begin{proof}[Proof of Proposition~\ref{lem:left-cont}]
Implication $\Longleftarrow$ in \eqref{eq:L1} follows immediately from definition \eqref{eq:x+}, since $x_+(h)$ is a lower bound of the corresponding set. 
Implication $\Longrightarrow$ in \eqref{eq:L1} follows from \eqref{eq:x+} as well, taking also into account that, by \eqref{eq:G properties}, the function $G$ is non-decreasing on $[0,\infty]$ and right-continuous on $[0,\infty)$ .  Thus, one obtains \eqref{eq:L1}. Equivalence \eqref{eq:L1-} is proved similarly. 

Inequalities \eqref{eq:less+} and \eqref{eq:less-} follow immediately from \eqref{eq:L1} and \eqref{eq:L1-}. 
The first inequalities in \eqref{eq:bet+} and \eqref{eq:bet-} follow immediately from \eqref{eq:less+} and \eqref{eq:less-}, while the second ones are special cases of
\eqref{eq:L1} and \eqref{eq:L1-}, respectively. 

Next, let us prove \eqref{eq:nonat+} and \eqref{eq:nonat-}. Assume indeed that $0\le h_1<h_2$ and $x_+(h_1)=x_+(h_2)=x$. Then, by \eqref{eq:L1} and \eqref{eq:bet+}, $G(x)\ge h_2>h_1\ge G(x-)$, so that $x\,\mu(\{x\})>0$. 
This proves \eqref{eq:nonat+}. Quite similarly one proves \eqref{eq:nonat-}. 

Property \eqref{x+-non-decr} follows immediately from definitions \eqref{eq:x+} and \eqref{eq:x-}. 

Property \eqref{x+-finite} follows because $G(x)\to m$ as $|x|\to\infty$.

Since the functions $\pm x_\pm$ are nonnegative \big(by definitions \eqref{eq:x+} and \eqref{eq:x-}\big) and $G(0)=0$, property \eqref{x+-pos} follows by \eqref{eq:L1} and \eqref{eq:L1-}, which imply that $G(x_\pm(h))\ge h>0$ for all $h\in(0,m]$.

Finally, let us now prove property \eqref{x+-left-cont}. Take any 
$h_0\in(0,m]$ and let $x_0:=x_+(h_0)$. Then, by property \eqref{x+-pos}, one has $x_0>0$. 
Next, take any $x\in(0,x_0)$. Then, by \eqref{eq:less+} and \eqref{eq:bet+},  $G(x)<h_0\le G(x_0)$ and, by \eqref{eq:L1}, one has $x_+(h)\in(x,x_0]$ for all $h$ in the interval $(G(x),G(x_0)]$ and hence for all $h$ in the nonempty subinterval $(G(x),h_0]$ of $(G(x),G(x_0)]$. This implies that $x_+$ is left-continuous on $(0,m]$; similarly, $-x_-$ is so. 
\end{proof}

\begin{proof}[Proof of Proposition~\ref{prop:lex}]
If $0\le x_1<x_2\le\infty$ and $0\le u_1,u_2\le1$ then $\tG(x_1,u_1)\le G(x_1)\le G(x_2-)\le\tG(x_2,u_2)$; and if $0\le x\le\infty$ and $0\le u_1<u_2\le1$ then  $\tG(x,u_1)\le\tG(x,u_2)$, by \eqref{eq:H}. This shows that the function $\tG$ is indeed $\prec$-nondecreasing on $[0,\infty]\times[0,1]$. 
Similarly it is shown that $\tG$ is $\prec$-nondecreasing on
$[-\infty,0]\times[0,1]$.
\end{proof}

\begin{proof}[Proof of Proposition~\ref{lem:H}]
Identity \eqref{eq:H-} follows from \eqref{eq:H+} by substituting $-X$ for $X$. 
So, it remains to prove that $H_+(h)=h$ for all $h\in[0,m]$. Fix any $h\in[0,m]$ and write 
$$H_+(h)=\E X\ii{(X,U)\in M},\quad\text{where}\quad
M:=\{(x,u)\in(0,\infty)\times[0,1]\colon\tG(x,u)\le h\}.$$
Introduce also
$$x_h:=\sup\{z\in[0,\infty)\colon G(z-)\le h\}$$
and consider the following cases.

\emph{Case 1: $x_h=\infty$.}\quad Then $m=G(\infty)=G(\infty-)\le h\le m$, whence $h=m$, and so, $G(z)\le h$ and $\tG(z,u)\le h$ for all $z\in[0,\infty)$ and $u\in[0,1]$. That is, $M=(0,\infty)\times[0,1]$ and thus $H_+(h)=\E X\ii{X>0}=m=h$, so that one does have \eqref{eq:H+} in Case~1. 

\emph{Case 2: $x_h<\infty$.}\quad Then $x_h=\max\{z\in[0,\infty)\colon G(z-)\le h\}$ \big(because the function $z\mapsto G(z-)$ is left-continuous on $(0,\infty)$\big). So,
$G(x_h-)\le h<G(z-)$ for all $z>x_h$, whence $G(x_h-)\le h\le G(x_h)$. 
Now one has to distinguish the following two subcases.

\emph{\quad Subcase 2.1: $G(x_h-)=G(x_h)$.}\quad Then $G(x_h-)=h=G(x_h)$ and $M=(0,x_h]\times[0,1]$ \big(because 
(i) $\tG(z,u)\le G(z)\le G(x_h-)=h$ for all $(z,u)\in(0,x_h)\times[0,1]$, 
(ii) $\tG(x_h,u)=h$ for all $u\in[0,1]$, and
(iii) $\tG(z,u)\ge G(z-)>h$ for all $(z,u)\in(x_h,\infty)\times[0,1]$\big). 
It follows that $H_+(h)=\E X\ii{0<X\le x_h}=G(x_h)=h$, whence $H_+(h)=h$ in Subcase~2.1.

\emph{\quad Subcase 2.2: $G(x_h-)<G(x_h)$.}\quad Then
$u_h:=(h-G(x_h-))/(G(x_h)-G(x_h-))\in[0,1]$ and $\tG(x_h,u_h)=h$. Also, reasoning as in Subcase~2.1, here one can see that 
$M=\big((0,x_h)\times[0,1]\big)\,\cup\,\big(\{x_h\}\times[0,u_h]\big)$. 
It follows that 
\begin{align*}
H_+(h)&=\E X\ii{0<X<x_h}+x_h\P(X=x_h)\,\P(U\in[0,u_h])\\
&=G(x_h-)+(G(x_h)-G(x_h-))\,u_h
=\tG(x_h,u_h)=h,
\end{align*}
whence $H_+(h)=h$ in Subcase~2.2 as well.
\end{proof}

\begin{proof}[Proof of Proposition~\ref{prop:cont}]
By Proposition~\ref{lem:H}, for all $h\in[0,m]$, one has 
$0=H_+(h)-H_+(h-)=\E X\ii{X>0,\ \tG(X,U)=h}$, whence $\P(X>0,\ \tG(X,U)=h)=0$; similarly, $\P(X<0,\tG(X,U)=h)=0$; note also that $\tG(X,U)\in[0,m]$ a.s. 
\end{proof}

\begin{proof}[Proof of Proposition~\ref{prop:Y}]
This follows immediately from Proposition~\ref{lem:H}, since, by \eqref{eq:P(Y)}, $\P\big(\tG(Y_\pm,U)\le h\big)=\frac1m\,H_\pm(h)$ and $\P\big(\tG(Y,U)\le h\big)=\frac1{2m}\,\big(H_+(h)+H_-(h)\big)$ for all $h\in[0,m]$.
\end{proof}

\begin{proof}[Proof of Proposition~\ref{prop:finite}]
This follows from Proposition~\ref{prop:cont}. Indeed, in view of property~\eqref{x+-finite} in Proposition~\ref{lem:left-cont} and definition \eqref{eq:r}, 
the event $\{|\r(X,U)|=\infty\}$ is contained in the event $\{X\ne0,\ \tG(X,U)=m\}$. 
\end{proof}

\begin{proof}[Proof of Proposition~\ref{prop:supp}]
For any $h\in[0,m]$, it follows from 
\eqref{eq:L1} that $G(x)<h\le G(x_+(h))$ and hence $\P(X\in(x,x_+(h)])>0$ for all $x\in[0,x_+(h))$, so that $x_+(h)\in\supp X$ provided that $x_+(h)\in(0,\infty)$. 
Similarly, $x_-(h)\in\supp X$ whenever $x_-(h)\in(-\infty,0)$.  
So, for any $u\in(0,1)$ and $x\in\R$ one has $\r(x,u)\in\supp X$ whenever $\r(x,u)\in\R\setminus\{0\}$. 

Next, if $\r(x,u)=0$ for some $x\in\R\setminus\{0\}$ and $u\in(0,1)$, then $\tG(x,u)=0$ \big(by \eqref{eq:r} and property~\eqref{x+-pos} of 
Proposition~\ref{lem:left-cont}\big).
So, by Proposition~\ref{prop:cont}, $\P(X\ne0,\ \r(X,U)=0)=0$.

It remains to recall Proposition~\ref{prop:finite}.
\end{proof}

\begin{proof}[Proof of Proposition~\ref{lem:hat}]\ 

\textbf{(i)} For $x\in[0,\infty]$, one has $G(x)\ge\tG(x,u)=h$, so that $\hat x(x,u)=x_+(h)\le x$, by \eqref{eq:x+}; also, \eqref{eq:hat} clearly implies that here $\hat x(x,u)\ge0$. This proves part~(i) of the proposition; part~(iii) is quite similar.

\textbf{(ii)} Assume that $0\le\hat x(x,u)<x$. 

(a):\ Note that $h=\tG(x,u)\le m<\infty$. 
Take any $h_1\in(h,\infty)$ and then any 
$z\in[0,\infty]$ such that $G(z)\ge h_1$ (if such a point $z$ exists). Then $G(z)>h=\tG(x,u)\ge G(x-)$; since $G$ is nondecreasing on $[0,\infty]$, it follows that $z\ge x$. 
That is, $z\ge x$ for all $z\in[0,\infty]$ such that $G(z)\ge h_1$. So, by \eqref{eq:x+}, $x_+(h_1)\le x$ for all $h_1\in(h,\infty)$ and hence 
$x_+(h+)\ge x>\hat x(x,u)=x_+(h)$. This verifies condition~(a) of part~(ii). 

(b):\ Using the monotonicity of $G$, condition $0\le\hat x<x$, \eqref{eq:hat}, \eqref{eq:L1}, and \eqref{eq:H}, one has $G(x-)\ge G(\hat x)\ge\tG(x,u)\ge G(x-)$. Now condition~(b) of part~(ii) follows. 

(c):\ By just checked condition~(b), $G(\hat x)=G(x-)$. Now condition~(c) follows by \eqref{eq:G(x)}. 

(d), (e):\ Again by condition~(b), $\tG(x,u)=G(x-)$. So, if $u\ne0$ then, by \eqref{eq:H}, $G(x)=G(x-)$, whence condition~(e) follows by (b). 
In turn, condition~(d) follows from (e). 

(f):\ Assume that $u\ne0$ and $x=x_+(h_1)$ for some $h_1\in[0,m]$. On the other hand, by \eqref{eq:hat}, $\hat x=x_+(h)$. So, the condition $\hat x<x$ means that $x_+(h)<x_+(h_1)$, whence $h<h_1$, by property (i) of Proposition~\ref{lem:left-cont}. Also, $x=x_+(h_1)$ implies $G(x)\ge h_1$, by \eqref{eq:L1}. So, $G(x)\ge h_1>h$. This contradicts condition~(e) and thereby verifies condition~(f). 

Thus, part~(ii) of the proposition is proved; part~(iv) is quite similar.

\textbf{(v)} This part follows immediately from parts (i), (ii)(f), (iii), and (iv)(f).
\end{proof}

\begin{proof}[Proof of Proposition~\ref{lem:regul}]
According to parts (ii)(a) and (iv)(a) of Proposition~\ref{lem:hat}, event
$\{\hat x(X,U)\ne X\}$ is contained in event  
$\{X\ne0,\ \tG(X,U)\in D\}$, where 
$D$ stands for the set of all points in $\R$ at which at least one of the monotonic functions $x_+$ or $x_-$ is discontinuous. Since the set $D$ is at most countable, 
$\P\big(\hat x(X,U)\ne X\big)
\le\sum_{h\in D}\P\big(X\ne0,\ \tG(X,U)=h\big)=0,$ by Proposition~\ref{prop:cont}. 
\end{proof}

\begin{proof}[Proof of Proposition~\ref{prop:symm}]
Implications 
\eqref{symm1}$\Rightarrow$
\eqref{symm2}$\Leftrightarrow$
\eqref{symm3}$\Rightarrow$
\eqref{symm4}
follow straight from the corresponding definitions.
Implication 
\eqref{symm4}$\Rightarrow$
\eqref{symm5} 
follows by Proposition~\ref{lem:regul}. 
Implication 
\eqref{symm2}$\Rightarrow$
\eqref{symm1} 
follows by the identity 
\begin{equation*}
	\P(X\in A)=\int_A\tfrac1x\,\d G(x)
\end{equation*}
for all $A\in\B(\R\setminus\{0\})$, which in turn follows from definition \eqref{eq:G(x)}. 
It remains to prove implication \eqref{symm5}$\Rightarrow$
\eqref{symm3}. Toward this end, assume \eqref{symm5} and 
observe the equivalence  $$x_-\big(\tG(x,u)\big)\ne-x_+\big(\tG(x,u)\big) \iff \hat x(x,u)\ne x$$
for all $(x,u)\in\R\times[0,1]$ such that $\r(x,u)=-x$. Therefore and by Proposition~\ref{prop:Y}, \eqref{eq:P(Y)}, and Proposition~\ref{lem:regul}, 
\begin{align*}
\frac1m\int_0^m\ii{x_-(h)\ne -x_+(h)}\,\d h
&=
\P\Big(x_-\big(\tG(Y,U)\big)\ne-x_+\big(\tG(Y,U)\big)\Big) \\ 
&=\P\big(\hat x(Y,U)\ne X\big)
=\frac1{2m}\E|X|\ii{\hat x(X,U)\ne X}=0,	
\end{align*}
so that $x_-=-x_+$ almost everywhere on $[0,m]$ (with respect to the Lebesgue measure) and hence on an everywhere dense subset of $[0,m]$. Now it remains to recall property~\eqref{x+-left-cont} in Proposition~\ref{lem:left-cont}, taking also into account that $x_\pm(0)=0$. 
\end{proof}

\begin{proof}[Proof of Proposition~\ref{lem:hat=rr}]
First of all, $\mathsf{v}$ is Borel by part~\eqref{Borel} of Remark~\ref{rem:cont}.
Next, in the case $x\ge0$, one has 
$y=\r(x,u)=x_-(h)$
and, by \eqref{eq:bet-},  $G(y+)\le h\le G(y)$; so, $v\in[0,1]$ and $h=\tG(y,v)$, whence $\r\big(\r(x,u),v\big)=\r(y,v)=x_+(h)=\hat x(x,u)$;
that is, \eqref{eq:hat=rr} follows in the case $x\ge0$; the case $x\le0$ is quite similar.
\end{proof}

\begin{proof}[Proof of Proposition~\ref{prop:recip}] This follows immediately from Propositions~\ref{lem:hat=rr} and \ref{lem:regul}, on letting $V:=\mathsf{v}(X,U)$.
\end{proof}

\begin{proof}[Proof of Proposition~\ref{prop:main}]
By monotone convergence, without loss of generality (w.l.o.g.) let us assume that the function $g$ is bounded. Now, in view of \eqref{eq:Xab} and the independence of $X$ and $U$, observe that the difference between the left-hand side and the right-hand side of \eqref{eq:main} equals $\E X\psi\big(X,\r(X,U)\big)$, where
$$\psi(x,y):=\frac{g(x,r;x,r)-g(r,x;r,x)}{x-r}\,\ii{xr\le0,x\ne r}$$
for all real $x$ and $r$, so that $\psi(x,r)$ is understood as $0$ if $x=r$. The function $\psi$ is symmetric, and the expression $|x\psi(x,r)|\le|g(x,r;x,r)-g(r,x;r,x)|$ is bounded over all real $x$ and $r$. It remains to refer to Proposition~\ref{prop:cond-mean0}, proved later in this paper. 
\end{proof}

\begin{proof}[Proof of Proposition~\ref{prop:g1g2}] This follows immediately from Proposition~\ref{prop:main} and \eqref{eq:XR<0 or X=R=0}.
\end{proof}

To prove Proposition~\ref{prop:cond-mean0}, we shall use some notation and two lemmas, as follows. 

For all real $a$ and $b$, let
\begin{align}
e_1(a,b)&:=e_{1,X}(a,b):=\E X\ii{X<a,\ \r(X,U)>b} \quad\text{and}\label{eq:e1}\\
e_2(a,b)&:=e_{2,X}(a,b):=\E X\ii{\r(X,U)<a,\ X>b}.\label{eq:e2} 
\end{align}

\begin{lemma}\label{lem:e1}
For all real $a$ and $b$ such that $a\le0\le b$,
\begin{equation*}
	e_1(a,b)=-m+G(a)\vee G(b).
\end{equation*}\end{lemma}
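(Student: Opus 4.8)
My plan is to first replace the event $\{X<a,\ \r(X,U)>b\}$ by an event involving only $\tG(X,U)$, and then deduce everything from Proposition~\ref{lem:H}. Since $a\le0\le b$, on $\{X<a\}$ one has $X<a\le0$, hence $X<0$, so $\r(X,U)=x_+\big(\tG(X,U)\big)$ by definition \eqref{eq:r}. Applying \eqref{eq:L1} with $x:=b\ge0$ yields, for every $h\in[0,m]$, the equivalence $x_+(h)>b\iff h>G(b)$; and since $0\le\tG(x,u)\le G(x)\le G(-\infty)=m$ for all $x\le0$ and $u\in[0,1]$ (by \eqref{eq:H}, \eqref{eq:G properties}, \eqref{eq:m}), this applies with $h=\tG(X,U)$ on $\{X<0\}$. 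Thus, as events, $\{X<a,\ \r(X,U)>b\}=\{X<a,\ \tG(X,U)>G(b)\}$, so by \eqref{eq:e1}, $e_1(a,b)=\E X\,\ii{X<a,\ \tG(X,U)>G(b)}$.

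Next I would record two elementary facts coming from the monotonicity of $G$ on $[-\infty,0]$ (\eqref{eq:G properties}) together with \eqref{eq:H}: (i) on $\{a\le X<0\}$, $\tG(X,U)\le G(X)\le G(a)$; and (ii) on $\{X<a\}$, picking any $y\in(X,a)$, $\tG(X,U)\ge G(X+)\ge G(y)\ge G(a)$. A handy consequence of (ii) is that $\{X<a,\ \tG(X,U)\le G(b)\}=\emptyset$ as soon as $G(b)<G(a)$; this strict emptiness is what lets the argument avoid any appeal to Proposition~\ref{prop:cont}.

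The computation then runs as follows. Write $\ii{X<a}=\ii{X<0}-\ii{a\le X<0}$ in the formula for $e_1(a,b)$. For the $\{X<0\}$ part, complement within $\{X<0\}$: by \eqref{eq:m} and \eqref{eq:G(x)}, $\E X\,\ii{X<0}=-\E(-X)\ii{X<0}=-G(-\infty)=-m$, while $\E X\,\ii{X<0,\ \tG(X,U)\le G(b)}=-H_-(G(b))=-G(b)$ by Proposition~\ref{lem:H} (note $G(b)\in[0,m]$), so this part equals $-m+G(b)$. For the $\{a\le X<0\}$ part, $\E X\,\ii{a\le X<0}=-G(a)$ again by \eqref{eq:G(x)}; if $G(b)\ge G(a)$ then by (i) the set $\{a\le X<0,\ \tG(X,U)\le G(b)\}$ equals $\{a\le X<0\}$ and the part is $0$, whereas if $G(b)<G(a)$ one splits off $\{X<a\}$, uses the emptiness from (ii) together with $H_-(G(b))=G(b)$, and gets $-G(a)+G(b)$; in either case the $\{a\le X<0\}$ part equals $G(b)-G(a)\vee G(b)$. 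Subtracting, $e_1(a,b)=(-m+G(b))-(G(b)-G(a)\vee G(b))=-m+G(a)\vee G(b)$.

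The main (and really the only) point needing care is the reduction in the first paragraph: one must make sure that passing from $\r(X,U)>b$ to $\tG(X,U)>G(b)$ is a genuine identity of events, which is why it matters that $\tG(X,U)\in[0,m]$ surely on $\{X<0\}$ and that \eqref{eq:L1} is an exact equivalence (covering also the degenerate case $\r(X,U)=\infty$). Everything afterwards is bookkeeping: there are no integrability concerns since every quantity in sight lies in $[0,m]$, and the two cases $G(b)\ge G(a)$ versus $G(b)<G(a)$ are precisely what manufactures the maximum $G(a)\vee G(b)$ in the conclusion.
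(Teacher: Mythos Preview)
Your proof is correct and uses the same key ingredients as the paper's proof: the equivalence from \eqref{eq:L1} to pass between $\r(X,U)>b$ and $\tG(X,U)>G(b)$, the monotonicity of $G$ on $[-\infty,0]$, and Proposition~\ref{lem:H}. The only difference is organizational: the paper splits into the two cases $G(a)>G(b)$ and $G(a)\le G(b)$ at the outset (in the first case showing directly that $X<a$ forces $\r(X,U)>b$, in the second reducing to the $\tG$-event), whereas you perform the reduction $\{X<a,\ \r(X,U)>b\}=\{X<a,\ \tG(X,U)>G(b)\}$ unconditionally and postpone the case split to the computation of the correction term $\E X\,\ii{a\le X<0,\ \tG(X,U)>G(b)}$.
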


\begin{proof}
Let us consider the following two cases.

\emph{Case 1: $G(a)>G(b)$.}\quad Then for all $x\in\R$ and $u\in(0,1)$
\begin{align}
x<a&\implies x<0\ \&\ \tG(x,u)\ge G(x+)\ge G(a)>G(b)\notag\\
	&\implies x<0\ \&\  x_+(\tG(x,u))>b\label{eq:by L1}\\
	&\implies \r(x,u)>b,\notag
\end{align}
where implication \eqref{eq:by L1} follows from \eqref{eq:L1}. 
So, in this case
\begin{align*}
e_1(a,b)=\E X\ii{X<a}&=\E X\ii{X<0}-\E X\ii{X\in[a,0)}\\
&=-m+G(a)=-m+G(a)\vee G(b).
\end{align*}

\emph{Case 2: $G(a)\le G(b)$.}\quad Then for all $x\in\R$ and $u\in(0,1)$
\begin{align}
x<0\ \&\ \r(x,u)>b &\iff x<0\ \&\ x_+(\tG(x,u))>b\notag\\
&\iff x<0\ \&\ \tG(x,u))>G(b)\label{eq:again by L1}\\
&\implies x<0\ \&\ G(x)\ge\tG(x,u)>G(b)\ge G(a)\notag\\
&\implies x<a,\notag
\end{align}
where equivalence \eqref{eq:again by L1} follows from \eqref{eq:L1}. 
Also, $x<a$ implies $x<0$, since $a\le0$.
So, in Case~2 event $\{X<a,\ \r(X,U)>b\}$ coincides with $\{X<0,\ \r(X,U)>b\}$ and hence, by \eqref{eq:again by L1}, with $\{X<0,\ \tG(X,U)>G(b)\}$. So,
\begin{align*}
e_1(a,b)&=\E X\ii{X<0,\ \tG(X,U)>G(b)}\\
&=\E X\ii{X<0}-\E X\ii{X<0,\ \tG(X,U)\le G(b)}\\
&=-m+G(b)=-m+G(a)\vee G(b),
\end{align*}
where the third equality follows by \eqref{eq:H-}.
\end{proof}

\begin{lemma}\label{lem:e}
For all real $a$ and $b$,
\begin{equation*}
	\E X\ii{X<a,\r(X,U)>b}+\E X\ii{\r(X,U)<a,X>b}=0.
\end{equation*}
\end{lemma}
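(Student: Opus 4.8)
The plan is to reduce Lemma~\ref{lem:e} to Lemma~\ref{lem:e1}, using two devices: the reflection $X\mapsto-X$, and the almost-sure sign relation between $X$ and $\r(X,U)$.

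\textbf{Reflection.} First I would record that, writing $\tX:=-X$, one has $G_{\tX}(x)=G_X(-x)$, $x_{\pm,\tX}=-x_{\mp,X}$, $\tG_{\tX}(x,u)=\tG_X(-x,u)$, and hence $\r_{\tX}(\tX,U)=-\r_X(X,U)$ almost surely, while $m_{\tX}=m$. Consequently, writing $e_{1,Z}(\alpha,\beta):=\E Z\ii{Z<\alpha,\ \r_Z(Z,U)>\beta}$ for a zero-mean r.v.\ $Z$, a change of the dummy variable gives
\[
	e_{2,X}(a,b)=\E X\ii{\r_X(X,U)<a,\ X>b}=-e_{1,-X}(-b,-a).
\]
Thus Lemma~\ref{lem:e} is equivalent to the single identity $e_{1,X}(a,b)=e_{1,-X}(-b,-a)$ for all real $a,b$.

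\textbf{Sign reduction.} Next I would use that, by construction \eqref{eq:r} (so $\r(x,u)\le0$ for $x\ge0$ and $\r(x,u)\ge0$ for $x\le0$) together with Proposition~\ref{prop:supp} (so $\r(X,U)\ne0$ a.s.\ on $\{X\ne0\}$), one has $\sign\r(X,U)=-\sign X$ almost surely. Hence, almost surely,
\[
	\ii{X<a,\ \r(X,U)>b}=\ii{X<a\wedge0,\ \r(X,U)>b\vee0}+\ii{0<X<a,\ \r(X,U)>b},
\]
the last indicator vanishing unless $a>0>b$ (on $\{X<0\}$ one has $\r(X,U)>0$ a.s., and on $\{X>0\}$ one has $\r(X,U)<0$ a.s., which kills the two cross terms). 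Therefore $e_1(a,b)=e_1(a\wedge0,b\vee0)+\E X\ii{0<X<a,\ \r(X,U)>b}$, and, by the reflection applied to $e_2$, also $e_2(a,b)=e_2(a\wedge0,b\vee0)+\E X\ii{b<X<0,\ \r(X,U)<a}$. Since $a\wedge0\le0\le b\vee0$, Lemma~\ref{lem:e1} together with $e_{2,X}(a\wedge0,b\vee0)=-e_{1,-X}(-b\vee0,-a\wedge0)$ gives $e_1(a\wedge0,b\vee0)+e_2(a\wedge0,b\vee0)=0$. So the task reduces to the ``box identity''
\[
	\E X\ii{0<X<a,\ \r(X,U)>b}+\E X\ii{b<X<0,\ \r(X,U)<a}=0\qquad(a>0>b).
\]

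\textbf{The box identity.} For the first term, on $\{X>0\}$ one has $\r(X,U)=x_-(\tG(X,U))$, and by \eqref{eq:L1-} the event $\{\r(X,U)>b\}$ agrees, up to an event on which $x_-(\tG(X,U))=b$, with $\{\tG(X,U)\le G(b)\}$; that exceptional event forces $\tG(X,U)$ into an interval not containing $0$, hence is null by Proposition~\ref{prop:cont}. Then
\[
	\E X\ii{0<X<a,\ \tG(X,U)\le G(b)}=\E X\ii{X>0,\ \tG(X,U)\le G(b)}-\E X\ii{X\ge a,\ \tG(X,U)\le G(b)},
\]
where the first term is $G(b)$ by \eqref{eq:H+} (note $G(b)\in[0,m]$ since $b<0$) and the second is pinned down using $\tG(X,U)\ge G(X-)\ge G(a-)$ on $\{X\ge a\}$. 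The second term of the box identity is handled symmetrically, using $\r(X,U)=x_+(\tG(X,U))$ on $\{X<0\}$, \eqref{eq:L1}, and \eqref{eq:H-}. Comparing the two closed forms (distinguishing whether $G(a-)>G(b)$ or not, and likewise on the other side) yields the required cancellation, completing the proof.

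\textbf{Main obstacle.} The substantive content is simply Lemma~\ref{lem:e1} plus the reflection; the real labour is in the box identity, precisely the boundary bookkeeping --- separating $\{\r(X,U)>b\}$ from $\{\r(X,U)\ge b\}$ and coping with possible atoms of $X$ at the endpoints $a$, $b$. This is where Proposition~\ref{prop:cont} (the distribution of $\tG(X,U)$ has no atom off $0$) and the observation $\{\tG(X,U)=0\}\subseteq\{X=0\}$ a.s.\ do the work, shrinking all exceptional sets to $\mu$-null or zero-contribution sets.
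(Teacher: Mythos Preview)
Your overall strategy---the reflection $e_{2,X}(a,b)=-e_{1,-X}(-b,-a)$, then the a.s.\ sign relation $X\r(X,U)<0$ on $\{X\ne0\}$ to reduce to the quadrant $a\le0\le b$ (covered by Lemma~\ref{lem:e1}) plus a residual ``box identity'' when $a>0>b$---is correct and closely parallels the paper's proof; the paper likewise hinges on reflection and the sign structure, but handles the off-quadrant cases by successive reduction to earlier cases rather than by one direct computation.

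The genuine gap is in your box-identity step. You claim that on $\{X>0\}$ the exceptional event $\{x_-(\tG(X,U))=b\}$ is null by Proposition~\ref{prop:cont}; this fails whenever $\mu(\{b\})>0$. Indeed $\{h:x_-(h)=b\}=(G(b+),G(b)]$ is then a nondegenerate interval, and Proposition~\ref{prop:cont}---which concerns single values of $h$---does not apply; by \eqref{eq:H+} one has $\E X\ii{X>0,\ G(b+)<\tG(X,U)\le G(b)}=G(b)-G(b+)=|b|\,\mu(\{b\})>0$. Concretely, for $\mu=\tfrac12\de_{-1}+\tfrac12\de_1$, $a=2$, $b=-1$, your recipe gives $G(a-)\wedge G(b)=\tfrac12$ for the first box term, whereas the true value $\E X\ii{0<X<2,\ \r(X,U)>-1}$ is $0$; so the two closed forms do not cancel. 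The repair is easy: from \eqref{eq:L1-} one has the \emph{exact} equivalence $x_-(h)>b\iff h\le G(b+)$ (and symmetrically $x_+(h)<a\iff h\le G(a-)$ from \eqref{eq:L1}), so no exceptional set arises on the $\r$-side. Combine this with the identifications $\{0<X<a\}=\{X>0,\ \tG(X,U)\le G(a-)\}$ and $\{b<X<0\}=\{X<0,\ \tG(X,U)\le G(b+)\}$, which hold up to null sets because the discrepancies lie in $\{X\ne0,\ \tG(X,U)=\text{const}\}$ (here Proposition~\ref{prop:cont} \emph{is} the right tool). Then \eqref{eq:H+}--\eqref{eq:H-} give the two box terms as $\pm\big(G(a-)\wedge G(b+)\big)$, and the identity follows.
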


\begin{proof}
We have to prove that 
$e_1+e_2=0$ on $\R^2$, where $e_1$ and $e_2$ are given by \eqref{eq:e1} and \eqref{eq:e2}. 
Observe that 
\begin{align}
G_{-X}(x)&=G_X(-x);\label{eq:G-}\\
\r_{-X}(x,u)&=-\r_X(-x,u);\notag
\\
e_{1,-X}(x,y)&=-e_{2,X}(-y,-x)\label{eq:e-}
\end{align}
for all real $x$ and $y$ and $u\in(0,1)$.
Let us now consider the four possible cases.

\emph{Case 1: $a\le0\le b$.}\quad Then, by \eqref{eq:e-}, Lemma~\ref{lem:e1}, and \eqref{eq:G-},
\begin{align*}
e_2(a,b)=e_{2,X}(a,b)=-e_{1,-X}(-b,-a)
&=m-G_{-X}(-b)\vee G_{-X}(-a)\\
&=m-G_X(b)\vee G_X(a)=-e_1(a,b),
\end{align*}
again by Lemma~\ref{lem:e1}. So, $e_1(a,b)+e_2(a,b)=0$ in Case~1.

\emph{Case 2: $a>0$ and $b\ge0$.}\quad Then
\begin{equation}
e_1(a,b)=e_1(0,b),\label{eq:e1a0}	
\end{equation}
since the inequalities $\r(X,U)>b$ and $b\ge0$ imply that $\r(X,U)>0$ and hence $X<0$, so that $X<a$. 

Next, in view of condition $b\ge0$ and Proposition~\ref{prop:supp}, one has $\P\big(X>b,\ \r(X,U)=0\big)=0$. 
This implies $e_2(0+,b)=e_2(0,b)$, whence
$e_2(a,b)=e_2(0+,b)=e_2(0,b)=-e_1(0,b)$; the third equality here follows by Case~1. Now, in view of \eqref{eq:e1a0}, one concludes that $e_1(a,b)+e_2(a,b)=0$ in Case 2 as well.

\emph{Case 3: $a\le0$ and $b<0$.}\quad This case follows from Case~2 by \eqref{eq:e-}. 

\emph{Case 4: $b<0<a$.}\quad In this case, taking into account the inequality $X\,\r(X,U)\le0$, one has 
\begin{align*}
e_1(a,b)&=e_2(b+,a-)-e_2(0+,a-)-e_2(b+,0-), \\
e_2(a,b)&=e_1(b+,a-)-e_1(0+,a-)-e_1(b+,0-). 
\end{align*}
By adding these two equalities, one obtains Case~4 from the already considered Cases~1, 2, 3. 
\end{proof}

\begin{proof}[Proof of Proposition~\ref{prop:cond-mean0}]
We have to show that $T(\psi)=0$, where 
$$T(\psi):=\E X\psi\big(X,\r(X,U)\big).$$ 
In view of the identity $\psi=\max(0,\psi)-\max(0,-\psi)$, let us assume w.l.o.g.\ that $\psi\ge0$ on $\R$. Then, by the symmetry of $\psi$, one has the identity $\psi(x,y)=\frac12\int_0^\infty\psi_{A_t}(x,y)\,\d t$ for all real $x$ and $y$, where $A_t:=\{(x,y)\in\R^2\colon\psi(x,y)\ge t\}$ and 
$$\psi_A(x,y):=\ii{(x,y)\in A}+\ii{(y,x)\in A}$$
for all real $x$ and $y$ and all 
$A\in\B(\R^2)$. 
Hence, by Fubini's theorem, it is enough to show that the finite signed measure $\tau$ defined by the formula $\tau(A):=T(\psi_A)$ for $A\in\B(\R^2)$ is zero. 
So, it is enough to show that $\tau(A)=0$ for the sets $A$ of the form $(-\infty,a)\times(b,\infty)$, for all real $a$ and $b$, since the set of all such sets generates the entire $\sigma$-algebra $\B(\R^2)$. 
Now it remains to refer to Lemma~\ref{lem:e}.
\end{proof}

\begin{proof}[Proof of Proposition~\ref{prop:Er/x}]
From Theorem~\ref{th:main} and \eqref{eq:ER/X}, it follows that (i) $\E\frac{\r(X,U)}X\le-1$ always and (ii) $\E\frac{\r(X,U)}X=-1$ iff $\r(X,U)+X=0$ a.s. It remains to use the equivalence \eqref{symm5}$\Leftrightarrow$\eqref{symm1} of Proposition~\ref{prop:symm}. 
\end{proof}

\begin{proof}[Proof of Proposition~\ref{prop:Y,r(Y)}] 
Let $\psi\colon\R^2\to\R$ be any nonnegative Borel function. 
By \eqref{eq:Ef(Y)}, 
\begin{align}
	\E\psi\big(Y_\pm,\r(Y_\pm,U)\big)
	&=\pm\frac1m\,\E X^\pm\psi\big(X,\r(X,U)\big)  
		\quad\text{ and }\quad \notag\\
		\E\psi\big(Y,\r(Y,U)\big)
	&=\frac1{2m}\,\E|X|E\psi\big(X,\r(X,U)\big)\notag\\
	&=\tfrac12\,\E\psi\big(Y_+,\r(Y_+,U)\big) 
	+\tfrac12\,\E\psi\big(Y_-,\r(Y_-,U)\big). \label{eq:Y,Y+-}
\end{align}
Letting now $g_1(x,r;\tx,\tr)\equiv|x|\psi(x,r)$  and $g_2(x,r;\tx,\tr)\equiv|x|\psi(r,x)$ 
in Proposition~\ref{prop:g1g2}, one has 
$\E\psi\big(Y,\r(Y,U)\big)=\E\psi\big(\r(Y,U),Y\big)$, which proves \eqref{eq:Yinterchange}. 

The first equality in \eqref{eq:Y+-interchange} is proved similarly, with $g_1(x,r;\tx,\tr)\equiv x^+\psi(x,r)$  and $g_2(x,r;\tx,\tr)\equiv-x^-\psi(r,x)$.  

To prove the second equality in \eqref{eq:Y+-interchange}, 
let $H:=\tG(Y_-,U)$; then, by Proposition~\ref{prop:Y}, the r.v.\ $H$ is indeed uniformly distributed in $[0,m]$. 
Recall also that $Y_-\le0$ a.s. Therefore and in view of \eqref{eq:hat} and Proposition~\ref{lem:regul}, $x_-(H)=x_-\big(\tG(Y_-,U)\big)=\hat x(Y_-,U)\as Y_-$. On the other hand, by \eqref{eq:r}, $x_+(H)=x_+\big(\tG(Y_-,U)\big)=\r(Y_-,U)$. Hence, $\big(\r(Y_-,U),Y_-\big)\as\big(x_+(H),x_-(H)\big)$. 

The second and third equalities in \eqref{eq:Ysets} follow immediately from \eqref{eq:Y+-interchange}. In turn, these two  equalities imply the first equality in \eqref{eq:Ysets}, in view of \eqref{eq:Y,Y+-} (used with symmetric $\psi$).    

The rest of Proposition~\ref{prop:Y,r(Y)} follows immediately from \eqref{eq:Yinterchange} and \eqref{eq:Y+-interchange}, except for the ``except when'' statement in the parentheses. To prove this latter statement, note that, in view of the inequality $\frac ab+\frac ba<-2$ for all real $a$ and $b$ with $ab<0$ and $a\ne-b$, the equality  
$\E\frac{\r(Y,U)}Y=\E\frac Y{\r(Y,U)}$ implies that 
$\E\frac{\r(Y,U)}Y=\E\frac Y{\r(Y,U)}=\frac12\big(\E\frac{\r(Y,U)}Y+\E\frac Y{\r(Y,U)}\big)<-1$ unless $\r(Y,U)=-Y$ a.s. It remains now to refer to 
\eqref{eq:P(Y)} and Proposition~\ref{prop:symm}. 
\end{proof}

\begin{proof}[Proof 1 of Proposition~\ref{prop:mix}]
W.l.o.g.\ the function $g$ is bounded (by monotone convergence) and nonnegative \big(by the identity $g=\max(0,g)-\max(0,-g)$\big). Write $g(x)=\int_0^\infty g_{A_t}(x)\,\d t$ for all $x\in\R$, where $g_A(x):=\ii{x\in A}$ and $A_t:=\{x\in\R\colon g(x)\ge t\}$. So, by Fubini's theorem, w.l.o.g.\ $g=g_A$ for some  $A\in\B(\R\setminus\{0\})$.  Let then $\la(A)$ and $\rho(A)$ denote, respectively, the left-hand side \big(say $L(g)$\big) and the right-hand side \big(say $R(g)$\big) of \eqref{eq:Eg(X) other} with $g=g_A$. It remains to show that the measures $\la$ and $\rho$ coincide on $\B(\R\setminus\{0\})$. Since the sets $A$ of the form $(-\infty,-b)$ or $(b,\infty)$ for some $b>0$ generate the  $\sigma$-algebra $\B(\R\setminus\{0\})$, it suffices to show that $L\big(g_{(-\infty,-b)}\big)=R\big(g_{(-\infty,-b)}\big)$ and $L\big(g_{(b,\infty)}\big)=R\big(g_{(b,\infty)}\big)$ for all $b>0$. 

Let next
$g_a(x):=x\ii{0<x\le a}$ and observe that 
\begin{equation}\label{eq:g_a}
\int_0^\infty g_a(x)\,\nu(\d a)
=x^+\,\nu\big([x,\infty)\big)
=g_{(b,\infty)}(x)
\end{equation} 
for all $x\in\R$ if $\nu=\nu_b$, where $\nu_b$ is the finite signed measure on $(0,\infty)$ uniquely determined by the condition that 	$x\,\nu_b\big([x,\infty)\big)=\ii{x>b}$ for all  $x\in\R$. 

On the other hand, for any $a>0$, one has $L(g_a)=\E g_a(X)=G(a)$ \big(by \eqref{eq:G(x)}\big) and 
$$R(g_a)=\int_0^m\ii{x_+(h)\le a}\,\d h=\int_0^m\ii{G(a)\ge h}\,\d h=G(a)$$
\big(by \eqref{eq:L1}\big).
So, $L(g_a)=R(g_a)$ for all $a>0$. 

Observe also that $\int_0^\infty |g_a(x)|\,|\nu_b(\d a)|\le c_b\,x^+$ for all $x\in\R$ and $b>0$, where $c_b:=\,\int_0^\infty |\nu_b(\d a)|<\infty$. So,
again by Fubini's theorem \big(and in view of \eqref{eq:g_a}\big), it follows that $L(g_{(b,\infty)})=R(g_{(b,\infty)})$ for all $b>0$. Similarly, $L(g_{(-\infty,-b)})=R(g_{(-\infty,-b)})$ for all $b>0$. 
\end{proof}

\begin{proof}[Proof 2 of Proposition~\ref{prop:mix}]
W.l.o.g.\ the function $g$ in Proposition~\ref{prop:mix} is nonnegative (otherwise, consider its positive and negative parts).  Let $\psi(x):=g(x)/|x|$ for all real $x\ne0$ and $\psi(0):=0$. Then, by \eqref{eq:Ef(Y)} and \cite[Theorem~2.2]{aizen}, 
\begin{equation}\label{eq:aizen}
\begin{aligned}
	\E g(X)&=2m\,\E\psi(Y)
	=2m\,\int_0^1
	\Big(\tfrac12\,\psi\big(Y_1(t)\big)
	+\tfrac12\,\psi\big(Y_2(t)\big)\Big)
	\,\d t \\
	&=m\,\int_0^1
	\Big(\frac{g\big(Y_1(t)\big)}{|Y_1(t)|}
	+\frac{g\big(Y_2(t)\big)}{|Y_2(t)|}\Big)
	\,\d t
	=m\,\int_0^1 \E g(Z_t)\,\frac{\d t}{\E Z_t^{\;+}},
\end{aligned}	
\end{equation}
where $Z_t:=X_{Y_1(t),Y_2(t)}$, $Y_1(t):=F^{-1}(\frac t2)$, $Y_2(t):=F^{-1}(1-\frac t2)$, $F^{-1}(u):=\break
\inf\{y\in\R\colon F(y)\ge u\}$, and, with $G=G_X$, 
\begin{equation*}
	F(y):=\P(Y\le y)=
	\begin{cases}
	\tfrac12-\tfrac1{2m}\,G(y+) &\text{ if }y\le0,\\
	\tfrac12+\tfrac1{2m}\,G(y) &\text{ if }y\ge0,
		\end{cases}
\end{equation*}
the latter equality taking place in view of \eqref{eq:P(Y)} and \eqref{eq:G(x)}.  

Next, fix any $t\in(0,1)$. Then $1-\frac t2>\frac12$, and so, 
\begin{equation}\label{eq:Y_2(t)}
\begin{aligned}
	Y_2(t)
	&=\inf\{y\ge0\colon\tfrac12+\tfrac1{2m}\,G(y)\ge1-\tfrac t2\}\\
	&=\inf\{y\ge0\colon G(y)\ge m(1-t)\}=x_+\big(m(1-t)\big).
\end{aligned} 
\end{equation}
Also, $\frac t2<\frac12$, whence, letting $h:=m(1-t)$, one has $h\in(0,m)$ and 
\begin{equation*}
\begin{aligned}
	Y_1(t)
	&=\inf\{y\le0\colon\tfrac12-\tfrac1{2m}\,G(y+)\ge\tfrac t2\}\\
	&=\inf\{y\le0\colon G(y+)\le h\}.
\end{aligned} 
\end{equation*}
Therefore, $Y_1(t)\in(-\infty,0]$ (since $G(y+)\underset{y\to-\infty}\longrightarrow m>h$), $G\big(Y_1(t)+\big)\le h$ \big(since the function $x\mapsto G(x+)$ is right-continuous on $[-\infty,0]$\big), $G(y+)>h$ for all $y<Y_1(t)$, and so, $G\big(Y_1(t)\big)\ge h$. 
Now \eqref{eq:L1-} yields $Y_1(t)\le x_-(h)$. If at that $Y_1(t)<x_-(h)$ then $G\big(Y_1(t)+\big)\ge G\big(x_-(h)\big)\ge h\ge G\big(Y_1(t)+\big)$, which implies that 
$\tG\big(Y_1(t),0\big)=G\big(Y_1(t)+\big)=h$; 
therefore, by \eqref{eq:hat}, 
$\hat x\big(Y_1(t),0\big)
=x_-\Big(\tG\big(Y_1(t),0\big)\Big)=x_-(h)>Y_1(t)$. 
Hence, by part (iv)(a) of Proposition~\ref{lem:hat}, $Y_1(t)=x_-(h)$ unless $h=m(1-t)$ is a point of discontinuity of the nonincreasing function $x_-$. 
Thus, $Y_1(t)=x_-\big(m(1-t)\big)$ for almost all $t\in(0,1)$. Now \eqref{eq:Eg(X) other} follows in view of \eqref{eq:aizen} and \eqref{eq:Y_2(t)}. 
\end{proof}

\begin{proof}[Proof of Proposition~\ref{prop:tF}]
This is quite similar to the proof of Proposition~\ref{prop:Y}.
\end{proof}

\begin{proof}[Proof of Proposition~\ref{prop:Eg(X) x/r}]
Let $g\colon\R\to\R$ is any Borel function bounded from below (or from above). 
In addition to the function $\Psi_g$ defined by \eqref{eq:Psi}, 
introduce the functions $\Psi_{g,+}$ and $\Psi_{g,-}$ defined by the formulas 
\begin{equation*}
\Psi_{g,+}(h):=\frac{\E g(X_h)}{x_+(h)}\quad\text{and}\quad
\Psi_{g,+}(h):=-\frac{\E g(X_h)}{x_-(h)}
\end{equation*}
for all $h\in(0,m)$, 
so that 
$$\Psi_g=\Psi_{g,+}+\Psi_{g,-}.$$ 

Consider now the case $g(0)=0$. 
Then, in view of \eqref{eq:EPsi}, Proposition~\ref{prop:Y}, \eqref{eq:Ef(Y)}, \eqref{eq:r}, Proposition~\ref{lem:regul}, and \eqref{eq:XR<0 or X=R=0}, 
\begin{align}
\E g(X)&=m\E\Psi_{g,+}(H)+m\E\Psi_{g,-}(H)\notag\\	&=m\E\Psi_{g,+}\big(\tG(Y_+,U)\big)+m\E\Psi_{g,-}\big(\tG(Y_-,U)\big)
\label{eq:+-}\\
	&=\E X^+\Psi_{g,+}\big(\tG(X,U)\big)+\E(-X^-)\Psi_{g,-}\big(\tG(X,U)\big)\notag\\
&=\int_{(0,\infty)\times[0,1]}\,\P(X\in\d x)\,\d u\notag\\
&\qquad\qquad
\times\frac{g\Big(x_-\big(\tG(x,u)\big)\Big)\,x_+\big(\tG(x,u)\big)-g\Big(x_+\big(\tG(x,u)\big)\Big)\,x_-\big(\tG(x,u)\big)}
{x_+\big(\tG(x,u)\big)-x_-\big(\tG(x,u)\big)}\notag\\
&+
\int_{(-\infty,0)\times[0,1]}\,\P(X\in\d x)\,\d u\notag\\
&\qquad\qquad
\times\frac{g\Big(x_+\big(\tG(x,u)\big)\Big)\,x_-\big(\tG(x,u)\big)-g\Big(x_-\big(\tG(x,u)\big)\Big)\,x_+\big(\tG(x,u)\big)}
{x_-\big(\tG(x,u)\big)-x_+\big(\tG(x,u)\big)}\notag\\
	&=\bigg(\int_{(0,\infty)\times[0,1]}+\int_{(-\infty,0)\times[0,1]}\bigg)\,
\frac{g\big(\r(x,u)\big)\,x-g(x)\,\r(x,u)}
{x-\r(x,u)}\,\P(X\in\d x)\,\d u \notag\\
&=
\int_{\R\times[0,1]} 
	\E g\big(X_{x,\r(x,u)}\big)\,
	\P(X\in \d x)\,\d u.\notag
\end{align}
So, identity \eqref{eq:Eg(X) again} is proved in the case when $g(0)=0$. But for $g(x)\equiv\ii{x=0}$, \eqref{eq:Eg(X) again} follows by Proposition~\ref{prop:supp}. So, \eqref{eq:Eg(X) again} is completely proved. The proofs of \eqref{eq:Eg(X) -x/r} and \eqref{eq:Eg(X) 1-x/r} are similar, but using 
$m\E\Psi_{g,+}\big(\tG(Y_-,U)\big)+m\E\Psi_{g,-}\big(\tG(Y_+,U)\big)$ and $2m\E\Psi_{g}\big(\tG(Y,U)\big)$ instead of 
$m\E\Psi_{g,+}\big(\tG(Y_+,U)\big)+m\E\Psi_{g,-}\big(\tG(Y_-,U)\big)$ in \eqref{eq:+-}; \eqref{eq:Eg(X) 1-x/r} is also an obvious corollary of \eqref{eq:Eg(X) again} and \eqref{eq:Eg(X) -x/r}.
\end{proof}

\begin{proof}[Proof of Proposition~\ref{prop:trans1}] 
Since $k$ is bounded from below, 
w.l.o.g.\ one has $k\ge0$. Then w.l.o.g.\ $\E k(\tX_1,\tX_2)<\infty$, since otherwise inequality \eqref{eq:trans} is trivial. 
Just to simplify writing, assume that $I_1=I_2=[0,\infty)$. 
Then $0\le k(\tX_1,0)+k(0,\tX_2)\le k(\tX_1,\tX_2)+k(0,0)$ a.s.\ (by the superadditivity), whence the r.v.'s $k(\tX_1,0)$ and $k(0,\tX_2)$ are integrable, and so are $k(X_1,0)$ and $k(0,X_2)$, 
by \eqref{eq:tX=X}; moreover, $\E k(X_1,0)=\E k(\tX_1,0)$ and $\E k(0,X_2)=\E k(0,\tX_2)$. 

Let $\mu_k$ be the nonnegative measure on $\B\big((0,\infty)^2\big)$ defined by the formula 
\begin{equation*}
	\mu_k\big((a,b]\times(c,d]\big):=k(a,c)+k(b,d)-k(a,d)-k(b,c)
\end{equation*}
for all $a,b,c,d$ in $[0,\infty)$ such that $a<b$ and $c<d$. Then 
\begin{equation}\label{eq:Fubini}
	k(X_1,X_2)=k(X_1,0)+k(0,X_2)-k(0,0)
	+\iint_{(0,\infty)^2}\ii{x_1\le X_1,x_2\le X_2}\,\mu_k(\d x_1,\d x_2)
\end{equation}
a.s.\ Hence, by Fubini's theorem, 
\begin{equation*}
\begin{aligned}
	\E k(X_1,X_2)&=\E k(X_1,0)+\E k(0,X_2)-k(0,0) \\
	&+\iint_{(0,\infty)^2}\E\ii{x_1\le X_1,x_2\le X_2}\,\mu_k(\d x_1,\d x_2). 
\end{aligned}
\end{equation*} 
A similar equality holds with $\tX_1$ and $\tX_2$ in place of $X_1$ and $X_2$. 
Recall that $\E k(X_1,0)=\E k(\tX_1,0)$ and $\E k(0,X_2)=\E k(0,\tX_2)$. 
It remains to observe that $\E\ii{x_1\le X_1,x_2\le X_2}
\le\P(x_1\le X_1)\wedge\P(x_2\le X_2)
=\P(x_1\le\tX_1)\wedge\P(x_2\le\tX_2)
=\E\ii{x_1\le\tX_1,x_2\le\tX_2}$ for all $x_1$ and $x_2$, 
where the latter equality is easy to deduce from \eqref{eq:tX}; 
alternatively, inequality $\E\ii{x_1\le X_1,x_2\le  X_2}\le\E\ii{x_1\le\tX_1,x_2\le\tX_2}$ follows (say) by \cite[Theorem~2]{tchen}, since $(z_1,z_2)\mapsto\ii{x_1\le z_1,x_2\le z_2}$ is a \emph{bounded} right-continuous superadditive function. 
\end{proof}

\begin{proof}[Proof of Proposition~\ref{prop:trans2}] The proof is quite similar to that of \cite[Corollary~2.2(a)]{tchen}. We shall only indicate the necessary changes in that proof, in the notations used there, including the correction of a couple of typos: use the interval $I_\vp:=(\vp,\frac1\vp]$ with $\vp\downarrow0$ instead of $(-B,B]$ and, accordingly, replace $QB$ by $I_\vp^2$; w.l.o.g.\ one may assume here that $h=0$; one does not need to assume that the integral $\vpi\,dH$ at the end of \cite[page~819]{tchen} is finite; on line 1 of \cite[page~820]{tchen}, there should be $\int(h-\vpi)\,d\overline{H}$ and $\liminf$ instead of $\int(h-\vpi)\,dH$ and $\lim$, respectively. 
\end{proof}

\begin{proof}[Proof of Proposition~\ref{prop:best}]\ 

\textbf{\eqref{tilde nu}} 
In view of \eqref{eq:Ef(Y)}, \eqref{eq:Eg(X) nu} \big(with $X^+g(X)$ in place of $g(X)$\big), \eqref{eq:Xab}, and \eqref{eq:tilde nu}, 
\begin{equation}\label{eq:E g(Y+)}
	\E g(Y_+)=\frac1m\,\E X^+\,g(X)
	=\frac1m\,\int_S 
	\frac{y_+(s)\,g\big(y_+(s)\big)\,y_-(s)}{y_-(s)-y_+(s)}\,\nu(\d s)
	=\int_S g\big(y_+(s)\big)\,\tilde\nu(\d s)
\end{equation}
for any bounded Borel function $g\colon\R\to\R$. 
In particular, letting here $g\equiv1$, one sees that $\tilde\nu$ is a probability measure, which proves part \eqref{tilde nu} of the proposition. 

\textbf{\eqref{y+-}} Identity \eqref{eq:E g(Y+)} means that 
$Y_+\D y_+$. Similarly, $Y_-\D y_-$. This proves part \eqref{y+-} of the proposition. It remain to prove part 

\textbf{\eqref{best}} 
The inequality in \eqref{eq:best-k} follows immediately from Propositions~\ref{prop:trans1}, \ref{prop:trans2}, and the just proved part \eqref{y+-} of Proposition~\ref{prop:best}. 
The equalities in \eqref{eq:best-k} follow immediately from relations \eqref{eq:Y+-interchange} and \eqref{eq:Ysets} in Proposition~\ref{prop:Y,r(Y)}. 

As explained in Remark~\ref{rem:best}, relations \eqref{eq:best-p-ratios}, \eqref{eq:best-p-symm}, and \eqref{eq:best-p-width} are special cases of \eqref{eq:best-k}. 
Next, \eqref{eq:best-p-ratio} follows immediately from  \eqref{eq:best-p-ratios}, \eqref{eq:Ysets}, and \eqref{eq:Yinterchange}.  
Finally, \eqref{eq:best-p-width,neg} follows from \eqref{eq:best-k} in view of Proposition~\ref{prop:trans1} with $I_1=I_2=[0,\infty)$ by taking $k(y_1,y_2)\equiv(y_1+y_2+\vp)^p$ for $\vp>0$, and then letting $\vp\downarrow0$ and using the monotone convergence theorem. 

So, part \eqref{best} and thus the entire Proposition~\ref{prop:best} are proved. 
\end{proof}

\begin{proof}[Proof of Proposition~\ref{prop:x+- idents}]
The first identity in \eqref{eq:x+- idents} is a special case of Proposition~\ref{prop:mix}, with $g(x)\equiv\ii{x>0}$; the second identity is quite similar.  
\end{proof}

\begin{proof}[Proof of Proposition~\ref{prop:x+-charact}]
Let
\begin{equation}\label{eq:L}
	L(x):=
\begin{cases}
\sup\{h\in[0,m]\colon y_+(h)\le x\} &\text{ if }x\in[0,\infty],\\
\sup\{h\in[0,m]\colon -y_-(h)\le-x\} &\text{ if }x\in[-\infty,0]. 
\end{cases}
\end{equation}
Then one can check that 
relations \eqref{eq:G properties}, \eqref{eq:L1}, and \eqref{eq:L1-} hold with
the functions $L$ and $y_\pm$ in place of $G$ and $x_\pm$, respectively. 
Introduce also a nonnegative measure $\nu$ on $\B(\R)$ by the formula
\begin{equation}\label{eq:mu}
\nu(A):=\int_{A\setminus\{0\}}\big|\tfrac1x\,\d L(x)\big|	
\end{equation}
for all $A\in\B(\R)$, so that \big(cf.\ \eqref{eq:G(x)}\big) 
\begin{equation}\label{eq:LL}
	L(x)=
\begin{cases}
\int_{(0,x]} z\,\nu(\d z) & \text{ if }x\in[0,\infty], \\
\int_{[x,0)}(-z)\,\nu(\d z) & \text{ if }x\in[-\infty,0]. 
\end{cases}
\end{equation}
Then, using \eqref{eq:L1} (with $L$ and $y_+$ instead of $G$ and $x_+$) 
and Fubini's theorem, one has 
\begin{align*}
&\int_0^m\frac{\d h}{y_+(h)}
=\int_0^m\d h\int_0^\infty\ii{y_+(h)\le\tfrac1t}\,\d t
=\int_0^m\d h\int_0^\infty\ii{L(\tfrac1t)\ge h}\,\d t=\\
&\int_0^\infty L(\tfrac1t)\,\d t=\int_0^\infty\d t\,\int_\R x\ii{0<x\le\tfrac1t}\,\nu(\d x)
=\int_\R \ii{x>0}\,\nu(\d x)=\nu\big((0,\infty)\big). 
\end{align*}
Similarly, $\int_0^m\dfrac{\d h}{-y_-(h)}
=\nu\big((-\infty,0)\big)$. 
So, by condition\eqref{eq:y+- idents}, one has $\nu(\R\setminus\{0\})\le1$. 
So, there exists a unique probability distribution $\mu$ on $\R$ such that $\mu(A)=\nu(A)$ for all $A\in\B(\R\setminus\{0\})$. Let $X$ be any r.v.\ with this distribution $\mu$. Then, by \eqref{eq:LL} and \eqref{eq:L}, one has 
$\E X^+=L(\infty)=m=L(-\infty)=\E(-X^-)$, whence $\E X=0$. 
Also, $G_\mu=L$, and so, in view of \eqref{eq:L1} and \eqref{eq:L1-}, the functions $x_\pm$ for the zero-mean distribution $\mu$ coincide with
$y_\pm$. 
The uniqueness of $\mu$ follows because (i) the functions $x_\pm$ uniquely determine the function $G$ \big(via \eqref{eq:L1} and \eqref{eq:L1-}\big) and (ii) the function $G$ uniquely determines the distribution \big(cf.\ \eqref{eq:mu}\big). 
\end{proof}

\begin{proof}[Proof of Proposition~\ref{prop:s,nu}]\ 

\fbox{Checking (I)$\implies$(II).} Here it is assumed that there exists a zero-mean probability measure $\mu$ on $\B(\R)$ whose reciprocating function $\r:=\r_\mu$ satisfies conditions $\mu_+=\nu$ and $\r_+=\sss$. We have to show at this point that then conditions \eqref{0}--\eqref{mu<1} necessarily take place. 

\textbf{\eqref{0}}\ Since $\r_+=\sss$, one has  $\sss(0,u)=\r(0,u)=0$ for all $u\in[0,1]$, by definition \eqref{eq:r}.   

\textbf{\eqref{nonincr}}\ The conditions $\r=\r_\mu$, $\mu_+=\nu$,  and $\r_+=\sss$ imply that 
\begin{equation}\label{eq:s=x-(tG)}
	\sss(x,u)=x_{-,\mu}\big(\tG_\mu(x,u)\big)\quad\text{for all}\quad (x,u)\in[0,\infty]\times[0,1].
\end{equation}
So, condition \eqref{nonincr} follows by Proposition~\ref{prop:lex}, since $x_{-,\mu}$ is nonincreasing on $[0,m_\mu]$ \big(by part~\eqref{x+-non-decr} of Proposition~\ref{lem:left-cont}\big).

\textbf{\eqref{x-l.c}, \eqref{u-l.c}}\ These conditions follow by \eqref{eq:s=x-(tG)}, Proposition~\ref{prop:lex}, and property~\eqref{x+-left-cont} in Proposition~\ref{lem:left-cont}, because $\tG_\mu(x,0)=G_\mu(x-)$ is left-continuous in $x\in[0,\infty]$ and $\tG_\mu(x,u)$ is affine and hence continuous in $u\in[0,1]$ for every $x\in[0,\infty]$. \big(Note that, if $\tG(x,u)=0$ for some $(x,u)\in(0,\infty]\times[0,1]$, then $\tG(z,v)=0$ for all $(z,v)$ such that $(0,0)\prec(z,v)\prec(x,u)$.\big)

\textbf{\eqref{nu-mass}, \eqref{m<infty}}\ The necessity of these conditions is obvious. 

\textbf{\eqref{nu,s cont1}}\ If $\nu(\{x\})=0$ and $x\in(0,\infty]$ then $\mu(\{x\})=0$, $\tG_\mu(x,1)=\tG_\mu(x,0)$, and so, by  \eqref{eq:s=x-(tG)}, $\sss(x,1)=\sss(x,0)$. 

\textbf{\eqref{nu,s cont2}}\ If $\nu\big((x,y)\big)=0$ and $0\le x<y\le\infty$ then $\mu\big((x,y)\big)=0$, 
$G_\mu(y-)-G_\mu(x)=\int_{(x,y)}z\,\mu(\d z)=0$, and so, by  \eqref{eq:s=x-(tG)}, $\sss(x,1)=x_{-,\mu}\big(G_\mu(x)\big)=x_{-,\mu}\big(G_\mu(y-)\big)=\sss(y,0)$. 

\textbf{\eqref{fin}}\ This follows from \eqref{eq:s=x-(tG)} and property~\eqref{x+-finite} in Proposition~\ref{lem:left-cont}. 

\textbf{\eqref{s<0}}\ Similarly, this follows from \eqref{eq:s=x-(tG)} and property~\eqref{x+-pos} in Proposition~\ref{lem:left-cont}. 

\textbf{\eqref{mu<1}}\ By Proposition~\ref{prop:Y}, the r.v.\ $\tG_\mu(Y_{+,\mu},U)$ is uniformly distributed in the interval $[0,m_\nu]$, where $Y_{+,\mu}$ is 
any r.v. which is independent of $U$ and such that (cf.\ \eqref{eq:P(Y)}) 
$
		\P(Y_{+,\mu}\in A)=
	\frac1{m_\nu}\,\int_A x^+\,\mu(\d x)	
$ 
for all $A\in\B(\R)$. Therefore and in view of \eqref{eq:s=x-(tG)}, 
\begin{equation}\label{x-mu}
	\int_0^{m_\nu}\frac{\d h}{x_{-,\mu}(h)}
	=m\,\E\frac1{x_{-,\mu}(\tG_\mu(Y_{+,\mu},U))}
	=\int_{(0,\infty)\times[0,1]}\frac x{\sss(x,u)}\,\nu(\d x)\,\d u
\end{equation}
(cf.\ \eqref{eq:Ef(Y)}).  
On the other hand, by Proposition~\ref{prop:x+- idents}, $\int_0^{m_\nu}\frac{\d h}{x_{-,\mu}(h)}=-\mu\big((-\infty,0)\big)$. 
Thus, 
\begin{equation}\label{eq:mu<1}
	\int_{(0,\infty)\times[0,1]}\Big(1-\frac x{\sss(x,u)}\Big)\,\nu(\d x)\,\d u
	=\nu\big((0,\infty)\big)+\mu\big((-\infty,0)\big)
	=\mu\big(\R\setminus\{0\}\big)\le1, 
\end{equation}
so that the necessity of condition \eqref{mu<1} is verified. 

\fbox{Checking (II)$\implies$(I).} \ 

\fbox{Step 1.} 
Here \big(assuming the conditions \eqref{0}--\eqref{mu<1} to hold\big) we shall show that there exists a unique function $y_-\colon[0,m_\nu]\to\R$ such that (cf.\ \eqref{eq:s=x-(tG)}) 
\begin{equation}\label{eq:s=y-(tG)} 
	\sss(x,u)=y_-\big(\tG_\nu(x,u)\big)\quad\text{for all}\quad (x,u)\in[0,\infty]\times[0,1].
\end{equation} 

Toward this end, let us first observe that the ``range''  $\tG_\nu([0,\infty]\times[0,1])$ contains the entire interval $[0,m_\nu]$. Indeed, for any given $h\in[0,m_\nu]$, let $x:=x_{+,\nu}(h)$, so that $x\in[0,\infty]$. Then (cf.\ \eqref{eq:bet+}) $G_\nu(x-)\le h\le G_\nu(x)$. Hence, $h=\tG_\nu(x,u)$ for some $u\in[0,1]$. \big(Here, we used \eqref{eq:H} and, tacitly, condition \eqref{m<infty}.\big)

Now, to complete Step~1 it is enough to show for all points $(x,u)$ and $(y,v)$ in $[0,\infty]\times[0,1]$ one has the implication
\begin{equation}\label{eq:function}
	\tG_\nu(x,u)=\tG_\nu(y,v)\implies\sss(x,u)=\sss(y,v).
\end{equation}
Let us assume that indeed $\tG_\nu(x,u)=\tG_\nu(y,v)$; we have to show that $\sss(x,u)=\sss(y,v)$. 
W.l.o.g.\ let us also assume that here $(x,u)\prec(y,v)$, whence, by condition \eqref{nonincr}, $\sss(x,u)\ge\sss(y,v)$. One of the following two cases must take place.

\emph{Case 1: $x=y$ and $u<v$}.\ Then $\tG_\nu(x,u)=\tG_\nu(y,v)$ implies, by \eqref{eq:H}, that $x=0$ or $\nu(\{x\})=0$, whence, by conditions \eqref{0} and \eqref{nu,s cont1}, $\sss(x,1)=\sss(x,0)$, so that, in view of conditions $x=y$ and \eqref{nonincr}, $\sss(y,v)=\sss(x,v)\ge\sss(x,1)=\sss(x,0)\ge\sss(x,u)$. This, together with the inequality $\sss(x,u)\ge\sss(y,v)$, yields $\sss(x,u)=\sss(y,v)$.  

\emph{Case 2: $x<y$}.\ Then 
\begin{equation}\label{eq:chain}
\tG_\nu(x,u)\le\tG_\nu(x,1)=G_\nu(x)\le G_\nu(y-)=\tG_\nu(y,0)\le\tG_\nu(y,v),	
\end{equation}
and so, $\tG_\nu(x,u)=\tG_\nu(y,v)$ implies that all the three inequalities in \eqref{eq:chain} are in fact equalities. 
In particular, one has $\tG_\nu(x,u)=\tG_\nu(x,1)$, which implies, by Case~1, that 
\begin{equation}\label{eq:ident1}
\sss(x,u)=\sss(x,1). 	
\end{equation}
Similarly, one has $\tG_\nu(y,0)=\tG_\nu(y,v)$, which implies that 
\begin{equation}\label{eq:ident2}
\sss(y,v)=\sss(y,0). 	
\end{equation}

To conclude Step~1, recall that one also has $G_\nu(x)=G_\nu(y-)$, whence $0=G_\nu(y-)-G_\nu(x)=\int_{(x,y)}z\,\nu(\d z)$, so that $\nu\big((x,y)\big)=0$, and, by condition 
\eqref{nu,s cont2}, $\sss(x,1)=\sss(y,0)$. This, together with \eqref{eq:ident1} and \eqref{eq:ident2}, yields the desired conclusion $\sss(x,u)=\sss(y,v)$ of implication 
\eqref{eq:function}. 

\fbox{Step 2.}\ Here \big(again assuming the conditions \eqref{0}--\eqref{mu<1} to hold\big) we shall show that all the conditions \eqref{x+-non-decr}--\eqref{x+-left-cont} in Proposition~\ref{lem:left-cont} are satisfied with $y_-$ \big(defined by \eqref{eq:s=y-(tG)}\big) and $m_\nu$ in place of $x_-$ and $m$. 

\textbf{\eqref{x+-non-decr}}\ That $y_-$ is non-increasing on $[0,m_\nu]$ follows because (by Proposition~\ref{prop:lex}) $\tG_\nu$ is $\prec$-non-decreasing $[0,\infty]\times[0,1]$ and, by condition \eqref{nonincr}, $\sss$ is $\prec$-non-increasing. Indeed, take any $h_1$ and $h_2$ such that $0\le h_1<h_2\le m_\nu$. Then there exist some points $(x_1,u_1)$ and $(x_2,u_2)$ in $[0,\infty]\times[0,1]$ 
such that $\tG(x_1,u_1)=h_1$ and $\tG(x_2,u_2)=h_2$; at that, necessarily $(x_1,u_1)\prec(x_2,u_2)$, and so, $y_-(h_1)=\sss(x_1,u_1)\ge\sss(x_2,u_2)=y_-(h_2)$.  

\textbf{\eqref{x+-finite}}\ That $y_-$ is finite on $[0,m_\nu)$ follows by condition \eqref{fin}. Indeed, take any $h\in[0,m_\nu)$
and any $(x,u)\in[0,\infty]\times[0,1]$ such that $\tG_\nu(x,u)=h$. Then $x<\infty$, since $\tG_\nu(\infty,u)=G_\nu(\infty)=m_\nu\ne h$ for all $u\in[0,1]$. Hence, by \eqref{fin}, $\sss(x,u)>-\infty$. Also, $\sss(x,u)\le0$, in view of conditions \eqref{0} and \eqref{nonincr}.
So, by \eqref{eq:s=y-(tG)}, $y_-(h)=y_-\big(\tG_\nu(x,u)\big)=\sss(x,u)\in(-\infty,0]$. 

\textbf{\eqref{x+-pos}}\ First at this point, note that, by \eqref{eq:s=y-(tG)} and condition \eqref{0},  $y_-(0)=\break
y_-\big(\tG_\nu(0,0)\big)=\sss(0,0)=0$. Next, 
take any $h\in(0,m_\nu]$
and any $(x,u)\in[0,\infty]\times[0,1]$ such that $\tG_\nu(x,u)=h$. Then $x>0$, since $\tG_\nu(0,u)=0$ for all $u\in[0,1]$. Hence, again by \eqref{eq:s=y-(tG)}, $y_-(h)=y_-\big(\tG_\nu(x,u)\big)=\sss(x,u)<0$, in view of condition \eqref{s<0}. Thus, indeed $-y_->0$ on $(0,m_\nu]$. 

\textbf{\eqref{x+-left-cont}}\ To complete Step~2 of the proof of Proposition~\ref{prop:s,nu}, we have to show that the function $y_-$ defined by \eqref{eq:s=y-(tG)} 
is left-continuous on $(0,m_\nu]$. 
For brevity, let here $x_h:=x_{+,\nu}(h)$ for all $h$. As was seen before, for each $h\in[0,m_\nu]$ there exists some $u_h\in[0,1]$ such that $\tG_\nu(x_h,u_h)=h$. 
Take any $h_0\in(0,m_\nu]$. Then $x_{h_0}>0$; cf.\ property~\eqref{x+-pos} in Proposition~\ref{lem:left-cont}. 
One of the following two cases must take place.

\emph{Case 1: $p_0:=\nu(\{x_{h_0}\})>0$ and $u_{h_0}>0$.}\ Take any $\de\in\big(0,\min(h_0,p_0\,x_{h_0}u_0)\big)$. 
Take next any $h\in(h_0-\de,h_0]$, so that $h\in(0,h_0]$ and \big(cf.\ property \eqref{x+-non-decr} in Proposition~\ref{lem:left-cont}\big) $x_h\le x_{h_0}$. In fact, we claim that $x_h=x_{h_0}$: 
otherwise, one would have $x_h<x_{h_0}$, whence $h=\tG_\nu(x_h,u_h)\le G(x_{h_0}-)=
\tG_\nu(x_{h_0},u_{h_0})-p_0\,x_{h_0}u_0
=h_0-p_0\,x_{h_0}u_0<h_0-\de<h$, a contradiction. 
So, for all $h\in(h_0-\de,h_0]$, one has $\tG_\nu(x_{h_0},u_h)=h$ and $y_-(h)=\sss(x_{h_0},u_h)$ (by \eqref{eq:s=y-(tG)}). Moreover, $u_h$ is increasing in $h\in(h_0-\de,h_0]$, since $\tG_\nu(x_{h_0},u_h)=\tG_\nu(x_h,u_h)=h$ for all $h\in(h_0-\de,h_0]$. So, by condition \eqref{u-l.c}, $y_-(h)=\sss(x_{h_0},u_h)\to\sss(x_{h_0},u_{h_0})=y_-(h_0)$ as $h\uparrow h_0$. 

\emph{Case 2: $p_0=0$ or $u_{h_0}=0$.}\ Then, with $x_h$ and $u_h$ defined as above, one has $h_0=\tG_\nu(x_{h_0},u_{h_0})=G(x_{h_0}-)=\tG_\nu(x_{h_0},0)$, so that $y_-(h_0)=\sss(x_{h_0},0)$. 
Moreover, for every $h\in(0,h_0)$ one has $0\le x_h<x_{h_0}$ -- because the inequality $x_{h_0}\le x_h$ would imply that $h_0=G(x_{h_0}-)\le G(x_h-)=\tG_\nu(x_h,0)\le\tG_\nu(x_h,u_h)=h$, which contradicts the condition $h\in(0,h_0)$. 
Hence and by condition~\eqref{nonincr}, $\sss(x_{h_0},0)\le\sss(x_h,u_h)\le\sss(x_h,0)$. 
Since $x_h=x_{+,\nu}(h)$ is left-continuous and non-decreasing in $h\in(0,m_\nu)$ \big(cf.\ properties~\eqref{x+-non-decr} and \eqref{x+-left-cont} in Proposition~\ref{lem:left-cont}\big), it follows by condition~\eqref{x-l.c} that $\sss(x_h,0)\to\sss(x_{h_0},0)$ as $h\uparrow h_0$. 
Therefore, by virtue of inequalities $\sss(x_{h_0},0)\le\sss(x_h,u_h)\le\sss(x_h,0)$, one has  $\sss(x_h,u_h)\to\sss(x_{h_0},0)$ as $h\uparrow h_0$. 
In view of equalities \eqref{eq:s=y-(tG)}, $h\equiv\tG_\nu(x_h,u_h)$, and $y_-(h_0)=\sss(x_{h_0},0)$, this means that $y_-(h)\to y_-(h_0)$ as $h\uparrow h_0$. 

This completes Step 2 of the proof. 

\fbox{Step 3.}\ Now we are prepared to complete the entire proof of Proposition~\ref{prop:s,nu}. 
By the just completed Step~2, the function $y_-$ has all the properties \eqref{x+-non-decr}--\eqref{x+-left-cont} listed in Proposition~\ref{lem:left-cont} for $x_-$. 
Letting now $y_+:=x_{+,\nu}$, observe that the function $y_+$ too has the same four properties \big(the proof of these properties of $y_+=x_{+,\nu}$ is practically the same as the corresponding part of the proof of Proposition~\ref{lem:left-cont}\big).  

Observe next that (cf.\ Proposition~\ref{prop:x+- idents}) 
\begin{equation*}
 \int_0^{m_\nu}\frac{\d h}{y_+(h)}=\nu\big((0,\infty)\big)
=\int_{(0,\infty)\times[0,1]}\nu(\d x)\,\d u.
\end{equation*}
Similarly to \eqref{x-mu} but using \eqref{eq:s=y-(tG)} instead of \eqref{eq:s=x-(tG)}, one also has
\begin{equation*}
 \int_0^{m_\nu}\frac{\d h}{y_-(h)}
=\int_{(0,\infty)\times[0,1]} \frac{x}{\sss(x,u)}\,\nu(\d x)\,\d u.
\end{equation*}
Hence, by condition \eqref{mu<1}, 
$\int_0^{m_\nu}\frac{\d h}{y_+(h)}+\int_0^{m_\nu}\frac{\d h}{-y_-(h)}\le1$. 

It follows now by Proposition~\ref{prop:x+-charact} that there exists a unique zero-mean probability measure $\mu$ such that $x_{+,\mu}=y_+$ and $x_{-,\mu}=y_-$. Since $x_{+,\mu}=y_+=x_{+,\nu}$, in view of \eqref{eq:L1}
one has 
\begin{equation}\label{eq:Gmu=Gnu}
G_\mu=G_\nu \quad\text{on}\quad [0,\infty] 
\end{equation}
and hence, by 
condition~\eqref{nu-mass}, $\mu_+=\nu$. 
Also, \eqref{eq:Gmu=Gnu} implies that $\tG_\mu=\tG_\nu$ on $[0,\infty]\times[0,1]$. Now, by virtue of equalities \eqref{eq:s=y-(tG)}, $y_-=x_{-,\mu}$, and \eqref{eq:r}, one has $\r_+=\sss$ (again for $\r:=\r_\mu$). 

Finaly, it remains to prove the uniqueness of $\mu$ given $\mu_+$ and $\r_+$. By Step~1 above, the function $x_{-,\mu}$ is uniquely determined by the condition 
\begin{equation*}
	\r_+(x,u)=x_{-,\mu_+}\big(\tG_{\mu_+}(x,u)\big)\quad\text{for all}\quad (x,u)\in[0,\infty]\times[0,1]
\end{equation*}
(cf.\ \eqref{eq:s=x-(tG)} and \eqref{eq:s=y-(tG)}). 
Also, the function $x_{+,\mu}$ is uniquely determined by $\tG_{\mu_+}$. It remains to refer to the uniqueness part of the statement of Proposition~\ref{prop:s,nu}. 
\end{proof}

\begin{proof}[Proof of Proposition~\ref{prop:s(x,u)}]\ 

\fbox{Checking (I)$\implies$(II)} To prove this implication, assume that condition (I) holds. Then conditions \ref{prop:s,nu}(II)\eqref{0}--\eqref{u-l.c} follow by Proposition~\ref{prop:s,nu}. Let us now prove that conditions \ref{prop:s(x,u)}(II)(\ref{fin}'), (\ref{s<0}'), (\ref{mu<1}') also hold. Let $b:=b_\sss$ and $a:=a_\sss$. 

\fbox{Checking (\ref{fin}'):} By (already established) condition
\ref{prop:s,nu}(II)\eqref{nonincr} and definitions~\eqref{eq:M_s} and \eqref{eq:a,b}, 
$(b,\infty]\times[0,1]\subseteq M_\sss(-\infty)$ while $M_\sss(-\infty)\cap\big([0,b)\times[0,1]\big)=\emptyset$. Let us consider the cases $b=\infty$ and $b<\infty$ separately. 

\emph{Case 1: $b=\infty$.} Then $\sss>-\infty$ on $[0,\infty)\times[0,1]$. 
Omitting the subscript ${}_\mu$ everywhere, recalling \eqref{eq:r}, and using the fact that $\mu(\{\infty\})=0$, one has $\sss(\infty,u)=\r(\infty,u)=x_-\big(\tG(\infty,0)\big)=x_-\big(\tG(\infty,1)\big)=\r(\infty,1)=\sss(\infty,1)$ for all $u\in[0,1]$, so that either $M_\sss(-\infty)=\{\infty\}\times[0,1]=[b,\infty]\times[0,1]$ or $M_\sss(-\infty)=\emptyset$, depending on whether $\sss(\infty,1)=-\infty$ or not. 
Thus, (\ref{fin}') holds in Case~1. 


\emph{Case 2: $b\in[0,\infty)$.} Then, by definition \eqref{eq:a,b}, convention $b=b_\sss$, and monotonicity condition \ref{prop:s,nu}(II)\eqref{nonincr}, there exists a nonincreasing sequence $(x_n)$ in $[0,\infty)$ such that $(x_n,1)\in M_\sss(-\infty)$ for all $n$ and $x_n\searrow b$. So, for all $n$ one has $\sss(x_n,1)=-\infty$ and, by condition \ref{prop:s,nu}(II)\eqref{fin}, $\tG(x_n,1)=m$ or, equivalently,  $G(x_n)=m$. Hence, by the right continuity of $G$ on $[0,\infty)$,  $G(b)=m$, and so, for all $n$ one has $G(b)=G(x_n)$, which (together with $\sss=\r_+$) in turn yields $\sss(b,1)=\sss(x_n,1)=-\infty$. In particular, in view of condition \ref{prop:s,nu}(II)\eqref{0}, $b\ne0$, so that $b\in(0,\infty)$. 
Now let us consider separately the two subcases of Case~2, depending on whether $\sss(b,u)>-\infty$ for all $u\in[0,1)$. 

\emph{Subcase 2.1: $\sss(b,u)>-\infty$ for all $u\in[0,1)$.} Then $M_\sss(-\infty)=\{(b,1)\}\cup(b,\infty]\times[0,1]$, so that (\ref{fin}') holds.

\emph{Subcase 2.2: $\sss(b,u_b)=-\infty$ for some $u_b\in[0,1)$.} Then, by \ref{prop:s,nu}(II)\eqref{fin}, $\tG(b,u_b)=m$. Now, if it were true that $\tG(b,0)<m$, then it would follow that $m=\tG(b,u_b)=(1-u_b)\tG(b,0)+u_b\tG(b,1)<m$, since $0\le u_b<1$.  This contradiction shows that $\tG(b,0)=m$ and hence for all $u\in[0,1]$ one has $m\ge\tG(b,u)\ge\tG(b,0)=m$, and so, $\tG(b,u)=m=\tG(b,u_b)$. This implies $\sss(b,u)=\sss(b,u_b)=-\infty$ for all $u\in[0,1]$. So, $M_\sss(-\infty)=[b,\infty]\times[0,1]$, and again (\ref{fin}') holds.

This completes the checking of (\ref{fin}'). 

\fbox{Checking (\ref{s<0}'):} Here, note first that, by \eqref{0}, $(0,u)\in M_\sss(0)$ for all $u\in[0,1]$. Also,
$[0,a)\times[0,1]\subseteq M_\sss(0)$ while $M_\sss(0)\cap\big((a,\infty]\times[0,1]\big)=\emptyset$. Let us consider the cases $a=0$ and $a>0$ separately. 

\emph{Case 1: $a=0$.} Then 
$\sss<0$ on $(0,\infty]\times[0,1]$. So, 
$M_\sss(0)=\{0\}\times[0,1]=[0,a]\times[0,1]$, and (\ref{s<0}') holds. 

\emph{Case 2: $a\in(0,\infty]$.} Then 
by definition \eqref{eq:a,b}, convention $a=a_\sss$, and monotonicity condition \ref{prop:s,nu}(II)\eqref{nonincr}, there exists a nondecreasing sequence $(x_n)$ in $(0,\infty]$ such that $(x_n,0)\in M_\sss(0)$ for all $n$ and $x_n\nearrow a$. Therefore, for all $n$ one has $\sss(x_n,0)=0$ and, by condition \ref{prop:s,nu}(II)\eqref{x-l.c}, $\sss(a,0)=0$. 
Now let us consider separately the two subcases of Case~2, depending on whether $\sss(a,u)<0$ for all $u\in(0,1]$. 

\emph{Subcase 2.1: $\sss(a,u)<0$ for all $u\in(0,1]$.} Then $M_\sss(0)=[0,a)\times[0,1]\cup\{(a,0)\}$. 
At that, $a<\infty$ -- because $\mu(\{\infty\})=0$, and so, by condition~\ref{prop:s,nu}(II)\eqref{nu,s cont1}, $a=\infty$ would imply $\sss(a,1)=\sss(a,0)=0$, which would contradict the definition of Subcase~2.1. Also, $a>0$, since $\sss(a,u)=0$ for $a=0$ and all $u\in[0,1]$, by condition~\ref{prop:s,nu}(II)\eqref{0}.
Thus, (\ref{s<0}') holds.

\emph{Subcase 2.2: $\sss(a,u_a)=0$ for some $u_a\in(0,1]$.} Then, by \ref{prop:s,nu}(II)\eqref{s<0}, $\tG(a,u_a)=0$. Now, if it were true that $\tG(a,1)>0$, then it would follow that $0=\tG(a,u_a)=(1-u_a)\tG(a,0)+u_a\tG(a,1)>0$, since $0<u_a\le1$.  This contradiction shows that $\tG(a,1)=0$ and hence for all $u\in[0,1]$ one has $\tG(a,u)=0$, and so, $\sss(a,u)=0$. This implies $M_\sss(0)=[0,a]\times[0,1]$, and again (\ref{s<0}') holds.

This completes the checking of (\ref{s<0}'). 

\fbox{Checking (\ref{mu<1}'):} Suppose that indeed $\sss(a,1)\ne\sss(a,0)$. Then, by \ref{prop:s,nu}(II)(\ref{0},\ref{nu,s cont1},\ref{nonincr}), one has $a\,\mu(\{a\})>0$ and $\sss(a,1)<\sss(a,0)\le0$. So,  
\begin{equation*}
	\int_{[0,1]}\frac a{-\sss(a,u)}\;\mu(\{a\})\,\d u
\le	\int_{(0,\infty)\times[0,1]}\Big(1-\frac x{\sss(x,u)}\Big)\,\mu(\d x)\,\d u\le1 
\end{equation*}
by \ref{prop:s,nu}(II)\eqref{mu<1}, 
whence (\ref{mu<1}') follows. 

Thus, implication (I)$\implies$(II) is proved.  

\fbox{Checking (II)$\implies$(I)} To prove this implication, assume that condition (II) indeed holds. Then, in view of 
(\ref{fin}'), $b$ is never $0$ \big(in particular, $b=\infty$ if $M_\sss(-\infty)=\emptyset$\big). Also, by \ref{prop:s,nu}(II)\eqref{nonincr}, $a\le b$. So, one has one of the following three cases: $0\le a<b\le\infty$, $a=b=\infty$, and $0<a=b<\infty$, which we shall consider separately. 

\emph{Case 1: $0\le a<b\le\infty$.} Then consider the function $\vpi$ defined on the set $R_a:=[a,\infty)\cap(0,\infty)$ by the formula 
\begin{equation}\label{eq:vpi}
		\vpi(x):=\int_0^1\Big(1+x-\frac x{\sss(x,u)}\Big)\,\d u;
\end{equation}
here, $\frac x{\sss(x,u)}:=0$ if $\sss(x,u)=-\infty$; also, $\vpi(x):=\infty$ if $\sss(x,u)=0$ for all $u\in[0,1]$. One can see that this definition is correct; moreover, $\vpi(x)\in(1,\infty]$ for all $x\in R_a$, and $\vpi(x)=\infty$ for some $x\in R_a$ only if $x=a$ and $\sss(a,1)=\sss(a,0)$. Indeed, if $x>a$ then, by \ref{prop:s,nu}(II)(\ref{nonincr},\ref{0}) and \eqref{eq:a,b}, $\sss(x,u)\le\sss(x,0)<0$ for all $u\in[0,1]$, so that $0<\int_0^1\frac x{-\sss(x,u)}\,\d u<\infty$. In particular, 
for all $x\in(a,b)$ one has $\vpi(x)\in(1,\infty)$.
It also follows that, if $x\in R_a$ and $\sss(x,u)=0$ for some but not all $u\in[0,1]$, then necessarily $x=a$ and, by \ref{prop:s,nu}(II)\eqref{nonincr}, $\sss(x,1)\ne\sss(x,0)$, so that (\ref{mu<1}') implies that the integral $\int_0^1\frac x{-\sss(x,u)}\,\d u$ is finite \big(and, by \ref{prop:s,nu}(II)(\ref{0},\ref{nonincr}), it is also strictly positive\big). Let also $\vpi(x):=0$ for $x\in\R\setminus R_a$. 

Now one is prepared to introduce the measure $\nu_1$ by the formula
\begin{align*}
	\nu_1(A)&:=\int_{A\cap(a,b)} \frac{p_1(x)\,\d x}{\vpi(x)}
\end{align*}
for all $A\in\B(\R)$, where $p_1$ is any probability density function that is strictly positive on $(a,b)$ and zero elsewhere. 
Recall that $\vpi(x)\in(1,\infty)$ for all $x\in(a,b)$. So, the measure $\nu_1$ is finite, nonzero,  nonnegative, and non-atomic (even more, it is absolutely continuous, with respect to the Lebesgue measure). Moreover, 
$\int_\R\vpi\,\d\nu_1=\int_{(a,b)}p_1(x)\,\d x=1$, 
$0<\nu_1(\R)=\nu_1\big((0,\infty)\big)<1$, and $\supp\nu_1=\R\cap[a,b]$.

Next, introduce the (possibly empty) set
\begin{equation}\label{eq:set D}
	D:=\{x\in(0,\infty]\colon\sss(x,1)\ne\sss(x,0)\}.  
\end{equation}
Observe that 
\begin{equation}\label{eq:D}
	D\subseteq[a,b].
\end{equation}
Indeed, if $x>b$ then, by \ref{prop:s(x,u)}(\ref{fin}'), $(x,u)\in M_\sss(-\infty)$ for all $u\in[0,1]$, whence $\sss(x,0)=-\infty=\sss(x,1)$, and so, $x\notin D$. Similarly, if $0\le x<a$ then, by \ref{prop:s(x,u)}(\ref{s<0}'), $(x,u)\in M_\sss(0)$ for all $u\in[0,1]$, whence $\sss(x,0)=0=\sss(x,1)$, and so, $x\notin D$. 

Observe also that the set $D$
is at most countable; this follows because, for any $x$ and $y$ in $D$ such that $x<y$, one has, by \ref{prop:s,nu}\eqref{nonincr}, $\sss(y,1)<\sss(y,0)\le\sss(x,1)<\sss(x,0)$, so that the open intervals $\big(\sss(x,1),\sss(x,0)\big)$  and $\big(\sss(y,1),\sss(y,0)\big)$ are disjoint. So, there exists a function $p_2\colon D\to(0,\infty)$ such that $\sum_{x\in D}p_2(x)=1$ (unless $D=\emptyset$). Take now any such function $p_2$ and introduce the finite nonnegative discrete measure $\nu_2$ by the formula
\begin{equation}\label{eq:nu2}
	\nu_2(A):=\sum_{x\in A\cap D} \frac{p_2(x)}{\vpi(x)}
\end{equation}
for all $A\in\B(\R)$, where $\vpi$ is still given by \eqref{eq:vpi}.
Since $D\subseteq R_a$ and 
$\vpi(x)=\infty$ for some $x\in R_a$ only if $x=a$ and $\sss(a,1)=\sss(a,0)$, it follows that $1<\vpi(x)<\infty$ for all $x\in D$. Therefore, definition \eqref{eq:nu2} is correct; moreover, $\nu_2(\{x\})>0$ for all $x\in D$, while $0\le\nu_2(\R)=\nu_2\big((0,\infty)\big)<1$. Furthermore,
$\int_\R\vpi\,\d\nu_2=\sum_{x\in D}p_2(x)=1$ (unless $D=\emptyset$, in which case $\nu_2$ is the zero measure). 

Let $\psi(x):=\int_0^1\Big(1-\frac x{\sss(x,u)}\Big)\,\d u$ for $x\in R_a$, so that $1\le\psi\le\vpi$ on $R_a$, and hence $0<\nu_1(\R)=\nu_1(R_a)\le\int_{R_a}\psi\,\d(\nu_1+\nu_2)\le
\int_{R_a}\vpi\,\d(\nu_1+\nu_2)\le2$. 
Therefore, there exists a finite strictly positive constant $c$ such that for the measure 
\begin{equation}\label{eq:nu_c}
\nu:=c(\nu_1+\nu_2)	
\end{equation}
one has $\int_{R_a}\psi\,\d\nu=1$, so that condition \ref{prop:s,nu}(II)\eqref{mu<1} is satisfied. 
Clearly, conditions \ref{prop:s,nu}(II)(\ref{nu-mass},\ref{m<infty}) will be satisfied as well. 

To complete the consideration of Case~1 in the proof of implication (II)$\implies$(I), it suffices to show that conditions \ref{prop:s,nu}(II)\eqref{nu,s cont1}, \eqref{nu,s cont2}, \eqref{fin}, \eqref{s<0} hold, for any choice of $c\in(0,\infty)$ in \eqref{eq:nu_c}. 

\fbox{Checking \ref{prop:s,nu}(II)\eqref{nu,s cont1}} Suppose that $x\in(0,\infty]$ and $\sss(x,1)\ne\sss(x,0)$, so that $x\in D$ and hence, by \eqref{eq:nu_c}, $\nu(\{x\})=c\,\nu_2(\{x\})>0$. This verifies \ref{prop:s,nu}(II)\eqref{nu,s cont1}.  

\fbox{Checking \ref{prop:s,nu}(II)\eqref{nu,s cont2}} Suppose that $0\le x<y\le\infty$ and $\nu\big((x,y)\big)=0$. Then, by \eqref{eq:nu_c}, $\nu_1\big((x,y)\big)=0$. Since $\supp\nu_1=\R\cap[a,b]$, it follows that either $0\le x<y\le a$ or $b\le x<y\le\infty$. By \ref{prop:s(x,u)}(II)(\ref{s<0}'), $M_\sss(0)\ni(a,0)$, whence $\sss(a,0)=0$, and so, $0\le x<y\le a$ will imply $\sss(y,0)=0$ and $\sss(x,1)=0$, and hence $\sss(x,1)=\sss(y,0)$. In the other case, when $b\le x<y\le\infty$, it follows that $b<\infty$, and so, by \ref{prop:s(x,u)}(II)(\ref{fin}'), $(b,1)\in M_\sss(-\infty)$; therefore, $\sss(b,1)=-\infty$, and hence $\sss(x,1)=-\infty=\sss(y,0)$. This completes the verification of \ref{prop:s,nu}(II)\eqref{nu,s cont2}.

\fbox{Checking \ref{prop:s,nu}(II)\eqref{fin}} Take any $(x,u)\in[0,\infty)\times[0,1]$ such that $\sss(x,u)=-\infty$. Then, by \ref{prop:s(x,u)}(II)(\ref{fin}'), $x\in[b,\infty)$. If, moreover, $x>b$ then, by \eqref{eq:D}, $[x,\infty)\cap D=\emptyset$, whence $\nu\big([x,\infty)\big)=c\,\nu_1\big([x,\infty)\big)+c\,\nu_2\big([x,\infty)\big)=0$; so, $m_\nu\ge\tG_\nu(x,u)\ge G_\nu(x-)=m_\nu-\int_{[x,\infty)}z\,\nu(\d z)=m_\nu$, and we conclude that indeed $\tG_\nu(x,u)=m_\nu$. 

Let now $x=b$, so that the assumption $\sss(x,u)=-\infty$ becomes $\sss(b,u)=-\infty$; this admits only two possible forms of $M_\sss(-\infty)$ of the three ones listed in condition \ref{prop:s(x,u)}(II)(\ref{fin}'). Accordingly, let us consider the following two subcases. 

\emph{Subcase \ref{fin}1: $M_\sss(-\infty)=
\{(b,1)\}\cup\big((b,\infty]\times[0,1]\big)$.} Then necessarily $u=1$, and so, $\tG_\nu(x,u)=\tG_\nu(b,1)=G_\nu(b)=m_\nu-\int_{(b,\infty)}z\,\nu(\d z)=m_\nu$, since $\supp\nu\subseteq[a,b]$. 

\emph{Subcase \ref{fin}2: $M_\sss(-\infty)=
[b,\infty]\times[0,1]$.} Then $\sss(b,0)=-\infty=\sss(b,1)$. So, $b\notin D$ and hence $\nu_2(\{b\})=0$, which yields $\nu(\{b\})=0$. Therefore, $m_\nu\ge\tG_\nu(x,u)=\tG_\nu(b,u)
\ge G_\nu(b-)=m_\nu-\int_{[b,\infty)}z\,\nu(\d z)=m_\nu$, since $\supp\nu\subseteq[a,b]$ and $\nu(\{b\})=0$. 

Thus, in both subcases one has  $\tG_\nu(x,u)=m_\nu$. This completes the verification of \ref{prop:s(x,u)}(II)\eqref{fin}.

\fbox{Checking \ref{prop:s,nu}(II)\eqref{s<0}} This is similar to checking \eqref{fin}, so we shall provide fewer details. Take any $(x,u)\in(0,\infty]\times[0,1]$ such that $\sss(x,u)=0$. Then, by \ref{prop:s(x,u)}(II)(\ref{s<0}'), $x\in[0,a]$. If, moreover, $x<a$ then 
$\nu\big([0,x]\big)=0$; so, $0\le\tG_\nu(x,u)\le G_\nu(x)=\int_{(0,x]}z\,\nu(\d z)=0$, and we conclude that indeed $\tG_\nu(x,u)=0$. 

Let now $x=a$, so that $\sss(a,u)=0$. Consider the two possible subcases. 

\emph{Subcase \ref{s<0}1: $M_\sss(0)=
\big([0,a)\times[0,1]\big)\cup\{(a,0)\}$.} Then $u=0$, and so, $\tG_\nu(x,u)=\tG_\nu(a,0)=G_\nu(a-)=\int_{(0,a)}z\,\nu(\d z)=0$. 

\emph{Subcase \ref{s<0}2: $M_\sss(0)=
[0,a]\times[0,1]$.} Then $\sss(a,0)=0=\sss(a,1)$. So, $a\notin D$ and hence $\nu_2(\{a\})=0$ and $\nu(\{a\})=0$. Therefore, $0\le\tG_\nu(x,u)=\tG_\nu(a,u)
\le G_\nu(a)=\int_{(0,a]}z\,\nu(\d z)=0$, since $\supp\nu\subseteq[a,b]$ and $\nu(\{a\})=0$. 

Thus, in both subcases one has  $\tG_\nu(x,u)=0$. This completes the verification of \eqref{s<0}, and thus the entire proof of implication (II)$\implies$(I) in Case~1. 

\emph{Case 2: $a=b=\infty$.} Then, by \ref{prop:s(x,u)}(II)(\ref{s<0}'), $M_\sss(0)=[0,\infty]\times[0,1]$ and $M_\sss(-\infty)=\emptyset$. So, the function $\sss$ is the constant $0$. Then, letting $\nu=\de_0$, one sees that conditions \ref{prop:s,nu}(II)\eqref{nu-mass}--\eqref{nu,s cont2}, \eqref{mu<1} are all trivial, with $m_\nu=0$ and $\tG_\nu\equiv0$, whence conditions \ref{prop:s,nu}(II)(\ref{fin},\ref{s<0}) are also trivial. 

\emph{Case 3: $0<a=b<\infty$.} Then \ref{prop:s(x,u)}(II)(\ref{fin}') and \ref{prop:s(x,u)}(II)(\ref{s<0}') admit only the following possibility: $M_\sss(0)=
\big([0,a)\times[0,1]\big)\cup\{(a,0)\}$ and $M_\sss(-\infty)=
\{(a,1)\}\cup\big((a,\infty]\times[0,1]\big)$. Let now $\nu:=c\,\de_a$, where $c\in(0,\infty)$. Then conditions \ref{prop:s,nu}(II)(\ref{nu-mass},\ref{m<infty}) are trivial, with $m_\nu=ca$. By \ref{prop:s(x,u)}(II)(\ref{mu<1}'), one can satisfy \eqref{mu<1} by taking a small enough $c$.  

It remains to check conditions \ref{prop:s,nu}(II)\eqref{nu,s cont1}, \eqref{nu,s cont2}, \eqref{fin}, \eqref{s<0} -- in Case~3. 

\fbox{Checking \ref{prop:s,nu}(II)\eqref{nu,s cont1}} Assume here that $\nu(\{x\})=0$ and  $x\in(0,\infty]$. Then either $x\in(0,a)$ or $x\in(a,\infty]$. 
If $x\in(0,a)$ then $\sss(x,1)=0=\sss(x,0)$. 
If $x\in(a,\infty]$ then $\sss(x,1)=-\infty=\sss(x,0)$. This verifies \ref{prop:s,nu}(II)\eqref{nu,s cont1}. 

\fbox{Checking \ref{prop:s,nu}(II)\eqref{nu,s cont2}} Assume here that 
$\nu\big((x,y)\big)=0$ and $0\le x<y\le\infty$. 
Then either $0\le x<y\le a$ or $a\le x<y\le\infty$. 
If $0\le x<y\le a$ then $\sss(x,1)=0=\sss(y,0)$, since both points $(x,1)$ and $(y,0)$ are in $M_\sss(0)=
\big([0,a)\times[0,1]\big)\cup\{(a,0)\}$. 
Similarly, if $a\le x<y\le\infty$ then $\sss(x,1)=-\infty=\sss(y,0)$, since both points $(x,1)$ and $(y,0)$ are in $M_\sss(-\infty)=
\{(a,1)\}\cup\big((a,\infty]\times[0,1]\big)$. 
This verifies \ref{prop:s,nu}(II)\eqref{nu,s cont2}. 

The verification of conditions \ref{prop:s,nu}(II)\eqref{fin},  \eqref{s<0} in Case~3 is similar to that in Case~1, but simpler. 

Thus, the proof of implication (II)$\implies$(I) and thereby the entire proof of Proposition~\ref{prop:s(x,u)} is complete. 
\end{proof}

\begin{proof}[Proof of Proposition~\ref{prop:s(x,u)_uniq}]\ 

\fbox{Checking (I)$\implies$(II).} Assume that condition (I) holds. By Proposition~\ref{prop:s(x,u)}, here it suffices to check that condition \ref{prop:s(x,u)_uniq}(II)(u) holds. To obtain a contradiction, assume that \ref{prop:s(x,u)_uniq}(II)(u) is false. Then, by property \ref{prop:s,nu}(II)\eqref{nonincr}, there exist some $x$ and $y$ such that 
\begin{equation*}
\text{$0\le x<y\le\infty$ but $-\infty<\sss(x,1)=\sss(y,0)<0$. }	
\end{equation*}
Hence, by \eqref{eq:a,b}, $a\le x<y\le b$; in particular, $a<b$. Take now any $\tx$ and $\ty$ such that $x<\tx<\ty<y$. Then, again by property \ref{prop:s,nu}(II)\eqref{nonincr}, 
\begin{equation}\label{eq:s,x,tx,ty,y}
	\sss(x,1)\ge\sss(\tx,1)\ge\sss(\ty,0)\ge\sss(y,0)=\sss(x,1), 
\end{equation}
whence $\sss(\tx,1)=\sss(\ty,0)$. 
Also, $\sss(x,1)\ge\sss(\tx,0)\ge\sss(\tx,1)\ge\sss(y,0)=\sss(x,1)$, and so, 
$\sss(\tx,1)=\sss(\tx,0)$ for all $\tx\in(x,y)$; that is, $(x,y)\cap D=\emptyset$, where $D$ is the set defined by \eqref{eq:set D}. So, there exist $\tx$ and $\ty$ such that 
\begin{equation}\label{eq:cap D}
0\le a<\tx<\ty<b\le\infty,\quad
-\infty<\sss(\tx,1)=\sss(\ty,0)<0,\quad
[\tx,\ty]\cap D=\emptyset. 	
\end{equation}
Now, fix any such $\tx$ and $\ty$ and -- along with the measures $\nu_1$, $\nu_2$, and $\nu$ constructed in the proof of Proposition~\ref{prop:s(x,u)} -- consider the measures $\tnu_1$ and $\tnu$ defined by the formulas 
\begin{equation*}
	\tnu_1(A):=\nu_1\big(A\setminus(\tx,\ty)\big)\quad\text{for all $A\in\B(\R)$\quad and}\quad \tnu:=c\,(\tnu_1+\nu_2),
\end{equation*}
where $c$ again is a finite positive constant. 

Then one can verify that properties \ref{prop:s,nu}(II)\eqref{nu-mass}--\eqref{mu<1} hold with $\tnu$ in place of $\nu$. This verification is quite similar to that done when checking implication (II)$\implies$(I) of Proposition~\ref{prop:s(x,u)}, with the only (if any) substantial difference being in the verification of \eqref{nu,s cont2}. There, another case when the conjuncture of conditions $0\le x<y\le\infty$ and $\nu\big((x,y)\big)=0$ occurs is $\tx\le x<y\le\ty$. But then too (cf.\ \eqref{eq:s,x,tx,ty,y}), it easily follows that $\sss(x,1)=\sss(y,0)$. 

So, by Proposition~\ref{prop:s,nu}, there exist zero-mean probability measures $\mu$ and $\tmu$ such that for $\r:=\r_\mu$ and  $\trr:=\r_{\tmu}$ one has $\mu_+=\nu$, $\tmu_+=\tnu$, and $\r_+=\trr_+=\sss$. Moreover, by the last condition in \eqref{eq:cap D}, 
\begin{equation}\label{eq:tnu=0}
	\tnu\big((\tx,\ty)\big)=0.
\end{equation}

Fix now any $x\in(\tx,\ty)$ and let, for brevity, $\hat x:=\hat x_\mu (x,1)$. If $\hat x<x$ then $\mu\big((\hat x,x)\big)=\nu\big((\hat x,x)\big)\ge c\,\nu_1\big((\hat x,x)\big)>0$, since $c>0$ and $x\in(a,b)=\operatorname{interior}(\supp\nu_1)$; this contradicts properties (i) and (ii)(c) listed in Proposition~\ref{lem:hat}. So, by Proposition~\ref{lem:hat=rr},  
$x=\hat x_\mu (x,1)=\r\big(\r(x,1),v\big)$
for some $v\in[0,1]$. But, by condition (I) of Proposition~\ref{prop:s(x,u)_uniq}, $\r=\trr$, and so, $x=\trr\big(\trr(x,1),v\big)$. 
Therefore, by \eqref{eq:r}, 
\begin{equation}\label{eq:x=x+(h)}
x=x_{+,\tmu}(h)=x_{+,\tnu}(h)	\quad\text{for some $h\in[0,m_{\tnu}]$.}
\end{equation} 

Next, introduce $h_0:=G_{\tnu}(x)$. Then $G_{\tnu}(\tx)=h_0$ \big(because, by \eqref{eq:tnu=0}, $\tnu\big((\tx,x]\big)=0$\big). So, for each $h\in(h_0,m_{\tnu}]$, one has $G_{\tnu}(x)<h$, whence, by \eqref{eq:L1}, $x<x_{+,\tnu}(h)$. 
On the other hand, again by \eqref{eq:L1} and the mentioned relation $G_{\tnu}(\tx)=h_0$, for each $h\in[0,h_0]$ one has $G_{\tnu}(\tx)\ge h$, whence, again by \eqref{eq:L1}, 
$\tx\ge x_{+,\tnu}(h)$, and thus $x>x_{+,\tnu}(h)$. So, $x<x_{+,\tnu}(h)$ for each $h\in(h_0,m_{\tnu}]$, and $x>x_{+,\tnu}(h)$ for each $h\in[0,h_0]$. 
This is a contradiction with \eqref{eq:x=x+(h)}. 
Thus, condition \ref{prop:s(x,u)_uniq}(II)(u) follows, so that implication (I)$\implies$(II) of Proposition~\ref{prop:s(x,u)_uniq} is verified. 


\fbox{Checking (II)$\implies$(I)} Assume that condition (II) holds. We have to show that there exists a \emph{unique} function $\r$ such that $\r_+=\sss$ and $\r$ coincides with the reciprocating function $\r_\mu$ of some zero-mean probability measure $\mu$ on $\B(\R)$. The existence here follows immediately from Proposition~\ref{prop:s(x,u)}. To verify implication (II)$\implies$(I), 
it remains to prove the uniqueness of $\r$. We shall do it in steps. 

\fbox{\emph{Step~1}.} Here we shall prove that the values of $\r(z,v)$ are uniquely determined for all $(z,v)\in(-\infty,0)\times(0,1]$ such that $z$ is in the image, say $S$, of the set $(0,\infty]\times[0,1]$ under the mapping $\sss$. 
More specifically, we shall prove at this step that, if $z=\sss(x,u)\in(-\infty,0)$ for some $(x,u)\in(0,\infty]\times[0,1]$, then $\r(z,v)=x$ for all $v\in(0,1]$. 
Toward that end, fix any any $v\in(0,1]$ and any $(x,u)\in(0,\infty]\times[0,1]$ such that 
\begin{equation}\label{eq:z}
	z:=\sss(x,u)\in(-\infty,0);
\end{equation}
then one also has $\r(x,u)=\sss(x,u)=z$. 
Next, introduce
$$y:=\r(z,v).$$
Then $y\in[0,\infty]$ and, by Proposition~\ref{lem:hat=rr}, there exists some $w\in[0,1]$ such that $\r(y,w)=\r\big(\r(z,v),w\big)=\hat x(z,v)$. On the other hand, $z=\r(x,u)=x_-(h)$ for $h:=\tG(x,u)$. So, 
by property (iv)(f) of Proposition~\ref{lem:hat} and because $v\ne0$, one has $\hat x(z,v)=z$. So, recalling that $\r(y,w)=\hat x(z,v)$, one has
\begin{equation}\label{eq:z=s(y,w)}
  z=\r(y,w)=\sss(y,w).
\end{equation}

Next, consider two cases: $y>x$ and $x>y$, to show that either one effects a contradiction.

\emph{Case 1: $y>x$, } 
so that $0\le x<y\le\infty$. Then, by condition~\ref{prop:s(x,u)_uniq}(II)(u), one of the following two  subcases must take place:
	$\sss(x,1)>\sss(y,0)$ or 
	$\sss(x,1)=\sss(y,0)\in\{-\infty,0\}$. 

\emph{Subcase 1.1: $\sss(x,1)>\sss(y,0)$.} Then, by property \ref{prop:s,nu}(II)\eqref{nonincr} of $\sss$, \eqref{eq:z}, and \eqref{eq:z=s(y,w)}, one has
\begin{equation}\label{eq:chain-z}
 \sss(x,1)\le\sss(x,u)=z=\sss(y,w)\le\sss(y,0)<\sss(x,1),
\end{equation}
which is a contradiction. 

\emph{Subcase 1.2: $\sss(x,1)=\sss(y,0)\in\{-\infty,0\}$.} Then the ``non-strict'' version of \eqref{eq:chain-z}, with the sign $<$ replaced by $\le$, still holds, whence $z\in\{-\infty,0\}$, which contradicts the assumption $(z,v)\in(-\infty,0)\times(0,1]$ made above for Step~1. 


\emph{Case 2: $x>y$.} This case is treated similarly to Case~1, with the roles of the pairs $(x,u)$ and $(y,w)$ interchanged. 

From this consideration of Cases~1 and 2, it follows that $y=x$, that is,  $\r(z,v)=x$. 
This completes Step~1.

\fbox{\emph{Step~2}.} Here we shall prove that the values of $\r(z,v)$ are uniquely determined for all $(z,v)\in(-\infty,0)\times(0,1]$, whether or not $z$ is in the image, $S$, of the set $(0,\infty]\times[0,1]$ under the mapping $\sss$. Toward that end, fix any $(z,v)\in(-\infty,0)\times(0,1]$ and introduce 
$$L_z:=\{x\in[0,\infty]\colon\sss(x,0)\ge z\}\quad\text{and}\quad
x_z:=\sup L_z.$$
Note that $0\in L_z$, so that $L_z\ne\emptyset$ and $x_z\in[0,\infty]$. By the monotonicity and left-continuity properties \ref{prop:s,nu}(II)(\ref{nonincr},\ref{x-l.c}), $L_z=[0,x_z]$, so that 
$$\sss(x,0)\ge z\ \forall x\in[0,x_z]\quad\text{and}\quad
\sss(x,0)<z\ \forall x\in(x_z,\infty].$$
Let next 
$$u_z:=\sup\{u\in[0,1]\colon\sss(x_z,u)\ge z\}.$$
Then $u_z\in[0,1]$ (because $\sss(x_z,0)\ge z$) and, by the monotonicity and left-continuity properties \ref{prop:s,nu}(II)(\ref{nonincr},\ref{u-l.c}), 
$$\sss(x_z,u)\ge z\ \forall u\in[0,u_z]\quad\text{and}\quad
\sss(x_z,u)<z\ \forall u\in(u_z,1].$$
Thus, in terms of the lexicographic order, 
\begin{equation}\label{eq:lex}
	\sss(x,u)
	\begin{cases}
\ge z &\text{ if } (0,0)\preccurlyeq(x,u)\preccurlyeq(x_z,u_z),\\
< z &\text{ if } (x,u)\succ(x_z,u_z),
	\end{cases}
\end{equation}
where, as usual, $(x,u)\preccurlyeq(y,v)$ means that either $(x,u)\prec(y,v)$ or $(x,u)=(y,v)$, and $(x,u)\succ(y,v)$ means that $(y,v)\prec(x,u)$. 

In particular, $\sss(x_z,u_z)\ge z$. 
So, by Step~1, w.l.o.g.\ we may, and shall, assume that 
\begin{equation}\label{eq:check z}
	\check z:=\sss(x_z,u_z)>z. 
\end{equation} 
Note that the pair $(x_z,u_z)$ is uniquely determined by $z$ and the function $\sss$, and hence so is $\check z$. 
Note also that \eqref{eq:check z} implies that $-\infty<\check z\le0$. 

Therefore, to complete Step~2, it suffices to verify that 
$\r(z,v)=\r(\check z,1)$	for all $v\in[0,1]$. 
Indeed, then one will have $\r(z,v)=0$ if $\check z=0$ and (by Step~1 and inequalities $-\infty<\check z\le0$) $\r(z,v)=x_z$ if $\check z\ne0$. 
 
So, to obtain a contradiction, assume that $\r(z,v)\ne\r(\check z,1)$	for some $v\in[0,1]$. Then (cf.\ the monotonicity property \ref{prop:s,nu}(II)\eqref{nonincr}), by \eqref{eq:check z}, $\r(z,v)>\r(\check z,1)$, and so, 
\begin{equation}\label{eq:r>r check}
 \r(z,1)>\r(\check z,1). 
\end{equation}
Let us now consider separately the only two possible cases: $\check z=0$ and $-\infty<\check z<0$. 

\emph{Case~1: $\check z=0$.} Then $0=\check z=\sss(x_z,u_z)$, so that, by \eqref{eq:lex}, \ref{prop:s,nu}(II)\eqref{nonincr}, 
and \eqref{eq:M_s}, $M_\sss(0)=
\{(x,u)\colon(0,0)\preccurlyeq(x,u)\preccurlyeq(x_z,u_z)\}=
[0,x_z)\times[0,1]\cup\{x_z\}\times[0,u_z]$. Comparing this with property \ref{prop:s(x,u)}(II)(\ref{s<0}'), one concludes that either $u_z=0$ or $u_z=1$. Let us show that either of these subcases effects a contradiction. 

\emph{Subcase~1.1: $u_z=0$.} Then, in view of \eqref{eq:lex} and equalities $\sss(x_z,u_z)=\check z=0$, one has 
\begin{equation}\label{eq:sss...}
\sss
	\begin{cases}
=0 &\text{ on }[0,x_z)\times[0,1]\cup\{(x_z,0)\}, \\
<z &\text{ on }\{x_z\}\times(0,1]\cup(x_z,\infty]\times[0,1].
	\end{cases}
\end{equation} 
In particular, $0=\sss(x_z,0)=x_-\big(G(x_z-)\big)$, where the functions $x_-$ and $G$ pertain to any given zero-mean probability measure $\mu$ such that $(\r_\mu)_+=\sss$. 
So, by the strict positivity property \eqref{x+-pos} listed in Proposition~\ref{lem:left-cont}, $G(x_z-)=0$. Take now any $u\in(0,1]$; then, by \eqref{eq:sss...}, $z>\sss(x_z,u)=x_-\big(\tG(x_z,u)\big)$ and hence, by \eqref{eq:L1-}, $G(z)<\tG(x_z,u)$. So, $0\le G(z)\le\lim_{u\downarrow0}\tG(x_z,u)=G(x_z-)=0$, whence $G(z)=0$ and $\r(z,1)=x_+\big(G(z)\big)=x_+(0)=0$, which contradicts inequality \eqref{eq:r>r check}, since $\r(\check z,1)=\r(0,1)=0$. 

\emph{Subcase~1.2: $u_z=1$.} This subcase is similar to Subcase~1.1. Indeed, here 
\begin{equation}\label{eq:sss...'}
\sss
	\begin{cases}
=0 &\text{ on }[0,x_z]\times[0,1], \\
<z &\text{ on }(x_z,\infty]\times[0,1].
	\end{cases}
\end{equation}  
In particular, $0=\sss(x_z,1)=x_-\big(G(x_z)\big)$, whence $G(x_z)=0$. Also, for all $(x,u)\in(x_z,\infty]\times[0,1]$, \eqref{eq:sss...'} (or even \eqref{eq:lex}) yields $z>\sss(x,u)=x_-\big(\tG(x,u)\big)$ and hence, by \eqref{eq:L1-},  $G(z)<\tG(x,u)\le G(x)$. So, 
$0\le G(z)\le\lim_{x\downarrow x_z}G(x)=G(x_z)=0$, whence $G(z)=0$ and $\r(z,1)=0$, which again contradicts inequality \eqref{eq:r>r check}. 
 
\emph{Case~2: $-\infty<\check z<0$.} Then, by Step~1, $\r(\check z,1)=x_z$, so that, in view of \eqref{eq:r>r check}, $\r(z,1)>\r(\check z,1)=x_z$. So, by \eqref{eq:r}, $x_+\big(G(z)\big)>
x_z$. Hence, by 
\eqref{eq:L1}, $
G(x_z)<G(z)$. 
On the other hand, just in the consideration of Subcase~1.2, one can see that $G(z)\le G(x_z)$, which contradicts the just established inequality $
G(x_z)<G(z)$.  

All these contradictions demonstrate that indeed $\r(z,v)=\r(\check z,1)$	for all $v\in[0,1]$, which completes Step~2 in the proof of implication (II)$\implies$(I) of Proposition~\ref{prop:s(x,u)_uniq}. 

\fbox{\emph{Step~3}.} Here we shall conclude the proof of implication (II)$\implies$(I). 
By Step~2, the values of $\r(z,v)$ are uniquely determined for all $(z,v)\in(-\infty,0)\times(0,1]$. Let now $\mu_1$ and $\mu_2$ be any two zero-mean probability measures such that $(\r_{\mu_1})_+=\sss=(\r_{\mu_2})_+$, and let $A_1$ and $A_2$ be, respectively, the sets of all the atoms of $\mu_1$ and $\mu_2$. Then the set $A:=A_1\cup A_2$ is countable. Also, for all $z\in(-\infty,0)\setminus A$ one has $\r_{\mu_1}(z,0)=\r_{\mu_1}(z,1)=r_{\mu_2}(z,1)=r_{\mu_2}(z,0)$. On the other hand (cf.\ condition~\ref{prop:s,nu}(II)\eqref{x-l.c}), for any reciprocating function $\r$, the function $z\mapsto\r(z,0)$ is right-continuous on $[-\infty,0)$. Since the set $(-\infty,0)\setminus A$ is dense in a right neighborhood of any given point 
in $[-\infty,0)$, 
it follows that $\r_{\mu_1}(z,0)=r_{\mu_2}(z,0)$ for all $z\in[-\infty,0)$, which in turn also implies that for all $u\in[0,1]$ one has $\r_{\mu_1}(-\infty,u)=r_{\mu_2}(-\infty,u)$, since $\tG_\mu(-\infty,u)=G_\mu\big((-\infty)+\big)=\tG_\mu(-\infty,0)$ for any 
probability measure $\mu$ on $\B(\R)$. 

Thus, $\r_{\mu_1}(z,u)=r_{\mu_2}(z,u)$ for all $u\in[0,1]$ and all  $z\in[-\infty,0)$. The same trivially holds for all $z\in[0,\infty)$ \big(since $(\r_{\mu_1})_+=(\r_{\mu_2})_+=\sss$\big). 

This completes the proof of implication (II)$\implies$(I) of Proposition~\ref{prop:s(x,u)_uniq}. 

It remains to prove the statement about $\supp(\mu_+)$. Assume that indeed either one of the two mutually equivalent conditions, (I) or (II), holds. Let $\mu$ be 
any zero-mean probability measure $\mu$ on $\B(\R)$ such that $(\r_\mu)_+=\sss$. For brevity, let us again omit subscript ${}_\mu$ everywhere and write $a$ and $b$ for $a_\sss$ and $b_\sss$. Then for all $(x,u)\in(b,\infty)\times[0,1]$ one has $\sss(x,u)=-\infty$, whence, by \ref{prop:s,nu}(II)\eqref{fin}, $G(x)=m$. So, $G(b)=G(b+)=m$ if $b<\infty$. On the other hand, if $b=\infty$, then $G(b)=G(\infty)=m$. Therefore, in all cases $G(b)=m=G(\infty)$, which yields $\mu\big((b,\infty)\big)=0$. 

Similarly, for all $(x,u)\in[0,a)\times[0,1]$ one has $\sss(x,u)=0$, whence, by \ref{prop:s,nu}(II)\eqref{s<0}, $G(x)=0$. So, $G(a-)=0=G(0)$ if $a>0$. On the other hand, if $a=0$, then $G(a-)=G(0)=0$. Therefore, in all cases $G(a-)=0=G(0)$, which yields $\mu\big((0,a)\big)=0$. 

Thus, $\supp(\mu_+)\subseteq[a,b]$. To obtain a contradiction, assume that $\supp(\mu_+)\ne[a,b]$. Then there exist $x$ and $y$ such that $a<x<y<b$ and $\mu\big((x,y)\big)=0$, whence $\nu\big((x,y)\big)=0$. So, by \ref{prop:s,nu}(II)\eqref{nu,s cont2}, $\sss(x,1)=\sss(y,0)$. Consequently, by \ref{prop:s(x,u)_uniq}(II)(u), $\sss(x,1)$ is either $-\infty$ or $0$. But this contradicts condition \eqref{eq:a,b}, since $a<x<b$.  
This contradiction shows that indeed  $\supp(\mu_+)=\R\cap[a_\sss,b_\sss]$. 
The proof of Proposition~\ref{prop:s(x,u)_uniq} is now complete.
\end{proof}

\begin{proof}[Proof of Proposition~\ref{prop:mu-,uniq}]\ Note that, in the proof of implication (I)$\implies$(II) in Proposition~\ref{prop:s(x,u)_uniq}, condition \ref{prop:mu-,uniq}(II)(u) was used only in Step~1, where it was proved that, if $z=\sss(x,u)\in(-\infty,0)$ for some $(x,u)\in[0,\infty]\times[0,1]$, then $\r(z,v)=x$ for all $v\in(0,1]$. 

So, here it suffices to verify that, if $\mu$ is any zero-mean probability measure on $\B(\R)$ such that $\mu_-$ is non-atomic and $(\r_\mu)_+=\sss$, then for all $(x,u)\in[0,\infty]\times[0,1]$
\begin{equation}\label{eq:r(s)}
	\r\big(\sss(x,u)\big)=\inf\{y\in[0,x]\colon\sss(y,u)=\sss(x,u)\};
\end{equation}
here $\r:=\r_\mu$, and we write $\r(z)$ instead of $\r(z,v)$ for any $(z,v)\in[-\infty,0]\times[0,1]$, which is correct since $\mu_-=\mu|_{\B((-\infty,0))}$ is non-atomic and $\r(0,v)=0$ for all $v\in[0,1]$. 
To prove \eqref{eq:r(s)}, take any $(x,u)\in[0,\infty]\times[0,1]$ and introduce
\begin{equation}\label{eq:z,...}
	z:=\sss(x,u),\quad h:=\tG(x,u),\quad E:=\{y\in[0,x]\colon\sss(y,u)=z\};
\end{equation}
again, the subscript ${}_\mu$ is omitted everywhere here. 
Then $z\in[-\infty,0]$ and $x\in E$. Also, by \eqref{eq:r} and $\r_+=\sss$, 
\begin{equation}\label{eq:z=x_-(h)}
	z=x_-(h),
\end{equation}
whence, by \eqref{eq:bet-}, $G(z+)\le h\le G(z)$, and so, $G(z)=h$ \big(since $z\le0$ and $\mu_-$ is non-atomic, so that $G$ is continuous on $[-\infty,0]$\big). Therefore,
	$\r(z)=x_+\big(G(z)\big)=x_+(h)$. 
So, by \eqref{eq:z,...}, to prove \eqref{eq:r(s)} it suffices to check that $x_+(h)=\inf E$.

Take now any $y\in E$. Then, by \eqref{eq:r}, $x_-\big(\tG(y,u)\big)=\r(y,u)=\sss(y,u)=z$. Hence, by \eqref{eq:z=x_-(h)} and \eqref{eq:nonat-}, $\tG(y,u)=h$ \big(because $\mu_-$ is non-atomic and hence $z\,\mu(\{z\})=0$\big). So, by \eqref{eq:H}, $G(y)\ge h$ and, then by \eqref{eq:L1}, $x_+(h)\le y$ -- for any $y\in E$. It follows that $x_+(h)\le\inf E$. So, 
it remains to show that $x_+(h)\ge\inf E$. Assume the contrary: $x_+(h)<\inf E$. Then also $x_+(h)<x$, since $x\in E$. So, there exists some $y_1$ such that $x_+(h)<y_1<x$ and $y_1\notin E$, so that, by \eqref{eq:z,...}, $\sss(y_1,u)\ne z=\sss(x,u)$. 
But $\sss(y_1,u)\ge\sss(x,u)$, because $0<y_1<x$ and the function $\sss=\r_+$ is non-increasing, by \eqref{prop:s,nu}(II)\eqref{nonincr}. 
It follows that
\begin{equation}\label{eq:s(y)>z}
	\sss(y_1,u)>z.
\end{equation}

On the other hand, by \eqref{eq:bet+}, $G\big(x_+(h)\big)\ge h\ge G\big(x_+(h)-\big)$. So, there exists some $v\in[0,1]$ such that $h=\tG\big(x_+(h),v\big)$, whence $\sss\big(x_+(h),v\big)=x_-(h)=z$. 
Also, again by \eqref{prop:s,nu}(II)\eqref{nonincr}, the condition  $x_+(h)<y_1$ implies that $\sss(y_1,u)\le\sss\big(x_+(h),v\big)=z$, which contradicts \eqref{eq:s(y)>z}. 
\end{proof}

\begin{proof}[Proof of Proposition~\ref{prop:s(x),nu}]\ 

\fbox{Checking (I)$\implies$(II).} Here it is assumed that condition (I) of Proposition~\ref{prop:s(x),nu} takes place. By Proposition~\ref{prop:s,nu}, 
conditions \ref{prop:s,nu}(II)\eqref{0}--\eqref{x-l.c}, \eqref{nu-mass}, \eqref{m<infty}, \eqref{fin}, and \eqref{s<0} \big(with $\sss(x)$ and $G(x)$ in place of $\sss(x,u)$ and $\tG(x,u)$\big) will then hold. So, to complete the proof of implication (I)$\implies$(II), it remains to check conditions  (\ref{nu,s cont2}') and (\ref{mu<1}''). 

\fbox{Checking (\ref{nu,s cont2}').} By \ref{prop:s,nu}(II)\eqref{nu,s cont2}, only implication $\Longleftarrow$ in place of $\iff$ needs to be proved here. 
To obtain a contradiction, suppose that 
$0\le x<y\le\infty$, $\nu\big((x,y)\big)>0$, and $\sss(x)=\sss(y)$, so that  $G(x)<G(y)$ and $x_-\big(G(x)\big)=x_-\big(G(y)\big)=:z\le0$ (for brevity, here we omit the subscript ${}_\mu$). Then $G(z)\ge G(y)>G(x)\ge G(z+0)$, by \eqref{eq:L1} and \eqref{eq:bet-}. So, $\mu(\{z\})>0$, which contradicts the condition that measure $\mu$ is non-atomic. 

\fbox{Checking (\ref{mu<1}'').} The verification of this is the same as that of condition \ref{prop:s,nu}(II)\eqref{mu<1}, taking also into account in \eqref{eq:mu<1} that 
$\mu\big(\R\setminus\{0\}\big)=1$, since $\mu$ is non-atomic. 

\fbox{Checking (II)$\implies$(I).} Here it is assumed that condition (II) of Proposition~\ref{prop:s(x),nu} takes place. Then, by Proposition~\ref{prop:s,nu}, there exists a unique zero-mean probability measure $\mu$ on $\B(\R)$ whose  
reciprocating function $\r=\r_\mu$ satisfies the conditions $\mu_+=\nu$ and $\r_+=\sss$. It remains to show that $\mu$ is non-atomic. 
To obtain a contradiction, suppose that $\mu(\{z\})>0$ for some $z\in\R$. Then $z\in(-\infty,0)$, since measure $\mu_+=\nu$ is non-atomic. Introduce $h_1:=G(z+)$ and $h_2:=G(z)$. (Here as well, we omit the subscript ${}_\mu$.) Then $\mu(\{z\})>0$ implies that $0\le h_1<h_2\le m$. Take any $h\in(h_1,h_2)$ and let $x:=x_+(h)$  and $y:=x_+(h_2)$. Then $0\le x\le y\le\infty$. Also, by \eqref{eq:bet+}, $G(x)=h$ and $G(y)=h_2$, since $\mu_+=\nu$ is non-atomic. So, $G(x)<G(y)$, whence $\nu\big((x,y)\big)>0$ and $x<y$. So, by (\ref{nu,s cont2}'), $\sss(y)<\sss(x)$. On the other hand, $\sss(x)=x_-\big(G(x)\big)=x_-(h)$ and $\sss(y)=x_-\big(G(y)\big)=x_-(h_2)\ge z$, by \eqref{eq:L1-}. 
Also, taking any $w\in(z,0)$, one has $G(w)\le G(z+)=h_1<h$ and hence, again by \eqref{eq:L1-}, $x_-(h)<w$. This implies that $\sss(x)=x_-(h)\le z$. Thus, $z\le\sss(y)<\sss(x)\le z$, a contradiction. 
\end{proof}

\begin{proof}[Proof of Proposition~\ref{prop:s(x)}]\ 

\fbox{Checking (I)$\implies$(II).} Here it is assumed that condition (I) of Proposition~\ref{prop:s(x)} takes place. By Proposition~\ref{prop:s(x),nu}, 
conditions \ref{prop:s,nu}(II)\eqref{0}--\eqref{x-l.c} \big(with $\sss(x)$ in place of $\sss(x,u)$\big) will then hold. So, to complete the proof of implication (I)$\implies$(II), it remains to check condition (\ref{nu,s cont2}''). 
In view of the monotonicity condition \ref{prop:s,nu}(II)\eqref{nonincr}, it is enough to show that the conjuncture of conditions  
$0\le x<y\le\infty$, $\sss(x+)<\sss(x)$, and 
$\sss(y)=\sss(x+)$ effects a contradiction. Now, conditions $\sss(y)=\sss(x+)$ and \ref{prop:s,nu}(II)\eqref{nonincr} imply that for all $z\in(x,y)$ one has $\sss(z)=\sss(y)$ and hence, by
\ref{prop:s(x),nu}(II)(\ref{nu,s cont2}'), $\nu\big((z,y)\big)=0$, for $\nu:=\mu_+$. 
So, $\nu\big((x,y)\big)=\lim_{z\downarrow x}\nu\big((z,y)\big)=0$. Using \ref{prop:s(x),nu}(II)(\ref{nu,s cont2}') again, one has $\sss(x)=\sss(y)$. This contradicts the assumptions $\sss(x+)<\sss(x)$ and 
$\sss(y)=\sss(x+)$. 

\fbox{Checking (II)$\implies$(I).} Here it is assumed that condition (II) of Proposition~\ref{prop:s(x)} takes place. 
Let $\vpi(z):=\frac z{1+z}$ for $z\in[0,\infty)$ and $\vpi(\infty):=1$. 
Then, in view of conditions \ref{prop:s,nu}(II)\eqref{0}--\eqref{x-l.c}, the formulas $\si\big((-\infty,0)\big):=0$ and 
\begin{equation}\label{eq:si,vpi}
	\si\big([0,x)\big):=\vpi\big(-\sss(x)\big)
\end{equation}
for all $x\in[0,\infty]$ uniquely determine a finite nonnegative measure $\si$ on $\B(\R)$. 

Observe that $\supp\si$ does not contain isolated points. Indeed, suppose that there exists an isolated point $x\in\supp\si$. Then there exists an open set $O\subseteq\R$ such that $x\in O$ but $(O\setminus\{x\})\cap\supp\si=\emptyset$. It follows that $\si(\{x\})=\si(O)>0$, and so, $\vpi\big(-\sss(x+)\big)-\vpi\big(-\sss(x)\big)=\si(\{x\})>0$, whence $\sss(x+)<\sss(x)$. Now, for any $y\in(x,\infty]$,  (\ref{nu,s cont2}'') yields $\sss(y)<\sss(x+)$, so that $\si\big((x,y)\big)>0$ and $(x,y)\cap\supp\si\ne\emptyset$. 
This contradicts the assumption that $x$ is an isolated point of $\supp\si$. 

So, by a well-known fact (see e.g.\ \cite[Corollary~6.2]{varad}, or \cite[Theorem~5.3]
{abram1} together with \cite[Problem~11.4.5(b)]{abram2}), there exists a non-atomic probability measure, say $\nu_0$, on $\B(\R)$ with 
\begin{equation}\label{eq:supp}
	\supp\nu_0=\supp\si.
\end{equation}

Now one can see that condition \ref{prop:s(x),nu}(II)(\ref{nu,s cont2}') is satisfied for $\nu_0$ in place of $\nu$. Indeed, take any $x$ and $y$ such that $0\le x<y\le\infty$. Then \eqref{eq:supp}, \eqref{eq:si,vpi}, and (\ref{nu,s cont2}'') imply
\begin{equation*}
	\nu_0\big((x,y)\big)=0\iff\si\big((x,y)\big)=0
	\iff\sss(x+)=\sss(y)\implies\sss(x+)=\sss(x).
\end{equation*}
So, $\nu_0\big((x,y)\big)=0$ implies $\sss(x)=\sss(y)$. 
Vice versa, in view of the monotonicity, $\sss(x)=\sss(y)$ implies that $\sss$ is constant on $[x,y]$, whence $\sss(x+)=\sss(y)$, $\si\big((x,y)\big)=0$, and thus $\nu_0\big((x,y)\big)=0$. 
This verifies condition \ref{prop:s(x),nu}(II)(\ref{nu,s cont2}') for $\nu_0$ in place of $\nu$. 

Next, by 
\eqref{eq:M_s} and \eqref{eq:a,b}, $M_\sss(0)=[0,a]\times[0,1]$. Also, $\si\big([0,a)\big)=\vpi\big(-\sss(a)\big)=0$, whence, by \eqref{eq:supp},  $\nu_0\big([0,a)\big)=0$. Since $\nu_0$ is non-atomic, $\nu_0\big([0,a]\big)=0$. 
Also, $\nu_0\big((-\infty,0)\big)=0$, since $\supp\nu_0=\supp\si\subseteq[0,\infty)$. 
So, $\sss<0$ on $\supp\nu_0$. 
Thus, for any $c\in(0,\infty)$, the formula
\begin{equation*}
	\nu(A):=c\int_A\frac{\nu_0(\d z)}{1+z-\frac z{\sss(z)}}\quad
	\text{for all $A\in\B(\R)$}
\end{equation*}
\big($\frac z{\sss(z)}:=0$ if $\sss(z)=-\infty$\big) 
correctly defines a finite non-atomic measure on $\B(\R)$, and at that the measures $\nu$ and $\nu_0$ are absolutely continuous relative each other. 

Hence, $\nu$ satisfies condition \ref{prop:s(x),nu}(II)(\ref{nu,s cont2}'). Also, conditions \ref{prop:s,nu}(II)(\ref{nu-mass},\ref{m<infty})
obviously hold, as well as condition \ref{prop:s(x),nu}(II)(\ref{mu<1}'') -- provided that $c$ is chosen appropriately. 

To complete the proof of implication (II)$\implies$(I), 
it remains to check conditions \ref{prop:s,nu}(II)(\ref{fin},\ref{s<0}) -- for $\sss(x)$ in place of $\sss(x,u)$.  

Checking \ref{prop:s,nu}(II)\eqref{fin}. Assume that $x\in[0,\infty)$ and $\sss(x)=-\infty$. Then, by \ref{prop:s,nu}(II)\eqref{nonincr}, $\sss(\infty)=-\infty=\sss(x)$. So, by the already verified condition \ref{prop:s(x),nu}(II)(\ref{nu,s cont2}'), $\nu\big((x,\infty)\big)=0$. 
 Hence, $G_\nu(x)=G_\nu(\infty)-\int_{(x,\infty)}z\,\nu(\d z)=G_\nu(\infty)=m_\nu$. 

Checking \ref{prop:s,nu}(II)\eqref{s<0}. Assume that $x\in(0,\infty]$ and $\sss(x)=0$. Then, by \ref{prop:s,nu}(II)\eqref{0}, $\sss(0)=0=\sss(x)$. 
So, by \ref{prop:s(x),nu}(II)(\ref{nu,s cont2}'), $\nu\big((0,x)\big)=0$.
Hence and because $\nu$ is non-atomic, $\nu\big((0,x]\big)=0$. It follows that $G_\nu(x)=\int_{(0,x]}z\,\nu(\d z)=0$. 

Thus, implication (II)$\implies$(I) is proved. It remains to note that the uniqueness of $\r$ follows by Proposition~\ref{prop:mu-,uniq}. 
\end{proof}

\begin{proof}[Proof of Proposition~\ref{prop:s(x),nu conn}]\ 

\fbox{Checking (I)$\implies$(II):} Assume that condition (I) takes place. Then conditions $\sss(0)=0$, $\nu\big((-\infty,0)\big)=0$, $m_\nu=G_\nu(\infty)<\infty$, and \ref{prop:s(x),nu}(II)(\ref{mu<1}'') 
hold by Proposition~
\ref{prop:s(x),nu}. 
Condition (\ref{nu,s cont2}'') is obvious, since $\nu=\mu_+$ and $\supp\mu=I$. 
So, to complete the proof of implication (I)$\implies$(II), it remains to check conditions (\ref{nonincr}'), (\ref{x-l.c}'), and (\ref{fin}''). At this point we shall do more: check conditions 
\ref{prop:r cont charact}(II)(\ref{nonincr}'',\ref{x-l.c}'',\ref{fin}''',$\r\circ\r$).

Note that the function $G$ is continuous on $[-\infty,\infty]$, since $\mu$ is non-atomic (again, we omit the subscript ${}_\mu$ everywhere here). Because $\supp\mu=I=\R\cap[a_-,a_+]$, the restriction (say $G_+$) of $G$ to the interval $[0,a_+]$ is strictly increasing and maps $[0,a_+]$ onto $[0,m]$. 
Similarly, the restriction (say $G_-$) of $G$ to $[a_-,0]$ is strictly decreasing and maps $[a_-,0]$ onto $[0,m]$. 
Hence, the function $x_+\colon[0,m]\to\R$ is continuous and strictly increasing, as the inverse to the continuous and strictly increasing function $G_+$, and $x_+$ maps $[0,m]$ onto $[0,a_+]$. Similarly, the function $x_-\colon[0,m]\to\R$ is continuous and strictly decreasing, and it maps $[0,m]$ onto $[a_-,0]$. Now conditions \ref{prop:r cont charact}(II)(\ref{nonincr}'',\ref{x-l.c}'') follow by \eqref{eq:r}, since $\r(x)=x_+\big(G(x)\big)$ for $x\in[0,\infty]$ and $\r(x)=x_-\big(G(x)\big)$ for $x\in[-\infty,0]$. 

Take now any $x\in[a_-,0]$ and let $y:=\r(x)=x_+\big(G(x)\big)=G_+^{-1}\big(G_-(x)\big)\in[0,a_+]$. Then $G_+(y)=G_-(x)$ and 
$\r\big(\r(x)\big)=\r(y)=x_-\big(G(y)\big)=G_-^{-1}\big(G_+(y)\big)=G_-^{-1}\big(G_-(x)\big)=x$. 
Similarly, $\r\big(\r(x)\big)=x$ for all $x\in[0,a_+]$. This proves condition \ref{prop:r cont charact}(II)($\r\circ\r$) as well. 

Further, take any $x\in[-\infty,a_-]$. Then $G(x)=G(a_-)=m$, since $\supp\mu=I=\R\cap[a_-,a_+]$ does not intersect with $[-\infty,a_-)$. Hence, $\r(x)=x_+\big(G(x)\big)=G_+^{-1}(m)=a_+$. Similarly, $\r(x)=a_-$ for all $x\in[a_+,\infty]$. This proves condition \ref{prop:r cont charact}(II)(\ref{fin}''') and thus completes the entire proof of implication (I)$\implies$(II). Moreover, we have shown that (I) implies conditions \ref{prop:r cont charact}(\ref{nonincr}'',\ref{x-l.c}'',\ref{fin}''',$\r\circ\r$). 

\fbox{Checking (II)$\implies$(I):} Assume that condition (II) takes place. Then, by Proposition~\ref{prop:s(x),nu}, it suffices to check conditions  \ref{prop:s,nu}(II)(\ref{nonincr},\ref{x-l.c},\ref{fin},\ref{s<0}), \ref{prop:s(x),nu}(II)(\ref{nu,s cont2}'), and $\supp\mu=I$. Conditions~\ref{prop:s,nu}(II)(\ref{nonincr}) and \ref{prop:s,nu}(II)(\ref{x-l.c}) follow immediately from \ref{prop:s(x),nu conn}(II)(\ref{nonincr}',\ref{fin}'') and, respectively, \ref{prop:s(x),nu conn}(II)(\ref{x-l.c}',\ref{fin}'').

\fbox{Checking \ref{prop:s,nu}(II)(\ref{fin}):} Take any $x\in[0,\infty)$ such that $G_\nu(x)<m_\nu$. Then $x\in[0,a_+)$; indeed, in view of \ref{prop:s(x),nu conn}(II)(\ref{nu,s cont2}'') -- and because $\nu$ is non-atomic and hence $\nu(\{a_+\})=0$, one has $G_\nu(y)=G_\nu(\infty)=m_\nu$ for all $y\in[a_+,\infty]$. So, by \ref{prop:s(x),nu conn}(II)(\ref{nonincr}'), $\sss(x)>\sss(a_+)\ge-\infty$, whence $\sss(x)>-\infty$. 

\fbox{Checking \ref{prop:s,nu}(II)(\ref{s<0}):} Take any $x\in(0,\infty]$. Then, by \ref{prop:s(x),nu conn}(II)(\ref{fin}'',\ref{nonincr}'), $\sss(x)=\sss(x\wedge a_+)<\sss(0)=0$, whence $\sss(x)<0$. 

\fbox{Checking \ref{prop:s(x),nu}(II)(\ref{nu,s cont2}'):} Take any $x$ and $y$ such that $0\le x<y\le\infty$. We have to check the equivalence $\nu\big((x,y)\big)=0\ \iff \sss(x)=\sss(y)$. 

Assume here first that $\nu\big((x,y)\big)=0$. Then $(x,y)\cap[0,a_+]=\emptyset$, by \ref{prop:s(x),nu conn}(II)(\ref{nu,s cont2}''). So, $a_+\le x<y\le\infty$, whence, by by \ref{prop:s(x),nu conn}(II)(\ref{fin}''), $\sss(x)=a_-=\sss(y)$. 

Vice versa, assume that $\sss(x)=\sss(y)$. Then $a_+\le x<y\le\infty$ \big(otherwise, one would have $0\le x<a_+$, and so, by \ref{prop:s(x),nu conn}(II)(\ref{nonincr}',\ref{fin}''), $\sss(x)>\sss(y\wedge a_+)\ge\sss(y)$, a contradiction\big). Hence, $(x,y)\subseteq(a_+,\infty)\subseteq\R\setminus\supp\nu$, by \ref{prop:s(x),nu conn}(II)(\ref{nu,s cont2}''). So, $\nu\big((x,y)\big)=0$.

\fbox{Checking $\supp\mu=I$:} First, by \ref{prop:s(x),nu conn}(II)(\ref{nu,s cont2}''), $G(a_+)=G(\infty)=m$. So, by \ref{prop:s(x),nu conn}(II)(\ref{fin}''), $a_-=\sss(a_+)=x_-\big(G(a_+)\big)=x_-(m)$. 
Hence, $m=G(-\infty)\ge G(a_-)\ge m$, 
by \eqref{eq:L1-}. So, $G(-\infty)=m=G(a_-)$, whence $\mu\big((-\infty,a_-)\big)=0$. 
Therefore, $(-\infty,a_-)\cap\supp\mu=\emptyset$. 

Next, take any $z_1$ and $z_2$ such that $a_-<z_1<z_2<0$. By conditions $\sss(0)=0$ and \ref{prop:s(x),nu conn}(II)(\ref{nonincr}',\ref{x-l.c}',\ref{fin}''), the function $\sss$ continuously decreases on the interval $[0,a_+]$ from $0$ to $a_-$. So, there exist $x_1$ and $x_2$ such that $z_1=\sss(x_1)$, $z_2=\sss(x_2)$, and $0<x_2<x_1<a_+$. Then, by \eqref{eq:r(s)}, $\r(z_1)=\r\big(\sss(x_1)\big)=x_1$ and $\r(z_2)=\r\big(\sss(x_2)\big)=x_2$, whence $\r(z_1)\ne\r(z_2)$. Now, by the natural left ``mirror'' analogue of \ref{prop:s(x),nu}(II)(\ref{nu,s cont2}'), $\mu\big((z_1,z_2)\big)>0$. 
So, $\supp\mu\supseteq\R\cap[a_-,0]$. Recalling now that $(-\infty,a_-)\cap\supp\mu=\emptyset$, one has $(-\infty,0]\cap\supp\mu=\R\cap[a_-,0]$. 
Also, $[0,\infty)\cap\supp\mu=\supp\nu=\R\cap[0,a_+]$. Thus, 
indeed $\supp\mu=\R\cap[a_-,a_+]=I$. 

This completes the proof of implication (II)$\implies$(I). It remains to note that the uniqueness of $\mu$ follows immediately by Proposition~\ref{prop:s,nu}. 
\end{proof}

\begin{proof}[Proof of Proposition~\ref{prop:s(x) conn}]
The implication (I)$\implies$(II) follows immediately from Proposition~\ref{prop:s(x),nu conn}. 

The uniqueness of $\r$ follows immediately 
from Proposition~\ref{prop:mu-,uniq} or Proposition~\ref{prop:s(x)}. 

Finally, implication (II)$\implies$(I) follows by Proposition~\ref{prop:s(x)}, since conditions $\sss(0)=0$ and \ref{prop:s(x),nu conn}(II)(\ref{nonincr}',\ref{x-l.c}',\ref{fin}'') imply \ref{prop:s,nu}(II)(\ref{0}--\ref{x-l.c}) and \ref{prop:s(x)}(II)(\ref{nu,s cont2}''). 
\end{proof}

\begin{proof}[Proof of Proposition~\ref{prop:r cont charact}]
Implication (I)$\implies$(II) of Proposition~\ref{prop:r cont charact} was already proved in the proof of implication (I)$\implies$(II) of Proposition~\ref{prop:s(x),nu conn}. 

Implication (II)$\implies$(I) of Proposition~\ref{prop:r cont charact} follows immediately by Proposition~\ref{prop:s(x) conn}. 
\end{proof}

\begin{proof}[Proof of Proposition~\ref{prop:loc-symm}]
 \fbox{Checking (I)$\implies$(II):} Assume that condition (I) takes place. Then, by Proposition~\ref{prop:r cont charact}, condition \ref{prop:r cont charact}(II) holds. Since $\supp\mu=I$ and in an open neighborhood (say $O$) of $0$ measure $\mu$ has a continuous strictly positive density (say $f$), function $G$ is continuously differentiable in $O$, with $G'(x)=x\,f(x)$ for all $x\in O$. 
Also, by the continuity of $\r$, one has $\r(x)\in O$ for all $x$ in some other open neighborhood (say $O_1$) of $0$ such that $O_1\subseteq O$. 
Next, by \eqref{eq:r} and (say) \eqref{eq:bet+}, \eqref{eq:bet-}, one has 
\begin{equation}\label{eq:G(r)}
 G\big(\r(x)\big)=G(x)\quad\text{for all $x\in[-\infty,\infty]$; }
\end{equation}
once again, the subscript ${}_\mu$ is omitted. So, by the inverse function theorem, $\r$ is differentiable in $O_1\setminus\{0\}$ and, for all $x\in O_1\setminus\{0\}$, 
\begin{equation}\label{eq:r'}
 \r'(x)=\frac{G'(x)}{G'\big(\r(x)\big)}
=\frac{x\,f(x)}{\r(x)\,f\big(\r(x)\big)}
\sim\frac x{\r(x)}
\end{equation}
as $x\to0$. In particular, it follows by \eqref{eq:r'} 
that $\r'$ is continuous in $O_1\setminus\{0\}$. 
It also follows that $\r(x)^2=\int_0^x 2\r(z)\,\r'(z)\,\d z\sim
\int_0^x 2z\,\d z=x^2$, $\r(x)\sim-x$, $\r'(0)=-1$, and, again by \eqref{eq:r'}, $\r'(x)\to-1$ as $x\to0$, so that $\r'$ is continuous at $0$ as well. 
This completes the proof of implication (I)$\implies$(II). 

\fbox{Checking (II)$\implies$(I):} Assume that condition (II) takes place. Let $\sss:=\r_+$. Then one can easily construct a measure $\nu$ on $\B(\R)$ with a density $g:=\frac{\d\nu}{\d x}$ that is continuous and strictly positive on $[0,a_+)$ and 
such that condition \ref{prop:s(x),nu conn}(II) holds. Then condition \ref{prop:s(x),nu conn}(I) holds as well, so that there exists a non-atomic zero-mean probability measure $\mu$ on $\B(\R)$ such that $\supp\mu=I$, $\mu_+=\nu$, and  
the reciprocating function $(\r_\mu)_+=\sss$. Moreover, by the uniqueness part of Proposition~\ref{prop:s(x) conn}, $\r_\mu=\r$. Therefore, identity \eqref{eq:G(r)} holds with $G=G_\mu$. So, for all $x$ in a left neighborhood (l.n.) of $0$ there exists the derivative
\begin{equation}\label{eq:G'}
 G'(x)=G'\big(\r(x)\big)\,\r'(x)
=\r(x)\,g\big(\r(x)\big)\,\r'(x)
\sim x\,g(0)
\end{equation}
as $x\uparrow0$, 
because $\r'(0-)=\r'(0)=-1$ and $\r(x)\sim-x$ as $x\uparrow0$. 
On the other hand, \eqref{eq:G'} implies that $G'$ is strictly negative and continuous in a l.n.\ (say $O_-$) of $0$. So, for all $A\in\B(O_-)$, 
\begin{equation*}
 \mu(A)=\int_A\frac{\d G(x)}x=\int_A\frac{G'(x)\,\d x}x
=\int_A f(x)\,\d x,
\end{equation*}
where $f(x):=\frac{G'(x)}x=\frac{\r(x)}x\,g\big(\r(x)\big)\,\r'(x)$ is continuous on $O_-$ and, by \eqref{eq:G'}, $f(0-)=g(0)$. Gluing the functions $f$ and $g$ together, one sees that indeed the probability measure $\mu$ has a continuous strictly positive density a neighborhood of $0$.  
\end{proof}

\begin{proof}[Proof of Proposition~\ref{prop:a}]\ 

\fbox{Checking (I):} By conditions $\r(0)=0$ and \ref{prop:r cont charact}(II)(\ref{nonincr}'',\ref{x-l.c}'',\ref{fin}'''), for $x$ in the interval $[0,a_+)$ 
the width $\w(x)=|x-\r(x)|$ equals $x-\r(x)$ and hence continuously and strictly increases from $0$ to $a_+-a_-$; 
therefore, the restriction of function $\w$ to the interval $[0,a_+)$
has a unique inverse, say $\w_+^{-1}$, which continuously increases on $[0,a_+-a_-)$ from $0$ to $a_+$.
Similarly, for $y$ in the interval $(a_-,0]$,  
$\w(y)$ equals $\r(y)-y$ and hence continuously and strictly decreases from $a_+-a_-$ to $0$, so that 
the restriction of function $\w$ to the interval $(a_-,0]$
has a unique inverse, say $\w_-^{-1}$, which continuously decreases on $[0,a_+-a_-)$ from $0$ to $a_-$. 
Thus, condition \eqref{eq:a} will hold for all $x\in[0,a_+)$ iff $\aaa(w)=\aaa_+(w):=\big(x+\r(x)\big)|_{x=\w_+^{-1}(w)}$ for all $w\in[0,a_+-a_-)$; 
similarly, \eqref{eq:a} will hold for all $y\in(a_-,0]$ iff $\aaa(w)=\aaa_-(w):=\big(y+\r(y)\big)|_{y=\w_-^{-1}(w)}$ for all $w\in[0,a_+-a_-)$. 
So, to prove the existence and uniqueness of a function $\aaa$ satisfying condition \eqref{eq:a} for all $w\in[0,a_+-a_-)$, it suffices 
to show that $\aaa_+(w)=\aaa_-(w)$ for all $w\in[0,a_+-a_-)$, that is, to verify the implication
\begin{equation}\label{eq:1-to-1}
	x-\r(x)=\r(y)-y\implies y+\r(y)=x+\r(x)
\end{equation}
whenever $a_-<y\le0\le x<a_+$. 
Fix any such $x$ and $y$. 
Again by conditions $\r(0)=0$ and \ref{prop:r cont charact}(II)(\ref{nonincr}'',\ref{x-l.c}'',\ref{fin}'''), $\r$ maps interval $(a_-,0]$ onto $[0,a_+)$. Hence, there exists some $\ty\in(a_-,0]$ such that $x=\r(\ty)$, so that, by condition \ref{prop:r cont charact}(II)($\r\circ\r$), $\r(x)=\ty$. It follows that $\r(y)-y=x-\r(x)=\r(\ty)-\ty$, and so, $y=\ty$, since $\r(y)-y$ strictly decreases in $y\in\R$. Therefore, $\r(y)+y=\r(\ty)+\ty=x+\r(x)$, so that implication \eqref{eq:1-to-1} is verified. 

Next, let us check the strict Lip(1) condition, which is easy to see to be equivalent to the condition that the functions $\xi$ and $\rho$ (defined by \eqref{eq:xi,rho}) are strictly increasing on $[0,a_+-a_-)$. 

For each $w\in[0,a_+-a_-)$,  $x:=\w_+^{-1}(w)$, and $y:=\w_-^{-1}(w)$, one has $w=\w(x)=\w(y)$, 
$x\in[0,a_+)$, and $y\in(a_-,0]$, and so, by \eqref{eq:xi,rho} and  \eqref{eq:a},
\begin{align}
\xi(w)&=\tfrac12(w+\aaa(w))=\tfrac12(x-\r(x)+x+\r(x))=\w_+^{-1}(w)
\quad\text{and}\label{eq:xi=+}\\
\xi(w)&=\tfrac12(w+\aaa(w))=\tfrac12(\r(y)-y+y+\r(y))=\r\big(\w_-^{-1}(w)\big);
\label{eq:xi=-}
\end{align}	
either of these two lines shows that $\xi$ is continuously and strictly increasing on $[0,a_+-a_-)$, from $0$ to $a_+$;
similarly, 
\begin{align}
\rho(w)&=\tfrac12(w-\aaa(w))=\tfrac12(x-\r(x)-x-\r(x))=-\r\big(\w_+^{-1}(w)\big)
\quad\text{and} \label{eq:rho=+}\\
\rho(w)&=\tfrac12(w-\aaa(w))=\tfrac12(\r(y)-y-y-\r(y))=-\w_-^{-1}(w); \label{eq:rho=-}
\end{align}	
either of the last two lines shows that $\rho$ is continuously and strictly increasing on $[0,a_+-a_-)$, from $0$ to $-a_-$. 
It also follows that $\aaa(w)=\xi(w)-\rho(w)\to a_++a_-$ as $w\uparrow a_+-a_-$.
Thus, statement (I) is verified. 

\fbox{Checking (II):} 
As noted in the above proof of statement (I), 
the strict Lip(1) condition on $\aaa$ is equivalent to the condition that the functions $\xi$ and $\rho$ be strictly increasing on $[0,a_+-a_-)$; these functions are also continuous, in view of definition \eqref{eq:xi,rho}, since the function $\aaa$ is Lipschitz and hence continuous. Therefore, the functions $\xi$ and $\rho$ are strictly and continuously increasing on $[0,a_+-a_-)$, from $0$ to $a_+$ and $-a_-$, respectively. So, the functions $\xi$ and $\rho$ have strictly and continuously increasing inverses $\xi^{-1}$ and $\rho^{-1}$, which map  $[0,a_+)$ and $[0,-a_-)$, respectively, onto $[0,a_+-a_-)$. 

Thus, one can use formula \eqref{eq:r,xi,rho} \emph{to define a} function $\r$ on the interval $(a_-,a_+)$. Let us then extend this definition to the entire interval $[-\infty,\infty]$ by imposing condition \ref{prop:r cont charact}(II)(\ref{fin}'''). 
Then one can see that the function $\r$ satisfies condition \ref{prop:r cont charact}(II). 
So, Proposition~\ref{prop:r cont charact} implies that $\r$ is the reciprocating function of a nonatomic zero-mean probability measure $\mu$ on $\B(\R)$ with $\supp\mu=I=\R\cap[a_-,a_+]$. 

Let us now verify \eqref{eq:a}. Take any $x\in[0,a_+)$. Then $x=\xi(w)$ for some $w\in[0,a_+-a_-)$, whence $\xi^{-1}(x)=w$ and, in view of \eqref{eq:r,xi,rho},  
\begin{gather*}
 |x-\r(x)|=x-\r(x)=x+\rho\big(\xi^{-1}(x)\big)
=\xi(w)+\rho(w)=w,\\
x+\r(x)=x-\rho\big(\xi^{-1}(x)\big)
=\xi(w)-\rho(w)=a(w),
\end{gather*}
so that \eqref{eq:a} holds for $x\in[0,a_+)$. Similarly, \eqref{eq:a} holds for $x\in(a_-,0]$. So, \eqref{eq:a} is verified. 

To complete the proof of (II), it remains to check the uniqueness of $\r$ given $\aaa$ and \eqref{eq:a}. That is, we have to show that the value of $\r(x)$ is uniquely determined for each $x\in(a_-,a_+)$. 
In fact, we shall show that, moreover, relations \eqref{eq:r,xi,rho} must necessarily hold. 
Toward that end, observe that, as shown in the above proof of statement (I), condition \eqref{eq:a} implies \eqref{eq:xi=+}--\eqref{eq:rho=-}. 

Now, take any $x\in[0,a_+)$. Then $x=\w_+^{-1}(w)$ for some $w\in[0,a_+-a_-)$, whence, by \eqref{eq:xi=+}, $\xi(w)=x$ and hence $\xi^{-1}(x)=w$; so, by \eqref{eq:rho=+}, $-\rho\big(\xi^{-1}(x)\big)=-\rho(w)=\r\big(\w_+^{-1}(w)\big)=\r(x)$, which proves the first case in \eqref{eq:r,xi,rho}. 

Similarly, take any $y\in(a_-,0]$. Then $y=\w_-^{-1}(w)$ for some $w\in[0,a_+-a_-)$, whence, by \eqref{eq:rho=-}, $\rho(w)=-y$ and hence 
$\rho^{-1}(-y)=w$. So, by \eqref{eq:xi=-}, $\xi\big(\rho^{-1}(-y)\big)=\xi(w)=\r\big(\w_-^{-1}(w)\big)=\r(y)$, which proves the second case in \eqref{eq:r,xi,rho}. 

\fbox{Checking (III), the ``if'' part:} Here, assume that the  asymmetry pattern function $\aaa$ is continuously differentiable in an open r.n.\ of $0$ and $\aaa'(0+)=0$. Then in such a r.n.\ the functions $\xi$ and $\rho$ are continuously differentiable, $\xi'=\frac12(1+\aaa')$, $\xi'(0+)=\frac12$, $\rho'=\frac12(1-\aaa')$, $\rho'(0+)=\frac12$. 
Recall also that the functions $\xi$ and $\rho$ are continuously increasing on $[0,a_+-a_-)$, 
so that the inverse functions $\xi^{-1}$ and $\rho^{-1}$ are continuously differentiable in a r.n.\ of $0$. 
Hence, by \eqref{eq:r,xi,rho}, there is some $\vp>0$ such that
\begin{align*}
x\in(0,\vp) & \implies \r'(x)
=\frac{-2\rho'\big(\xi^{-1}(x)\big)}{1+\aaa'\big(\xi^{-1}(x)\big)}
\underset{x\downarrow0}\longrightarrow-1,\\
x\in(-\vp,0) & \implies \r'(x)
=\frac{-2\xi'\big(\rho^{-1}(-x)\big)}{1-\aaa'\big(\rho^{-1}(-x)\big)}
\underset{x\uparrow0}\longrightarrow-1,
\end{align*}
which shows, in view of the mean value theorem, that indeed the corresponding reciprocating function $\r$ 
is continuously differentiable in a neighborhood of $0$. 

\fbox{Checking (III), the ``only if'' part:} Here, assume that a reciprocating function $\r$ such as in Proposition~\ref{prop:r cont charact}  
is continuously differentiable in a neighborhood of $0$. Then, by Proposition~\ref{prop:loc-symm}, $\r'(0)=-1$. By \eqref{eq:a},  $\aaa\circ\w_+=\al$, where $\w_+\colon[0,a_+)\to[0,a_+-a_-)$, $\al\colon[0,a_+)\to\R$, 
$\w_+(x)\equiv x-\r(x)$ and $\al(x)\equiv x+\r(x)$. At that, the function $\w_+$ is continuously differentiable in some r.n., say $(0,\vp)$, of $0$, with $\w_+'(x)=1-\r'(x)$ for all $x\in(0,\vp)$, so that $\w_+'(0+)=2\ne0$. So, for all $w$ in some open r.n.\ of $0$, one has $\aaa(w)=\al\big(\w_+^{-1}(w)\big)$ and hence 
\begin{equation*}
	\aaa'(w)=\frac{1+\r'\big(\w_+^{-1}(w)\big)} {\w_+'\big(\w_+^{-1}(w)\big)}
	\underset{w\downarrow0}\longrightarrow0, 
\end{equation*}
so that indeed the function $\aaa$ is continuously differentiable in an open right neighborhood of $0$ and $\aaa'(0+)=0$. 
\end{proof}

\subsection{Proofs of the main results} \label{proofs:main} 

\begin{proof}[Proof of Theorem~\ref{th:main}]
This theorem is a special case of Proposition~\ref{prop:main}. 
\end{proof}

\begin{proof}[Proof of Theorem~\ref{th:F}]
For $j=1,\dots,n+1$, introduce
\begin{equation}\label{eq:Phi}
g_j(p_1,\dots,p_n) 
:=\E g(p_1,\dots,p_{j-1},X_{j;p_j},R_{j;p_j},\dots,X_{n;p_n}R_{n;p_n})
\end{equation}
and
$$
\I_j
:= \int_{(\R\times[0,1])^n}
g_j(p_1,\dots,p_n)\,
\d p_1\dots\d p_n.
$$
Then, for all $j=1,\dots,n$,
$$
\I_{j+1}
= \int_{(\R\times[0,1])^{n-1}}\EE_j\,\d p_1\dots\d p_{j-1}\,
\d p_{j+1}\dots\d p_n,
$$
where
\begin{align*} 
\EE_j
&:= \E g_{j+1}(p_1,\dots,p_{j-1},X_j,R_j,p_{j+1},\dots,p_n) \\
&=\int_{\R\times[0,1]} 
\E g_{j+1}(p_1,\dots,p_{j-1},X_{j;p_j},R_{j;p_j},p_{j+1},\dots,p_n)\,\P(X_j\in\d x_j)\,\d u_j 
\\
&=\int_{\R\times\R\times[0,1]}  g_{j+1}(p_1,\dots,p_{j-1},x_{j;p_j},r_{j;p_j},p_{j+1},\dots,p_n)\\
&\qquad\qquad\qquad\qquad\qquad\times
\P\big((X_{j;p_j},R_{j;p_j})
\in\d x_{j;p_j}\times\d r_{j;p_j}\big)\,
\P(X_j\in\d x_j)\d u_j 
\\
&=\int_{\R\times\R\times[0,1]}  \E g(p_1,\dots,p_{j-1},x_{j;p_j},r_{j;p_j},
X_{j+1;p_{j+1}},R_{j+1;p_{j+1}},\dots,X_{n;p_n},R_{n;p_n})\\
&\qquad\qquad\qquad\qquad\qquad\times
\P\big((X_{j;p_j},R_{j;p_j})
\in\d x_{j;p_j}\times\d r_{j;p_j}\big)\,
\P(X_j\in\d x_j)\d u_j 
\\
&=\int_{\R\times[0,1]}  \E g(p_1,\dots,p_{j-1},X_{j;p_j},R_{j;p_j},\dots,X_{n;p_n},R_{n;p_n})\,\P(X_j\in\d x_j)\,\d u_j 
\\
&=\int_{\R\times[0,1]}  g_j(p_1,\dots,p_n)\,\P(X_j\in\d x_j)\,\d u_j
=\int_{\R\times[0,1]}  g_j(p_1,\dots,p_n)\,\d p_j;
\end{align*}
the second of these 7 equalities follows by \eqref{eq:main}, and the fourth and sixth ones by \eqref{eq:Phi}.  

Now it follows that  
$\I_{j+1}=\I_j$,
for all $j=1,\dots,n$. 
This finally implies $\I_{n+1}=\I_1$, so that
\begin{equation*} 
\begin{split}
\E g( & X_1,R_1,\dots,X_n,R_n)\\
&=\I_{n+1}
=\I_1=
\int_{(\R\times[0,1])^n}
\E g(X_{1;p_1}R_{1;p_1},\dots,X_{n;p_n},R_{n;p_n})\,
\d p_1\dots\d p_n.
\end{split}
\end{equation*}
\end{proof}

\begin{proof}[Proof of Corollary~\ref{cor:student-normal}]
Take any function $f\in\H5$ and, for any $x_1,r_1\dots,$ $x_n,r_n$ in $\R$, let  
$$g_f(x_1,r_1\dots,x_n,r_n):=
\begin{cases}
f\bigg(\dfrac{x_1+\dots+x_n}{\frac12\sqrt{w_1^2+\dots+w_n^2}}\bigg) & \text{ if } w_1^2+\dots+w_n^2\ne0, \\
f(0) & \text{ otherwise, }
\end{cases}
$$
where $w_i:=|x_i-r_i|$. Then, by Theorem~\ref{th:F} and \cite[Theorem~2.1]{normal}, 
\begin{align*}
\E f(S_W)&=\E g_f(X_1,R_1,\dots,X_n,R_n) \\
&\le\sup\big\{\E g_f(X_{x_1,r_1},R_{x_1,r_1},\dots,X_{x_n,r_n},R_{x_n,r_n})\colon(x_1,r_1\dots,x_n,r_n)\in\R^{2n},\\ 
& \quad\quad\quad\quad\quad\quad\quad\quad\quad\quad\quad\quad\quad\quad\quad\quad\quad\quad\quad\quad\quad\quad\quad\quad\quad\quad x_jr_j\le0\ \forall j\big\} \\
&=\sup\bigg\{\E
f\bigg(\dfrac{X_{x_1,r_1}+\dots+X_{x_n,r_n}}
{\frac12\sqrt{(x_1-r_1)^2+\dots+(x_n-r_n)^2}}\bigg) \colon(x_1,r_1\dots,x_n,r_n)\in\R^{2n},\\ 
& \quad\quad\quad\quad\quad\quad\quad\quad\quad\quad\quad\quad\quad\quad\quad\quad\quad\quad\quad\quad\quad\quad\quad\quad\quad\quad x_jr_j\le0\ \forall j\bigg\} \\
&\le\E f(Z), 
\end{align*}
which proves \eqref{eq:stud-f(Z)}. 
Now \eqref{eq:stud-P(Z>x)} follows by \cite[Corollary~2.2]{normal}. 
\end{proof}

\begin{proof}[Proof of Corollary~\ref{cor:stud-asymm}]
This proof is similar to that of Corollary~\ref{cor:student-normal}, using 
\cite[Theorem~4 and Corollary~3]{asymm} 
instead of \cite[Theorem~2.1 and Corollary~2.2]{normal}. 
Here we only would like to provide some details concerning condition~\eqref{eq:bounded-asymm}, in its relation with condition \cite[(20)]{asymm}. Namely, we shall show that \eqref{eq:bounded-asymm} implies that
\begin{equation}\label{eq:left}
	\frac{\r_i(y,u)}{|y|}\le\ga:=\frac{1-p}p\quad\text{for all}\ y<0,\ u\in[0,1], 
\end{equation}
and $i\in\{1,\dots,n\}$, which will allow one to immediately apply the mentioned results of \cite{asymm}. 

Fix any $i\in\{1,\dots,n\}$ and write, for brevity, $X$ 
and $\r$ for $X_i$ 
and $\r_i$, respectively. Then, by \eqref{eq:bounded-asymm} and Fubini's theorem, one has $\P(X\in A)=1$, where $A:=\{x>0\colon\mes(B_x)=1\}$, $\mes$ denotes the Lebesgue measure, and $B_x:=\{u\in[0,1]\colon\frac x{|\r(x,u)|}\le\ga\}$. Since $B_x$ is a closed interval, one has $B_x=[0,1]$ and hence $\frac x{|\r(x,0)|}\le\ga$ for all $x\in A$. 

Take now any $y<0$ and let $x_y:=\r(y,1)$ and $h:=\tG(y,1)=G(y)$. Then $x_y=x_+(h)$, 
$G(x_y-)\le h$ \big(by \eqref{eq:bet+}\big), and $0\ge\r(x_y,0)=x_-(G(x_y-))\ge x_-(h)\ge y$ \big(by property~\eqref{x+-non-decr} of Proposition~\ref{lem:left-cont} and \eqref{eq:L1-}\big), so that $|\r(x_y,0)|\le|y|$ and 
\begin{equation}\label{eq:less}
\frac{\r(y,u)}{|y|}\le\frac{x_y}{|y|}\le\frac{x_y}{|\r(x_y,0)|}\quad\text{for all }u\in[0,1]. 	
\end{equation}
Moreover, w.l.o.g.\ $x_y>0$ \big(otherwise, $\r(y,1)=x_y=0$ and hence $\r(y,u)=0$ for all $u\in[0,1]$, so that \eqref{eq:left} is trivial\big). 
On the other hand, for each $x\in[0,x_y)$, by \eqref{eq:less+} and \eqref{eq:L1} one has $G(x)<h\le G(x_y)$ and hence $\P(X\in(x,x_y])>0$, so that $(x,x_y]\cap A\ne\emptyset$ (since $\P(X\in A)=1$). Therefore, there exists a non-decreasing sequence $(x_n)$ in $A$ such that $x_n\uparrow x_y$. So, $\ga_n:=\frac{x_n}{|\r(x_n,0)|}\le\ga$ for all $n$ and, in view of property~\eqref{x+-left-cont} of  Proposition~\ref{lem:left-cont}, $\frac{x_y}{|\r(x_y,0)|}=\lim_n\ga_n\le\ga$, since $\r(x,0)=x_-(G(x-))$ for all $x>0$ and the function $(0,\infty)\ni x\mapsto G(x-)$ is non-decreasing and left-continuous. 

Thus, $\frac{x_y}{|\r(x_y,0)|}\le\ga$, whence, by \eqref{eq:less}, inequality\eqref{eq:left} follows. 
\end{proof}

\end{document}